\DeclareMathAlphabet\euscr{U}{eus}{m}{n}
\newcommand{\longhookrightarrow}{\lhook\joinrel\longrightarrow}
\newcommand{\longtwoheadrightarrow}{\relbar\joinrel\twoheadrightarrow}
\theoremstyle{plain}
\newtheorem{theorem}{Theorem}[section]
\newtheorem{conjecture}[theorem]{Conjecture}
\newtheorem{prop}[theorem]{Proposition}
\newtheorem{corollary}[theorem]{Corollary}
\newtheorem{lemma}[theorem]{Lemma}
\theoremstyle{definition}
\newtheorem{remark}[theorem]{Remark}
\newtheorem{example}[theorem]{Example}
\long\def\symbolfootnote[#1]#2{\begingroup
\def\thefootnote{\fnsymbol{footnote}}\footnote[#1]{#2}\endgroup} 
\DeclareMathOperator{\WD}{W}
\DeclareMathOperator{\Ann}{Ann}
\DeclareMathOperator{\KS}{SKu}
\DeclareMathOperator{\cont}{cont}
\def\bl{{\Sigma_{*}}}
\def\alg{{\mathrm{alg}}}
\def\an{{\mathrm{an}}}
\def\GL{{\bf GL}}
\def\A{\mathbf{A}}
\def\I{\mathbf{I}}
\def\cA{\mathcal{A}}
\def\eA{\euscr{A}}
\def\sD{\mathscr{D}}
\def\sgn{\mathrm{sgn}}
\def\N{\mathrm{N}}
\def\1{\mf{1}}
\DeclareMathOperator{\Fitt}{Fitt}
\DeclareMathOperator{\Aug}{Aug}
\DeclareMathOperator{\Ind}{Ind}
\DeclareMathOperator{\nr}{nr}
\DeclareMathOperator{\Frac}{Frac}
\DeclareMathOperator{\cyc}{cyc}
\DeclareMathOperator{\cond}{cond}
 \DeclareMathOperator{\Norm}{Norm}
\DeclareMathOperator{\rec}{rec} \DeclareMathOperator{\Div}{Div}
\DeclareMathOperator{\Hom}{Hom} \DeclareMathOperator{\End}{End}
\DeclareMathOperator{\sign}{sign}
\DeclareMathOperator{\coker}{coker} 
\DeclareMathOperator{\ord}{ord}
\DeclareMathOperator{\cusps}{cusps}
 \DeclareMathOperator{\real}{Re}
 \DeclareMathOperator{\Cl}{Cl}
\DeclareMathOperator{\lcm}{lcm} 
 \DeclareMathOperator{\Frob}{Frob}
\DeclareMathOperator{\tr}{tr}
\DeclareMathOperator{\Ext}{Ext}
\newcommand{\bpsi}{\pmb{\psi}}
\newcommand{\mat}[4]{\begin{pmatrix}{#1} & {#2} \\ {#3} & {#4} \end{pmatrix}}
\newcommand{\mint}{\times\!\!\!\!\!\!\!\int}
\newcommand{\mf}{\mathfrak }
\newcommand{\mscr}{\mathscr }
\def\fa{\mathfrak{a}}
\def\fn{\mathfrak{n}}
\def\fp{\mathfrak{p}}
\def\fq{\mathfrak{q}}
\def\fg{\mathfrak{g}}
\def\fm{\mathfrak{m}}
\def\ft{\mathfrak{t}}
\def\fO{\mathfrak{O}}
\def\fK{\mathfrak{K}}
\def\fl{\mathfrak{l}}
\def\fP{\mathfrak{P}}
\def\fh{\mathfrak{h}}
\def\fm{\mathfrak{m}}
\def\fd{\mathfrak{d}}
\def\T{\mathbf{T}}
\def\Z{\mathbf{Z}}
\def\F{\mathbf{F}}
\def\Q{\mathbf{Q}}
\def\C{\mathbf{C}}
\def\G{\mathbf{G}}
\def\B{\mathbf{B}}
\def\bO{\mathbf{O}}
\def\bdf{\begin{defn}}
\def\edf{\end{defn}}
\def\cH{\mathcal{H}}
\def\cD{\mathcal{D}}
\def\cO{\mathcal{O}}
\def\cC{\mathcal{C}}
\def\fb{\mathfrak{b}}
\def\fc{\mathfrak{c}}
\def\Gal{{\rm Gal}}
\def\ab{{\rm ab}}
\def\cF{{\mathcal F}}
\def\ram{\text{ram}}
\def\ab{\text{ab}}
\def\sL{{\mscr L}}
\begin{document}
\baselineskip 14.25 pt

\title{Brumer--Stark Units and Explicit Class Field Theory}
\author{Samit Dasgupta \\ Mahesh Kakde}

\maketitle

\begin{abstract}
Let $F$ be a totally real field of degree $n$ and $p$ an odd prime.  We prove the $p$-part of the integral Gross--Stark conjecture for the Brumer--Stark $p$-units living in CM abelian extensions of $F$.  In previous work, the first author showed that such a result implies an exact $p$-adic analytic formula for these Brumer--Stark units up to a bounded root of unity error, including a ``real multiplication'' analogue of Shimura's celebrated reciprocity law from the theory of Complex Multiplication.  In this paper we show that the Brumer--Stark units, along with $n-1$ other easily described elements (these are simply square roots of certain elements of $F$) generate the maximal abelian extension of $F$.  We therefore obtain an unconditional construction of the maximal abelian extension of any totally real field, albeit one that involves $p$-adic integration for infinitely many primes $p$.

Our method of proof of the integral Gross--Stark conjecture is a generalization of our previous work on the Brumer--Stark conjecture.  We apply Ribet's method in the context of group ring valued Hilbert modular forms.  A key new construction here is the definition of a Galois module $\nabla_{\!\sL}$ that incorporates an integral version of the Greenberg--Stevens $\sL$-invariant into the theory of Ritter--Weiss modules.  This allows for the reinterpretation of Gross's conjecture as the vanishing of the Fitting ideal of $\nabla_{\!\sL}$.  This vanishing is obtained by constructing a quotient of $\nabla_{\!\sL}$ whose Fitting ideal vanishes using the Galois representations associated to cuspidal Hilbert modular forms.
\end{abstract}

\tableofcontents

\section{Introduction}

Our motivation in this paper is explicit class field theory, i.e.\ the explicit analytic construction of the maximal abelian extension of a number field $F$.  Let $F$ be a totally real number field.  Up to a bounded root of unity, we prove an explicit $p$-adic analytic formula for certain elements (Brumer--Stark $p$-units) that we show generate, along with other easily described elements, the maximal abelian extension of $F$ as we range over all primes $p$ and all conductors $\fn \subset \cO_F$.  To demonstrate the simplest possible novel case of these formulas, in \S\ref{s:compute}  we present example computations of narrow Hilbert class fields of real quadratic fields generated by our elements; complete tables of hundreds of such calculations are given in \cite{dasweb}.

The $p$-adic formula for Brumer--Stark units that we prove was conjectured by the first author, collaborators, and others over a series of previous papers (\cite{thesis}, \cite{dd}, \cite{chapthesis}, \cite{chapcomp}, \cite{das}, \cite{ds}).  
It was proven in \cite{das} that under a mild assumption denoted $(*)$ below, 
 our formula for Brumer--Stark $p$-units is implied by
 the $p$-part of a conjecture of Gross on the relationship between Brumer--Stark $p$-units and the special values of $L$-functions in towers of number fields \cite{gross2}*{Conjecture 7.6}.  Note that the assumption $(*)$ excludes only finitely many primes $p$ for a given totally real field $F$ (a subset of those dividing the discriminant of $F$).
The conjecture of Gross is often referred to as the ``integral Gross--Stark conjecture" or ``Gross's tower of fields conjecture,"
and the proof of the $p$-part of this conjecture takes up the bulk of the paper.

We prove the $p$-part of the integral Gross--Stark conjecture by applying Ribet's method, which was first established in his groundbreaking paper \cite{ribet}.
We apply Ribet's method in the context of group ring valued families of Hilbert modular forms as employed in \cite{wiles} and developed in our previous work \cite{dk}. 

The main new feature in the present paper that goes beyond our work in \cite{dk} is to incorporate an integral group-ring version of the Greenberg-Stevens $\sL$-invariant.  In this way, we generalize from considering just the ``value'' of the $L$-function to the ``derivative'' of the $L$-function (in Gross's integral group ring sense).  After defining an appropriate generalized group ring $R_\sL$ in which this integral Greenberg--Stevens $\sL$-invariant lives, we  construct a Ritter--Weiss module $\nabla_\sL$ associated to the $\sL$-invariant.  We calculate the Fitting ideal of $\nabla_\sL$ using the Galois representations associated to group ring valued modular forms.  The connection between $\sL$-invariants, families of modular forms, and Galois representations was pioneered by the work of Greenberg and Stevens on the Mazur--Tate--Teitelbaum conjecture \cite{gs}.  The application of these ideas toward the rational Gross--Stark conjecture (\cite{gross}*{Conjecture 2.12}) was introduced in \cite{ddp} and developed in \cite{dkv}.  The current construction is a strong integral refinement of those prior works.  The methods of this paper are similar to those used by Atsuta and Kataoka to give a near complete proof of the Equivariant Tamagawa Number Conjecture for the minus part of the Tate motive associated to CM abelian extensions of totally real fields \cite{ak}.

We now describe our results in greater detail.

\subsection{The Brumer--Stark conjecture}

Let $F$ be a totally real field of degree $n$ over $\Q$.  Let $H$ be a finite abelian extension of $F$ that is a CM field.  Write $G = \Gal(H/F)$.   Let $S$ and $T$ denote finite nonempty disjoint sets of places of $F$ such that $S$ contains the set $S_{\infty}$ of real places and the set $S_{\ram}$ of finite primes ramifying in $H$. 
Associated to any character $\chi \colon G \longrightarrow \C^*$  one has the Artin $L$-function
\begin{equation} \label{e:ls} 
L_S(\chi, s) = \prod_{\fp \not \in S} \frac{1}{1 - \chi(\fp) \N\fp^{-s}}, \qquad \real(s) > 1, 
\end{equation}
and its ``$T$-smoothed" version
\begin{equation} \label{e:lst}
L_{S,T}(\chi, s) = L_S(\chi, s) \prod_{\fp \in T} (1 - \chi(\fp)\N\fp^{1-s}). 
\end{equation}
The function $L_{S,T}(\chi, s)$ can be analytically continued to a holomorphic function on the complex plane. These $L$-functions can be packaged together into Stickelberger elements 
\[ 
\Theta_S^{H/F}(s), \quad \Theta_{S,T}^{H/F}(s) \in \C[G] 
\] 
defined by (we drop the superscript $H/F$ when unambiguous)
\[ 
\chi(\Theta_S(s)) = L_S(\chi^{-1}, s), \qquad \chi(\Theta_{S,T}) = L_{S,T}(\chi^{-1}, s) \qquad \text{ for all } \chi \in \hat{G}. 
\]

A classical  theorem of Siegel \cite{siegel}, Klingen \cite{klingen} and Shintani \cite{shintani} states that  $\Theta_S := \Theta_S(0)$ lies in $\Q[G]$.  This was refined by Deligne--Ribet \cite{dr} and Cassou-Nogu\`es \cite{cn}, who proved that under a certain mild technical condition on $T$ (which is discussed in (\ref{e:drcond}) below and which we assume holds for the remainder of the paper), we have $\Theta_{S,T}:=\Theta_{S,T}(0) \in \Z[G].$  

The following conjecture  stated by Tate is known as the Brumer--Stark conjecture. Let $\fp \not\in S \cup T$ be a prime of $F$ that splits completely in $H$.  Let $U_{\fp}^- \subset H^*$ denote the group of elements $u$ satisfying  $|u|_v = 1$ for all places $v$ of $H$ not lying above $\fp$, including the complex places.  Let $U_{\fp, T}^- \subset U_\fp^-$ denote the subgroup of elements such that $u \equiv 1 \pmod{\fq \cO_H}$ for all $\fq \in T$. 

\begin{conjecture}[Tate--Brumer--Stark, \cite{tate}] \label{c:bs}  Fix a prime $\fP$ of $H$ above $\fp$.  There exists an element  $u_\fp \in U_{\fp, T}^-$ such that
\begin{equation} \label{e:bs}
 \ord_G(u_\fp) := \sum_{\sigma \in G} \ord_{\fP}(\sigma(u_\fp)) \sigma^{-1} = \Theta_{S,T} 
 \end{equation}
in $\Z[G]$.
\end{conjecture}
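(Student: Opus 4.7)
The plan is to establish the $p$-part of the conjecture for each prime $p$ separately; since $\Theta_{S,T} \in \Z[G]$, the $p$-parts together determine the integral statement. Fix an odd prime $p$ (the case $p = 2$ requires parallel but more delicate arguments). The key algebraic reformulation uses a \emph{Ritter--Weiss module} $M$ over $\Z_p[G]$, assembled from the arithmetic of $H$ at the primes in $S \cup T \cup \{\fp\}$---local units, global $S$-units, class groups, and appropriate Galois cohomology---with the feature that, by Pontryagin/Tate duality, the containment $\Theta_{S,T} \in \Fitt_{\Z_p[G]}(M)$ is equivalent to the existence of a $p$-unit $u_\fp \in U_{\fp,T}^- \otimes \Z_p$ with $\ord_G(u_\fp) = \Theta_{S,T}$.

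To produce $\Theta_{S,T}$ in the Fitting ideal of $M$, the engine is \textbf{Ribet's method} applied to group-ring-valued Hilbert modular forms on $\GL_2/F$, in the paradigm of \cite{wiles}. One constructs a $\Z_p[G]$-valued Hilbert modular Eisenstein series $E$ whose constant terms at the cusps are $\Theta_{S,T}$ times units in $\Z_p[G]$, and then shows by Eisenstein-ideal analysis that $E$ is congruent modulo $\Theta_{S,T}$ to a cuspidal Hilbert modular Hecke eigenform $f$. The associated Galois representation $\rho_f \colon \Gal(\overline{F}/F) \to \GL_2$ is reducible modulo the congruence ideal but non-semisimple; its off-diagonal cocycle, viewed in Galois cohomology with coefficients in an appropriate twist, becomes, via Poitou--Tate duality, an annihilator of $M$ with multiplier exactly $\Theta_{S,T}$.

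The final step extracts $u_\fp$ itself from this cohomological data: the class built from $\rho_f$ corresponds, under Kummer theory and the reciprocity isomorphism, to an element of $H^* \otimes \Z_p$, and the local behavior of $\rho_f$ at primes of $S$, $T$, and $\{\fp\}$ translates respectively into the non-$\fp$ unit condition, the $T$-congruence $u_\fp \equiv 1 \pmod{\fq \cO_H}$, and the valuation divisor $\ord_G(u_\fp) = \Theta_{S,T}$. The \textbf{main obstacle} will be producing the Ribet congruence at \emph{exactly} the depth $\Theta_{S,T}$: generic arguments give only a divisibility, so obtaining the Fitting ideal equality requires delicate control over the Eisenstein ideal, the freeness of the relevant Hecke module over the localised Hecke algebra (so that the extracted cocycle is integral rather than fractional), and careful handling of components of $\Z_p[G]$ coming from non-regular characters where the Eisenstein ideal can degenerate.
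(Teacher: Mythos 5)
The statement you are attempting to prove is stated in the paper as a \emph{conjecture} (Conjecture~\ref{c:bs}); the paper does not prove it here. Theorem~\ref{t:bs} merely records that the authors' earlier work \cite{dk} establishes the conjecture after inverting~$2$, and the present paper builds on that result rather than reproving it. So there is no ``paper's own proof'' of this statement to compare against in the strict sense; the relevant comparison is with the proof strategy of \cite{dk}, which the paper summarizes and extends.

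Your sketch captures the broad architecture of \cite{dk} --- Ritter--Weiss modules, Fitting-ideal reformulation, Ribet's method with group-ring-valued Hilbert modular forms in the style of \cite{wiles}, Silliman-type constant-term modifications to produce cuspidality, and reading off Galois cohomology classes from the reducible-but-non-semisimple mod-Eisenstein Galois representation. Two substantive problems remain, however. First, your parenthetical ``the case $p=2$ requires parallel but more delicate arguments'' is not just an understatement: the $p=2$ case is \emph{genuinely open}, not merely harder. The entire present paper assumes $p$ odd, and Remark~\ref{r:mn} explicitly describes the $2$-adic Brumer--Stark conjecture as unfinished work in progress (with only the cyclic $2$-Sylow case settled). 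A sketch that frames $p=2$ as routine cannot be correct as a proof of the integral statement in $\Z[G]$. Second, your final extraction step is not how the argument actually closes. You propose recovering $u_\fp$ ``under Kummer theory and the reciprocity isomorphism'' directly from the off-diagonal cocycle of $\rho_f$; in \cite{dk} one instead proves the Fitting ideal identity $\Fitt_{\Z_p[G]^-}\bigl(\nabla_{S_\fp}^{T}\bigr) = (\Theta_{S,T})$ (the Kurihara--Burns strong form), from which $\Theta_{S,T}$ annihilates the $T$-smoothed minus class group of $H$, and the annihilation of the class of $\fp$ is what produces $u_\fp \in U_{\fp,T}^- \otimes \Z_p$ with $\ord_G(u_\fp) = \Theta_{S,T}$. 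The cocycle gives a surjection onto a module of controlled Fitting ideal; it does not directly hand you a global element via Kummer theory. You should also be aware that the congruence-depth issue you flag as the ``main obstacle'' is handled in \cite{dk} not by a single Eisenstein-ideal estimate but by working with group-ring-valued families and carefully comparing Fitting ideals of a Ritter--Weiss module against a module built from the Hecke-algebra Galois representation, with special care at primes above $p$ and a separate treatment when $\fp$ is the only prime over $p$.
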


In previous work \cite{dk}, we proved this conjecture away from $2$, i.e.\ over $\Z[1/2]$.  In forthcoming work \cite{dksw}, we will prove the conjecture at $2$ and thereby complete the proof.  (Even without the result at $p=2$, one could carry around an unspecified power of $2$ in various results in this paper, and our applications to explicit class field theory would not change.)

\begin{theorem}   \label{t:bs}
The Brumer--Stark Conjecture holds.
\end{theorem}

\subsection{The Integral Gross--Stark conjecture} \label{s:igs}

Let $\fp$ be as above and write $S_\fp = S \cup \{\fp\}$.
Let $L$ denote a finite abelian CM extension of $F$ containing $H$ that is ramified over $F$ only at the places in $S_\fp$.  Write $\fg = \Gal(L/F)$ and $\Gamma = \Gal(L/H)$, so $\fg/\Gamma \cong G$.  Let $I$ denote the relative augmentation ideal associated to $\fg$ and $G$, i.e.\ the kernel of the canonical projection
\[ 
 \Aug_G^{\fg} \colon \Z[ \fg] \longtwoheadrightarrow \Z[G]. 
\]
Then $ \Theta_{S_\fp,T}^{L/F}   $ lies in $I$, since its image under $\Aug_G^{\fg}$ is 
\begin{equation} \label{e:euler}
 \Theta_{S_\fp,T}^{H/F}= \Theta_{S,T}^{H/F} (1 - \Frob(H/F, \fp)) = 0, \end{equation}
as $\fp$ splits completely in $H$.  Intuitively, if we view $ \Theta_{S_\fp,T}^{L/F}$ as a function on the ideals of $\Z[\fg]$, equation (\ref{e:euler}) states that this function ``has a zero" at the ideal $I$; the value of the ``derivative" of this function at $I$ is simply the image of $ \Theta_{S_\fp,T}^{L/F} $ in $I/I^2$.  Gross provided a conjectural algebraic interpretation of this derivative as follows.
Denote by \begin{equation} \label{e:recp}
 \rec_\fP \colon H_\fP^* \longrightarrow \Gamma 
 \end{equation} the composition of the inclusion $H_\fP^* \longhookrightarrow \A_H^*$ with the global Artin reciprocity map \[ \A_H^* \longtwoheadrightarrow \Gamma. \]
Throughout this article we adopt Serre's convention \cite{serre} for the reciprocity map. Therefore $\rec(\varpi^{-1})$ is a lifting to $G_{\fp}^{\ab}$ of the Frobenius element on the maximal unramified extension of $F_{\fp}$ if $\varpi \in F_{\fp}^*$ is a uniformizer. 

\begin{conjecture}[Gross, \cite{gross2}*{Conjecture 7.6}] \label{c:grosstower} Define
\begin{equation} \label{e:recg}
\rec_G(u_\fp) = \sum_{\sigma \in G} (\rec_\fP\sigma(u_\fp)-1)  \tilde{\sigma}^{-1} \in I/I^2, \end{equation}
where $\tilde{\sigma} \in \fg$ is any lift of $\sigma \in G$.  Then
\[ \rec_G(u_\fp) \equiv  \Theta_{S_\fp,T}^{L/F}  \] 
in $I/I^2$.
\end{conjecture}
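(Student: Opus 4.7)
The plan is to prove the $p$-part of Conjecture~\ref{c:grosstower} by recasting it as the vanishing of the Fitting ideal over $\Z_p[G]$ of a single Galois module $\nabla_\sL$, and then exhibiting a quotient of that module whose Fitting ideal vanishes by virtue of Galois representations attached to group-ring valued Hilbert modular forms. This is the natural derivative analogue of the Ribet-style argument that yielded Theorem~\ref{t:bs}.

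First, I would construct $\nabla_\sL$ as a Ritter--Weiss-type module for the tower $L/H/F$ whose local component at $\fP$ encodes the reciprocity map $\rec_\fP$, and whose Fitting ideal captures the difference $\rec_G(u_\fp) - \Theta^{L/F}_{S_\fp,T}$ in $I/I^2$, rather than merely in $I$ (which is the home of the Brumer--Stark conjecture). The crucial new ingredient is an integral version of the Greenberg--Stevens analytic $\sL$-invariant, which must be built into the local piece at $\fp$ so that the module detects first-order behavior at the trivial zero forced by (\ref{e:euler}). With such a construction in hand, the conjecture becomes equivalent to the algebraic statement $\Fitt_{\Z_p[G]}(\nabla_\sL) = 0$.

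Next, I would produce the required quotient of $\nabla_\sL$ by Ribet's method in the group-ring setting of \cite{dk}. One builds cuspidal Hilbert modular forms, with coefficients in $\Z_p[\fg]$, that are congruent to a suitable Eisenstein series manufactured from $\Theta^{L/F}_{S_\fp,T}$, and congruent to high order in the augmentation ideal $I$. Because $\fp$ splits completely in $H$, we sit at a trivial zero; the $U_\fp$-eigenvalue varies in a weight family whose derivative is precisely the analytic $\sL$-invariant, in the spirit of Greenberg--Stevens \cite{gs}. The Galois representation attached to such a cusp form contributes a cohomology class which carves out a canonical quotient of $\nabla_\sL$ with vanishing Fitting ideal; combining this with the algebraic reformulation yields $\Fitt_{\Z_p[G]}(\nabla_\sL) = 0$.

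The main obstacle is the construction of $\nabla_\sL$ itself: one must enlarge the Ritter--Weiss formalism to incorporate an integral $\sL$-invariant in a manner that is compatible with the $\Z_p[\fg]$-action, is functorial enough to accommodate varying levels, and matches the shape of the local Galois representations produced by Ribet's construction. A secondary difficulty is controlling the Eisenstein congruences to enough precision in powers of $I$ for first-order information to survive, and verifying that the associated Galois representations are ordinary at $\fp$ of the expected Greenberg--Stevens form. Once these analytic and representation-theoretic pieces are assembled, the Fitting ideal calculation proceeds in close parallel with the proof of the Brumer--Stark conjecture in \cite{dk}.
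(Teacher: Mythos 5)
Your proposal correctly identifies the paper's architecture: define a Ritter--Weiss-type module $\nabla_{\!\sL}$ that incorporates an integral $\sL$-invariant, reinterpret the conjecture as the vanishing of its Fitting ideal, and prove this vanishing by Ribet's method with group-ring valued Hilbert modular forms and their associated ordinary Galois representations. This is the same strategy the paper employs. One small technical correction: the Fitting ideal in question is taken over the auxiliary ring $R_{\sL} = R[\sL]/(\Theta_H\sL - \Theta_L,\ \sL I,\ \sL^2,\ I^2)$ where $R = \Z_p[\fg]^-$, not over $\Z_p[G]$; the key point is that $R/I^2 \hookrightarrow R_{\sL}$, so killing $\Fitt_{R_{\sL}}(\nabla_{\!\sL})$ recovers the congruence in $I/I^2$.

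However, there is a genuine gap in your plan: it breaks down entirely when $\fp$ is the \emph{only} prime of $F$ lying above $p$. In that case, for $k \equiv 1 \pmod{(p-1)p^m}$, the Eisenstein series $E_k(1,\chi)$ and $E_k(\chi,1)$ become congruent modulo $p^m$, the Hida families associated to the two $\fp$-stabilizations intersect in weight 1, and one cannot construct a group-ring valued family over a module carrying an $R_{\sL}$-action that detects the analytic $\sL$-invariant — more concretely, the term coming from $W_k(\overline{\bpsi},1_p)$ in the cusp form construction survives and prevents the crucial estimate $\Ann_{A_{\sL}^\#}(\prod_{\fq \mid \fP'}\epsilon_\fq) \subset (p^m)$, since $\fP' = 1$. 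The paper handles this case by an entirely different argument (Section~\ref{s:onep}): one deforms up the cyclotomic $\Z_p$-tower (which is ramified only at $\fp$, so $\Sigma_\fp$ need not change), applies the known rank-one rational Gross--Stark conjecture \cite{ddp}, \cite{v} together with the Brumer--Baker nonvanishing theorem for the derivative of the $p$-adic $L$-function, and uses the exact Fitting ideal formula $\Fitt_{R_m}(\nabla_{\Sigma_\fp}^{\Sigma'}(LF_m)_{R_m}) = (\Theta_{\Sigma_\fp,\Sigma'}^{LF_m/F})$ from \cite{dk}. Without some such separate argument, your proof would be incomplete for an infinite family of situations (e.g., $F$ real quadratic with $p$ inert, which is in fact the simplest case of interest for the Hilbert 12th problem application).
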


Let $p$ denote the rational prime below $\fp$, and assume that $p \neq 2$.
Our first main result is  the $p$-part of Gross's conjecture.

\begin{theorem} \label{t:maingross} Let $p$ be an odd prime and suppose that $\fp$ lies above $p$. 
Gross's Conjecture~\ref{c:grosstower} holds in $(I/I^2) \otimes \Z_p$.
 \end{theorem}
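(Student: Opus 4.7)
The plan is to adapt the Ribet-style strategy used in our previous work on Brumer--Stark \cite{dk}, enriched by an integral Greenberg--Stevens $\sL$-invariant \cite{gs}. The backbone of the argument is to package the desired congruence $\rec_G(u_\fp) \equiv \Theta_{S_\fp,T}^{L/F}$ modulo $(I/I^2)\otimes\Z_p$ as the vanishing of a Fitting ideal, and then to kill that Fitting ideal by producing a suitable quotient from Galois representations attached to cuspidal Hilbert modular forms.

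Concretely, the first step is to define a Ritter--Weiss type Galois module $\nabla_{\!\sL}$ over $\Z_p[G]$. This module is built by combining the transpose module that appears in the classical Brumer--Stark setup with an extra local contribution at $\fp$: roughly, one incorporates the $T$-smoothed unit group $U_{\fp,T}^-\otimes\Z_p$ together with the semilocal units at primes above $p$, glued along $\rec_\fP$ and twisted by an integral avatar of the analytic $\sL$-invariant attached to $L/H$. The construction is arranged so that $\mathrm{Fitt}_{\Z_p[G]}(\nabla_{\!\sL})$ precisely computes the difference between the two sides of Conjecture~\ref{c:grosstower} in $(I/I^2)\otimes\Z_p$. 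The theorem thus reduces to showing that this Fitting ideal vanishes.

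For the vanishing, use Hida theory to build a $\Lambda$-adic family of cuspidal Hilbert modular eigenforms over $F$, valued in a group ring over a suitable Iwasawa algebra, that is congruent to an Eisenstein form whose constant term encodes the Stickelberger element $\Theta_{S_\fp,T}^{L/F}$. The derivative of the $U_\fp$-eigenvalue along this family will recover the analytic $\sL$-invariant; this is the group-ring analogue of the Greenberg--Stevens mechanism and is what transfers the first-order information in $I/I^2$ into a concrete Galois-theoretic object. Attaching the Galois representations of the cuspidal specializations, and running a lattice argument in the style of Ribet \cite{ribet} and Wiles \cite{wiles}, one extracts cohomology classes that assemble into a surjection from $\nabla_{\!\sL}$ onto a module whose Fitting ideal is manifestly zero (the cuspidality providing exactly the local freedom needed at $\fp$). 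Since $\mathrm{Fitt}(M)\subseteq\mathrm{Fitt}(N)$ whenever $M\twoheadrightarrow N$, the vanishing of the target forces $\mathrm{Fitt}_{\Z_p[G]}(\nabla_{\!\sL})=0$, which is the theorem.

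The principal obstacle is the integrality of the $\sL$-invariant. Conjecture~\ref{c:grosstower} asserts a congruence in $(I/I^2)\otimes\Z_p$, not merely modulo a fixed power of $p$, so it is essential that the Greenberg--Stevens derivative be realized as an \emph{integral} datum inside $\nabla_{\!\sL}$ and that it match, on the nose, the class extracted from the Hida family. Producing sufficiently sharp congruences between cuspidal and Eisenstein group-ring valued Hilbert modular forms, and keeping the local conditions at $\fp$ intact through the Ritter--Weiss bookkeeping, will be the most delicate part of the work; it is also the step where the restriction to odd $p$ enters, since the usual $2$-torsion phenomena of \cite{dk} interact badly with the semilocal structure at $\fp$.
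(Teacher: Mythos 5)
Your proposal correctly identifies the paper's broad strategy (Ritter--Weiss module enriched by an integral $\sL$-invariant, Fitting ideal reduction, surjection from $\nabla_{\!\sL}$ produced via Ribet/Wiles lattice construction with group-ring valued Hida families), but it has two genuine gaps that the paper has to work hard to fill.

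First, you write that $\nabla_{\!\sL}$ is a module over $\Z_p[G]$ whose Fitting ideal ``precisely computes the difference'' between the two sides of the conjecture, but you give no mechanism for why vanishing of a Fitting ideal in a group ring would yield an equality in $(I/I^2)\otimes\Z_p$. The paper's actual solution is more delicate: it introduces a new base ring
\[
R_\sL = R[\sL]/(\Theta_H \sL - \Theta_L,\ \sL I,\ \sL^2,\ I^2),
\]
i.e.\ the universal $R$-algebra in which the ratio of $\Theta_L$ to $\Theta_H$ can be formally adjoined, defines $\nabla_{\!\sL}$ as an $R_\sL$-module, and then proves that $\Fitt_{R_\sL}(\nabla_{\!\sL}) = (\rec_G(u_\fp^{\Sigma,\Sigma'}) - \Theta_L)$. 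Crucially, one must also prove that the structure map $R/I^2 \to R_\sL$ is \emph{injective}; without that, vanishing of the Fitting ideal says nothing about the image in $I/I^2$. That injectivity (Theorem~\ref{t:ws}) is not formal --- it rests on an auxiliary order-of-vanishing result for $\Theta_L$ in the relative augmentation filtration (Theorem~\ref{t:gorder}), itself deduced from the strong Brumer--Stark theorem of \cite{dk}. Your proposal does not identify this as a step at all, and without it the reduction to a Fitting-ideal statement is incomplete. Relatedly, the Fitting ideal of the target module built from the Galois representation is not ``manifestly zero'' in one stroke: one must show it is contained in $(p^m)$ for every $m$, and the key input is that the annihilator of the Hecke algebra element $\prod_{\fq \mid \fP'}\epsilon_\fq$ in $A_\sL^\#$ is contained in $(p^m)$, which in turn uses the structure of the constant terms of the group-ring Eisenstein series.

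Second, and more seriously, your strategy breaks down when $\fp$ is the \emph{only} prime of $F$ above $p$. In that case the product $\prod_{\fq\mid\fP'}\epsilon_\fq$ is empty and the annihilator bound used to control the Fitting ideal of the target module fails, so the modular-forms argument does not close. The paper handles this case by an entirely different route (Section~\ref{s:onep}): deform vertically up the cyclotomic tower (possible precisely because when $\fp$ is the only prime above $p$ the cyclotomic tower introduces no new ramification outside $\Sigma_\fp$), import the rank-one rational Gross--Stark conjecture as proved by \cite{ddp} and \cite{v}, invoke the Brumer--Baker transcendence theorem to know the $p$-adic $L$-derivative is a non-zerodivisor, and then use the already-known Fitting ideal formula $\Fitt_{R_m}(\nabla_{\Sigma_\fp}^{\Sigma'}(LF_m)_{R_m}) = (\Theta_{\Sigma_\fp,\Sigma'}^{LF_m})$ together with an integral-formula computation of the derivative. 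Your proposal does not mention this case, and it is not a corner case that can be absorbed into the generic argument; a proof along the lines you sketch would simply not terminate here.
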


\subsection{An Exact Formula for Brumer--Stark units}

Building off the $p$-adic Gross--Stark conjecture and applying the methods introduced by Darmon in \cite{darmon}, the 
first author proposed an exact formula for Brumer--Stark units in his Ph.D.\ thesis \cite{thesis}, published  jointly with Darmon in \cite{dd}.
The setting for this conjecture was that of a real quadratic ground field $F$, a prime $\fp$ of the form  $\fp = p \cO_F$ for a rational prime $p$, and  a ring class field extension $H/F$.
Afterward, a sequence of works generalized and refined this conjecture to the case of arbitrary totally real fields $F$ and finite primes $\fp$ that split completely in a CM abelian extension $H$ (\cite{chapthesis}, \cite{chapcomp}, \cite{das}, \cite{ds}).  Further details on this history are given in \S\ref{s:hilbert}.  See also the analogous works \cite{rs}, \cite{cd} in the archimedean context.  

For expositional purposes in this introduction, let us describe the shape of these conjectures in a special case mentioned above: we assume that the rational prime $p$ is inert in $F$ and that $\fp = p\cO_F$.  Then for each integral ideal $\fa$ of $F$ relatively prime to the primes in $S$ and $T$,
 one may define a $\Z$-valued measure $\nu_{\fa, S, T}$ on $\cO_\fp^*$ in terms of special values of Shintani zeta-functions (or, alternatively, in terms of periods of Eisenstein series).  The following is a special case of \cite{das}*{Conjecture 3.21} or \cite{ds}*{Conjecture 6.1}.
 
\begin{conjecture}\label{c:thesis} Let $\sigma = \Frob(H/F, \fa)$.  We have the following exact analytic formula for the associated conjugate of the Brumer--Stark unit $u_{\fp}$:
\begin{equation} \label{e:thesis}
\sigma(u_\fp) = p^{\zeta_{S,T}(\sigma, 0)} \mint_{\cO_\fp^*} x \ d\nu_{\fa, S, T}(x) \quad \text{ in } \quad F_\fp^*. \end{equation}
\end{conjecture}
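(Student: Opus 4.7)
The plan is to prove the conjecture up to a bounded root of unity, for each odd prime $p$ satisfying $(*)$, by reducing it via the implication established in \cite{das} to the $p$-part of Gross's tower-of-fields conjecture, namely Theorem~\ref{t:maingross}. Concretely, once Theorem~\ref{t:maingross} is known, one invokes the hypothesis $(*)$ to upgrade the derivative identity in $(I/I^2)\otimes\Z_p$ into an exact multiplicative integral formula in $F_\fp^*$ matching the right-hand side of \eqref{e:thesis}. Since this reduction is carried out in \cite{das}, the bulk of the work lies in proving Theorem~\ref{t:maingross}.

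For Theorem~\ref{t:maingross} I would adapt Ribet's method in the setting of group ring valued Hilbert modular forms, as in \cite{dk}. The first step is to construct a Hida family of cuspidal Hilbert eigenforms over $F$ whose associated Stickelberger-type expressions interpolate $\Theta^{H/F}_{S,T}$ at the classical weight. Because $\fp$ splits completely in $H$, the relevant $L$-value has a trivial zero at $\fp$; by the Greenberg--Stevens philosophy the first-order variation of this family along the cyclotomic direction is governed by an analytic $\sL$-invariant, and identifying this $\sL$-invariant with the Galois-theoretic quantity $\rec_\fP$ applied to the putative Brumer--Stark unit $u_\fp$ is exactly what drives the identity in Conjecture~\ref{c:grosstower}.

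The key algebraic tool, foreshadowed in the abstract, is the Galois module $\nabla_{\!\sL}$ that integrally incorporates this $\sL$-invariant into the Ritter--Weiss framework. One first shows that Gross's conjecture is equivalent to the vanishing of the Fitting ideal of $\nabla_{\!\sL}$ after localization at a chosen height-one prime of $\Z_p[G]$. Then, using the group-ring-valued cuspidal family constructed above and its attached Galois representation, one produces an explicit quotient $\overline{\nabla}_{\!\sL}$ together with a presentation whose Fitting ideal is provably zero; a Nakayama-type argument together with the congruence between the cusp form and a suitable Eisenstein series propagates this vanishing back to $\nabla_{\!\sL}$ itself.

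The main obstacle, in my view, is the construction of $\nabla_{\!\sL}$: one must find a finitely presented integral $\Z_p[G]$-module that simultaneously sees the relative augmentation ideal $I/I^2$, the local reciprocity map $\rec_\fP$, and the global class-group data encoded in the Ritter--Weiss module, and which fits into an exact sequence compatible with the Galois cohomology of a Hida-family deformation. A secondary difficulty is matching the analytic $\sL$-invariant coming from the modular-form side to $\rec_\fP(u_\fp)$ exactly on the nose, rather than up to an ambiguous unit; this will require careful ramification control at $\fp$ for the auxiliary cuspidal form used in the congruence step, and is also the point at which the hypothesis $(*)$ is likely to enter.
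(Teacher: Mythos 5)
Your high-level plan matches the paper's: reduce to the $p$-part of Gross's tower-of-fields conjecture (Theorem~\ref{t:maingross}) via Theorem~\ref{t:maindas} from \cite{das} under hypothesis $(*)$, then prove that statement with Ribet's method for group ring valued Hilbert modular forms, encoding the result as the vanishing of the Fitting ideal of a new Ritter--Weiss-type module $\nabla_{\!\sL}$ built around the $\sL$-invariant. That is precisely the structure of the paper, and you correctly flag the construction of $\nabla_{\!\sL}$ as the central algebraic novelty.

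Two points where your description diverges from what the paper actually does. First, you say the first-order variation is taken ``along the cyclotomic direction''; the paper emphasizes (in its comparison with Darmon--Pozzi--Vonk) that the relevant deformation is the \emph{horizontal} or \emph{tame} one, over the group ring of $\fg = \Gal(L/F)$ — the group-ring variable $I/I^2$ is the tangent direction in which $\Theta_{S_\fp,T}^{L/F}$ has its derivative. The cyclotomic tower is used only in the special Case 2(c) where $\fp$ is the unique prime of $F$ above $p$; there the deformation does go up the cyclotomic $\Z_p$-extension and the argument routes through the rational Gross--Stark conjecture of \cite{ddp} and \cite{v} together with Brumer--Baker nonvanishing, which is a genuinely different mechanism from the main case. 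Second, the Fitting-ideal vanishing is not obtained by localizing at a height-one prime of $\Z_p[G]$: the paper works over the quotient ring $R_\sL = R[\sL]/(\Theta_H\sL - \Theta_L, \sL I, \sL^2, I^2)$, proves the crucial injectivity of $R/I^2 \hookrightarrow R_\sL$ (Theorem~\ref{t:ws}), and, after decomposing over odd tame characters $\chi$ of $G'$, shows directly that $\Fitt_{R_{\sL}}(\nabla_{\!\sL})=0$ by producing for each $m$ a surjection $\nabla_{\!\sL,A}\twoheadrightarrow\tilde B(\bpsi^{-1})$ with $\Fitt_{A_\sL}(\tilde B(\bpsi^{-1}))\subset(p^m)$. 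This is a bound on the Fitting ideal of a quotient, not a Nakayama argument. You would also need to notice that the Hecke algebra $\T^\dagger$ must be augmented by the operators $U_\fq$ for the \emph{other} primes $\fq\mid p$, whose partial ``smallness'' in $W$ (captured by $\Ann_{A_\sL^\#}(\prod_{\fq\mid\fP'}\epsilon_\fq)\subset(p^m)$) is what makes the final Fitting ideal estimate work, and that this fails precisely when there is only one prime above $p$, forcing the separate Case 2(c) argument.
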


Here we view $\sigma(u_\fp)$ as an element of $F_\fp^*$ via $H \subset H_\fP \cong F_\fp$, where $\fP$ is the prime above $\fp$ appearing in Conjecture~\ref{c:bs}.  Conjecture~\ref{c:thesis} implies not only the algebraicity of the $p$-adic integrals in (\ref{e:thesis}), but a ``Shimura reciprocity law" in which the geometric action of a generalized class group on equivalence classes of ideals $\fa$ is identified with the Galois action of $\Gal(H/F)$ on the units $u_\fp$ (see \cite{das}*{Conjecture 3.21}).  In this way, Conjecture~\ref{c:thesis} can be viewed as a part of a theory of ``real multiplication" in parallel with the classical theory of complex multiplication that is governed by Shimura's celebrated reciprocity law.

Our second main result, which applies in the general case (i.e.~without assuming $\fp$ is inert over $\Q$), is the following.

\begin{theorem} \label{t:thesis}  Let $p$ denote the rational prime below $\fp$.
Suppose that
\begin{quote} $(*)$ \ \ \ \  $p$ is odd and $H \cap F(\mu_{p^\infty}) \subset H^+$, the maximal totally real subfield of $H$.
\end{quote}
Then equation (\ref{e:thesis}), or more precisely its generalization (\ref{e:genform}) to the general setting,  holds up to multiplication by a root of unity in $F_\fp^*$.
\end{theorem}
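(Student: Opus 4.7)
The plan is to deduce Theorem \ref{t:thesis} from Theorem \ref{t:maingross} via the first author's earlier work \cite{das}. That paper establishes, under the assumption $(*)$, that the $p$-part of Gross's Conjecture \ref{c:grosstower} — applied across a suitable family of abelian CM extensions $L/H$ unramified outside $S_\fp$ — implies the exact $p$-adic analytic formula (\ref{e:genform}) up to multiplication by a root of unity in $F_\fp^*$. Theorem \ref{t:maingross} supplies this input for every $L$ required, and Theorem \ref{t:bs} from \cite{dk} (in its $p$-integral form, valid since $p$ is odd by $(*)$) provides the Brumer--Stark unit $u_\fp$ itself.

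The broad mechanism is the following. Fix $\sigma = \Frob(H/F, \fa)$ and let $L$ range over a cofinal family in the tower of ray class fields of $H$ of conductor dividing $\fp^n$, with associated Galois groups $\fg = \Gal(L/F)$, $\Gamma = \Gal(L/H)$, and relative augmentation ideal $I$. The Stickelberger elements $\Theta_{S_\fp,T}^{L/F}$, reduced modulo $I^2$ and assembled as $n \to \infty$, package into a measure on $\cO_\fp^*$ whose Shintani-type moments recover the $p$-adic integral on the right-hand side of (\ref{e:thesis}). Gross's formula identifies the very same inverse limit of data with the image of $\sigma(u_\fp)$ under $\rec_\fP$ composed with the projection to $\varprojlim \Gamma$. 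Since $\rec_\fP$ has finite kernel on $U_{\fp,T}^-$, this determines $\sigma(u_\fp) \in F_\fp^*$ up to a root of unity, giving the desired conclusion.

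The role of assumption $(*)$ is to keep the minus-part projection underlying the argument of \cite{das} nondegenerate: the hypothesis $H \cap F(\mu_{p^\infty}) \subset H^+$ guarantees that the characters of the tower through which one varies $L$ are not obstructed by cyclotomic $p$-power characters already visible in $H$, so that the $p$-adic measure constructed from the Stickelberger elements detects the Brumer--Stark unit on the nose (modulo torsion). The main obstacle is therefore not the deduction sketched above but Theorem \ref{t:maingross} itself, whose proof — via Ribet's method applied to group-ring valued Hilbert modular forms and a new Ritter--Weiss module $\nabla_{\!\sL}$ incorporating the Greenberg--Stevens $\sL$-invariant — occupies the remainder of the paper.
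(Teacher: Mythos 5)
Your deduction is exactly the paper's: it records Theorem~\ref{t:maindas} (quoting \cite{das}*{Theorem 5.18}) that under $(*)$ the $p$-part of Conjecture~\ref{c:grosstower} implies Conjecture~\ref{c:shin}, i.e.\ formula (\ref{e:genform}), up to a root of unity in $F_\fp^*$, and then concludes by invoking Theorem~\ref{t:maingross}. Your additional paragraph sketching the internal mechanism of \cite{das} (varying $L$ up the $\fp$-power ray class tower and packaging the images of $\Theta_{S_\fp,T}^{L/F}$ modulo $I^2$ into a measure detecting $\rec_\fP(\sigma(u_\fp))$) is background on that reference rather than a step the present paper reproduces; the proof here consists entirely of citing Theorem~\ref{t:maindas} and applying Theorem~\ref{t:maingross}.
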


Since $\fp$ splits completely in $H$ while $p$ is totally ramified in $\Q(\mu_{p^n})$ for all $n$, condition $(*)$ can fail for an odd prime $p$ only if $p$ is ramified in $F$.  The condition therefore eliminates only finitely many $p$, a subset of those dividing the discriminant of $F$.

An analogue of Conjecture~\ref{c:thesis} where $F$ is replaced by the function field of a smooth projective algebraic curve over $\F_q$---a far simpler setting because of the explicit class field theory afforded by the theory of Drinfeld modules---was proven by the first author and Miller in \cite{dm}.

We conclude this discussion by bringing attention to the beautiful concurrent work of Darmon, Pozzi, and Vonk, who prove a version of Conjecture~\ref{c:thesis} in the setting that $F$ is a real quadratic field and the rational prime $p$ is inert in $F$ \cite{dpv}.  While their work also employs the deformations of $p$-adic modular forms and their associated Galois representations, they work with deformations in ``vertical" $p$-adic towers as opposed to the ``horizontal" tame deformations applied in this paper.

\subsection{Explicit Class Field Theory} \label{s:iecft}

A celebrated theorem of Kronecker and Weber states that the maximal abelian extension of the field $\Q$ of rational numbers is obtained by adjoining all roots of unity.

\begin{theorem}[Kronecker--Weber] \label{t:kw} 
We have $\Q^{\ab} = \bigcup_{n \ge 1} \Q(e^{2 \pi i/n}).$
\end{theorem}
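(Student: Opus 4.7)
The plan is to establish both inclusions. The inclusion $\bigcup_{n\geq 1}\Q(\mu_n)\subseteq \Q^{\ab}$ is immediate, since each $\Q(\mu_n)/\Q$ is Galois with abelian Galois group $(\Z/n\Z)^*$.

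For the reverse inclusion, I would reduce to the local Kronecker--Weber theorem: every finite abelian extension of $\Q_p$ is contained in $\Q_p(\mu_m)$ for some $m$. Granting this, let $K/\Q$ be a finite abelian extension with Galois group $G$. For each rational prime $p$, choose a prime $\mathfrak{P}$ of $K$ above $p$; the decomposition group $D_{\mathfrak{P}}\subseteq G$ is abelian, so $K_{\mathfrak{P}}/\Q_p$ is abelian, and by local KW one has $K_{\mathfrak{P}}\subseteq \Q_p(\mu_{p^{a_p}\cdot q_p})$ for some $a_p\geq 0$ and $q_p$ prime to $p$, with $a_p=0$ whenever $p$ is unramified in $K$. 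Set $n=\prod_p p^{a_p}$, a finite product over ramified primes.

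The key intermediate step is an inertia calculation. The inertia subgroup at $p$ in $\Gal(\Q(\mu_n)/\Q)$ is canonically $(\Z/p^{a_p}\Z)^*$, and by local KW combined with the choice of $n$, the inertia of $\Gal(K/\Q)$ at $p$ lies in the same subgroup. Consequently, the compositum $L=K\cdot\Q(\mu_n)$ satisfies: $L/\Q(\mu_n)$ is unramified at every finite prime, and for $n\geq 3$ also at every infinite place (since $\Q(\mu_n)$ is totally imaginary). To conclude $L=\Q(\mu_n)$---hence $K\subseteq\Q(\mu_n)$---I would combine Minkowski's theorem (no nontrivial everywhere-unramified extension of $\Q$ exists) with a structural analysis of the abelian group $\Gal(L/\Q)$: each inertia subgroup $I_p$ in $\Gal(L/\Q)$ injects into $\Gal(\Q(\mu_n)/\Q)$ by unramifiedness of $L/\Q(\mu_n)$, and the $I_p$ together with complex conjugation must generate $\Gal(L/\Q)$ (else their fixed field would be a nontrivial everywhere-unramified extension of $\Q$). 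A counting argument using pairwise coprime orders $\phi(p^{a_p})$ then forces $[L:\Q]=\phi(n)$, giving $L=\Q(\mu_n)$.

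The main obstacle is the local Kronecker--Weber theorem itself, especially the wild case at $p$. This is most cleanly handled via Lubin--Tate formal group theory, which produces an explicit description $\Q_p^{\ab}=\Q_p^{\ur}\cdot\Q_p(\mu_{p^\infty})$ by adjoining torsion points of the formal multiplicative group (and its twists) attached to a uniformizer of $\Q_p$; classical proofs instead reduce to the case of cyclic $p$-extensions and invoke a delicate analysis of higher ramification filtrations on $\Gal(\Q_p^{\ab}/\Q_p)$, together with the Hasse--Arf theorem.
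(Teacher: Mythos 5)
The paper does not prove the Kronecker--Weber theorem; it is stated as a classical fact to set the stage for Hilbert's 12th problem, with no proof given. So there is no argument in the paper to compare against. Your proposal follows one of the two standard routes (reduction to local Kronecker--Weber, then a global ramification/Minkowski argument), and the overall structure is sound.

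One detail in the closing counting argument is off, though not fatally. You invoke ``pairwise coprime orders $\phi(p^{a_p})$,'' but these integers are not pairwise coprime in general --- for instance $\phi(8)=\phi(5)=4$. The counting does not need coprimality: since $\Gal(L/\Q)$ is abelian and Minkowski forces it to be generated by the finite inertia subgroups $I_p$, there is a surjection $\prod_p I_p \twoheadrightarrow \Gal(L/\Q)$, and local Kronecker--Weber bounds $|I_p|\le\phi(p^{a_p})$, so $[L:\Q]\le \prod_p\phi(p^{a_p})=\phi(n)$. Combined with $L\supseteq\Q(\mu_n)$, which gives $[L:\Q]\ge\phi(n)$, you get $L=\Q(\mu_n)$. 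Relatedly, bringing complex conjugation into the generating set is unnecessary --- Minkowski's discriminant bound already shows that an extension of $\Q$ unramified at all finite primes is trivial, so the finite inertia groups alone generate --- and including it would only complicate the count. With those adjustments the sketch is a correct outline of the classical proof, modulo the local theorem you correctly isolate as the real work.
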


The roots of unity can be viewed analytically as the special values of the analytic function $e^{2 \pi i x}$ at rational arguments, or algebraically as the set of torsion points of the group scheme $\G_m$.  The theory of complex multiplication provides a similar description of $F^{\ab}$ when $F$ is a quadratic imaginary field.  

\begin{theorem} \label{t:cm}  Let $F$ be a quadratic imaginary field.  Let $E$ denote an elliptic curve with complex multiplication by the ring of integers $\cO_F$ and let $w$ denote the Weber function.  We have \[ F^{\ab} = \bigcup_{n \ge 1} F(j(E), w(E[n])).  \]
\end{theorem}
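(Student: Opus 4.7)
The plan is to prove Theorem~\ref{t:cm} in two stages: first identify $F(j(E))$ with the Hilbert class field $H$ of $F$, then show that adjoining $w(E[n])$ produces the ray class field of conductor $n$, and finally invoke the fact that $F^{\ab} = \bigcup_n H_n$ from class field theory.

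For the first stage, I would fix an embedding $F \hookrightarrow \C$ and realize $E(\C) = \C/\fa$ for a fractional $\cO_F$-ideal $\fa$. The key observations are: (i) the $j$-invariants of the finitely many homothety classes $\{\C/\fb\}$ indexed by $\Cl(\cO_F)$ form a complete set of Galois conjugates of $j(E)$ over $F$, and (ii) the Galois action factors through $\Cl(\cO_F)$ via the rule $\sigma_\fc(j(\C/\fa)) = j(\C/(\fc^{-1}\fa))$. Establishing (i) requires showing $j(E)$ is algebraic (via the integrality of singular moduli) and that the class group acts transitively on these $j$-values. For (ii), I would pass to the adelic formulation and use the Serre--Tate good reduction criterion together with the fact that $E$ has good reduction outside the conductor. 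Unramifiedness of $F(j(E))/F$ at each finite prime $\fq$ of $F$ follows from the N\'eron--Ogg--Shafarevich criterion applied to the CM elliptic curve $E$. Combined with $[F(j(E)):F] = h_F$, class field theory identifies $F(j(E))$ with $H$.

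For the second stage, I would analyze $H(w(E[n]))/H$. The Galois group $\Gal(\overline{F}/H)$ acts on $E[n] \cong \cO_F/n\cO_F$ through a character $\rho \colon \Gal(\overline{F}/H) \to (\cO_F/n\cO_F)^*$; the Weber function $w$ is precisely designed to quotient out by $\cO_F^* = \mathrm{Aut}(E)$, so that $H(w(E[n]))$ corresponds via $\rho$ to the subgroup $(\cO_F/n\cO_F)^*/\mathrm{image}(\mu(\cO_F))$. The ramification is concentrated at primes dividing $n$, again by N\'eron--Ogg--Shafarevich. The remaining task is to match this extension with the ray class field $H_n$ of conductor $n$ over $F$: one computes that the reciprocity map sends an id\`ele $s \in \A_F^*$ to the multiplication-by-$s_n^{-1}$ action on $E[n]$ (where $s_n$ denotes the finite $n$-component of $s$ in $(\cO_F/n\cO_F)^*$), and this precisely corresponds under global class field theory to the ray class group modulo $n$. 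This is the content of Shimura's reciprocity law.

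The main technical obstacle is Shimura's reciprocity law itself, i.e.\ the precise identification of the geometric Galois action on torsion points with the id\`elic reciprocity map. The cleanest route is via Serre--Tate: for each prime $\fq$ of $H$ at which $E$ has good reduction, compare the action of $\mathrm{Frob}_\fq$ on $E[n]$ with multiplication by a generator of $\fq$, using that $E$ mod $\fq$ is isogenous to its Frobenius conjugate via an isogeny of degree $\N\fq$. Assembling these local statements id\`elically and verifying compatibility at the ramified primes (using the fact that $E$ acquires good reduction over an abelian extension because its Galois representation is abelian) yields the reciprocity law, from which the theorem follows by combining the two stages with $F^{\ab} = \bigcup_n H_n$.
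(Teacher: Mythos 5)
The paper does not prove Theorem~\ref{t:cm}. It is stated as classical background in \S\ref{s:iecft}, and the text immediately following the statement says: ``See the elegant exposition~\cite{ghate} for the definition of the Weber function $w$ and a proof of Theorem~\ref{t:cm}.'' There is therefore no proof in the paper against which to compare your write-up.

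That said, your outline is the standard two-stage route to the Main Theorem of Complex Multiplication: first $F(j(E)) = H$ (the Hilbert class field), then $H(w(E[n])) = H_{n\cO_F}$ (the ray class field of conductor $n\cO_F$), and finally $F^{\ab} = \bigcup_n H_{n\cO_F}$. The Weber function is inserted precisely to quotient by $\mathrm{Aut}(E) = \cO_F^*$ so that the torsion-point extension matches the ray class field rather than overshooting it. Two remarks. First, your invocation of N\'eron--Ogg--Shafarevich to conclude that $F(j(E))/F$ is unramified is not quite a one-step deduction: NOS controls ramification of the $\ell$-adic representation of a fixed curve over a fixed base, whereas unramifiedness of the field of moduli requires relating the Frobenius action on the finite Galois-stable set of CM $j$-invariants to the class group action, which is usually done via Deuring's reduction theory (good reduction of a CM curve and the fact that the reduction of the isogeny $\C/\fa \to \C/\fq^{-1}\fa$ at a prime above $\fq$ is the $\N\fq$-power Frobenius). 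Second, you correctly identify Shimura's reciprocity law as ``the main technical obstacle,'' but the sketch essentially defers the entire content of the theorem to that law; absent a proof of it, what remains is the routine group-theoretic and class-field-theoretic bookkeeping. That is an accurate self-assessment rather than a gap per se, but it means the proposal is a roadmap rather than a proof. For a complete argument one should follow a reference such as the one the paper cites.
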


See the elegant exposition~\cite{ghate} for the definition of the Weber function $w$ and a proof of Theorem~\ref{t:cm}.  From the analytic perspective, the modular functions $j$ and $w$ take on the role of the exponential function $e^{2 \pi i x}$ in the case $F=\Q$; from the algebraic perspective,  the abelian variety $E$ takes on the role of the group scheme $\G_m$.

As we now describe, Theorem~\ref{t:thesis} can be viewed as an explicit class field theory for totally real fields $F$ in the spirit of
  of Theorems~\ref{t:kw} and~\ref{t:cm}.  Our approach here is inspired by Stark's discussion of the application of his conjectures to Hilbert's 12th problem \cite{stark3}.
For each nonzero ideal $\fn \subset \cO_F$, pick a prime ideal $\fp(\fn) \subset \cO_F$ whose image in the narrow ray class group of $F$ of conductor $\fn$ is trivial.  Choose $\fp(\fn)$ such that the rational prime $p$ below it satisfies $(*)$.
Let $u_{\fp(\fn)}$ denote the Brumer--Stark unit for the narrow ray class field $F(\fn)$ of conductor $\fn$.
Let 
\[ S_\fn = \left\{ \sigma(u_{\fp(\fn)}) \colon \sigma \in \Gal(F(\fn)/F) \right\}. \]
Finally, let $\{\alpha_1, \dotsc, \alpha_{n-1}\}$ denote any elements of $F^*$ whose signs in \[ \{\pm 1\}^{n}/ (-1, \dotsc, -1) \] under the real embeddings of $F$ form a basis for this $\Z/2\Z$-vector space. In \S\ref{s:hilbert}, we prove the following.

\begin{theorem}  \label{t:h12}
The maximal abelian extension of $F$ is generated by \[ \sqrt{\alpha_1}, \dotsc, \sqrt{\alpha_{n-1}} \]together with the elements of $S_\fn$ as $\fn$ ranges over all nonzero  ideals $\fn \subset \cO_F$:
\[ F^{\ab}  = {\dot{\bigcup}}_{\fn} \ F(S_{\fn}) \ \ \dot\cup \ \ F(\sqrt{\alpha_1}, \dotsc, \sqrt{\alpha_{n-1}}), \]
where $\dot\cup$ denotes compositum of fields.
\end{theorem}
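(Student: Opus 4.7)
The strategy is to decompose $F^{\ab}$ as the compositum of a ``CM part,'' which will be generated by the Galois orbits of Brumer--Stark units, and a ``sign part'' equal to $F^{\mathrm{gen}} := F(\sqrt{\alpha_1}, \dotsc, \sqrt{\alpha_{n-1}})$. Since each $S_\fn$ lies in a finite abelian extension of $F$ and each $\sqrt{\alpha_i}$ generates a quadratic (hence abelian) extension, the inclusion $\supseteq$ in the claimed identity is immediate, so all the content lies in the reverse inclusion.

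By class field theory, $F^{\ab}$ is the compositum of the narrow ray class fields $F(\fn)$ as $\fn$ ranges over the nonzero integral ideals of $\cO_F$ (with all infinite places in the modulus). Let $c_1, \dotsc, c_n \in \Gal(F^{\ab}/F)$ denote the complex conjugations at the $n$ real places of $F$; by weak approximation for signs on $F^*$ applied to the fields $F(\sqrt{\alpha})$ for suitable $\alpha$, the $c_i$ are $\mathbf{F}_2$-linearly independent. Let $F^{\mathrm{CM}}$ denote the compositum of all CM abelian extensions of $F$, equivalently the fixed field of the subgroup $\langle c_i c_j^{-1}\rangle \subset \Gal(F^{\ab}/F)$, so $[F^{\ab} : F^{\mathrm{CM}}] = 2^{n-1}$. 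I then claim $F^{\ab} = F^{\mathrm{CM}} \cdot F^{\mathrm{gen}}$; since $[F^{\mathrm{gen}}:F] = 2^{n-1}$, this reduces to the assertion $F^{\mathrm{CM}} \cap F^{\mathrm{gen}} = F$. Encoding the restrictions $\bar c_i := c_i|_{F^{\mathrm{gen}}}$ in the $n \times (n-1)$ matrix $M$ over $\mathbf{F}_2$ with entries $M_{ij} = (1 - \sgn_i(\alpha_j))/2$, the basis hypothesis on the $\alpha_j$---that their sign vectors together with $\sgn(-1) = (1,\dotsc,1)$ span $\mathbf{F}_2^n$---forces the one-dimensional left null space of $M$ to be spanned by a vector of \emph{odd} Hamming weight, from which a short linear-algebra calculation shows that the $\bar c_i \bar c_j^{-1}$ generate all of $\Gal(F^{\mathrm{gen}}/F) \cong \mathbf{F}_2^{n-1}$.

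It remains to show that the compositum of the $F(S_\fn)$ equals $F^{\mathrm{CM}}$. The inclusion $\subseteq$ is clear since each $u_{\fp(\fn)}^{m(\fn)}$ lies in a CM extension. For the reverse, given any finite CM abelian extension $H/F$, choose $\fn$ with $H \subset F(\fn)$; then $u_{\fp(\fn)}$ lies in the maximal CM subfield $K$ of $F(\fn)$, which contains $H$, and I claim the Galois orbit of $u_{\fp(\fn)}$ generates $K$ over $F$. This nondegeneracy is the main obstacle: the classical Brumer--Stark statement (Theorem~\ref{t:bs}) only records the valuations of the conjugates $\sigma(u_{\fp(\fn)})$ and does not in itself preclude a nontrivial stabilizer in $\Gal(K/F)$. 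Theorem~\ref{t:thesis} enters decisively here: the exact $p$-adic analytic formula for $\sigma(u_{\fp(\fn)})$---a ``real multiplication'' Shimura reciprocity---provides an injection $\Gal(K/F) \hookrightarrow K_\fP^*/\mu(K_\fP)$ via $\sigma \mapsto \sigma(u_{\fp(\fn)}) \bmod \mu$, so any stabilizing $\sigma$ can act trivially only up to a root of unity, an ambiguity killed upon raising to the $m(\fn)$-th power. Combined with the identity $F^{\ab} = F^{\mathrm{CM}} \cdot F^{\mathrm{gen}}$, this yields the theorem.
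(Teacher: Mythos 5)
Your high-level decomposition of $F^{\ab}$ into a ``CM part'' and a ``sign part'' parallels the paper's structure, and your $\mathbf{F}_2$-linear-algebra argument for the sign part (showing that the restrictions $\bar c_i \bar c_j^{-1}$ generate $\Gal(F^{\mathrm{gen}}/F)$ because the left null space of the sign matrix is spanned by an odd-weight vector) is a valid global recasting of the paper's Lemma~\ref{l:sqrts}. The serious gap is in the CM part, in the nondegeneracy claim.

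You assert that Theorem~\ref{t:thesis} ``provides an injection $\Gal(K/F) \hookrightarrow K_\fP^*/\mu(K_\fP)$ via $\sigma \mapsto \sigma(u_{\fp(\fn)}) \bmod \mu$.'' That does not follow from Theorem~\ref{t:thesis}. The theorem gives an explicit \emph{formula} for each conjugate $\sigma(u_{\fp(\fn)})$, but a formula is not an injectivity statement: nothing in the statement of (\ref{e:genform}) rules out two distinct $\sigma$ yielding the same value (even exactly, not merely up to roots of unity), and if the relevant partial $L$-values vanished the unit would be trivial and the map would collapse entirely. Separately, the remark that the root-of-unity ambiguity is ``killed upon raising to the $m(\fn)$-th power'' is not correct: $m(\fn)$ is a power of $2$, whereas the roots of unity in $F_{\fp(\fn)}^*$ have order $\N\fp(\fn)-1$, which is in general not a power of $2$.

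The paper's nondegeneracy step (Lemma~\ref{l:ugen}) is in fact more elementary and makes no use of Theorem~\ref{t:thesis} at all. It uses only the Brumer--Stark valuation identity $\ord_G(u_\fp) = \Theta_{S,T}$ (from Theorem~\ref{t:bs}) together with Tate's order-of-vanishing formula $\ord_{s=0}L_{S,T}(\chi,s) = \#\{v\in S : \chi(G_v)=1\}$, which gives $L_{S,T}(\chi,0)\neq 0$ when $\chi$ is a \emph{faithful} odd character and $S$ is minimal. Applying $\chi$ to the valuation identity then forces any $\tau$ fixing $u_\fp^m$ to satisfy $\chi(\tau)L_{S,T}(\chi,0)=L_{S,T}(\chi,0)$, hence $\chi(\tau)=1$ and $\tau=1$. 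This, however, requires $\Gal(H/F)$ to be cyclic (otherwise there is no faithful character) and requires $S$ to be minimal for $H$; hence it also requires the two reductions the paper performs but you omit: Lemma~\ref{l:cmcyclic}, reducing to cyclic CM subextensions $H$, and the choice $\fn = \cond(H)$ for each such $H$, so that the minimal $S$ for $F(\fn)$ agrees with the minimal $S$ for $H$. That agreement is exactly what makes the Tate norm relation $u_\fp(H) = \N_{F(\fn)/H}(u_\fp(F(\fn)))$ usable and keeps the $L$-values away from trivial zeros. Your stronger intermediate claim---that the Galois orbit of $u_{\fp(\fn)}$ generates the whole maximal CM subfield of $F(\fn)$ for arbitrary $\fn$---is neither established nor needed.
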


Since Theorem~\ref{t:thesis} gives an exact formula for the elements in $S_\fn$, we obtain via Theorem~\ref{t:h12} an effective method of generating the maximal abelian extension of any totally real field.     See Remark~\ref{r:rou} for a discussion regarding the root of unity ambiguity in Theorem~\ref{t:thesis}.

The integrals in (\ref{e:thesis}) are explicitly computable and yield a practical method of generating class fields. In Section~\ref{s:compute}  we provide examples of narrow Hilbert class fields of real quadratic fields generated by this analytic formula.

\bigskip

Any discussion of explicit class field theory would be incomplete without mentioning Hilbert's 12th problem.  In his famed address at the ICM in Paris in 1900, Hilbert wrote \cite{hilbert}: 
\begin{quote}
``The theorem that every abelian number field arises from the realm of rational numbers by the composition of fields of roots of unity is due to Kronecker..."

``Since the realm of the imaginary quadratic number fields is the simplest after the realm of rational numbers, the problem arises, to extend Kronecker's theorem to this case..."

``Finally, the extension of Kronecker's theorem to the case that, in the place of the realm of rational numbers or of the imaginary quadratic field, any algebraic field whatever is laid down as the realm of rationality, seems to me of the greatest importance.  I regard this problem as one of the most profound and far-reaching in the theory of numbers and of functions."
\end{quote}

At the time of Hilbert's lecture, Theorem~\ref{t:cm} was not  fully proved.  Over the previous decades, the explicit construction of class fields of imaginary quadratic fields using special values of modular functions was the topic of great study, particularly by Kronecker (1823--1891), who called this program his {\em Jugendtraum} (``dream of youth").  Nevertheless, it was already clear by 1900 that analytically constructing class fields of ground fields {\em other than} $\Q$ or quadratic imaginary fields represented a substantially more difficult problem. 
Hilbert was somewhat specific in the type of explicit class field theory he envisioned: he asked for the definition of certain complex analytic functions whose special values or transformation properties yield the maximal abelian extension of $F$.  Certainly, as  $p$-adic numbers had only recently been invented at the time of Hilbert's lecture, the constructions of this paper do not fit neatly into his framework.
We refer the reader to Schappacher's delightful exposition on Hilbert's 12th problem for further background and historical details \cite{schapp}.

\bigskip

As a final note on explicit class field theory in the introduction, we recall that if $E/F$ is a quadratic CM extension, then ``most" of the field $E^{\ab}$ is obtained by taking the compositum of fields of moduli of appropriate CM-motives with $F^{\ab}$.  More precisely, the following result, which combines Corollary 1.5.2, Theorem 2.1, and Corollary 2.3 of \cite{wei}, is known.

\begin{theorem} Let $E$ be a CM field with maximal totally real subfield $F$.  Let $M_E$ be the field generated over $E$ by the fields of moduli of all CM-motives with Hodge cycle structure whose reflex fields are contained in $E$.  Equivalently, $M_E$ is the field obtained by adjoining to $E$ the fields of moduli of all polarised abelian varieties of CM-type, whose reflex fields are contained in $E$, and their torsion points.  Then the compositum $M_E F^{\ab}$ is a subfield of $E^{\ab}$ such that $\Gal(E^{\ab}/M_E F^{\ab})$ has exponent dividing 2 (it is an infinite product of $\Z/2\Z$'s unless $F=\Q$, in which case it is trivial).
\end{theorem}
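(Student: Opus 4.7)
The plan is to combine Shimura's main theorem of complex multiplication with global class field theory for $E$. Let $c$ denote complex conjugation, which restricts to the nontrivial element of $\Gal(E/F)$.

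First, the inclusion $M_E F^{\ab} \subseteq E^{\ab}$ follows from the main theorem of CM. For any polarised CM abelian variety $A$ of type $(E, \Phi)$ whose reflex field $E^*(\Phi)$ is contained in $E$, Shimura's theorem asserts that the field of moduli of $A$ together with all its torsion points is abelian over $E^*(\Phi)$, hence over $E$. Taking the compositum over all such $(A, \Phi)$ yields $M_E \subseteq E^{\ab}$, and $F^{\ab} \cdot E \subseteq E^{\ab}$ is automatic since any abelian extension of $F$ remains abelian after base change to $E$.

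Second, to bound $\Gal(E^{\ab}/M_E F^{\ab})$ by exponent $2$, translate via global class field theory to the idele class group $C_E$. Setting $N = N_{E/F} \colon C_E \to C_F$, the subfield $E \cdot F^{\ab}$ corresponds, modulo connected components, to $\ker(N)$. Since $E/F$ is CM one has $N(x) = x \cdot c(x)$, and Hilbert's Theorem~90 for $\Gal(E/F)$ represents every element of $\ker(N)$ as $y/c(y)$ with $y \in \A_E^*$. The reciprocity law in the main theorem of CM expresses the action of $\rec_E(y/c(y))$ on the field of moduli of an $(E, \Phi)$-type abelian variety (with $E^*(\Phi) \subseteq E$) through the reflex type norm $N_{\Phi^*}(y/c(y))$. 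Applying the Shimura--Taniyama identity $N_{\Phi^*}(z) \cdot c\bigl(N_{\Phi^*}(z)\bigr) = N_{E^*(\Phi)/\Q}(z)$ to $z = y/c(y)$ and iterating shows that $\rec_E(x^2)$ acts trivially on every field of moduli for $x \in \ker(N)$; combined with the triviality of $\rec_E(x^2)$ on $F^{\ab}$, this yields the exponent-$2$ bound.

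Finally, when $F = \Q$ the field $E$ is imaginary quadratic and Theorem~\ref{t:cm} directly gives $M_E = E^{\ab}$, trivializing the quotient. When $[F:\Q] > 1$, an explicit construction using ideles $y/c(y)$ supported at primes split in $E/F$ realises $\ker(N)/\ker(N)^2$ as an infinite-dimensional $\F_2$-vector space whose image in $\Gal(E^{\ab}/M_E F^{\ab})$ remains infinite-dimensional, yielding the asserted infinite product of $\Z/2\Z$'s. The main obstacle lies in Step~2: one must track sign conventions and CM-type combinatorics carefully to show that the reflex norms attached to CM types $\Phi$ with $E^*(\Phi) \subseteq E$ collectively account for $\ker(N)$ modulo squares. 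The natural setting for this bookkeeping is Serre's motivic Galois group for CM motives (equivalently Langlands's and Deligne's Taniyama group), which packages all reflex type norms into a single universal morphism and reduces the claim to a structural statement about the cokernel of a canonical map.
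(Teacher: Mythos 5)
The paper does not prove this theorem: it is quoted verbatim as a known result, attributed to Corollary 1.5.2, Theorem 2.1, and Corollary 2.3 of Wei's preprint \cite{wei}, so there is no in-paper argument to compare against. With that said, your sketch is a reasonable reconstruction of the structure such a proof must have, and it identifies the right ingredients (Shimura's main theorem of CM, Hilbert~90 for $\Gal(E/F)$, the Shimura--Taniyama reflex-norm identity, the Taniyama/motivic Galois group). However, as you yourself flag, the central step is left open: you need to show that the reflex norms $N_{\Phi^*}$, as $\Phi$ ranges over CM types with $E^*(\Phi)\subseteq E$, jointly cut out $\ker(N_{E/F})$ up to squares in $C_E$, and you do not supply this. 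That is not a minor bookkeeping issue --- it is exactly the content of Wei's Theorem 2.1, and without it the exponent-$2$ bound on $\Gal(E^{\ab}/M_E F^{\ab})$ is unestablished. Similarly, the claim that the quotient is an infinite product of $\Z/2\Z$'s for $[F:\Q]>1$ is asserted via a gesture at ideles supported at split primes, but you would need to verify that these classes are not killed by any reflex norm, which again requires the same unproved combinatorial control. So the proposal correctly outlines the shape of the argument but does not close it; as it stands it is a plan rather than a proof, and it is in any case proving something the paper treats as an external citation.
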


 As a result, we find that the construction of $F^{\ab}$ given in Theorem~\ref{t:h12} together with the field $M_E$ yields a description of most of the maximal abelian extension of $E$.

\subsection{Summary of Proof}

We conclude the introduction by describing the proof of Theorem~\ref{t:maingross}, the $p$-part of the integral Gross--Stark conjecture where $p$ is the prime below $\fp$.  We always assume that $p$ is odd. Recall that we are given a tower of fields $L/H/F$ with $\fg = \Gal(L/F)$ and $G = \Gal(H/F)$.  
Let 
\[ R = \Z_p[\fg]^- = \Z_p[\fg] / (\sigma + 1), \qquad \overline{R} = \Z_p[G]^-, \]
where $\sigma$ is the complex conjugation of $\fg$.
 Let $I = \ker(R \longtwoheadrightarrow \overline{R})$.

As a first step, we alter the smoothing set $S$ and depletion set $T$ as follows.  Define
\begin{align*}
\Sigma &= \{v \in S \colon v \mid p \infty \}, \\
\Sigma_\fp &= \Sigma \cup \{\fp\}, \\
\Sigma' &= \{v \in S: v  \nmid p\infty \} \cup T.
\end{align*}
There exists an associated modified Stickelberger element $ \Theta_{\Sigma,\Sigma'}^H \in \Z_p[G]$ and  modified Brumer--Stark unit $u_{\fp}^{\Sigma, \Sigma'} \in 
U_{\fp,T}^-$ such that $\ord_G(u_{\fp}^{\Sigma, \Sigma'}) = \Theta_{\Sigma,\Sigma'}^H.$  We show in Lemma~\ref{l:equiv} that the modified Gross--Stark congruence
\begin{equation} \label{e:mod}
 \rec_G(u_{\fp}^{\Sigma, \Sigma'}) \equiv \Theta_{\Sigma_\fp, \Sigma'}^L \pmod{I^2} 
 \end{equation}
implies the original one.

\bigskip

In order to prove (\ref{e:mod}), we recall the Ritter--Weiss modules $\nabla_{\Sigma}^{\Sigma'}(H)$ and $\nabla_{\Sigma_\fp}^{\Sigma'}(L)$ and the relationship between these modules.  Versions of these modules were originally defined by Ritter and Weiss in the foundational work \cite{rw}.  An alternate approach was studied in \cite{bks} and \cite{burns}.  Here we apply our previous work \cite{dk}, which builds upon the original definition of Ritter--Weiss.

In order to make a connection with the Greenberg--Stevens theory, we introduce an $R$-algebra $R_\sL$ that is generated over $R$ by an element $\sL$ that plays the role of the analytic 
$\sL$-invariant, i.e. the ``ratio" between $\Theta_L = \Theta_{\Sigma_\fp, \Sigma'}^L$ and $\Theta_H = \Theta_{\Sigma, \Sigma'}^H$.  In some sense, $R_\sL$ is the canonical $R$-algebra in which such a ratio can be considered:
\[ R_\sL = R[\sL] / (\Theta_H \sL - \Theta_L, \sL I, \sL^2, I^2). \]
See \S\ref{s:rldef} for an expanded discussion motivating this definition. An important feature of the ring $R_\sL$ that we prove in \S\ref{s:rldef} is that the canonical $R$-algebra map $R/I^2 \longrightarrow R_\sL$ is injective.

\bigskip

 We next define a generalized Ritter--Weiss module $\nabla_{\!\sL}$ over the ring $R_{\sL}$.
   This module can be viewed as a gluing of the modules $\nabla_{\Sigma}^{\Sigma'}(H)$ and $\nabla_{\Sigma_\fp}^{\Sigma'}(L)$ over $R_\sL$.  By its defining properties, the module
 $\nabla_{\Sigma}^{\Sigma'}(H)$ ``sees" the modified Brumer-Stark unit  $u_{\fp}^{\Sigma, \Sigma'}$, while the module 
 $\nabla_{\Sigma_\fp}^{\Sigma'}(L)$ sees the  image of $u_{\fp}^{\Sigma, \Sigma'}$ under $\rec_G$.  By fiat, the ring $R_\sL$ sees the Stickelberger elements 
$ \Theta_H$ and $\Theta_L$ (or more precisely, a stand-in $\sL$ for their ``ratio").  The upshot is that $\nabla_{\!\sL}$ will be large if (\ref{e:mod}) holds, and will be small if (\ref{e:mod}) fails.  This notion of size is made precise via the theory of Fitting ideals.

We will show in \S\ref{s:gcfi} that the Fitting ideal $\Fitt_{R_\sL}(\nabla_{\!\sL})$ is generated by the element 
\[ \rec_G(u_{\fp}^{\Sigma, \Sigma'}) -  \Theta_{\Sigma_\fp, \Sigma'}^L \in I/I^2.  \]
In view of the injectivity of $R/I^2 \longrightarrow R_\sL$, in order to prove that (\ref{e:mod}) holds it suffices to prove that
\begin{equation} \label{e:ifit}
 \Fitt_{R_\sL}(\nabla_{\!\sL}) = 0. 
  \end{equation}

We prove (\ref{e:ifit}) and thereby conclude the proof of Theorem~\ref{t:maingross} as follows. 
We first give a characterization of $\nabla_{\!\sL}$  via Galois cohomology as in \cite{dk}*{Lemma A.8}.
Using this characterization, we show that for an $R_\sL$-module $M$, an $R_\sL$-module surjection \begin{equation} \label{e:nablam} \nabla_{\!\sL} \longtwoheadrightarrow M \end{equation} is equivalent to a Galois cohomology class $\kappa \in H^1(G_F, M)$ satisfying certain local conditions.  For each positive integer $m$, we construct such a Galois cohomology class in an $R_\sL$-module $M$ satisfying $ \Fitt_{R_\sL}(M) \subset (p^m)$.  The surjection (\ref{e:nablam}) then implies 
\[  \Fitt_{R_\sL}(\nabla_{\!\sL}) \subset  \Fitt_{R_\sL}(M) \subset (p^m). \]  Since this is true for all $m$, we obtain $(\ref{e:ifit})$ as desired.

\bigskip

The $R_\sL$-module $M$ and cohomology class $\kappa$ are constructed using group ring valued families of Hilbert modular forms. 
Write $\fn$ for the conductor of $L/F$.  For simplicity in this introduction we assume that all primes of $F$ above $p$ divide $\fn$ and that there is more than one such prime.
Let $\Lambda$ be a finite free $\Z_p$-module.
For a positive integer $k$,
let $M_k(\fn, \Lambda)$ (respectively $S_k(\fn, \Lambda)$) denote the space of Hilbert modular forms (respectively, cusp forms) over $F$ of level $\Gamma_1(\fn)$ over $\Lambda$. The group $S_k(\fn, \Lambda)$ is endowed with the action of a Hecke algebra $\tilde{\T}$ that is generated over $\Z_p$ by the Hecke operators $T_\fq$ for $\fq \nmid \fn$, $U_\fq$ for $\fq \mid p$, and the diamond operators $S(\fm)$ for each integral ideal 
$\fm$ relatively prime to $\fn$.  

The main result of \S\ref{s:cusp} is the definition of an $R$-module $\Lambda$, a submodule $J \subset \Lambda$, and the construction of a  cusp form $f \in S_k(\fn, \Lambda)$ satisfying  the following properties.
\begin{itemize}
\item The weight $k$ is congruent to 1 modulo $(p-1)p^m$.
\item The quotient $\Lambda/J$ has the structure of a faithful $R/(I^2, p^m)$-module.
\item  Let $G_\fn^+$ denote the narrow ray class group of conductor $\fn$, and let \[ \bpsi \colon G_\fn^+ \longtwoheadrightarrow \fg \subset R^* \] denote the canonical character.  The form $f$ has nebentypus $\bpsi$, i.e.\  $S(\fm) f = \bpsi(\fm)f$ for $(\fm, \fn) = 1$.
\item The form $f$ is Eisenstein modulo $J$ in the sense that $T_\fq(f) \equiv (1 + \bpsi(\fq)) f  \pmod{J}$ for  $\fq \nmid \fn$, and 
$U_\fq(f) \equiv f  \pmod{J}$ for all $\fq \mid p, \fq \neq \fp$.

\item Modulo $J$, the operator $(1 - U_\fp)$ acting on the Hecke span of the form $f$ satisfies the relations governing the element $\sL \in R_\sL$, e.g. \[ (1 - U_\fp) \Theta_H f \equiv \Theta_L f \pmod{J}. \]
\end{itemize}

The form $f$ allows for the definition of an $R_{\sL}/p^m$-algebra $W$ and an $R$-algebra homorphism \[ \varphi\colon \tilde{\T} \longrightarrow W\] such that 
\begin{itemize}
\item $\varphi(T_\fq) = 1 + \bpsi(\fq)$ for  $\fq \nmid \fn$ 
\item  $\varphi(U_\fq) = 1$ for all $\fq \mid p, \fq \neq \fp$, and 
\item $\varphi(1 - U_\fp) = \sL$.
\end{itemize}
  Furthermore the algebra $W$ is large enough that the structure map \begin{equation} \label{e:rslw}
  R_{\sL}/p^m \longrightarrow W\end{equation} is an injection.

There is a Galois representation $\rho \colon G_F \longrightarrow \GL_2(\Frac(\tilde{\T}))$ that satisfies the usual conditions; in particular $\rho$ is unramified outside $\fn$ and $\tr(\rho(\sigma_\fq)) = T_\fq$ for $\fq \nmid \fn$, where $\sigma_\fq$ denotes a Frobenius at $\fq$.  We choose a basis for $\rho$ such that $\rho(\tau)$ is diagonal for a certain well-chosen $\tau \in G_F$ that restricts to the complex conjugation in $\fg$.  Writing \[ \rho(\sigma) = \begin{pmatrix} a(\sigma) & b(\sigma) \\ c(\sigma) & d(\sigma)\end{pmatrix} \] in this basis, 
we let $\tilde{\B}$ denote the $\tilde{\T}$-module generated by $b(\sigma)$ for $\sigma \in G_F$, together with certain other elements $x_\fq \in \Frac(\tilde{\T})$ for $\fq \mid p$.  We then define $\tilde{B} = \tilde{\B}/(p^m, \ker \varphi)\tilde{\B}$.

Standard methods in the theory of pseudorepresentations then allow for the definition of a cohomology class $\kappa \in H^1(G_F, \tilde{B}(\bpsi^{-1}))$.  We show that the class $\kappa$ satisfies the necessary local conditions to yield a surjection
$ \nabla_{\!\sL} \longtwoheadrightarrow \tilde{B}(\bpsi^{-1})$ as in (\ref{e:nablam}).  These local calculations are based on the fact that the form $f$ is ordinary at primes $\fq \mid p$, and $p$-ordinary forms have Galois representations with prescribed shapes when restricted to decomposition groups at primes dividing $p$.  The elements $x_\fq$ mentioned above arise from this local calculation.

Finally, we use the methods of \cite{dk}*{Theorem 9.10} along with the crucial injection (\ref{e:rslw}) to prove the desired result 
\[  \Fitt_{R_\sL}(\tilde{B}(\bpsi^{-1})) \subset (p^m). \]
One modification of the description above in the text is that we break $R$ into a product of components $A$ and work over $A_\sL = A \otimes_R R_\sL$.  This reduction is useful in our constructions with modular forms.

Also, the case where the prime $\fp$ is the only prime of $F$ above $p$ requires special consideration and is handled in \S\ref{s:onep}.  Let us try to motivate why this case is unique.  In this setting, when $k \equiv 1 \pmod{(p-1)p^m}$, the Eisenstein series $E_k(1, \chi)$ and $E_k(\chi, 1)$ are congruent modulo $p^m$ for any odd character of $G$.  The intersection of the associated Hida families in weight 1 causes a singularity in the spectrum of the Hida Hecke algebra at that point.  The deformations of modular forms at the weight 1 point corresponding to $E_1(1, \chi_\fp)$  are more complicated for this reason.  
  We are not able to construct a group ring family of modular forms defined over a module endowed with an action of the ring $R_\sL$ in this case. 
  
   Instead, we provide a different strategy for proving the congruence (\ref{e:mod}) when there is only one prime of $F$ above $p$.  In a sense, the argument is easier in this case, though it relies on previous significant results, some of which were themselves proved using Hilbert modular forms.  
   We are able to deform up the cyclotomic tower of $F$ without altering the depletion set $\Sigma_\fp = S_\infty \cup \{\fp\}$ since this tower is ramified only at the prime $\fp$.  Let $F_m$ denote the $m$th layer of the tower, let $\fg_m = \Gal(LF_m/F)$, and let $R_m = \Z_p[\fg_m]^-$.  
   We introduce a ring $R_{X,m}$ analogous to $R_\sL$, except that $X$ now represents the ratio between $\Theta_L$ and the derivative of the $p$-adic $L$-function of $H/F$ at $s=0$, denoted $\Theta_H'$.  We then define a homomorphism $R_m \longrightarrow R_{X,m}$ and show that in order to prove (\ref{e:mod}), it suffices to prove that
 \begin{equation} \label{e:nxi}
  \Fitt_{R_m} (\nabla_{\Sigma_\fp}^{\Sigma'}(LF_m)_{R_m} \otimes_{R_m} R_{X,m}) = 0. \end{equation}
This reduction is dependent on the proof of the rational rank 1 Gross--Stark conjecture by the first author with Darmon and Pollack in \cite{ddp} and Ventullo in \cite{v}.  Another ingredient of the reduction proof is the nonvanishing of the first derivatives of $p$-adic $L$-functions deduced by combining the  rational rank 1 Gross--Stark conjecture with the transcendence result of Brumer--Baker on the nonvanishing of algebraic linear combinations of $p$-adic logarithms of algebraic numbers (see Theorem~\ref{t:nv}). 

   We prove that (\ref{e:nxi}) holds and thereby conclude the proof by applying the formula
 \begin{equation} \label{e:dkni}
  \Fitt_{R_m} (\nabla_{\Sigma_\fp}^{\Sigma'}(LF_m)_{R_m}) = (\Theta_{\Sigma_\fp, \Sigma'}^{LF_m/F}(0))  \end{equation}
    proved in our previous work \cite{dk}.  We deduce (\ref{e:nxi}) from (\ref{e:dkni})
 using the explicit construction of the Deligne--Ribet $p$-adic $L$-function as an integral.

This concludes our summary discussion of the proof of Theorem~\ref{t:maingross}.

\subsection{Acknowledgements}

We would first like to thank Takamichi Sano for very helpful discussions while this research was performed.  Sano explained to us in detail his work with Burns and Kurihara on the link between the Leading Term Conjecture and the integral Gross--Stark conjecture \cite{bks}.  While we do not apply their results directly, the philosophy presented in {\em loc.\ cit.} was very influential in our definition of the module $\nabla_{\!\sL}$ and the proof of Theorem~\ref{t:mfig} in \S\ref{s:gcfi}.

We would also like to thank Brian Conrad, Henri Darmon, Jesse Silliman, and Jiuya Wang for helpful discussions.  We thank James Milne for pointing out the application to CM fields mentioned in \S\ref{s:iecft} and in particular the article \cite{wei}.

We are extremely grateful to Max Fleischer and Yijia Liu, two undergraduate students of the first author at Duke University, who wrote the code to produce the computations in \S\ref{s:compute}.  Their code and complete tables are given in \cite{dasweb}.

Both authors would like to thank the hospitality of the Mathematical Research Institute of Oberwolfach, where certain aspects of this research were completed.  
The first author was supported by grants DMS-1600943 and DMS-1901939 from the National Science Foundation during the completion of this project. The second author was support by SUPRA grant SPR/2019/000422 from the Science \& Engineering Research Board during the completion of this project.

\section{Explicit Class Field Theory}

\subsection{An Exact Formula for Brumer--Stark units}

In \cite{das}, we proposed a conjectural exact $p$-adic analytic formula for the Brumer--Stark units $u_\fp$.
We briefly recall the shape of this formula. Let $\fn \subset \cO_F$ denote a nonzero ideal and let $F(\fn)$ denote the narrow ray class field  of $F$ associated to the conductor $\fn$.  Let $H$ be the maximal CM subfield of $F(\fn)$ in which the prime $\fp$ splits completely.  Let $f$ denote the order of $\fp$ in the narrow ray class group of conductor $\fn$, and write $\fp^f = (\pi)$ for a totally positive element $\pi \in 1 + \fn$.  We also assume that the set $T$ contains a prime whose norm is a rational prime in $\Z$.  

Next we let $\sD$ denote a Shintani domain, as defined in \cite{das}*{Proposition 3.7}.  Let $\cO_\fp$ denote the completion of $\cO_F$ at $\fp$, and let $\bO = \cO_\fp - \pi \cO_\fp$.  Let $\fb \subset \cO_F$ denote an integral ideal that is relatively prime to $\fn$.  Associated with all this data, we have:
\begin{itemize}
\item A totally positive unit $\epsilon(\fb, \sD, \pi) \in \cO_F^*$
 congruent to 1 modulo $\fn$, 
defined in \cite{das}*{Definition 3.17}.
\item A $\Z$-valued measure $\nu(\fb, \sD)$ on $\cO_\fp$, defined in \cite{das}*{eqn. (21)} using Shintani's theory of simplicial zeta functions.
\end{itemize}
We then proposed:
\begin{conjecture}[\cite{das}*{Conjecture 3.21}]  \label{c:shin} Let $\sigma_\fb \in \Gal(H/F)$ denote the Frobenius element associated with $\fb$.  Let $\fP$ and $u_\fp$ be as in Conjecture~\ref{c:bs}, and consider $H$ as a subfield of $F_\fp$ via $H \subset H_\fP \cong F_\fp$.
We then have
\begin{equation} \label{e:genform}
 \sigma_{\fb}(u_\fp) = \epsilon(\fb, \cD, \pi) \cdot \pi^{\zeta_{S,T}(F(\fn)/F, \fb, 0)} \mint_{\bO} x \ d\nu(\fb, \sD, x)) \in F_\fp^*. 
 \end{equation}
\end{conjecture}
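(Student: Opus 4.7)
The plan is to prove Conjecture~\ref{c:shin} in the form stated in Theorem~\ref{t:thesis}, i.e.\ up to a bounded root of unity under hypothesis $(*)$. The first step is the reduction supplied by the first author in \cite{das}: under $(*)$, the $p$-part of Gross's tower of fields conjecture (Conjecture~\ref{c:grosstower}) implies the integral formula (\ref{e:genform}) up to multiplication by a root of unity in $F_\fp^*$. So I would invoke this reduction immediately and spend the remainder of the argument proving Theorem~\ref{t:maingross}, the $p$-part of the integral Gross--Stark conjecture.

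To attack Theorem~\ref{t:maingross}, I would first pass from $(S,T)$ to the modified depletion and smoothing sets $(\Sigma,\Sigma')$ with Stickelberger element $\Theta_{\Sigma,\Sigma'}^H$ and modified Brumer--Stark unit $u_\fp^{\Sigma,\Sigma'}$, and verify (Lemma~\ref{l:equiv}) that the modified congruence (\ref{e:mod}) implies the original. The key new algebraic object I would introduce is the ring
\[ R_{\sL} = R[\sL]/(\Theta_H \sL - \Theta_L,\ \sL I,\ \sL^2,\ I^2), \]
in which $\sL$ formally represents the ``ratio" $\Theta_L/\Theta_H$, together with a generalized Ritter--Weiss module $\nabla_{\!\sL}$ obtained by gluing $\nabla_\Sigma^{\Sigma'}(H)$ and $\nabla_{\Sigma_\fp}^{\Sigma'}(L)$ over $R_\sL$. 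The reformulation I would establish is that $\Fitt_{R_\sL}(\nabla_{\!\sL})$ is generated by the difference $\rec_G(u_\fp^{\Sigma,\Sigma'}) - \Theta_{\Sigma_\fp,\Sigma'}^L$; combined with the injectivity of the structural map $R/I^2 \hookrightarrow R_\sL$, this reduces Theorem~\ref{t:maingross} to the Fitting-ideal vanishing $\Fitt_{R_\sL}(\nabla_{\!\sL})=0$.

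To produce this vanishing, I would adapt Ribet's method as in \cite{dk}, working with group ring valued Hilbert modular forms. For each integer $m\geq 1$ I would construct a cusp form $f\in S_k(\fn,\Lambda)$ of weight $k\equiv 1\pmod{(p-1)p^m}$, nebentypus $\bpsi$, and congruent to a group ring Eisenstein series modulo a submodule $J\subset\Lambda$ for which $\Lambda/J$ is a faithful $R/(I^2,p^m)$-module, with the crucial extra property that $(1-U_\fp)$ realizes $\sL$ on the Hecke span of $f$ modulo $J$. From the associated pseudorepresentation $\rho$, a careful choice of basis diagonalizing a lift of complex conjugation yields a module $\tilde{B}$ out of the $b$-entries $b(\sigma)$ together with auxiliary elements $x_\fq$ for $\fq\mid p$ coming from $p$-ordinarity, and a cohomology class $\kappa\in H^1(G_F,\tilde{B}(\bpsi^{-1}))$. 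The local behaviour of $\rho$ at primes dividing $p$ would be checked to match the local conditions in the Galois-cohomology description of $\nabla_{\!\sL}$, giving a surjection $\nabla_{\!\sL}\twoheadrightarrow \tilde{B}(\bpsi^{-1})$; combined with the injectivity of $R_\sL/p^m$ into the appropriate Hecke-algebra quotient and the methods of \cite{dk}*{Theorem 9.10}, this yields $\Fitt_{R_\sL}(\tilde{B}(\bpsi^{-1}))\subset (p^m)$, and letting $m\to\infty$ completes the proof.

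The main obstacle I anticipate is the construction of the cusp form $f$ carrying not merely an Eisenstein congruence but also the $\sL$-invariant relation $(1-U_\fp)\Theta_H f\equiv \Theta_L f\pmod J$; this requires integrating a Greenberg--Stevens type derivative into the group-ring constructions of \cite{dk}. A secondary but substantial obstacle is the degenerate case where $\fp$ is the unique prime of $F$ above $p$, in which the Eisenstein series $E_k(1,\chi)$ and $E_k(\chi,1)$ become congruent mod $p^m$ and two Hida families collapse together in weight one. In that case I would abandon the $R_\sL$ construction and instead deform along the cyclotomic $\Z_p$-tower $F_m/F$, which is ramified only at $\fp$ so the depletion set does not change. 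Using an auxiliary ring $R_{X,m}$ in which $X$ represents the ratio between $\Theta_L$ and the derivative $\Theta_H'$ of the Deligne--Ribet $p$-adic $L$-function, I would combine the Fitting-ideal formula (\ref{e:dkni}) from \cite{dk} with the rational rank-one Gross--Stark theorem of \cite{ddp} and \cite{v}, together with the Brumer--Baker nonvanishing of algebraic combinations of $p$-adic logarithms, to deduce (\ref{e:nxi}) and thereby conclude.
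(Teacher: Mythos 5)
Your proposal follows exactly the paper's route: reduce Conjecture~\ref{c:shin} (up to roots of unity, under $(*)$) to the $p$-part of the integral Gross--Stark conjecture via Theorem~\ref{t:maindas}, reformulate that congruence as the vanishing of $\Fitt_{R_\sL}(\nabla_{\!\sL})$ using the ring $R_\sL$ and the injectivity of $R/I^2 \hookrightarrow R_\sL$, then establish the vanishing by Ribet's method with group ring valued Hilbert modular forms, with the separate cyclotomic-tower $R_{X,m}$ argument (using rational Gross--Stark and Brumer--Baker) reserved for the case of a unique prime above $p$. This is the same decomposition, the same key lemmas, and the same two-case structure as the paper.
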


We stress that the exponent $\zeta_{S,T}(F(\fn)/F, \fb, 0)$ and the measure $\nu(\fb, \sD)$ may be computed explicitly using 
Shintani's formulas.  If we may take $\pi$ to be a rational integer (e.g.\ if $p$ is inert in $F$ and $\pi = p^f$), then the unit $\epsilon(\fb, \sD, \pi)$ is equal to 1 and the formula simplifies as described in the introduction (\ref{e:thesis}).
See \cite{comp}*{Proposition 3.2} for an explicit formula for the measure $\nu(\fb, \sD)$ in this setting when $F$ is real quadratic.
One of the main theorems of \cite{das} is the following:

\begin{theorem}[\cite{das}*{Theorem 5.18}]  Suppose that condition $(*)$ holds. Then the $p$-part of Conjecture~\ref{c:grosstower} for all $L$ implies  Conjecture~\ref{c:shin}
 up to multiplication by a root of unity in $F_\fp^*$. \label{t:maindas}
\end{theorem}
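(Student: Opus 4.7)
The plan is to introduce the analytic candidate
\[ u_\fp^{\an} := \epsilon(\fb, \sD, \pi) \cdot \pi^{\zeta_{S,T}(F(\fn)/F, \fb, 0)} \mint_{\bO} x \ d\nu(\fb, \sD, x) \in F_\fp^* \]
defined by the right hand side of (\ref{e:genform}), and to prove that for every admissible $\fb$ the ratio $\xi_\fb := u_\fp^{\an}/\sigma_\fb(u_\fp)$ is a root of unity. A first reduction via the Brumer--Stark conjecture (Theorem~\ref{t:bs}) shows that the $\fp$-adic valuations of the two sides already agree: the exponent $\zeta_{S,T}(\sigma_\fb, 0)$ of $\pi$ on the right equals $\ord_\fP\sigma_\fb(u_\fp)$ by (\ref{e:bs}), while the factors $\epsilon(\fb,\sD,\pi)$ and $\mint_\bO x\,d\nu$ are $\fp$-adic units by construction. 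Hence $\xi_\fb \in \cO_\fp^*$, and because $\mu(F_\fp^*)$ is precisely the kernel of the $p$-adic logarithm on $\cO_\fp^*$, it suffices to prove $\log_p(\xi_\fb) = 0$ for every $\fb$.

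Dually, this amounts to showing $\sum_\fb \chi(\fb) \log_p(\xi_\fb) = 0$ for every $p$-adic character $\chi$ of a narrow ray class group cutting out an appropriate CM abelian tower $L \supset H$; condition $(*)$ enters precisely here, guaranteeing that there are enough cyclotomic twists of $H$ for these characters to separate $\cO_\fp^*$ modulo roots of unity. I would then split the problem into the pair of identities
\begin{equation*}
\sum_\fb \chi(\fb) \log_p \sigma_\fb(u_\fp) \ = \ L_{p,S,T}'(\chi^{-1}, 0) \ = \ \sum_\fb \chi(\fb) \log_p u_\fp^{\an},
\end{equation*}
where $L_{p,S,T}$ denotes the Deligne--Ribet--Cassou-Noguès $p$-adic $L$-function. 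The left identity is extracted from Theorem~\ref{t:maingross} by specializing the tower $L/H/F$ to $HF_\infty/H/F$, with $F_\infty$ the cyclotomic $\Z_p$-extension of $F$: the Gross--Stark congruence modulo $I^2$ translates, via the local reciprocity map at $\fp$ and the standard identification of $I/I^2$ with the Galois group of the tower, into the required formula for the derivative of the $p$-adic $L$-function at $s = 0$.

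The right identity is the heart of the argument and the main obstacle. One proves it by a direct Shintani computation: unwinding the definitions of $\nu(\fb,\sD)$ and of the auxiliary unit $\epsilon(\fb,\sD,\pi)$, one expresses $\log_p u_\fp^{\an}$ as a sum over the simplicial cones of $\sD$ of logarithms that, after pairing against $\chi(\fb)$, reassemble into the Cassou-Noguès integral representation of $L_{p,S,T}'(\chi^{-1}, 0)$. The key subtlety is that $\epsilon(\fb,\sD,\pi)$ is designed precisely to cancel the boundary contributions arising from the dependence of the Shintani decomposition on a choice of fundamental domain, and the combinatorial bookkeeping needed to match this cancellation against the Iwasawa derivative term by term is delicate. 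Assembling the two identities gives $\log_p(\xi_\fb) = 0$ against every admissible $\chi$, so $\xi_\fb \in \mu(F_\fp^*)$ and the theorem follows.
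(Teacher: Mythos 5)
The paper does not actually prove this statement here; it is quoted verbatim from \cite{das}*{Theorem 5.18}, so there is no internal argument to compare against.

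That said, your proposal has a genuine gap at its heart. You specialize Theorem~\ref{t:maingross} to the cyclotomic tower $L = HF_\infty$ and claim this yields $\sum_\fb \chi(\fb)\log_p\sigma_\fb(u_\fp) = L'_{p,S,T}(\chi^{-1},0)$. But the Gross--Stark congruence along the cyclotomic tower recovers only the rational Gross--Stark conjecture (Theorem~\ref{t:ddp}), which controls $\log_p\circ\Norm_{F_\fp/\Q_p}$ of the Brumer--Stark unit — and nothing more. The left side of your displayed identity, as written with $\log_p$ rather than $\log_p\circ\Norm_{F_\fp/\Q_p}$, lies in $F_\fp$ while the right side is a scalar, so either there is a type error or you implicitly mean the norm; in the latter case the kernel of $\Norm_{F_\fp/\Q_p}$ on $\cO_\fp^*$ has $\Z_p$-rank $[F_\fp:\Q_p]-1 > 0$ whenever $F \neq \Q$, and the argument can at best constrain $\xi_\fb$ to lie in a positive-rank subgroup, far from $\mu(F_\fp)$. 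This is precisely why the rational conjecture does not solve Hilbert's 12th problem and why the integral version is needed: the congruence modulo $I^2$ for a cofinal family of CM extensions $L/H$ ramified at $\fp$ — not just cyclotomic ones — controls the full image $\rec_\fP(\sigma(u_\fp))$ under local reciprocity, and it is via $\rec_\fP$ (not $\log_p\circ\Norm$) that one pins down $\sigma(u_\fp)$ up to torsion. Your remark that condition $(*)$ provides ``enough cyclotomic twists'' to ``separate $\cO_\fp^*$ modulo roots of unity'' gestures at this, but is incompatible with specializing only to $HF_\infty$; the non-cyclotomic directions in the $p$-ramified abelian tower are essential.

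A secondary concern is that your ``right identity'' — matching $\log_p u_\fp^{\an}$ to the Cassou-Nogu\`es derivative by a direct Shintani computation — is essentially the entire technical content of the theorem being proved and cannot be dispatched with a sentence about boundary cancellations; but the structural gap above is the more serious one and would need to be repaired before the Shintani bookkeeping even becomes the bottleneck.
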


In other words, Theorem~\ref{t:maingross} implies Theorem~\ref{t:thesis}.

\bigskip

We conclude this section by discussing the history of Conjecture~\ref{c:shin} and its various manifestations.  The first conjecture of this form appeared in the 2004 Ph.D.\ thesis of the first author \cite{thesis}, which was published in the form \cite{dd}.  The setting for this conjecture was that of a real quadratic field $F$ and a prime $p$ that is inert in $F$.  Furthermore $H$ was taken to be a CM ring class field extension of $F$.  In that paper, the measure $\nu$ was defined as a specialization of the {\em Eisenstein cocycle} obtained by integrating Eisenstein series on the complex upper half plane.  The resulting interplay between complex and $p$-adic integration was inspired by Darmon's theory of integration on $\cH_p \times \cH$ \cite{darmon}.
 
 This first construction was generalized by Chapdelaine in his 2007 Ph.D. thesis \cite{chapthesis} to the context where $F$ is still a real quadratic field and $p$ is an inert prime, but $H$ is an arbitrary CM abelian extension of $F$ in which $\fp = p \cO_F$ splits completely.  Chapedelaine's construction applied integration of more general Eisenstein series than considered in \cite{dd}.  It was published in the form \cite{chapcomp}.  
 
 Next, as described above, the first author applied Shintani's theory of simplicial zeta functions in order to expand the earlier constructions to cover the general case: 
 $F$ is any totally real field, $H$ is a CM abelian extension, and $\fp$ is a finite prime of $F$ that splits completely in $H$  \cite{das}.  
 The equivalence between the constructions of \cite{dd} and \cite{das} in the setting of the former article was established in \cite{das}*{\S8}, though we note an error in this argument that was observed and corrected by Chapdelaine in \cite{chapquebec}.  In  Chapdelaine's paper the equivalence between the constructions was generalized to include that of \cite{chapcomp}.
 
 Notably absent from the article \cite{das}, however, is the cohomological perspective of the earlier works.  More recently, several articles have appeared that have reestablished the cohomological underpinnings of the $p$-adic formula (\ref{e:genform}) in the general case.

In \cite{pcsd}, Charollois and the first author reconsidered the construction of the Eisenstein cocycle by Sczech using conditionally convergent sums, and proved an integrality result yielding another construction of the measure $\nu$ in the general case.  The cohomological approach was applied to Shintani's method independently by Spiess in \cite{spiess} and by the first author in joint work with Charollois and M.~Greenberg in \cite{cdg}.  The application of these constructions to an exact $p$-adic analytic formula for $u_\fp$ was given in joint work of the first author with Spiess \cite{ds}*{\S6}.  The equivalence of the cohomological approach of {\em loc.\ cit.} with the formula (\ref{e:genform}) above is the subject of current work by Honnor, building on the Ph.D.\ thesis of Tsosie \cite{tsosie}.
We hope to prove directly that Theorem~\ref{t:maingross} implies the conjecture of \cite{ds}  (and in fact generalize to the higher rank case) in future joint work with Spiess.

\subsection{The Maximal Abelian Extension of $F$} \label{s:hilbert}

The goal of this section is to prove Theorem~\ref{t:h12}. The following is \cite[Remarque 2.3 Chap. IV]{tatebook}.  

\begin{lemma}  \label{l:ugen}
Let $F$ be a totally real field and suppose that $H/F$ is a cyclic CM extension such that the finite prime $\fp \subset \cO_F$ splits completely in $H$.  Let $S = S_\infty \cup S_{\ram}(H/F)$ be minimal for the extension $H/F$.  Then $H = F(u_\fp)$.
\end{lemma}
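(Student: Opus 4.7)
My plan is to reduce the claim to showing the stabilizer $\Stab$ of $u_\fp^m$ in $G := \Gal(H/F)$ is trivial, which immediately gives $F(u_\fp^m) = H^{\Stab} = H$. I will do this by exhibiting a single faithful odd character $\chi$ of $G$ for which $L_{S,T}(\chi^{-1},0) \neq 0$, forcing any element of $\Stab$ to lie in $\ker\chi = \{1\}$.

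First I would observe that for any $\sigma \in \Stab$, the equality $\sigma u_\fp^m = u_\fp^m$ forces $\sigma u_\fp = \zeta_\sigma u_\fp$ for some root of unity $\zeta_\sigma \in H$. Since $\ord_G \colon H^* \to \Z[G]$ is a $G$-equivariant homomorphism vanishing on roots of unity,
\[
\sigma \cdot \Theta_{S,T}^{H/F} \;=\; \sigma \cdot \ord_G(u_\fp) \;=\; \ord_G(\sigma u_\fp) \;=\; \ord_G(u_\fp) \;=\; \Theta_{S,T}^{H/F}.
\]
Applying any character $\chi \colon G \to \C^*$ yields $(\chi(\sigma) - 1)\cdot L_{S,T}(\chi^{-1},0) = 0$, so $\sigma \in \ker\chi$ whenever $L_{S,T}(\chi^{-1},0) \neq 0$. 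Since $G$ is cyclic of even order $2n$ with complex conjugation $c = g^n$ for a generator $g$, I take $\chi$ to be the character defined by $\chi(g) = e^{\pi i/n}$; this $\chi$ is both faithful (it is primitive of order $|G|$, so $\ker\chi = \{1\}$) and odd ($\chi(c) = -1$). Granting the nonvanishing $L_{S,T}(\chi^{-1},0)\neq 0$, the previous line forces $\sigma \in \ker\chi = \{1\}$, hence $\Stab = \{1\}$.

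The main obstacle, and where minimality of $S$ enters essentially, is verifying $L_{S,T}(\chi^{-1},0)\neq 0$. By the standard Tate formula, for nontrivial $\chi^{-1}$,
\[
\ord_{s=0}L_S(\chi^{-1},s) \;=\; \#\{v \in S : \chi|_{G_v} = 1\}.
\]
For real archimedean $v \in S_\infty$, $G_v$ is generated by the unique complex conjugation of the CM extension, namely $c$, on which $\chi$ is nontrivial by oddness. For finite $v \in S_{\ram}(H/F)$, the nontrivial inertia $I_v \subset G_v$ cannot lie in $\ker\chi = \{1\}$ by faithfulness, so $\chi|_{G_v}$ is again nontrivial. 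The minimality $S = S_\infty \cup S_{\ram}(H/F)$ is crucial here: any additional unramified finite prime in $S$ could have $\chi|_{G_v} = 1$ (its Frobenius landing in $\ker\chi$) and contribute a trivial zero. With minimality, $\ord_{s=0}L_S(\chi^{-1},s) = 0$, so $L_S(\chi^{-1},0)\neq 0$. The $T$-smoothing factor $\prod_{\fq\in T}(1 - \chi^{-1}(\fq)\N\fq)$ is nonzero since each term has absolute value at least $\N\fq - 1 \ge 1$, and I conclude $L_{S,T}(\chi^{-1},0) \neq 0$, completing the proof.
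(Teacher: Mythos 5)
Your proof is correct and takes essentially the same approach as the paper: fix a faithful odd character $\chi$ of the cyclic group $G$, use Tate's order-of-vanishing formula together with minimality of $S$ to get $L_{S,T}(\chi^{-1},0)\neq 0$, then deduce from the Brumer--Stark relation and $G$-equivariance of $\ord_G$ that any $\sigma$ fixing $u_\fp^m$ satisfies $\chi(\sigma)=1$, hence is trivial. The only small detour is the intermediate step $\sigma u_\fp = \zeta_\sigma u_\fp$: since $u_\fp$ a priori lives only in $U_{\fp,T}^-\otimes\Z[\tfrac{1}{2}]$ rather than in $H^*$, it is cleaner (and is what the paper does) to apply $\ord_G$ directly to the honest element $u_\fp^m \in H^*$, obtain $m(\sigma-1)\Theta_{S,T}=0$, and divide by $m$ in the torsion-free module $\Z[G]$.
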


\begin{proof}  Since $H/F$ is cyclic, there exists a {\em faithful} character of $G = \Gal(H/F)$. 
Such a character is necessarily odd, i.e. $\chi(\text{complex conjugation}) = -1$.
 By \cite{tatebook}*{Pg. 25}, we have
\[ \ord_{s=0} L_{S,T}(\chi, 0) = \# \{ v \in S\colon \chi(G_v) = 1 \} = 0 \]
since $\chi$ is faithful and $S$ is minimal (in particular $S$ contains no place $v$ that splits completely in $H$).  Hence $L_{S,T}(\chi,0) \neq 0$.  On the other hand, applying $\chi^{-1}$ to (\ref{e:bs}) yields
\[ L_{S,T}(\chi, 0)  = \sum_{\sigma \in G} \ord_{\fP}(\sigma(u_\fp)) \chi(\sigma).
\]
If $\tau \in G$ fixes $u_\fp$, then 
\begin{align*} \chi(\tau)  \cdot L_{S,T}(\chi, 0)  & = \sum_{\sigma \in G} \ord_{\fP}(\sigma(u_\fp)) \chi(\tau\sigma) \\
& = \sum_{\sigma \in G} \ord_{\fP}(\sigma\tau^{-1}(u_\fp)) \chi(\sigma) \\
&=  \sum_{\sigma \in G} \ord_{\fP}(\sigma(u_\fp)) \chi(\sigma) \\
&=  L_{S,T}(\chi, 0).
\end{align*}
Since  $L_{S,T}(\chi, 0)$ is nonzero, we conclude that $\chi(\tau) = 1$.  Since $\chi$ is faithful, we have $\tau=1$.  By Galois theory, we  have $H = F(u_{\fp})$.
\end{proof}

\begin{lemma} \label{l:cmcyclic} Let $F$ be a totally real field and suppose that $H/F$ is an abelian CM extension.  Then $H$ is the compositum of its CM subfields $H' \subset H$ containing $F$ such that $\Gal(H'/F)$ is cyclic.
\end{lemma}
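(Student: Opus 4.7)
The plan is to translate the claim into group-theoretic language via the Galois correspondence and then resolve it through a short character-theoretic argument.

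First I would observe that, since $H$ is CM and $\Gal(H/F)$ is abelian, complex conjugation is a well-defined central element $c$ of order $2$ in $G = \Gal(H/F)$: it is the generator of $\Gal(H/H^+)$, where $H^+$ denotes the maximal totally real subfield of $H$. Under the Galois correspondence, intermediate fields $F \subset H' \subset H$ correspond to subgroups $N \subset G$ via $H' = H^N$, and a direct check shows that $H'$ is CM if and only if complex conjugation acts nontrivially on $H'$, i.e.\ if and only if $c \notin N$; likewise $H'/F$ is cyclic if and only if $G/N$ is cyclic. The compositum of the qualifying subfields $H'$ then corresponds to $\bigcap_N N$, so the lemma reduces to showing that $\bigcap_N N = \{1\}$ as $N$ ranges over subgroups of $G$ with $c \notin N$ and $G/N$ cyclic.

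Next I would invoke Pontryagin duality for the finite abelian group $G$: subgroups $N$ with $G/N$ cyclic are exactly the kernels of characters $\chi \colon G \to \C^*$, and the condition $c \notin \ker \chi$ translates to $\chi(c) \neq 1$, equivalently $\chi(c) = -1$ since $c^2 = 1$. The assertion thus becomes: for every nontrivial $g \in G$ there exists a character $\chi$ of $G$ with $\chi(c) = -1$ and $\chi(g) \neq 1$.

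This I would handle by cases. If $g = c$, any $\chi$ with $\chi(c) = -1$ works, and such a $\chi$ exists by duality because $c \neq 1$. If $g \neq c$, then $\{\chi \in \hat G : \chi(g) = 1\}$ and $\{\chi \in \hat G : \chi(c) = 1\}$ are two proper subgroups of $\hat G$ (each proper by Pontryagin duality, since $g$ and $c$ are both nontrivial); and since no group is the union of two proper subgroups, there is some $\chi$ lying outside both. This last elementary step is the only nonformal point in the argument and presents no real obstacle.
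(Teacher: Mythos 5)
Your proof is correct and follows essentially the same route as the paper: use the Galois correspondence to reduce the statement to the group-theoretic fact that, for a finite abelian group $G$ with a nontrivial element $c$, the intersection of all subgroups $N$ with $G/N$ cyclic and $c \notin N$ is trivial. The paper leaves this group-theoretic fact as an exercise, whereas you supply a complete character-theoretic proof of it (via Pontryagin duality and the observation that no group is a union of two proper subgroups); aside from this the two treatments are identical.
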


\begin{proof} Let $c \in G = \Gal(H/F)$ denote the unique complex conjugation.
Let $G'$ be a subgroup of $G$. The fixed field $H^{G'}$ is CM if and only if $c \not \in G'$.  The desired result then follows from the following elementary fact in group theory:  if $G$ is a finite abelian group and $c \in G$ is any nontrivial element, then the intersection of all subgroups $G' \subset G$ such that $G/G'$ is cyclic and $c \not\in G'$ is trivial.  We leave the proof of this fact as an exercise.
\end{proof}

To pass from CM fields to arbitrary abelian extensions, we  consider the sign homomorphism
\[ \sgn\colon F^* \longrightarrow \{\pm 1\}^n, \qquad x \mapsto (\sign(\sigma_1(x)), \dotsc, \sign(\sigma_n(x)). \]
Here the $\sigma_i$ denote the $n$ real embeddings of $F$.
We say that elements $\alpha_1, \dotsc, \alpha_{n}$ are {\em sign spanning} if the images of these elements under $\sgn$
generate the abelian group $\{\pm 1\}^n.$

\begin{lemma}  \label{l:sqrts}
Let $F$ be a totally real field and let $K/F$ be a finite abelian extension. Suppose that $\alpha_1, \dotsc, \alpha_{n}$ are a sign spanning set of elements of $F^*$ such that $\sqrt{\alpha_i} \in K$ for all $i=1, \dotsc, n$.  Let $H$ denote the maximal CM  extension of $F$ contained in $K$.  Then $K = H(\sqrt{\alpha_1}, \dotsc, \sqrt{\alpha_{n}}).$
 \end{lemma}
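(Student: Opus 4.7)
The plan is to set $G = \Gal(K/F)$, let $c_1, \ldots, c_n \in G$ denote the complex conjugations at the $n$ real places of $F$ (well-defined since $G$ is abelian), set $C = \langle c_1, \ldots, c_n \rangle \subseteq G$, and set $K_V = F(\sqrt{\alpha_1}, \ldots, \sqrt{\alpha_n}) \subseteq K$.  Since $L = H \cdot K_V$ inside $K$, we have
\[ \Gal(K/L) \;=\; \Gal(K/H) \,\cap\, \Gal(K/K_V), \]
and I will establish the two inclusions $\Gal(K/H) \subseteq C$ and $C \cap \Gal(K/K_V) = \{1\}$.  Together they force $\Gal(K/L) = \{1\}$, which is exactly the conclusion $K = L$.

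For the first inclusion, I plan to prove $K^+ \subseteq H$, where $K^+$ denotes the maximal totally real subfield of $K$; since $\Gal(K/K^+) = C$, this is enough.  The sign-spanning hypothesis supplies an element of the $\F_2$-subspace $V = \langle \alpha_1, \ldots, \alpha_n \rangle \cdot (F^*)^2/(F^*)^2$ that is totally negative, so for such an $\alpha$ the quadratic field $F(\sqrt{\alpha}) \subseteq K$ is a nontrivial CM extension of $F$; in particular $H \neq F$ and $H$ carries a nontrivial complex conjugation $c_H$.  On the compositum $K^+ \cdot H$, each $c_i$ restricts to one and the same involution $\tilde{c}_H$, namely the automorphism that fixes $K^+$ pointwise and induces $c_H$ on $H$.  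This involution is nontrivial because $c_H \neq 1$, and its fixed field has index $2$ and is totally real, so $K^+ \cdot H$ is itself CM.  The maximality of $H$ then yields $K^+ \cdot H = H$, i.e., $K^+ \subseteq H$.

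For the second inclusion, Kummer theory identifies $\Gal(K_V/F) \cong \Hom(V, \{\pm 1\})$, and under this identification the image of $c_j$ is the character $\alpha \mapsto \sgn_j(\alpha) := \sign(\sigma_j(\alpha))$.  If $d = \prod_j c_j^{a_j}$ lies in $C \cap \Gal(K/K_V)$, then $\prod_j \sgn_j(\alpha)^{a_j} = 1$ for every $\alpha \in V$.  But sign-spanning says precisely that $\alpha \mapsto (\sgn_j(\alpha))_j$ is surjective onto $\{\pm 1\}^n$, so specializing to an $\alpha$ whose only negative sign is at place $k$ forces $(-1)^{a_k} = 1$ and hence $a_k = 0$ for each $k$.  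Therefore $d = 1$, as required.

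The step I expect to be subtlest is verifying that the compositum $K^+ \cdot H$ is genuinely CM rather than merely totally imaginary.  What saves the argument is that the $c_i$ all agree on $H$ (because $H$ is CM) and fix $K^+$ pointwise, so they collapse to a single nontrivial involution on $K^+ \cdot H$, whose fixed field is the required totally real subfield of index $2$.  The remainder of the proof is then a routine manipulation with Galois groups and the Kummer pairing.
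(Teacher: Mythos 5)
Your proof is correct, and it takes a genuinely different route from the paper's. The paper constructs the maximal CM subfield explicitly: it sets $G' = \langle \tau_i\tau_j^{-1}: i, j\rangle$, identifies $H = K^{G'}$, verifies that $K^{G'}$ is CM (rather than totally real) by ruling out relations $\prod_{i\in E}\tau_i = 1$ with $E$ nonempty (odd $E$ would make $K^{G'}$ totally real), and then concludes $K = H(\sqrt{\alpha_1}, \dotsc, \sqrt{\alpha_n})$ by observing that any $\tau$ fixing the right-hand side lies in $G'$, hence is a product $\prod_{i\in E}\tau_i$, and no nontrivial such product fixes every $\sqrt{\alpha_i}$. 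You instead treat $H$ abstractly via its maximality, decomposing the problem into the two inclusions $\Gal(K/H) \subseteq C$ and $C \cap \Gal(K/K_V) = 1$. The first you obtain from $K^+ \subseteq H$ via the CM-compositum argument: the conjugations $c_i$ all collapse to a single nontrivial involution on $K^+ H$ with totally real fixed field of index $2$, so $K^+ H$ is CM and therefore equals $H$ by maximality. The second inclusion is, in Kummer-dual language, precisely the same arithmetic computation that drives the paper's proof — sign-spanning makes the pairing $C \times V \to \{\pm 1\}$ nondegenerate on the $C$-side. So the two proofs share their computational core but differ in how they locate $H$: the paper builds it bottom-up out of the $\tau_i$'s, while you argue top-down from maximality, paying a modest extra cost to verify that $K^+ H$ is genuinely CM and not merely totally imaginary (a subtlety you correctly flag and resolve by noting the $c_i$ all agree on $K^+ H$). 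Both are valid; your version has the minor advantage of isolating $K^+ \subseteq H$ as a clean, reusable intermediate fact.
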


\begin{proof}  The assumption of the existence of the $\alpha_i$ implies that $K$ contains a CM extension of $F$, e.g.\ $F(\sqrt{\alpha})$ for a product $\alpha$ of the $\alpha_i$ that is totally negative.  Note also that the notion of ``maximal CM extension" is well-defined, since the compositum of CM fields is again CM.

Let $\tau_i \in G = \Gal(K/F)$ denote the complex conjugation corresponding to any complex place of $K$ above the real place $\sigma_i$ of $F$.  Let $G' \subset G$ denote the exponent 2 subgroup generated by the elements $\tau_i/\tau_j$ for all $i, j = 1, \dotsc, n$.
The fixed field $H = K^{G'}$ has at most one complex conjugation, namely the image of any $\tau_i$ in $G/G'$.  The field $H$ will be CM if this image is nontrivial, and totally real if the image is trivial.  The latter situation happens if and only if there is an equality of the form
\begin{equation} \label{e:tauprod} \prod_{i \in E} \tau_i = 1 \text{ in } G \end{equation}
for some subset $E \subset \{1, \dotsc, n\}$ of odd size.  In fact the assumption of the lemma implies that no such relation can occur for any nonempty $E$.  Indeed, choose an appropriate product $\alpha$ of the $\alpha_i$ such that $\sigma_j(\alpha)< 0 $ for some $j \in E$ and $\sigma_{j'}(\alpha) > 0$ for all $j' \neq j$.  Then $\left(\prod_{i \in E} \tau_i\right)(\sqrt{\alpha}) = - \sqrt{\alpha}$, precluding the possibility that (\ref{e:tauprod}) holds.

It remains to show that $K = H(\sqrt{\alpha_1}, \dotsc, \sqrt{\alpha_{n}}).$  Any element $\tau \in G$ that fixes the subfield $H(\sqrt{\alpha_1}, \dotsc, \sqrt{\alpha_{n}})$ pointwise must in particular fix $H$ pointwise, and hence must lie in $G'$.  Such an element therefore has the form $\tau = \prod_{i \in E} \tau_i$ for some subset $E \subset \{1, \dotsc, n\}$.  Yet we just showed that no such nontrivial product can fix every $\sqrt{\alpha_i}$.  It follows that $\tau = 1$, and the desired result follows from Galois theory.
\end{proof}

We now prove Theorem~\ref{t:h12}, whose statement we recall.
For each nonzero ideal $\fn \subset \cO_F$, pick a prime ideal $\fp(\fn) \subset \cO_F$ whose image in the narrow ray class group of $F$ of conductor $\fn$ is trivial.  Choose $\fp(\fn)$ such that the rational prime $p$ below it satisfies $(*)$.
Let $u_{\fp(\fn)} \in U_{\fp(\fn),T}$ denote the Brumer--Stark element for the narrow ray class field $F(\fn)$ of conductor $\fn$ (i.e.~for the maximal CM extension of $F$ contained in $F(\fn)$).  Define
\begin{equation} \label{e:sfndef}
 S_\fn = \left\{ \sigma(u_{\fp(\fn)}) \colon \sigma \in \Gal(F(\fn)/F) \right\}. 
 \end{equation}
Finally, let $\alpha_1, \dotsc, \alpha_{n-1} \in F^*$ such that $-1, \alpha_1, \alpha_2, \dotsc, \alpha_{n-1}$ are sign spanning.

\begin{theorem} \label{t:h12main}
The maximal abelian extension of $F$ is generated by \[ \sqrt{\alpha_1}, \dotsc, \sqrt{\alpha_{n-1}} \] together with the elements of $S_\fn$ as $\fn$ ranges over all nonzero  ideals $\fn \subset \cO_F$:
\begin{equation} \label{e:h12}
 F^{\ab}  = {\dot{\bigcup}}_{\fn} \ F(S_{\fn}) \ \ \dot\cup \ \ F(\sqrt{\alpha_1}, \dotsc, \sqrt{\alpha_{n-1}}), 
 \end{equation}
where $\dot\cup$ denotes compositum of fields.
\end{theorem}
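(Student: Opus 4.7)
The plan is to establish the two containments in (\ref{e:h12}) separately. The inclusion $\supseteq$ is immediate: each $F(\sqrt{\alpha_i})$ is quadratic, hence abelian over $F$, and each $F(S_\fn)$ is contained in the narrow ray class field $F(\fn)$, which is abelian over $F$; so the compositum lies in $F^{\ab}$.

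For the inclusion $\subseteq$, fix an arbitrary finite abelian extension $K/F$. I would first enlarge $K$ to $K' := K \cdot F(\sqrt{-1}, \sqrt{\alpha_1}, \ldots, \sqrt{\alpha_{n-1}})$, which remains abelian over $F$. Since $\{-1, \alpha_1, \ldots, \alpha_{n-1}\}$ is a sign spanning set all of whose square roots lie in $K'$, Lemma~\ref{l:sqrts} gives $K' = H \cdot F(\sqrt{-1}, \sqrt{\alpha_1}, \ldots, \sqrt{\alpha_{n-1}})$, where $H$ is the maximal CM subextension of $K'/F$. By Lemma~\ref{l:cmcyclic}, $H$ is the compositum of its cyclic CM subfields. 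The problem therefore reduces to showing that every cyclic CM extension $H'/F$ is contained in some $F(S_\fn)$; applying the reduction to $H' = F(\sqrt{-1})$ also absorbs the factor $F(\sqrt{-1})$ appearing on the right.

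To realize a given cyclic CM extension $H'/F$, I would choose $\fn$ to be the finite part of the conductor of $H'$. Then $H' \subset F(\fn)$, and the finite ramified primes of $F(\fn)/F$, hence of every subextension, are precisely the primes dividing $\fn$, i.e.\ the finite ramified primes of $H'/F$. Using Chebotarev, one can choose $\fp(\fn)$ with trivial image in the narrow ray class group of conductor $\fn$ and whose underlying rational prime satisfies $(*)$. Let $H_\fn$ denote the maximal CM subfield of $F(\fn)$ in which $\fp(\fn)$ splits completely, so that $u_{\fp(\fn)}^{m(\fn)} \in H_\fn$ and $H' \subset H_\fn$. It then remains to prove $H' \subset F(u_{\fp(\fn)}^{m(\fn)}) \subset F(S_\fn)$.

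This last inclusion is the main obstacle, because the Brumer--Stark unit $u_{\fp(\fn)}$ is attached to the potentially larger field $H_\fn$ and not to $H'$ itself, so Lemma~\ref{l:ugen} does not apply directly. Instead, I would adapt its proof: any $\tau \in \Gal(H_\fn/F)$ fixing $u_{\fp(\fn)}^{m(\fn)}$ satisfies $\chi(\tau) = 1$ for every character $\chi$ of $\Gal(H_\fn/F)$ with $L_{S,T}(\chi,0) \neq 0$, where $S = S_\infty \cup S_{\ram}(H_\fn/F)$. Applying this to the inflation of a faithful character $\chi_0$ of $\Gal(H'/F)$, which exists by cyclicity, the non-vanishing of $L_{S,T}(\chi_0,0)$ is verified place by place: $\chi_0$ is odd since $H'$ is CM, accounting for the infinite places; and each $v \in S_{\ram}(H_\fn/F)$ divides $\fn = \cond(H')$, hence is ramified in $H'/F$, so the nontrivial inertia $I_v(H'/F)$ meets the faithful character $\chi_0$ nontrivially. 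Therefore $\chi_0(\tau) = 1$, and by faithfulness $\tau|_{H'} = 1$, giving $\Stab(u_{\fp(\fn)}^{m(\fn)}) \subset \Gal(H_\fn/H')$, which yields $H' \subset F(u_{\fp(\fn)}^{m(\fn)})$ and completes the argument.
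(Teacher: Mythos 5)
Your proof is correct, and its skeleton matches the paper's exactly: enlarge $K$ to contain all the needed square roots, invoke Lemma~\ref{l:sqrts} and Lemma~\ref{l:cmcyclic} to reduce to a cyclic CM extension $H'/F$, and set $\fn=\cond(H')$. The genuine departure is in the final step. The paper applies the norm compatibility of Brumer--Stark elements (\cite{tatebook}*{Prop.~IV.3.5}) to form $u_\fp(H')=\N_{F(\fn)/H'}(u_{\fp(\fn)})$, observes via (\ref{e:sfndef}) that its $m(\fn)$-th power lies in the compositum, and then quotes Lemma~\ref{l:ugen} verbatim with $H=H'$. You instead keep $u_{\fp(\fn)}^{m(\fn)}$ living inside the (generally much larger) field $H_\fn\subset F(\fn)$ and re-run the character computation from Lemma~\ref{l:ugen}'s proof in that larger setting: applying the inflation $\chi$ of a faithful odd character $\chi_0$ of $\Gal(H'/F)$ to the Brumer--Stark identity for $u_{\fp(\fn)}$, the stabilizer of $u_{\fp(\fn)}^{m(\fn)}$ is forced into $\ker\chi_0$ once one knows $L_{S,T}(\chi,0)\neq 0$, and this nonvanishing reduces, via Tate's formula for $\ord_{s=0}L_{S,T}(\chi,s)$, to the fact that every finite $v\in S$ divides $\fn=\cond(H')$ and hence ramifies in $H'/F$, so $\chi_0$ is nontrivial on the inertia at $v$. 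Both arguments are sound and rest on the same two inputs --- Tate's order-of-vanishing formula and the Brumer--Stark relation --- and your set $S=S_\infty\cup S_{\ram}(H_\fn/F)$ coincides with the one for $F(\fn)/F$ precisely because $\cond(H')=\fn$, so no discrepancy arises. The paper's route is shorter and more modular, reusing Lemma~\ref{l:ugen} as a black box at the cost of one extra citation; your route is self-contained at that step (no norm compatibility needed) and makes explicit a fact the paper leaves implicit, namely that $u_{\fp(\fn)}^{m(\fn)}$ lies in the maximal CM subfield $H_\fn$ of $F(\fn)$, which follows because $|u_{\fp(\fn)}|_w=1$ at every archimedean place $w$ forces it to be fixed by every product of two complex conjugations. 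That assertion is the one spot in your write-up that deserves a sentence of justification rather than being stated outright.
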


\begin{proof}  Let $L$ denote the field on the right side of (\ref{e:h12}).  It is clear that $L$ is an abelian extension of $F$.  We must show that if $K$ is any finite abelian extension of $F$, then $K \subset L$.  After replacing $K$ by $K(\sqrt{-1}, \sqrt{\alpha_1}, \dotsc, \sqrt{\alpha_{n-1}})$, we may assume that $K$ contains $\sqrt{-1}, \sqrt{\alpha_1}, \dotsc, \sqrt{\alpha_{n-1}}$.  If $H$ denotes the maximal CM extension of $F$ contained in $K$, then $\sqrt{-1} \in H$ and hence Lemma~\ref{l:sqrts} implies that $K = H(\sqrt{\alpha_1}, \dotsc, \sqrt{\alpha_{n-1}})$.  It therefore suffices to show that $H \subset L$.  By Lemma~\ref{l:cmcyclic} we may assume that $H$ is a cyclic CM extension of $F$.

Let $\fn \subset \cO_F$ denote the conductor of $H$.  The minimal set $S$ for the narrow ray class field $F(\fn)$ is the same as that for $H \subset F(\fn)$, namely the union of $S_\infty$ with the set of primes dividing $\fn$.  We write $\fp = \fp(\fn)$. By  \cite{tatebook}*{Prop IV.3.5, Pg. 92}, the Brumer--Stark units for the extensions $F(\fn)$ and $H$ are related by
\begin{equation} \label{e:normrel}
 u_{\fp}(H) = \N_{F(\fn)/H}(u_{\fp}(F(\fn))) = \prod_{\sigma \in \Gal(F(\fn)/H)} \sigma(u_{\fp}(F(\fn))). \end{equation}
It follows from (\ref{e:normrel}) and the definition of $S_\fn$ that $u_{\fp}(H)\in L$.  Lemma~\ref{l:ugen} implies that
\[ H = F(u_{\fp}(H)) \] and hence $H \subset L$.  The result follows.
\end{proof}

\begin{remark}  \label{r:rou} We should remark on the root of unity ambiguity in Theorem~\ref{t:maindas} with respect to providing an explicit formula for the elements in $S_{\fn}$ appearing on the right side of (\ref{e:h12}).  Since this root of unity necessarily lies in $F_{\fp(n)}^*$, its order divides $\N\fp(\fn) - 1$.  Therefore we may simply raise our elements $u_{\fp(\fn)}$ to the power $\N\fp(\fn) - 1$ and obtain an unconditional exact equality, with elements that still satisfy the necessary properties for Theorem~\ref{t:h12main} (it is easy to adapt Lemma~\ref{l:ugen} to replace $u_\fp$ by $u_\fp^m$ for any positive integer $m$).   Alternatively, since roots of unity always generate abelian extensions, we can simply adjoin all roots of unity to the right side of (\ref{e:h12}) and ignore any possible root of unity error in the formula (\ref{e:genform}) for the elements of $S_\fn$.
\end{remark}

\subsection{Computations} \label{s:compute}

In this section we present some computations of the units $u_\fp$ calculated using the formula  (\ref{e:genform}).  We consider the simplest possible  case beyond $F = \Q$: we let $F = \Q(\sqrt{D})$ be a real quadratic field with discriminant $D$, let $p$ be a rational prime that is inert in $F$ (so $\fp = p \cO_F$), and let $H$ be the narrow Hilbert class field of $F$.  
The set $S$ is taken to equal $S_\infty$, the minimal possible set in this setting.  The set $T$ is taken to contain a single prime $\fq$ such that $\N\fq = \ell$ is a prime not equal to $p$.

The code to perform these computations was written by Max Fleischer and Yijia Liu, two undergraduate students of the first author at Duke University.  Their algorithm  closely follows the paper \cite{comp}.  One important difference is as follows.  In {\em loc.\ cit.}, a formal divisor $\sum_{d \mid N} n_d[d] \in \Div(\Z)$ is used to smooth the zeta values. The conditions $\sum_{d \mid N} n_d = 0$ and $\sum_{d \mid N} n_d d = 0$ are imposed.  For this reason, computations are performed with the simplest possible nonzero such divisor, namely $2[1] - 3[2] + [4]$.
In the present paper, we use the set $T$ to smooth our zeta values, which corresponds to setting $N = \ell$ and using the divisor $[\ell]-\ell[1]$.
 Note that this divisor does not satsify the condition $\sum_{d \mid N} n_d = 0$.  It turns out that this condition is unnecessary to apply the general algorithm of {\em loc.\ cit.}; only minor modifications are necessary (see \cite{dasweb}).

In each case, formula (\ref{e:genform}) is used to compute the image of $u_\fp$ and all of its conjugates over $F$ in $F_p^*$ to 100 $p$-adic digits.  The elementary symmetric polynomials of these conjugates are then calculated and, after scaling by the appropriate power of $p$ to achieve integrality, recognized as elements of $\cO_F$ using a standard nearest lattice vector algorithm.  This allows for the computation of the minimal polynomial of $u_\fp$ over $F$.  The computed 100 $p$-adic digits were enough to recognize the minimal polynomials in each case.  

We stress that the field $H$ itself is never fed into the program; all computations take place within $F$ and its completion $F_p$.  After the fact, it was verified that in each case the minimal polynomials listed split over a CM abelian extension of $F$ unramified at all finite primes, and hence must be contained in the narrow Hilbert class field $H$ of $F$.  The splitting field of the minimal polynomial of $u_\fp$ is in fact precisely $H$, as implied by a suitable modification of Lemma~\ref{l:ugen} (using the fact that $L_{S,T}(\chi, 0) \neq 0$ for every odd character $\chi$ since here $S=S_\infty$ contains no finite primes).

The programs to generate these results were written in SageMath and executed on a Jupyter Notebook using the kernel SageMath 9.0.  The code is available at 
\cite{dasweb}.  
This web page also describes a typo (sign error) in one equation in \cite{comp} that was discovered by Fleischer and Liu.
Below we give three interesting examples of the computations. 

\begin{example} $D = 221, p=3, \ell = 5, G = \Gal(H/F) \cong \Cl^+(F) \cong \Z/4\Z$.  The values $\ord_p(\sigma(u_\fp))$ for $\sigma \in G$, i.e.\ the partial zeta values $\zeta_{S,T}(\sigma, 0)$, are $\pm 3, \pm 15$.  The minimal polynomial of $u_\fp$ over $F$ is computed to be:
\begin{align*}
 X^4 + &  \left (\frac{-423812}{3^{13}} + \frac{71680\sqrt{D}}{3^{15}}\right)X^3 + \left(\frac{76348630}{3^{18}} + \frac{-5218304\sqrt{D}}{3^{16}}\right)X^2  \\
 & + \left(\frac{-423812}{3^{13}} + \frac{71680\sqrt{D}}{3^{15}}\right)X + 1. 
 \end{align*}
\end{example}

\begin{example} $D = 321, p=7, \ell = 5, G \cong \Z/6\Z$.  The values $\ord_p(\sigma(u_\fp))$ for $\sigma \in G$ are $\pm 1, \pm 3, \pm 7$.  The computed minimal polynomial of $u_\fp$ over $F$ is:
\begin{align*}
X^6 + & \left(\frac{55935}{2 \cdot 7^{7 }} + \frac{-63891\sqrt{D}}{2 \cdot 7^{7 }}\right)X^5 + \left(\frac{1062148509}{2 \cdot 7^{10 }} + 
\frac{2960001\sqrt{D}}{2 \cdot 7^{10 }}\right)X^4 
\\
+ & \left(\frac{-49244921}{2 \cdot 7^{10 }} + \frac{-279429993\sqrt{D}}{2 \cdot 7^{11}}
\right)X^3 + \cdots + 1.
 \end{align*}
Note that the minimal polynomials of Brumer--Stark units are always palindromic, since $u_\fp^{-1}$ is the complex conjugate of $u_\fp$.  Hence the coefficient of $X^2$ above equals the coefficient of $X^4$ and the coefficient of $X$ equals that of $X^5$.
\end{example}

\begin{example} $D = 897, p=5, \ell=7, G \cong \Z/4\Z \times \Z/2\Z$. The values $\ord_p(\sigma(u_\fp))$ are $\pm 7, \pm 9, \pm 11, \pm 21$.  The computed minimal polynomial of $u_\fp$ over $F$ is: 
\begin{align*}
X^8  + & \left(\frac{2549757626558363}{2\cdot 5^{21}}+\frac{1416002374557\sqrt{D}}{2\cdot 5^{21}}\right)X^7 \\
+ & \left(\frac{51143699935554731498041}{5^{32}}+\frac{56709030111424864533\sqrt{D}}{5^{31}}\right)X^6 \\
+ & \left(-\frac{11738117897361345671334368371}{2\cdot 5^{41}}+\frac{4935116278645813872967514931\sqrt{D}}{2\cdot 5^{41}}\right)X^5 \\
+ & \left(-\frac{4489586764048071498962140328642159}{5^{48}}+\frac{49988908282076855221482\sqrt{D}}{5^{34}}\right)X^4 \\ 
+ & \cdots +1
\end{align*}
\end{example}

Complete tables for all fundamental discriminants $D < 1000$ whose associated narrow class field is CM, comprising hundreds of similar examples, are given in \cite{dasweb}.

We conclude this section by noting that in his 2007 undergraduate senior thesis at Harvard University, Kaloyan Slavov computed an example where the ground field $F$ is a totally real cubic field and the conductor $\fn$ is nontrivial.  We refer to \cite{slavov}*{\S8.6.1} for details.

\section{Algebraic preliminaries}

The rest of the paper is concerned with proving Theorem~\ref{t:maingross}.  Let us recall the setup.
We are given a totally real field $F$ and a finite abelian CM extension  $H$. 
Let $S$ be a finite set of places of $F$ containing $S_\ram(H/F) \cup S_\infty$. Let
$T$ be a finite set of primes  of $F$ disjoint from $S$ satisfying the following condition of Deligne--Ribet:
\begin{equation}
\label{e:drcond}
  \parbox{\dimexpr\linewidth-5em}{
    \strut
$T$ contains a prime of residue characteristic greater than $[F:\Q]+1$, or two primes of different residue characteristic.
    \strut
  }
  \end{equation}
  This condition is useful because it implies the following:
\[
  \parbox{\dimexpr\linewidth-5em}{
    \strut
Let $T_H$ denote the set of primes of $H$ above those in $T$.  The group of roots of unity $\zeta \in \mu(H)$ such that $\zeta \equiv 1 \!\!\!\!\!\pmod{\fq}$ for all $\fq \in T_H$ is trivial.
    \strut
  }
\]

We fix a prime $\fp \subset \cO_F$ not in $S \cup T$ that splits completely in $H$ and write $S_\fp = S \cup \{\fp\}$.  
We consider another finite abelian CM extension $L/F$ containing $H$ and unramified outside $S_\fp$. Write  \[ \fg = \Gal(L/F), \quad \Gamma = \Gal(L/H), \quad G = \Gal(H/F) \cong \fg/\Gamma . \]
Let $p$ denote the rational prime contained in $\fp$. We assume that $p$ is odd.
Let \[ R = \Z_p[\fg]^-, \qquad  \overline{R} = \Z_p[G]^-, \qquad I = \ker(R \longrightarrow \overline{R}). \]
By Theorem~\ref{t:bs}, there is a unique $u_{\fp} \in U_{\fp,T}^- \otimes \Z_p$ such that \[ \ord_G(u_{\fp}) =  \Theta_{S,T}^H. \]
Our goal is to prove the following congruence, the $p$-part of Gross's conjecture:
\[ \rec_G(u_{\fp}) \equiv \Theta_{S_\fp,T}^L \pmod{I^2}. \]

\subsection{Altering Depletion and Smoothing Sets}

Following \cite{dk}, define
\begin{align*}
\Sigma &= \{v \in S  \colon v \mid p \infty \}, \\
\Sigma_\fp &= \Sigma \cup \{\fp\}, \\
\Sigma' &= \{v \in S \colon v  \nmid p\infty \} \cup T.
\end{align*}
It will be very convenient for us to replace the sets $S, S_\fp$, and $T$ by the sets $\Sigma, \Sigma_\fp$, and $\Sigma'$, respectively.  In this section, we make this replacement precise and prove that it suffices to prove the result in this context.

There is a Stickelberger element $\Theta_{\Sigma, \Sigma'}^H \in \Q[G]^-$ defined by the property 
\[ \chi(\Theta_{\Sigma, \Sigma'}^H) = L_{\Sigma, \Sigma'}(\chi^{-1}, 0) \] for every odd character $\chi$ of $G$, where
$L_{\Sigma, \Sigma'}(\chi, s)$ is defined by (\ref{e:ls}) and (\ref{e:lst}) along with the convention that $\chi(\fq)=0$ if $\chi$ is ramified at $\fq$.  We show in \cite{dk}*{Remark 3.6} that $\Theta_{\Sigma, \Sigma'}^H \in \overline{R}$.

As we explain, the results of \cite{dk} imply that there exists a unique \[ u_{\fp}^{\Sigma, \Sigma'} \in U_{\fp, T}^- \otimes \Z[\frac{1}{2}] \] such that \[ \ord_G(u_{\fp}^{\Sigma, \Sigma'}) = \Theta_{\Sigma,\Sigma'}^H. \]  
Let $\#$ denote the involution on $\overline{R}$ induced by $g \mapsto g^{-1}$ for $g \in G$.
Define \[ \KS_p^T(H/F) = (\Theta_{\Sigma, T}^H)^\# \prod_{\mathclap{\substack{v \in S_{\ram}(H/F) \\v \nmid p}}} (\N I_v, 1 - \sigma_v \N I_v/\#I_v) \subset \overline{R}. \] 
Here $I_v$ is the inertia subgroup at $v$. 
Remark 3.6 of {\em loc.\,cit.} implies that 
\[ \Theta_{\Sigma,\Sigma'}^H \in \KS_p^T(H/F)^\#. \] 
Equation (35) of {\em loc.\,cit.} then implies that $\Theta_{\Sigma,\Sigma'}^H$ annihilates $\Cl^T(H)_p^-$.  The annihilation of the class represented by $\fp$ is equivalent to the existence of the desired $u_{\fp}^{\Sigma, \Sigma'}$.

\begin{lemma}  \label{l:equiv}
To prove Gross's conjecture, it suffices to show that in each setup as above, we have the ``modified Gross conjecture":
\begin{equation} \label{e:mgc}
 \rec_G(u_{\fp}^{\Sigma, \Sigma'}) \equiv \Theta_{\Sigma_\fp, \Sigma'}^L \pmod{I^2}. \end{equation}
\end{lemma}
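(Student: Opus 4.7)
The plan is to show that the two forms of Gross's conjecture have the same error modulo $I^2$:
\[
\rec_G(u_\fp) - \Theta^L_{S_\fp,T} \;\equiv\; \rec_G(u_\fp^{\Sigma,\Sigma'}) - \Theta^L_{\Sigma_\fp,\Sigma'} \pmod{I^2},
\]
from which the modified congruence \eqref{e:mgc} immediately implies Conjecture~\ref{c:grosstower}. Since the passage from $(S, T)$ to $(\Sigma, \Sigma')$ amounts to moving each prime $v \in S \setminus \Sigma$ from the depletion set into the smoothing set, one at a time, an induction on $|S \setminus \Sigma|$ reduces the task to checking invariance of the error term under a single such swap.

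Fix $v \in S \setminus \Sigma$, set $S' = S \setminus \{v\}$, $T' = T \cup \{v\}$, and consider first the case that $v$ is unramified in $L/F$ (so $S'$ still contains $S_\infty \cup S_\ram(H/F)$ and $u_\fp^{S',T'}$ is defined). The multiplicative Euler factor identities
\[
\Theta^L_{S_\fp,T} = \Theta^L_{S'_\fp,T}(1-\sigma_v^{-1}), \qquad \Theta^L_{S'_\fp,T'} = \Theta^L_{S'_\fp,T}(1-\sigma_v^{-1}\N v),
\]
with $\sigma_v \in \fg$ the Frobenius at $v$, combined with $\Theta^L_{S'_\fp,T} \in I$ (since $\fp$ splits completely in $H$), give
\[
\Theta^L_{S_\fp,T} - \Theta^L_{S'_\fp,T'} \;\equiv\; \Theta^L_{S'_\fp,T}\cdot\sigma_v^{-1}(\N v - 1) \pmod{I^2}.
\]
The analogous identity in $\overline{R}$, together with the injectivity of $\ord_G$ on $U^-_{\fp, T}\otimes\Z_p$ (ensured by the Deligne--Ribet condition \eqref{e:drcond}), identifies $u_\fp / u_\fp^{S', T'}$ as the unique unit with $\ord_G$-image $\Theta^H_{S', T}\cdot\sigma_v^{-1}(\N v - 1)$. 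A local reciprocity computation at $\fp$, exploiting the $\overline{R}$-linearity of $\rec_G$ and Serre's convention for $\rec$, then yields
\[
\rec_G(u_\fp) - \rec_G(u_\fp^{S', T'}) \;\equiv\; \Theta^L_{S'_\fp, T}\cdot\sigma_v^{-1}(\N v - 1) \pmod{I^2},
\]
matching the Stickelberger discrepancy and closing the induction.

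The main obstacle is the ramified case $v \in S_\ram(H/F) \setminus \Sigma$, where removing $v$ from $S$ violates the setup for the Brumer--Stark conjecture on $H/F$ and the naive Euler factor fails to make sense in $R$. This case is handled by invoking the $\KS_p^T(H/F)$-module description recalled from \cite{dk}*{Remark 3.6}: the appropriate replacement for $(1 - \sigma_v^{-1}\N v)$ is the inertia-modified pair $(\N I_v,\, 1 - \sigma_v^{-1}\N I_v / \# I_v)$, and a parallel local reciprocity computation now involving the full decomposition group at $v$ produces the same matching discrepancy on both sides, completing the proof.
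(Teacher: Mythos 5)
Your skeleton matches the paper's: both arguments rest on the three-term Euler-factor identity relating $\Theta^L_{S_\fp,T}$, $\Theta^L_{S'_\fp,T'}$ and $\Theta^L_{S'_\fp,T}$, the corresponding multiplicative relation among Brumer--Stark units, the $\overline{R}$-linearity of $\rec_G$, and the inertia-modified factors $(\N I_v,\,1-\sigma_v^{-1}\N I_v/\#I_v)$ in the ramified case. The gap is the step you call a ``local reciprocity computation at $\fp$.'' By $\overline{R}$-linearity and the unit relation $u_\fp = u_\fp^{S',T'}\cdot(u_\fp^{S',T})^{\sigma_v^{-1}(\N v-1)}$, what you actually obtain is
\[
\rec_G(u_\fp)-\rec_G(u_\fp^{S',T'}) = \sigma_v^{-1}(\N v-1)\,\rec_G(u_\fp^{S',T}),
\]
and to identify the right-hand side with $\sigma_v^{-1}(\N v-1)\,\Theta^L_{S'_\fp,T}$ modulo $I^2$ you need precisely the Gross congruence for the \emph{third} setup $(S',T)$, the one in which $v$ lies in neither the depletion nor the smoothing set. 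No local computation at $\fp$ can supply this: $\rec_G(u_\fp^{S',T})$ is built from the reciprocity map applied to all conjugates of a global unit, and comparing it with a Stickelberger element is the content of the conjecture itself. Equivalently, writing $E(S,T)=\rec_G(u_\fp^{S,T})-\Theta^L_{S_\fp,T}$, a single swap gives $E(S,T)-E(S',T')=\sigma_v^{-1}(\N v-1)E(S',T)$ in $I/I^2$, so the error term is \emph{not} invariant under a swap; it changes by a multiple of the error of an intermediate setup, and your induction never establishes that this intermediate error vanishes.

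This is exactly why the lemma is phrased ``in each setup as above'' and why the paper's induction must carry, alongside the congruence (\ref{e:uj}) for $(J,J')$, the congruence (\ref{e:uj2}) for the pair $(J,J_v')$ over the subfield $L^{I_v}$: the factor $\N I_v$ in the Euler term permits descent to $L^{I_v}$, where $v$ is unramified and $(J,J_v')$ is a legitimate setup covered by the (simultaneous, all-towers) inductive hypothesis, and the map $x\mapsto x\cdot\N I_v$ carries $I_G(v)^2$ into $I^2$. To repair your argument you must run the induction over all towers and all admissible pairs at once, so that the congruence for $(S',T)$ --- or its avatar over $L^{I_v}$ in the ramified case --- is available as an inductive hypothesis rather than asserted as a local fact; as written, the central step of your reduction is unproved.
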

\begin{proof}  More generally if $\Sigma \subset J \subset S$ and $J' = \Sigma' \setminus J$, $J_\fp = J \cup \{\fp\}$, then we will show that 
\begin{equation} \label{e:uj}
 \rec_G(u_{\fp}^{J, J'}) \equiv \Theta_{J_\fp, J'}^L \pmod{I^2}. 
 \end{equation}
 The desired result is the case $J=S$.
We proceed by induction on $\#(J \setminus \Sigma)$.  The base case $J = \Sigma$ is given. For the inductive step, fix $J \supset \Sigma$ and let $v \in S \setminus J$.  Then 
$v \nmid p\infty$.  We write \[ 
J_v = J \cup \{v\}, \qquad
J_{v, \fp} = J \cup \{v, \fp\}, \qquad J'_v = J' - \{v\}. \]
Then \begin{equation} \label{e:theta}
 \Theta_{J_{v, \fp}, J'_v}^L = \Theta_{J_\fp, J'}^L + \frac{\N v - 1}{\#I_v} \sigma_v^{-1} \N I_v \Theta_{J_\fp, J_v'}^L. \end{equation}
Note that $\# I_v \mid \N v - 1 $ in $\Z_p$, since $v \nmid p$.
Both terms on the right side of (\ref{e:theta}) lie in $\text{SKu}_p^T(L/F)^\#$ and hence lie in $\overline{R}$.  By the inductive hypothesis, we have (\ref{e:uj}).
Furthermore, if we write $\overline{I}_v$ for the image of $I_v$ in $G$ and \[ I_G(v) = \ker(\Z_p[\fg/I_v]^- \longrightarrow \Z_p[G/\overline{I}_v]^-), \] then by induction we have
\begin{equation} \label{e:uj2}
 \rec_{G/\overline{I}_v}(u_{\fp}^{J, J_v'}) \equiv \Theta_{J_\fp, J_v'}^{L^{I_v}} \pmod{I_G(v)^2}.  
 \end{equation}
Here $u_{\fp}^{J, J_v'} \in U_\fp(H^{\overline{I}_v})^-$ 
satisfies \[ \ord_{G/\overline{I}_v}(u_{\fp}^{J, J_v'}) = 
\Theta_{J, J_v'}^{H^{\overline{I}_v}}. \]
 Now $x \mapsto x \cdot \N I_v$ yields a map $\Z_p[\fg/I_v] \rightarrow \Z_p[\fg]$ sending $I_G(v)^2 \rightarrow I^2$, so from (\ref{e:uj2}) we deduce
 \begin{equation}  \rec_G((u_{\fp}^{J, J_v'})^{\N I_v}) \equiv  \N I_v \Theta_{J_\fp, J_v'}^{L} \pmod{I^2}.  \label{e:uj3}
 \end{equation}
The desired result
\[  \rec_G(u_{\fp}^{J_v, J_v'}) \equiv \Theta_{J_{v,\fp}, J_v'}^L \pmod{I^2}  \]
now follows by combining (\ref{e:uj}) and (\ref{e:uj3}), using (\ref{e:theta}).  We must simply note that
\begin{align}
\Theta_{J_v, J_v'}^H &= \Theta_{J,J'}^H + \frac{\N v - 1}{\#\overline{I}_v} \sigma_v^{-1} \N \overline{I}_v \Theta_{J, J_v'}^H \\
&= \Theta_{J,J'}^H + \frac{\N v - 1}{\#{I_v}} \sigma_v^{-1} \N {I_v} \Theta_{J, J_v'}^H,
  \end{align}
which implies that $u_{\fp}^{J_v, J_v'} = u_{\fp}^{J, J'} \cdot (u_{\fp}^{J, J_v'})^{\frac{\N v - 1}{\#{I_v}} \sigma_v^{-1} \N {I_v}}.$
\end{proof}

\subsection{Removing primes above $p$ from the smoothing set} \label{s:remove}

When working with modular forms in \S\ref{s:cusp}, it will be convenient if the set $\Sigma'$ does not contain any primes above $p$.  Note that any primes above $p$ in $\Sigma'$ necessarily lie in $T$ and hence are unramifed in $L/F$.  
We give now the elementary argument, in the spirit of \cite{dk}*{\S4.1}, that shows that we can safely remove these primes.
Let 
\[ T_p = \{\fq \in T \colon \fp \mid p\}, \qquad T_0 = T \setminus T_p, \qquad  \Sigma'_0 = \Sigma' \setminus T_p. \]

\begin{lemma} \label{l:tp}  There exists $u_\fp^{\Sigma, \Sigma'_0} \in U_{\fp, T_0}^{-} \otimes {\Z_p}$ such that $ \ord_G(u_{\fp}^{\Sigma, \Sigma'_0}) = \Theta_{\Sigma,\Sigma'_0}^H$.  Furthermore the congruence
\begin{equation} \label{e:rec0}
 \rec_G(u_{\fp}^{\Sigma, \Sigma'_0}) \equiv \Theta_{\Sigma_\fp, \Sigma'_0}^L \pmod{I^2}  \end{equation}
implies the congruence  \[ \rec_G(u_{\fp}^{\Sigma, \Sigma'}) \equiv \Theta_{\Sigma_\fp, \Sigma'}^L \pmod{I^2}. 
\]
\end{lemma}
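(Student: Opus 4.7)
The plan is to exploit the fact that the Euler-type factors at primes above $p$ are units in the relevant group rings. More precisely, for each $\fq \in T_p$, the norm $\N\fq$ is a positive power of $p$, so $\N\fq \in p\Z_p$. Consequently the elements
\[
\delta_H := \prod_{\fq \in T_p}(1 - \sigma_\fq^{-1}\N\fq) \in \overline{R}, \qquad \delta_L := \prod_{\fq \in T_p}(1 - \tilde\sigma_\fq^{-1}\N\fq) \in R
\]
(for any choice of lifts $\tilde\sigma_\fq \in \fg$ of the Frobenius elements $\sigma_\fq \in G$, which exist since the $\fq \in T_p$ are unramified in $L/F$) reduce to $1$ modulo $p$ and hence are units, because $\Z_p[\fg]$ and $\Z_p[G]$ are $p$-adically complete. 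By construction $\delta_L$ maps to $\delta_H$ under the projection $R \longtwoheadrightarrow \overline{R}$. The standard formula relating $T$-smoothed and $T_0$-smoothed Stickelberger elements gives
\[ \Theta_{\Sigma,\Sigma'}^H = \delta_H \cdot \Theta_{\Sigma,\Sigma'_0}^H, \qquad \Theta_{\Sigma_\fp,\Sigma'}^L = \delta_L \cdot \Theta_{\Sigma_\fp,\Sigma'_0}^L. \]

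To produce $u_\fp^{\Sigma,\Sigma'_0}$, I would simply set $u_\fp^{\Sigma,\Sigma'_0} := (u_\fp^{\Sigma,\Sigma'})^{\delta_H^{-1}}$. Since $u_\fp^{\Sigma,\Sigma'} \in U_{\fp,T}^- \otimes \Z_p \subset U_{\fp,T_0}^- \otimes \Z_p$ and the latter is an $\overline{R}$-module, the element $u_\fp^{\Sigma,\Sigma'_0}$ lies in $U_{\fp,T_0}^- \otimes \Z_p$. The $\overline{R}$-equivariance of $\ord_G$ yields
\[ \ord_G(u_\fp^{\Sigma,\Sigma'_0}) = \delta_H^{-1} \cdot \Theta_{\Sigma,\Sigma'}^H = \Theta_{\Sigma,\Sigma'_0}^H, \]
as required. (Uniqueness of a Brumer--Stark element need not hold for $T_0$, but the lemma only asserts existence.)

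For the implication of congruences, I would use that $\rec_G\colon U_\fp^- \otimes \Z_p \longrightarrow I/I^2$ is $\overline{R}$-linear (since $I/I^2$ is naturally an $\overline{R}=R/I$-module). Hence
\[ \rec_G(u_\fp^{\Sigma,\Sigma'}) \;=\; \rec_G\!\left((u_\fp^{\Sigma,\Sigma'_0})^{\delta_H}\right) \;=\; \delta_H \cdot \rec_G(u_\fp^{\Sigma,\Sigma'_0}) \;\equiv\; \delta_H \cdot \Theta_{\Sigma_\fp,\Sigma'_0}^L \pmod{I^2}, \]
using the hypothesis (\ref{e:rec0}). Now $\Theta_{\Sigma_\fp,\Sigma'_0}^L$ lies in $I$ (its image in $\overline{R}$ vanishes because $\fp$ splits completely in $H$, carrying the Euler factor $1-\Frob(\fp)$ to zero). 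Since $\delta_L$ lifts $\delta_H$ and any two lifts differ by an element of $I$, multiplication by $\delta_H$ on $I/I^2$ agrees with multiplication by $\delta_L$ modulo $I^2$, giving
\[ \delta_H \cdot \Theta_{\Sigma_\fp,\Sigma'_0}^L \equiv \delta_L \cdot \Theta_{\Sigma_\fp,\Sigma'_0}^L = \Theta_{\Sigma_\fp,\Sigma'}^L \pmod{I^2}, \]
which is exactly the desired congruence.

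The only potentially subtle point is checking that $\delta_H$ is indeed a unit in $\overline{R}$ (not merely in $\Z_p[G]$), but this is automatic: a unit in $\Z_p[G]$ remains a unit in any quotient. There is no serious obstacle here; the lemma is essentially a bookkeeping exercise made possible by the crucial input $\N\fq \in p\Z_p$ for $\fq \mid p$, which both guarantees invertibility and is responsible for why the modular-forms arguments in \S\ref{s:cusp} prefer to have no primes above $p$ in the smoothing set.
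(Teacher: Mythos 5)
Your proposal is correct and follows essentially the same route as the paper: factor out the Euler factors at $T_p$, observe they are units in $\Z_p[G]$ because $\N\fq \equiv 0 \pmod p$, twist $u_\fp^{\Sigma,\Sigma'}$ by the inverse, and use $\overline{R}$-linearity of $\ord_G$ and $\rec_G$ to transfer the congruence. Your explicit remark that multiplication by $\delta_H$ and by its lift $\delta_L$ agree on $I/I^2$ (since $\Theta_{\Sigma_\fp,\Sigma'_0}^L \in I$) is a point the paper leaves implicit, but it is the same argument.
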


\begin{proof} 
For each $\fq \in T_p$, let $\sigma_\fq$ denote the associated Frobenius in $\fg$, and $\overline{\sigma}_\fq$ its image in $G$.
We have \begin{equation} \label{e:tp}
\Theta_{\Sigma,\Sigma'}^H =  \Theta_{\Sigma,\Sigma'_0}^H \prod_{\fq \in T_p} (1 - \overline{\sigma}_\fq \N\fq). \end{equation}
Each term in the product in (\ref{e:tp}) is congruent to 1 modulo $p$ and hence is invertible in $\Z_p[G]$.  Hence we can simply define
\[ u_\fp^{\Sigma, \Sigma'_0} =  \prod_{\fp \in T_p} (1 - \overline{\sigma}_\fp \N\fp)^{-1} (u_\fp^{\Sigma, \Sigma'}). \]
The congruence (\ref{e:rec0}) then implies
\begin{align*}
 \rec_G(u_{\fp}^{\Sigma, \Sigma'}) & \equiv 
  \Theta_{\Sigma_\fp, \Sigma'_0}^L  \prod_{\fq \in T_p} (1 - \overline{\sigma}_\fq \N\fq) \pmod{I^2} \\
  & \equiv \Theta_{\Sigma_\fp, \Sigma'}^L  \pmod{I^2}
  \end{align*}
  as desired.
\end{proof}

Applying Lemma~\ref{l:tp}, we assume for the remainder of the paper that $T$ contains no primes above $p$.
We will continue to write $T, \Sigma'$ rather than $T_0, \Sigma'_0$.

\begin{remark}  After removing the primes above $p$ from $T$, condition (\ref{e:drcond}) might no longer be satisified.  That condition was used to ensure the integrality of $\Theta_{S,T}$, which was then used to deduce the $p$-integrality of $\Theta_{\Sigma, \Sigma'}$.  As the argument in this section shows, after removing the primes above $p$ from $T$, the $p$-integrality of $\Theta_{\Sigma, \Sigma'}$ still holds. 
\end{remark}

\subsection{The ring $R_\sL$} \label{s:rldef}
In \cite{ddp}, the rank one $p$-adic Gross--Stark conjecture was interpreted as the equality of an algebraic $L$-invariant $\sL_{\alg}$ and
an  analytic $L$-invariant $\sL_{\an}$.  The analytic $\sL$-invariant is the ratio of the leading term of the $p$-adic $L$-function at $s=0$ to its classical counterpart: \begin{equation} \label{e:lan}
 \sL_{\an} = -\frac{L_p'(\chi\omega, 0)}{L(\chi, 0)}. \end{equation}
   The algebraic $L$-invariant is the ratio of the $p$-adic logarithm and valuation of the $\chi^{-1}$-component of the Brumer--Stark unit: 
   \begin{equation} \label{e:lalg}
    \sL_{\alg} = \frac{\log_p \Norm_{H_{\fP}/\Q_p}(u_\fp^{\chi^{-1}}) }{ \ord_\fP(u_\fp^{\chi^{-1}})}.  
    \end{equation}

Here and throughout the paper, $\log_p$ denotes the $p$-adic logarithm following Iwasawa's convention $\log_p(p)=0$.
There is no difficulty in defining the ratios (\ref{e:lan}) and (\ref{e:lalg}), since the quantities live in a $p$-adic field and the denominators are non-zero.  The analog of this situation in our present context is more delicate.  Let $I$ denote the kernel of the projection 
\[ 
\begin{tikzcd}
R = \Z_p[\fg]^- \ar[r] &  \overline{R} = \Z_p[G]^-. 
\end{tikzcd} 
\]
The role of the $p$-adic $L$-function is played by the Stickelberger element 
$\Theta_{\Sigma_\fp, \Sigma'}^L \in R$, and the analogue of the derivative at 0 is played by the image of $\Theta_{\Sigma_\fp, \Sigma'}^L$ in $I/I^2$.  The role of the classical $L$-function is played by the element $\Theta_{\Sigma, \Sigma'}^H \in \overline{R}$.  It is therefore not clear how to take the ``ratio" of these quantities.  Similarly, the role of the $p$-adic logarithm is played by $\rec_G(u_{\fp}^{\Sigma,\Sigma'}) \in I/I^2$ and the role of the $p$-adic valuation is played by $\ord_G(u_{\fp}^{\Sigma, \Sigma'}) \in \overline{R}$.

For this reason, we introduce an $R$-algebra $R_\sL$ that is generated by an element $\sL$ that plays the role of the analytic 
$\sL$-invariant, i.e. the ``ratio" between 
\[ 
\Theta_L = \Theta_{\Sigma_\fp, \Sigma'}^L \qquad \text{and} \qquad \Theta_H = \Theta_{\Sigma, \Sigma'}^H.
\]
We define
\begin{equation} \label{e:rldef}
 R_\sL = R[\sL] / (\Theta_H \sL - \Theta_L, \sL I, \sL^2, I^2). 
 \end{equation}
Note that $\Theta_H\sL$ is well-defined in $R[{\sL}]/\sL I$, so this definition makes sense. A key point is that the ring $R_\sL$, in which we have adjoined a ratio $\sL$ between $\Theta_L$ and $\Theta_H$, is still large enough to see  $R/I^2$.  
 
 \begin{theorem} The kernel of the structure map $R \longrightarrow R_{\sL}$ is $I^2$. \label{t:ws}
\end{theorem}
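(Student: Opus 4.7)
The plan is to reduce the injectivity of $R/I^2 \to R_\sL$ to an algebraic statement about how the annihilator of $\Theta_H$ in $\overline{R}$ acts on the image of $\Theta_L$ in $I/I^2$, and then verify that statement using the defining relationship between the two Stickelberger elements.

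First I would introduce the intermediate ring $N := R[\sL]/(\sL I, \sL^2, I^2)$. The relations $\sL I = 0$ and $\sL^2 = 0$ give an $R$-module decomposition $N \cong R/I^2 \oplus \overline{R}\cdot\sL$ (the second summand receiving its $\overline{R}$-action because $\sL I = 0$), and the kernel of the natural map $R \to N$ is precisely $I^2$. Since $R_\sL = N/J$ with $J := (\Theta_H\sL - \Theta_L)\cdot N$, the theorem is equivalent to the statement $J \cap (R/I^2) = 0$ inside $N$.

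Next I would compute $J$ explicitly. Expanding $(\Theta_H\sL - \Theta_L)(r + s\sL)$ and using $\sL^2 = 0$, $\sL I = 0$, together with $\Theta_L \in I$ (which holds because $\fp$ splits completely in $H$), the product collapses to $\bar{r}\Theta_H\sL - \bar{r}\Theta_L$. Hence, in the decomposition of $N$, every element of $J$ has the form $(-\bar{r}\Theta_L,\ \bar{r}\Theta_H\sL)$ with $\bar{r}\in\overline{R}$, where $\bar{r}\Theta_L \in I/I^2$ is well-defined (the indeterminacy in lifting $\bar r$ to $R$ being absorbed by $I \cdot \Theta_L \subset I^2$). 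The $\overline{R}\sL$-coordinate of such an element vanishes precisely when $\bar{r}\Theta_H = 0$, so the theorem reduces to the algebraic claim
\[
 \Ann_{\overline{R}}(\Theta_H) \cdot \Theta_L \;\subset\; I^2.
\]

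To establish this, I would prove the key congruence
\[
 \Theta_L \equiv (1 - \sigma_\fp^{-1})\,\tilde{\Theta}_H \pmod{I^2},
\]
where $\tilde{\Theta}_H \in R$ is any lift of $\Theta_H$ and $\sigma_\fp \in \fg$ is any Frobenius lift of $\Frob_\fp$. Because $\fp$ splits completely in $H$, the image of $\sigma_\fp$ in $G$ is trivial, so $\sigma_\fp \in \Gamma$ and $(1 - \sigma_\fp^{-1}) \in I$. Granting this, for any $a \in R$ with $\bar{a}\Theta_H = 0$ (equivalently $a\tilde{\Theta}_H \in I$) we obtain $a\Theta_L \equiv (1-\sigma_\fp^{-1})(a\tilde{\Theta}_H) \in I\cdot I = I^2$, which closes the argument. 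When $\fp$ is unramified in $L/F$ the congruence is immediate from the identity $\Theta_{\Sigma_\fp,\Sigma'}^L = (1 - \sigma_\fp^{-1})\Theta_{\Sigma,\Sigma'}^L$ in $R$ together with $\Theta_{\Sigma,\Sigma'}^L \equiv \tilde{\Theta}_H \pmod{I}$, so that $(1 - \sigma_\fp^{-1})(\Theta_{\Sigma,\Sigma'}^L - \tilde{\Theta}_H) \in I \cdot I = I^2$. The main obstacle is the case when $\fp$ ramifies in $L/F$: the factorization of $\Theta_L$ then picks up a contribution from characters of $\fg$ ramified at $\fp$, and recovering the congruence requires an averaging argument over the inertia subgroup $I_\fp \subset \Gamma$---using, for instance, that $N_{I_\fp} x \equiv |I_\fp|\,x \pmod{I^2}$ for every $x \in I$, with additional care when $p \mid |I_\fp|$---or, alternatively, a reduction to the maximal subextension of $L$ unramified at $\fp$ via the surjection $R \twoheadrightarrow \Z_p[\fg/I_\fp]^-$, coupled with careful control of the kernel $(I_\fp - 1)R$ modulo $I^2$.
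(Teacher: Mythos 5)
Your reduction to the statement $\Ann_{\overline{R}}(\Theta_H) \cdot \Theta_L \subset I^2$ is sound and matches the paper's reduction to Lemma~3.6 (the paper considers the constant and linear coefficients of a relation $a = (\Theta_H x - \Theta_L) q(x) + \dotsb$ in $R[x]$, which is the same bookkeeping). The gap is in the final step. Your proposed key congruence
\[
\Theta_L \equiv (1-\sigma_\fp^{-1})\,\tilde\Theta_H \pmod{I^2}
\]
is fine when $\fp$ is unramified in $L$, where it follows from the Euler factor identity $\Theta^L_{\Sigma_\fp,\Sigma'}=(1-\sigma_\fp^{-1})\Theta^L_{\Sigma,\Sigma'}$ together with $\Theta^L_{\Sigma,\Sigma'}\equiv\tilde\Theta_H\pmod I$. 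But when $\fp$ ramifies in $L$ the right-hand side is not even well-defined: replacing $\sigma_\fp$ by $\sigma_\fp\tau$ for $\tau\in I_\fp\subset\Gamma$ changes it by $(1-\tau^{-1})\tilde\Theta_H$, i.e. by the element $\Theta_H\otimes\tau$ in $I/I^2\cong\overline R\otimes\Gamma$, and there is no reason for this to vanish. Morally your congruence asserts that $\Theta_L$ is a \emph{multiple} of $\Theta_H$ in $I/I^2$, which is strictly stronger than the annihilator containment you need, and the averaging over $I_\fp$ you sketch is exactly the kind of thing that fails when $p\mid|I_\fp|$ (which is the generic case once one reduces to the $p$-primary part of $\Gamma$). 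So as written, the ramified case is a genuine gap, and I do not see how to patch it by elementary manipulations of Euler factors.

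The paper does something different and less elementary. It establishes the annihilator containment (Lemma~3.6: $\overline r\Theta_H=0$ implies $r\Theta_L\in I^2$) through Theorem~3.5, which says $\Theta_L$ lies in the product $\prod_{v\in\Sigma_\fp}I(v)$ of the kernels $I(v)=\ker\bigl(R\to\Z_p[\fg/\fg_v]^-\bigr)$. This in turn is extracted from the Fitting ideal formula $\Fitt_R\bigl(\nabla_{\Sigma_\fp}^{\Sigma'}(L)_R\bigr)=(\Theta_L)$ of \cite{dk}, by exhibiting $\Theta_L$ (up to a unit) as the determinant of a matrix whose columns indexed by $v\in\Sigma_\fp$ have entries in $I(v)$. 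Lemma~3.6 then follows by an idempotent decomposition of $\overline R$ according to whether $\Theta_H$ has a trivial zero at a given odd character: the $\fp$-factor in $\Theta_L\in\prod I(v)$ gives one factor in $I$, the $\Sigma$-factors force the remaining $\overline R$-component into the submodule killed by $e_1$, and the hypothesis $\overline r\Theta_H=0$ forces $\overline r e_2=0$, whence $\overline r$ kills that submodule. So the paper trades your putative explicit congruence for a weaker structural statement about $\Theta_L$ whose proof genuinely uses the Ritter--Weiss machinery and the strong Brumer--Stark input; I do not think one can get Theorem~3.4 by Euler-factor manipulations alone.
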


Before proving the theorem we establish some intermediate results that are important in their own right.

\begin{theorem} \label{t:gorder} For each prime $v$, let \[ \begin{tikzcd}
I(v) = \ker(R \ar[r] & \Z_p[\fg/\fg_v]^-). \end{tikzcd} \]  We have \[ \Theta_L \in \prod_{v \in \Sigma_\fp} I(v). \]
\end{theorem}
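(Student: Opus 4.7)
The plan is to combine a character-theoretic Euler factor argument with the Deligne--Ribet integrality of the $T$-smoothed Stickelberger element.

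First, I would dispose of the infinite places. For $v \in S_\infty$, the unique complex conjugation $c \in \fg$ lies in $\fg_v$, so its image $\bar c$ in $\fg/\fg_v$ is trivial; consequently $1 + \bar c = 2$ is a unit in $\Z_p$ (since $p$ is odd), forcing $\Z_p[\fg/\fg_v]^- = 0$ and $I(v) = R$. The infinite places therefore drop out of the product, and it remains to show
\[
\Theta_L \in \prod_{v \in \Sigma_\fp^f} I(v),
\]
where $\Sigma_\fp^f$ denotes the finite places in $\Sigma_\fp$ (all of which lie above $p$).

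Next I would factor out the unramified primes. Partition $\Sigma_\fp^f = \Sigma^{\unram} \sqcup \Sigma^{\ram}$ according to whether $v$ is unramified in $L/F$. For unramified finite $v$, the standard identity relating Stickelberger elements across a single depletion change gives
\[
\Theta_{\Sigma_\fp, \Sigma'}^L = \Theta_{S_\infty \cup \Sigma^{\ram}, \Sigma'}^L \cdot \prod_{v \in \Sigma^{\unram}}(1 - \sigma_v^{-1}),
\]
where $\sigma_v \in \fg$ is the Frobenius. The first factor is $p$-integral by Deligne--Ribet applied with $\Sigma'$ (its depletion set contains every finite prime ramified in $L/F$), while each $(1 - \sigma_v^{-1}) \in I(v)$ since $\sigma_v \in \fg_v$. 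The claim thereby reduces to
\[
\Theta_{S_\infty \cup \Sigma^{\ram}, \Sigma'}^L \in \prod_{v \in \Sigma^{\ram}} I(v).
\]

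I would then proceed by induction on $|\Sigma^{\ram}|$, pivoting on the rational factorization in $\Q_p[\fg]^-$
\[
\Theta_{U \cup \{v\}, \Sigma'}^L = \Theta_{U, \Sigma'}^L \cdot e_v, \qquad e_v := 1 - e_{I_v} \tilde\sigma_v^{-1},
\]
where $e_{I_v} = |I_v|^{-1} \sum_{\tau \in I_v} \tau$ is the idempotent of inertia and $\tilde\sigma_v$ is any lift of Frobenius (the product $e_{I_v}\tilde\sigma_v^{-1}$ being independent of the lift). A character check confirms $e_v \in I(v) \otimes \Q_p$, and the algebraic identity
\[
|I_v|\, e_v = |I_v|(1 - \tilde\sigma_v^{-1}) + \tilde\sigma_v^{-1} \sum_{\tau \in I_v}(1 - \tau)
\]
exhibits $|I_v|\, e_v \in I(v)$ integrally in $R$. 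Combining this with the inductive hypothesis yields the inclusion up to a denominator bounded by $\prod_{v \in \Sigma^{\ram}} |I_v|$.

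The main obstacle is removing this denominator when the inertia groups $I_v$ have order divisible by $p$, i.e.\ at wildly ramified primes above $p$. In that case $e_{I_v}$ genuinely introduces a $p$-adic denominator that cannot be cancelled by the purely group-ring manipulation above. I would handle this by appealing to the Deligne--Ribet construction of the $T$-smoothed $p$-adic $L$-function as an explicit $\Z_p$-valued measure: writing $\Theta_{S_\infty \cup \Sigma^{\ram}, \Sigma'}^L$ as an integral against this measure, the integrand naturally factors through $\prod I(v)$ once one restricts integration to units that are nontrivial modulo the conductors at each $v \in \Sigma^{\ram}$. The $T$-smoothing condition \eqref{e:drcond}, which guarantees $\Z_p$-valuedness of the measure, is precisely what absorbs the offending denominator $\prod |I_v|$ and promotes the rational factorization to an integral one.
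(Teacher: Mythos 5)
Your first three steps are sound: the infinite places drop out since $p$ is odd and $c\in\fg_v$ forces $\Z_p[\fg/\fg_v]^-=0$; the Euler factors $(1-\sigma_v^{-1})$ at unramified finite $v\in\Sigma_\fp$ visibly lie in $I(v)$; and the rational factorization, together with the identity showing $|I_v|\,e_v\in I(v)$, is correct. (One small slip: the $p$-integrality of $\Theta_{S_\infty\cup\Sigma^{\mathrm{ram}},\Sigma'}^L$ does \emph{not} follow from the bare Deligne--Ribet theorem as you claim, because the depletion set $S_\infty\cup\Sigma^{\mathrm{ram}}$ does not contain the ramified primes of $L/F$ away from $p$ --- those sit in $\Sigma'$, and integrality of the $\Sigma,\Sigma'$-type Stickelberger elements is a separate argument given in \cite{dk}*{Remark 3.6}.) The genuine gap is the final paragraph. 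The $T$-smoothing condition (\ref{e:drcond}) yields $\Z_p$-valuedness of the Deligne--Ribet measure and hence $\Theta_L\in R$; it gives no information about divisibility of $\Theta_L$ by $I(v)$ at a ramified $v\mid p$ with $p\mid|I_v|$. Knowing $\Theta_L\cdot\prod|I_v|\in\prod I(v)$ together with $\Theta_L\in R$ does not allow cancellation of $\prod|I_v|$ when $p$ divides it, and ``restricting integration to units nontrivial modulo the conductors'' is not a meaningful operation on the Stickelberger measure that would produce the required factorization. Integral divisibility at these ramified trivial zeros is precisely the nontrivial content of the theorem and cannot be extracted from measure integrality alone.

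The paper obtains the result from a much deeper input, namely the strong Brumer--Stark theorem $\Fitt_R(\nabla_{\Sigma_\fp}^{\Sigma'}(L)_R)=(\Theta_L)$ (Theorem~\ref{t:dkresult}, proved in \cite{dk}). Since $V^\theta_R$ is free of the same rank as $B^\theta_{\Sigma_\fp,R}$ by Lemma~\ref{l:qp}, this Fitting ideal equals $(\det A)$ for a square presentation matrix $A$. By construction of the Ritter--Weiss module, the column of $A$ indexed by $v\in\Sigma_\fp$ has entries in the image of $\Ind_{\fg_v}^{\fg}\Delta\fg_v\to R$, which is exactly $I(v)$ --- uniformly for ramified and unramified $v$, with no inertia denominator ever appearing. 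Hence $\det A\in\prod_{v\in\Sigma_\fp}I(v)$, and $(\Theta_L)=(\det A)$ gives the theorem. The divisibility you are struggling to produce is built into the Ritter--Weiss presentation at the level of the matrix columns and extracted via the Fitting-ideal identity, not by Euler-factor manipulation or measure-theoretic considerations.
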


The proof of Theorem~\ref{t:gorder} uses the Ritter--Weiss modules that will be recalled in the next section.  For this reason we postpone the proof until that point.

\begin{lemma}  Suppose that $r \in R$ satisfies $\overline{r} \Theta_H = 0$ in $\overline{R}$.  Then $r \Theta_L \in I^2$. \label{l:rtt}
\end{lemma}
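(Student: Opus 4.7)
The plan is to factor $\Theta_L$ as a product of two elements of $I$ by means of the Euler factor identity at $\fp$. Set $\Theta_L^* := \Theta_{\Sigma,\Sigma'}^L \in R$. Because $\Sigma_\fp = \Sigma \cup \{\fp\}$, adding $\fp$ to the depletion set multiplies the Stickelberger element by the Euler factor at $\fp$, yielding
\[
\Theta_L = \Theta_L^* \cdot (1 - \sigma_\fp) \quad \text{in } R,
\]
where $\sigma_\fp = \Frob(L/F,\fp) \in \fg$. This is the exact analogue for $L/F$ of the identity used in~(\ref{e:euler}) for $H/F$. (If $L$ is wildly ramified at $\fp$, the element $(1-\sigma_\fp)$ is replaced by the obvious partial-Frobenius element having the same character values on all characters of $\fg$ unramified at $\fp$; the argument below goes through unchanged since that replacement still lies in $I$.)

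The two factors on the right each lie in $I$. Indeed, the canonical projection $R \twoheadrightarrow \overline{R}$ sends $\Theta_L^*$ to $\Theta_{\Sigma,\Sigma'}^H = \Theta_H$, so $r\Theta_L^*$ maps to $\overline{r}\,\Theta_H$, which vanishes by hypothesis; hence $r\Theta_L^* \in I$. On the other hand, since $\fp$ splits completely in $H$, its Frobenius $\sigma_\fp \in \fg$ maps trivially to $G = \fg/\Gamma$, so $\sigma_\fp \in \Gamma$ and $(1-\sigma_\fp) \in I$. Multiplying these facts,
\[
r\Theta_L = (r\Theta_L^*)(1-\sigma_\fp) \in I \cdot I = I^2,
\]
which is the desired conclusion.

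No step of this argument is subtle; the entire content is that $\Theta_L$ factors as a lift of $\Theta_H$ times the Euler factor at $\fp$, with the hypothesis providing one copy of $I$ and the complete splitting of $\fp$ in $H$ providing the other. In particular, Theorem~\ref{t:gorder} is not required for this lemma—its strength is reserved for Theorem~\ref{t:ws} and the later Fitting-ideal analysis of $\nabla_{\!\sL}$.
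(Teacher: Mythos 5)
Your factorization $\Theta_L = \Theta_L^*\,(1-\sigma_\fp)$ is correct when $\fp$ is unramified in $L$, and one can patch it up when $\fp$ is tamely ramified; but the lemma must hold with $L$ allowed to be \emph{wildly} ramified at $\fp$. That case genuinely occurs---for instance in \S\ref{s:onep}, where $L$ is replaced by $LF_m$ with $F_m$ a layer of the cyclotomic $\Z_p$-extension. In the wild case the argument breaks at two points. First, the assertion $\Theta_L^* := \Theta_{\Sigma,\Sigma'}^L \in R$ is unjustified: the $p$-integrality theorems of Deligne--Ribet and Cassou-Nogu\`es require the depletion set to contain all finite places of ramification, and $\Sigma\cup\Sigma'$ omits $\fp$. (Indeed this is exactly why the paper goes to the trouble of citing \cite{dk}*{Remark 3.6} to justify $\Theta_{\Sigma,\Sigma'}^H\in\overline{R}$, an argument that works for $H$ precisely because $H/F$ is unramified at $\fp$.) Second, the partial-Frobenius element you would substitute for $1-\sigma_\fp$ is $e = 1 - \sigma_\fp^{-1}\,\tfrac{1}{\#I_\fp}\,\N I_\fp$; when $p\mid\#I_\fp$ this has a genuine denominator and lives in $\Q_p[\fg]^-$ but not in $R$. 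So the claim that ``that replacement still lies in $I$'' is false---$e\notin R$, and the product $(r\Theta_L^*)\cdot e$ cannot be read inside $I\cdot I$.

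This is exactly the gap that Theorem~\ref{t:gorder} closes. It furnishes, purely in $R$ and with no ramification hypothesis, the membership $\Theta_L \in \prod_{v\in\Sigma_\fp} I(v)$; since $I(\fp)\subset I$ this writes $\Theta_L$ as a sum of products $yz$ with $y\in\prod_{v\in\Sigma}I(v)$ and $z\in I$---the integral, ramification-robust version of the factorization you were aiming for. The paper's proof then observes that $\overline{r}$ is killed by the non-trivial-zero idempotent $e_2$ (since $\overline{r}\Theta_H=0$ and $\Theta_H e_2$ is a non-zerodivisor), whereas each $\overline{y}$ is killed by $e_1$, and concludes $\overline{r}\cdot[\Theta_L]=0$ in $I/I^2\cong\overline{R}\otimes I_\Gamma/I_\Gamma^2$. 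So Theorem~\ref{t:gorder} is not ``held in reserve'' for later use; it is the input that makes this lemma true when $\fp$ ramifies in $L$. Your argument proves the lemma only in the unramified and tame cases.
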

\begin{proof}  Let \[ e_1 = \ \ \ \ \ \  \sum_{\mathclap{L_{\Sigma, \Sigma'}(H/F, \chi, 0) =0}} \ e_\chi \] denote the idempotent of $\Frac(\overline{R})$ corresponding to the set of odd characters of $G$ at which $\Theta_H$ has a trivial zero.  Let $e_2 = 1 - e_1$ the denote the idempotent corresponding to the set of other odd  characters of $G$.  
Let $I_\Gamma$ denote the (absolute) augmentation ideal of $\Z_p[\Gamma]$.
Our goal is to prove that the image of $r \Theta_L$ in \[ I/I^2 \cong \overline{R}  \otimes I_\Gamma/I_\Gamma^2 \] vanishes
(see \cite{pcmi}*{5.2.3(b)} for this isomorphism).  Now $\overline{r} \Theta_H = 0$ implies that $\overline{r}e_2 =0$.

 Let 
$\Upsilon$ denote the kernel of the projection $\overline{R} \longrightarrow \overline{R}e_1$.  We have a short exact sequence
\begin{equation} \label{e:e1s}
\begin{tikzcd}
\Upsilon \otimes I_\Gamma/I_\Gamma^2 \ar[r, "\varpi"] & \overline{R}  \otimes I_\Gamma/I_\Gamma^2 \ar[r,"e_{1}"] &  \overline{R} e_1  \otimes I_\Gamma/I_\Gamma^2 \ar[r] & 0. \end{tikzcd}
\end{equation}
We claim that the image of $\Theta_L$ under the map denoted $e_{1}$ in (\ref{e:e1s}) vanishes.  Granting this claim for now, let us finish the proof.  The claim implies that $\Theta_L$ lies in the image of $\Upsilon \otimes I_\Gamma / I_\Gamma^2$ under the map $\varpi$ in (\ref{e:e1s}).  But $\overline{r}e_2 = 0$ implies that $\overline{r}$ annihilates $\Upsilon$.  It follows that $\overline{r} \Theta_L$ vanishes in $I/I^2$ as desired.

We now prove the claim, which states that $ \Theta_Le_1$ vanishes in $\overline{R}e_1  \otimes I_\Gamma/I_\Gamma^2$.
By Theorem~\ref{t:gorder}, we have that $\Theta_L \in \prod_{v \in \Sigma_\fp} I(v)$.  Now $\fg_\fp \subset \Gamma$, so $I(\fp) \subset I$.   We may therefore write $\Theta_L$ as a sum of elements of the form $y z$ where $y \in \prod_{v \in \Sigma} I(v)$ and $z \in I$.  Now if $y \in \prod_{v \in \Sigma} I(v)$ and $\overline{y}$ denotes the image of $y$ in $\overline{R}$, then $e_1 \overline{y} = 0$.  Indeed, it suffices to check this character by character: for an odd $\chi \in \hat{G},$ we have  $\chi(e_1) =0$ if $\chi(G_v) \neq 1$ for all $v \in \Sigma$, whereas $\chi(\overline{y}) = 0$ if 
$\chi(G_v) = 1$ for some $v \in \Sigma$.  It follows that if $z \in I$, then $e_1(y z)$ vanishes in 
$\overline{R}e_1  \otimes I_\Gamma/I_\Gamma^2$.  The desired result for $\Theta_Le_1$ follows.
\end{proof}

We can now establish the injectivity of the map $R/I^2 \longrightarrow R_{\sL}$.

\begin{proof}[Proof of Theorem~\ref{t:ws}] If the image of $a \in R$ in the polynomial ring $R[x]$ belongs to the ideal generated by $\Theta_Hx - \Theta_L, x^2, xI, I^2$, then considering the constant term implies that $a + \Theta_L r \in I^2$ for some $r \in R$. Considering the linear term implies that $\overline{r} \Theta_H = 0$ in $\overline{R}$, where $\overline{r}$ denotes the image of $r$ in $\overline{R}$.  It then follows from Lemma~\ref{l:rtt} that $a \in I^2$, as desired.
\end{proof}

In view of Theorem \ref{t:ws},
the integral Gross--Stark conjecture can be reinterpreted as an equation between algebraic and analytic $\sL$-invariants in the ring $R_\sL$.  We will show 
\begin{equation} 
 \rec_G(u_{\fp}^{\Sigma, \Sigma'})  = \sL \ord_G(u_{\fp}^{\Sigma, \Sigma'}) \text{ in } R_{\sL}. \label{e:linv} \end{equation}
Since the modified Brumer--Stark unit satsifies $\ord_G(u_{\fp}^{\Sigma, \Sigma'}) = \Theta_H$, the right side of (\ref{e:linv}) equals $\Theta_L$. The equality $\rec_G(u_{\fp}^{\Sigma, \Sigma'})  = \Theta_L$ in $R_{\sL}$ then gives the desired congruence
$\rec_G(u_{\fp}^{\Sigma, \Sigma'})  \equiv \Theta_L \pmod{I^2}$ in $R$ by the injectivity of $R/I^2 \longrightarrow R_{\sL}$.

\section{Generalized Ritter--Weiss modules} \label{s:rw}

The Galois modules introduced by Ritter and Weiss to give generalized Tate sequences play a central role in this work. Before delving into the details, we give a road map for \S\ref{s:rwdef}--\S\ref{s:gcfi}.
In  \S\ref{s:rwdef}, we recall the definition of the Ritter--Weiss module $\nabla_{\Sigma}^{\Sigma'}$, following the construction of \cite{dk} that incorporates the smoothing set $\Sigma'$.  To maintain maximal generality, we work over $\Z[\fg]$ rather than over $R =\Z_p[\fg]^-$.  In \S\ref{s:gc} we give an interpretation of $\nabla_{\Sigma}^{\Sigma'}$ in terms of Galois cohomology.  In \S\ref{s:nablal} we define a module $\nabla_{\!\sL}$ over the ring $R_{\sL}$ that incorporates the analytic $\sL$-invariant. In \S\ref{s:gcfi} we interpret the $p$-part of Gross's conjecture as the statement that the Fitting ideal of $\nabla_{\!\sL}$ over $R_{\sL}$ vanishes.
\subsection{Definition} \label{s:rwdef}

We recall the definition of $\nabla_{\Sigma}^{\Sigma'}$ from \cite{dk}*{\S A}.
We begin by choosing an auxiliary finite set of primes $S'$ of $F$ that contains $\Sigma_\fp$ and is disjoint from $\Sigma'$. Note that the places in $S' - \Sigma_\fp$ are unramified in $L$.
We furthermore assume that $S'$ is large enough so that $\Cl_{S'}^{\Sigma'}(L) = 1$, $\Cl_{S'}^{\Sigma'}(H) = 1$, and such that the union of the decomposition groups 
$\fg_v \subset \fg$ for $v \in S'$ is all of $\fg$.  The construction of $\nabla$ is  independent of the chosen auxiliary set $S'$ (see \cite{dk}*{\S A.2}).

For each place $v$ of $F$, we fix a place $w$ of $L$ above $v$. Write $\Delta \fg_v$ for the augmentation ideal of $\Z[\fg_v]$.
  Ritter--Weiss \cite{rw} define $\Z[\fg]$-modules $V_w(L)$ and $W_w(L)$ sitting in  exact sequences:
\begin{equation} \label{e:rwvwdef}
\begin{tikzcd}[row sep=tiny]
 1 \ar[r] &   L_w^* \ar[r] &  V_w(L) \ar[r] & \Delta \fg_v \ar[r] & 1  \\
  1 \ar[r] &   \cO_{w}^* \ar[r] & V_w(L) \ar[r] &  W_w(L) \ar[r] &1.
 \end{tikzcd}
\end{equation}
 Here $\cO_{w}$ denotes the completion of $\cO_L$ at $w$.
Let $U_{w} \subset \cO_{w}^*$ denote the subgroup of 1-units.   

The modules $V_w(L)$ and $W_w(L)$ are defined as follows.   Let $L_w^{\ab} \supset L_w^{\nr}$ denote the maximal abelian and unramified extensions of $L_w$, respectively.
There  are canonical short exact sequences
\[
\begin{tikzcd}[row sep=tiny]
1  \ar[r] & \WD(L_w^{\ab}/L_w) \cong L_w^*  \ar[r] & \WD(L_w^{\ab}/F_v)  \ar[r,"\pi_V"] & \fg_v  \ar[r] & 1 \\
1  \ar[r] & \WD(L_w^{\nr}/L_w) \cong \Z  \ar[r] & \WD(L_w^{\nr}/F_v)  \ar[r,"\pi_W"] & \fg_v  \ar[r] & 1,
 \end{tikzcd}
\]
where $\WD$ denotes the Weil group. 
 Let $I_V$ denote the (absolute) augmentation ideal of $\WD(L_w^{\ab}/F_v)$, and let 
$I_{\pi_V}$ denote the relative augmentation ideal corresponding to $\pi_V$.   Define $I_W$ and $I_{\pi_W}$ similarly
from the corresponding terms in the second exact sequence above.
Then \begin{equation} \label{e:vandw}
 V_w(L) = I_V / I_V I_{\pi_V}, \qquad W_w(L) = I_W / I_W I_{\pi_W}. \end{equation}

Following \cite{greither}, given a collection of $\fg_v$-modules $M_w$, 
we write   \[ \prod_{v}^\sim M_w = \prod_{v} \Ind_{\fg_v}^{\fg} M_w. \]
Define \[ 
V = \prod_{v \in S'}^\sim V_w(L) \prod_{v \in \Sigma'}^{\sim} U_{w}
 \prod_{v \not\in S' \cup \Sigma'}^{\sim} \cO_{w}^*. \]
 Let \begin{align*}
 J_\Sigma &=  \prod_{v \not\in \Sigma  \cup \Sigma'}^{\sim} \cO_{w}^* \prod_{v \in \Sigma}^{\sim} L_w^* \prod_{v \in \Sigma'}^{\sim} U_{w},  
   & J_{\Sigma_\fp} & =  \prod_{v \not\in \Sigma_\fp  \cup \Sigma'}^{\sim} \cO_{w}^* \prod_{v \in \Sigma_\fp}^{\sim} L_w^* \prod_{v \in \Sigma'}^{\sim} U_{w}, 
 \\
  W_\Sigma &= \prod_{v \in S' - \Sigma}^{\sim}  W_{w}(L)  \prod_{v \in \Sigma}^{\sim} \Delta \fg_v, 
   &  W_{\Sigma_\fp} & =  \prod_{v \in S' - \Sigma_\fp}^{\sim}  W_{w}(L)  \prod_{v \in \Sigma_\fp}^{\sim} \Delta \fg_v.
 \end{align*}

Note that we do not adorn $V$ with a subscript because it does not depend on the choice of $\Sigma$ versus $\Sigma_\fp$.  The fact that the same module $V$ is used in both constructions will be of great importance.

For each set $\bl = \Sigma$ or $\Sigma_\fp$, we have a commutative diagram
\begin{equation}  \label{e:rwdiagram}
\begin{gathered}
\begin{tikzcd}
1 \ar[r] & J_\bl \ar[r]  \ar[d,"\theta_J"] & V  \ar[r] \ar[d,"\theta"] & W_\bl  \ar[r] \ar[d,"\theta_W"] & 1 \\
1 \ar[r] & C_L  \ar[r] & \fO  \ar[r] & \Delta \fg  \ar[r] & 1.
\end{tikzcd}
\end{gathered}
\end{equation}
Here $C_L = \A_L/L^*$ denotes the id\`ele class group of $L$, and $\fO$ denotes the extension of $\Delta \fg$ by $C_L$ associated to the global fundamental class in $H^2(\fg, C_L)$ (see \cite{rw}).
By \cite{dk}*{Lemma A.1}, the map $\theta$ is surjective.
We therefore get a short exact sequence
\begin{equation} \label{e:snake}
\begin{tikzcd}
 0 \ar[r] & \cO_{L, \bl, \Sigma'}^* \ar[r] & V^\theta \ar[r] & W_\bl^\theta \ar[r] & \Cl_{\bl}^{\Sigma'}(L) \ar[r] & 0,
 \end{tikzcd}
 \end{equation}
where $V^\theta$ denotes the kernel of $\theta$ and $W_{\bl}^{\theta}$ denotes the kernel of $\theta_W$. Further, $\cO_{L, \bl, \Sigma'}^*$ denotes the group of $\bl$-units of $L$ congruent to 1 modulo $\Sigma'$, i.e.\ the set of elements in $L^*$ whose image in $\tilde{\prod}_{v} L_w$ is contained in $J_{\Sigma_*}$.    
Next we define 
\[ B_\Sigma =  (\Z[\fg] \oplus \Z[\fg/\fg_\fp]) \oplus \prod_{v \in S' - \{\fp\}} \Z[\fg], \qquad \qquad B_{\Sigma_\fp} =  \prod_{v \in S'} \Z[\fg].  \]
In $B_\Sigma$, the first term $\Z[\fg] \oplus \Z[\fg/\fg_\fp]$  will be referred to as the component at $\fp$.  

There are injective maps $j_\bl \colon W_\bl \longrightarrow B_\bl $ that we now describe on each component.
\begin{itemize}
\item Each $v \in S' - \Sigma_\fp$ is unramified in $L$, so we have $W_w(L) \cong \Z[\fg_v]$ (see \cite{rw}*{Lemma 5}).
If $\sigma_w \in \WD(L_w^{\nr}/F_v)$ is the Frobenius element, this isomorphism sends the image of $\sigma_v - 1 \in I_W$ to 1.
We then have  $\Ind_{\fg_v}^{\fg} W_w(L) \cong \Z[\fg]$.  The component of $j_\bl$ at $v \in S' - \Sigma_\fp$ is this isomorphism.

\item For $v \in \bl$, the inclusion $\Delta \fg_v \subset \Z[\fg_v]$ induces $\Ind_{\fg_v}^{\fg} \Delta {\fg_v} \subset \Z[\fg]$.
The component of $j_\bl$ at $v \in \bl$  is  this inclusion.

\item To conclude we define, for $w$ the chosen place of $L$ above $\fp$, a map 
\begin{equation} \label{e:wpl}
\xymatrix{ j_\fp \colon \Ind_{\fg_\fp}^{\fg} W_w(L) \ar[r] & \Z[\fg] \oplus \Z[\fg/\fg_\fp] } \end{equation}
giving the component of $j_{\Sigma}$ at $\fp$.
Consider the composition \begin{equation} \label{e:ordp}
\begin{tikzcd}
 \WD(\overline{F}_\fp/F_\fp) \ar[r] & \WD(F^{\ab}_\fp/F_\fp) \cong F_\fp^* \ar[r,"\ord_\fp"] & \Z,
 \end{tikzcd}
 \end{equation}
which we simply denote $\ord_\fp$.  Clearly $\ord_\fp$ factors through $\WD(L_w^{\nr}/F_\fp)$.  We then define
\[ \begin{tikzcd}
 W_w(L) \ar[r] &   \Z[\fg_\fp] \oplus \Z
 \end{tikzcd} \]
by 
\begin{equation} \label{e:jpdef}
  \sigma -1 \mapsto (  \sigma|_{L_w} -1, \ord_{\fp}(\sigma)), \qquad \sigma \in\WD(L_w^{\nr}/F_\fp). 
  \end{equation}
Inducing this map from $\fg_\fp$ to $\fg$ yields the desired map $j_\fp$.  
\end{itemize}

The fact that $j_\fp$ is an injection follows from \cite{rw}*{Lemma 5(b)}.  
Let $Y_\fp$ denote the cokernel of $j_\fp$.  
Let \[ Y_{\Sigma} =  Y_\fp \prod_{v \in \Sigma} \Ind_{\fg_v}^{\fg} \Z, \qquad Y_{\Sigma_\fp} = \prod_{v \in \Sigma_\fp} \Ind_{\fg_v}^{\fg} \Z. \]
For each $\bl = \Sigma$ or $\Sigma_\fp$ we have a commutative diagram with   exact rows:
\begin{equation}  \label{e:wbdiagram}
\begin{gathered}
\begin{tikzcd}
0 \ar[r] & W_\bl \ar[r]  \ar[d,"\theta_W"] & B_\bl \ar[r] \ar[d,"\theta_B"] & Y_\bl \ar[r] \ar[d,"\theta_Y"] & 0 \\
0 \ar[r] & \Delta \fg  \ar[r] & \Z[\fg] \ar[r] & \Z \ar[r] & 0.
\end{tikzcd}
\end{gathered}
\end{equation}
The map $\theta_B$ is defined as follows:
\begin{itemize}
\item $\theta_B$ is the identity in the factors corresponding to $v \in \Sigma_*$.
\item $\theta_B$ is multiplication by $\sigma_v - 1$ (where $\sigma_v \in \fg_v \subset \fg$ denotes Frobenius at $v$) in the factors corresponding to $v \in S' - \Sigma$ (see \cite{rw}*{Lemma 5}).
\item For $v = \fp$ and $\bl = \Sigma$, the map $\theta_B$ on the component at $\fp$ is  projection 
onto the first factor.
\end{itemize}
Since $\theta_W$ is surjective, taking kernels in (\ref{e:wbdiagram}) yields a short exact sequence
\begin{equation} \label{e:wbx}
\begin{tikzcd}
 0 \ar[r] & W^\theta_\bl  \ar[r] & B^\theta_\bl  \ar[r] & Y^\theta_\bl  \ar[r] & 0. 
 \end{tikzcd}
 \end{equation}
Note that $B^\theta_\Sigma \cong \Z[\fg/\fg_\fp] \oplus \Z[\fg]^{\#S'-1} $ and $B_{\Sigma_\fp}^\theta \cong \Z[\fg]^{\#S' - 1}$.  We define 
\begin{equation} \begin{tikzcd}
\tilde{\nabla}_{\bl}^{\Sigma'}(L) = \coker(f_{\bl} \colon V^\theta \ar[r] &  W^\theta_{\bl} \ar[r] & B^\theta_\bl). 
\end{tikzcd} \label{e:fsigmadef}
\end{equation}
When $\bl = \Sigma_\fp$, the module $\tilde{\nabla}_{\Sigma_\fp}^{\Sigma'}(L)$ is the same as the module 
$\nabla_{\Sigma_\fp}^{\Sigma'}(L)$  defined in \cite{dk}, and when we speak of it individually we will use the latter notation.  However for $\bl = \Sigma$, 
the module $\tilde{\nabla}_{\Sigma}^{\Sigma'}(L)$ is a subquotient of the module  ${\nabla}_{\Sigma}^{\Sigma'}(L)$ defined in {\em loc.\,cit.}.  We have introduced the $\tilde{\nabla}$ notation so as to not conflict with the notation of {\em loc.\,cit.}, and so that we may speak of ${\nabla}_{\Sigma_\fp}^{\Sigma'}(L)$ and
$\tilde{\nabla}_{\Sigma}^{\Sigma'}(L)$ simultaneously when convenient, using the symbol $\tilde{\nabla}_{\bl}^{\Sigma'}(L)$.

In view of (\ref{e:snake}) and (\ref{e:wbx}), we have two exact sequences:
\begin{equation} \label{e:tate}
\begin{tikzcd}
  0   \ar[r] & \cO_{L, \bl, \Sigma'}^*  \ar[r] & V^\theta  \ar[r] & B^\theta_\bl  \ar[r] & \tilde{\nabla}_{\bl}^{\Sigma'}(L)  \ar[r] & 0, 
  \end{tikzcd}
 \end{equation}
\begin{equation} \label{e:seltr}
\begin{tikzcd}
0  \ar[r] & \Cl_\bl^{\Sigma'}(L)  \ar[r] & \tilde{\nabla}_{\bl}^{\Sigma'}(L)  \ar[r] & Y^\theta_\bl  \ar[r] & 0. 
\end{tikzcd}
\end{equation}

In \cite{dk} we proved the following results.  Write 
\[ (V^\theta)_R = (V^\theta) \otimes_{\Z[\fg]} R, \]
and similarly with $V^\theta$ replaced by any other $\Z[\fg]$-module.

\begin{lemma}[\cite{dk}*{Lemma A.4}]  \label{l:qp}
The module $(V^\theta)_R$ is free over $R$ of rank equal to the rank of the free module $B_{\Sigma_\fp}^\theta$, namely $\#S' - 1$.  Hence the module 
${\nabla}_{\Sigma_{\fp}}^{\Sigma'}(L)_R$ is quadratically presented over $R$.
\end{lemma}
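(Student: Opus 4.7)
The plan is to deduce freeness of $(V^\theta)_R$ from cohomological triviality combined with an explicit rank count, then read off the quadratic presentation of $\nabla_{\Sigma_\fp}^{\Sigma'}(L)_R$ directly from the definition in (\ref{e:fsigmadef}).

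First, I would extract from (\ref{e:rwdiagram}), using surjectivity of $\theta$ (recalled from \cite{dk}*{Lemma A.1}), the short exact sequence
\[
0 \longrightarrow V^\theta \longrightarrow V \longrightarrow \fO \longrightarrow 0.
\]
The term $\fO$ is $\fg$-cohomologically trivial by Tate's theorem, being the extension of $\Delta \fg$ by $C_L$ classified by the global fundamental class. For the middle term, each local factor $V_w(L)$ appearing in (\ref{e:vandw}) is $\fg_v$-cohomologically trivial as the local analogue of the same result applied to the local fundamental class, and $\Ind_{\fg_v}^{\fg}$ preserves cohomological triviality. The remaining factors of $V$, namely the one-units $U_w$ at $v \in \Sigma'$ and the units $\cO_w^*$ outside $S'\cup\Sigma'$, assemble after tensoring with $R = \Z_p[\fg]^-$ into a cohomologically trivial module: prime-to-$p$ contributions die upon tensoring with $R$, while the residual $p$-part at tamely ramified primes is handled by a standard cohomological vanishing (cf.\ \cite{dk}*{Appendix A}). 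Consequently $(V^\theta)_R$ is cohomologically trivial over $\fg$ and therefore projective of finite type over $R$.

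Next, I would compute the $R \otimes \Q_p$-rank of $(V^\theta)_R$ by tensoring (\ref{e:tate}) with $\Q_p$ and taking minus parts. Using the Dirichlet-style decomposition
\[
\cO_{L, \Sigma_\fp, \Sigma'}^* \otimes \Q \;\cong\; \ker\Bigl(\textstyle\bigoplus_{v \in \Sigma_\fp} \Q[\fg/\fg_v] \twoheadrightarrow \Q\Bigr),
\]
each archimedean place contributes zero to the minus part (since complex conjugation lies in every archimedean decomposition group), and the remaining contribution from the finite places in $\Sigma_\fp$ matches that of $Y^\theta_{\Sigma_\fp}$ appearing in (\ref{e:seltr}), together with the vanishing of $\Cl_{\Sigma_\fp}^{\Sigma'}(L)\otimes\Q_p$. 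An Euler characteristic computation over $R \otimes \Q_p$ then forces
\[
\dim_{R \otimes \Q_p}\bigl((V^\theta)_R \otimes \Q_p\bigr) \;=\; \mathrm{rank}_R\bigl(B_{\Sigma_\fp}^\theta \otimes R\bigr) \;=\; \#S' - 1.
\]

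Finally, since $\fg$ is finite abelian, $R = \Z_p[\fg]^-$ is a commutative semilocal ring (its reduction $\F_p[\fg]^-$ is Artinian). By a classical theorem, every finitely generated projective module of constant rank over a commutative semilocal ring is free, so $(V^\theta)_R \cong R^{\#S'-1}$. Then (\ref{e:fsigmadef}) exhibits $\nabla_{\Sigma_\fp}^{\Sigma'}(L)_R$ as the cokernel of an $R$-linear map between two copies of $R^{\#S'-1}$, which is the required quadratic presentation. The main obstacle is the rank accounting on the minus part at the primes of $\Sigma_\fp$ above $p$, since the splitting of $\fp$ in $L$ (as opposed to its complete splitting in $H$) interacts nontrivially with the structure of $\Gamma = \Gal(L/H)$ and must be tracked carefully to verify the Euler characteristic identity above.
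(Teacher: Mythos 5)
The paper itself offers no proof here: Lemma~\ref{l:qp} is cited verbatim from \cite{dk}*{Lemma A.4}, so I cannot compare your argument against an in-text one. What I can say is that your overall strategy -- cohomological triviality via the local/global fundamental classes, an Euler-characteristic rank count from the sequence (\ref{e:tate}), and freeness of a constant-rank projective over the semilocal ring $R$ -- is indeed the natural route, and most of the details you sketch are accurate. In particular your identification of $\cO^*_{L,\Sigma_\fp,\Sigma'}\otimes\Q$ with $\ker(\bigoplus_{v\in\Sigma_\fp}\Q[\fg/\fg_v]\twoheadrightarrow\Q)$, and the observation that this agrees rationally with $Y^\theta_{\Sigma_\fp}\otimes\Q$, is exactly what makes the Euler characteristic collapse componentwise to $\#S'-1$.

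There is, however, one genuine gap. You pass from ``$(V^\theta)_R$ is cohomologically trivial'' directly to ``therefore projective of finite type over $R$.'' That implication is false in general: for a finitely generated $\Z_p[\fg]$-module, cohomological triviality is equivalent to projective dimension $\le 1$, and one obtains projectivity only after additionally knowing that the module is $\Z_p$-torsion-free. So you must verify that $(V^\theta)_R$ has no $p$-torsion. Fortunately this can be read off from (\ref{e:tate}): $B^\theta_{\Sigma_\fp}$ is free, so any torsion in $V^\theta$ must already lie in $\cO^*_{L,\Sigma_\fp,\Sigma'}$, whose torsion subgroup consists of roots of unity congruent to $1$ at the primes of $\Sigma'$; by the Deligne--Ribet condition (\ref{e:drcond}) on $T\subset\Sigma'$ this torsion subgroup is trivial. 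Supplying this torsion-freeness argument is essential -- without it the claim that a cohomologically trivial module is projective simply does not hold. A secondary, smaller issue: your treatment of the factors of $V$ outside $S'$ (the $U_w$ at $\Sigma'$ and the $\cO_w^*$ outside $S'\cup\Sigma'$) is quite compressed; the clean statement is that the places outside $S'$ are unramified in $L$, so $\cO_w^*$ (and $U_w$) are cohomologically trivial there, while the $U_w$ at places of $\Sigma'$ of residue characteristic $\ne p$ vanish outright after $\otimes\,\Z_p$. Making that explicit would tighten the argument.
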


\begin{theorem}[\cite{dk}*{Theorem 3.3}]   \label{t:dkresult} We have \begin{equation} \label{e:ftheta}
   \Fitt_{R}(\nabla_{\Sigma_\fp}^{\Sigma'}(L)_R) = (\Theta^L_{\Sigma_\fp, \Sigma'}). \end{equation}
\end{theorem}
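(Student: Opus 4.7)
The plan is to exploit the quadratic presentation from Lemma~\ref{l:qp}: since $\nabla_{\Sigma_\fp}^{\Sigma'}(L)_R$ is the cokernel of a square matrix over $R$, its Fitting ideal is principal, so the equality of ideals reduces to comparing two generators up to a unit. I would establish the equality by proving both inclusions separately.

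For the inclusion $\Fitt_R(\nabla_{\Sigma_\fp}^{\Sigma'}(L)_R) \subseteq (\Theta^L_{\Sigma_\fp, \Sigma'})$, I would run Ribet's method in the setting of group ring valued Hilbert modular forms: for each $m \geq 1$, construct an $R$-module $M_m$ together with a surjection $\nabla_{\Sigma_\fp}^{\Sigma'}(L)_R \twoheadrightarrow M_m$ satisfying $\Fitt_R(M_m) \subseteq (\Theta^L_{\Sigma_\fp, \Sigma'}, p^m)$. The surjection is produced by a Galois cohomology class built from the off-diagonal entry of a two-dimensional Galois representation attached to a cuspidal Hilbert eigenform; the eigenform itself is found via a congruence with a group ring valued Hilbert Eisenstein series whose constant terms encode $\Theta^L_{\Sigma_\fp, \Sigma'}$ (generalizing the classical construction of Deligne--Ribet). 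Passing to $m \to \infty$ using the $p$-adic compactness of $R$ then yields $\Fitt_R(\nabla_{\Sigma_\fp}^{\Sigma'}(L)_R) \subseteq (\Theta^L_{\Sigma_\fp, \Sigma'})$.

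For the reverse inclusion $(\Theta^L_{\Sigma_\fp, \Sigma'}) \subseteq \Fitt_R(\nabla_{\Sigma_\fp}^{\Sigma'}(L)_R)$, I would work character by character over $\Q_p[\fg]^-$. The sequences (\ref{e:tate}) and (\ref{e:seltr}) exhibit $\nabla_{\Sigma_\fp}^{\Sigma'}(L)_R$ as an extension of $Y^\theta_{\Sigma_\fp}$ by the cokernel of a regulator-like map from the $\Sigma_\fp$-smoothed unit group into the local components $B^\theta_{\Sigma_\fp}$. Stark's analysis of partial zeta values at $s=0$ then identifies the $\chi^{-1}$-component of the determinant with $L_{\Sigma_\fp, \Sigma'}(\chi^{-1}, 0)$, up to Euler factors at $\Sigma_\fp$ which are precisely absorbed by the local pieces of $Y^\theta_{\Sigma_\fp}$ coming from $j_\fp$ and its analogues. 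To promote the character-wise equality to an integral statement in $R$, one combines the upper bound obtained in the previous paragraph with the analytic class number formula, comparing the orders of the finite quotients $R/\Fitt_R(\nabla)$ and $R/(\Theta^L_{\Sigma_\fp,\Sigma'})$, which forces the two principal ideals to coincide.

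The main obstacle, as always in Ribet's method, is the modular forms step: constructing a cuspidal Hilbert eigenform that is congruent modulo $\Theta^L_{\Sigma_\fp, \Sigma'}$ (rather than merely modulo a smaller ideal such as $p$) to the prescribed group ring valued Eisenstein series, and whose attached Galois representation has exactly the right local ramification data at primes in $\Sigma_\fp \cup \Sigma'$ so that the resulting cohomology class satisfies the Selmer-type local conditions implicit in the definition of $\nabla_{\Sigma_\fp}^{\Sigma'}(L)_R$. The novelty of working with group ring coefficients (rather than scalar $\Z_p$-coefficients as in Ribet's original arguments) is what makes this construction delicate, since standard deformation-theoretic tools must be adapted to the non-domain ring $R$.
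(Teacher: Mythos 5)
This result is cited from the authors' earlier paper \cite{dk} (Theorem 3.3 there), and the present paper does not reprove it, so there is no in-paper argument against which to match your proposal directly. Assessed on its own terms, your proposal has roughly the right architecture for the forward inclusion but a concrete gap in the reverse one.

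For the inclusion $\Fitt_R(\nabla_{\Sigma_\fp}^{\Sigma'}(L)_R) \subseteq (\Theta^L_{\Sigma_\fp,\Sigma'})$, your outline --- group-ring valued Eisenstein series whose constant terms encode $\Theta^L_{\Sigma_\fp,\Sigma'}$, a cuspidal congruence modulo an ideal of the form $(\Theta^L_{\Sigma_\fp,\Sigma'}, p^m)$, a Galois representation whose off-diagonal entries yield a cohomology class satisfying the Selmer-type local conditions cut out by $\nabla_{\Sigma_\fp}^{\Sigma'}(L)$, a surjection from $\nabla_{\Sigma_\fp}^{\Sigma'}(L)_R$ onto a module of controlled Fitting ideal, and a limit as $m\to\infty$ --- does reflect the Ribet-method strategy of \cite{dk} (and of \S\ref{s:cusp} of this paper in the more elaborate $\nabla_{\!\sL}$ setting).

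The reverse inclusion is where your argument fails. You propose to invoke "the analytic class number formula, comparing the orders of the finite quotients $R/\Fitt_R(\nabla_{\Sigma_\fp}^{\Sigma'}(L)_R)$ and $R/(\Theta^L_{\Sigma_\fp,\Sigma'})$," but these quotients are \emph{not} finite. Since $\fp\in\Sigma_\fp$ and $\fp$ splits completely in $H$, the Euler factor $1-\psi^{-1}(\Frob_\fp)$ vanishes for every odd character $\psi$ of $\fg$ that factors through $G$, hence $\psi(\Theta^L_{\Sigma_\fp,\Sigma'})=0$ for all such $\psi$ --- this is precisely (\ref{e:euler}) --- so $R/(\Theta^L_{\Sigma_\fp,\Sigma'})$ has positive $\Z_p$-rank. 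On the other side, the module $Y^\theta_{\Sigma_\fp}$ in (\ref{e:seltr}) contributes free rank to $\nabla_{\Sigma_\fp}^{\Sigma'}(L)_R$ on precisely those components, so $R/\Fitt_R(\nabla_{\Sigma_\fp}^{\Sigma'}(L)_R)$ is likewise infinite, and a cardinality comparison says nothing. One would have to split off the "trivial zero" components and treat them by a separate (and genuinely harder) argument, which is essentially what the Brumer--Stark/strong Brumer--Stark statement is about in the first place. Moreover, "Stark's analysis of partial zeta values at $s=0$" gives only rational, component-by-component information over $\Q_p[\fg]^-$; it does not by itself pin down the integral ideal in $R$, which is the whole content of the theorem. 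As written, the second half of your proposal does not close.
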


We can now give the proof of Theorem~\ref{t:gorder}.
\begin{proof}[Proof of Theorem~\ref{t:gorder}]    

If we denote by $\{e_v\colon v \in S' \}$ the standard $R$-basis for \[ (B_{\Sigma_\fp})_R = \prod_{v \in S'} R, \] 
then the module $(B_{\Sigma_\fp}^\theta)_R$ has a basis \[ \{b_v = e_v - \theta_B(e_v)e_\infty : v \in S' - \infty\} \] where $\infty \in S_\infty \subset S'$ is any fixed infinite place of $F$.
By Lemma~\ref{l:qp}, the $R$-module $V^\theta_R$ is free of rank $\#S'-1$, and hence 
$  \Fitt_{R}(\nabla_{\Sigma_\fp}^{\Sigma'}(L)_R)$ is the ideal generated by the determinant of the square matrix $A$
representing the map $f\colon V^\theta_R \longrightarrow (B_{\Sigma_\fp}^\theta)_R$ with respect to any bases.

We choose any basis of $V^\theta_R$ and the basis $\{b_v \colon v \in S', v \neq \infty\}$ for 
$(B_{\Sigma_\fp}^\theta)_R$.
  The columns of the associated matrix $A$ are indexed by the basis vectors $b_v$.  For $v \in \Sigma_\fp$, the elements of the corresponding column vector lie, by definition, in the image of $\Ind_{\fg_v}^{\fg} \Delta \fg_v \longrightarrow R$.
  This is exactly the ideal \[ I(v) = \ker(R \longrightarrow \Z_p[\fg/\fg_v]^-). \]  The determinant of $A$ therefore lies in $\prod_{v \in \Sigma_\fp} I(v)$.  In view of (\ref{e:ftheta}), the result follows.
\end{proof}

\subsection{Interpretation via Galois Cohomology} \label{s:gc}

In \cite{dk}*{\S A.3}, we gave a description of the projection to the minus side of the class in $\Ext_{\Z[G]}^1( Y_{\Sigma_\fp}^\theta, \Cl_{\Sigma_\fp}^{\Sigma'}(L) )$ determined by ${\nabla}_{\Sigma_\fp}^{\Sigma'}(L)$ via the exact sequence (\ref{e:seltr}), using  Galois cohomology.  We recall this now and give the generalization that allows for the application to $\tilde{\nabla}_{\Sigma}^{\Sigma'}(L)$.

For each $v \in S'$, let
 \[ G_{F, v} \cong \Gal(\overline{F}_v/F_v) \subset G_F\] denote the decomposition group at $v$ associated to a place of $\overline{F}$ above $v$ restricting to the chosen place $w$ of $L$.  Let $M$ be a $\fg$-module.
Suppose that we are given a 1-cocycle \[ \kappa \in Z^1(G_{F}, M), \] 
where $M$ is endowed with a $G_{F}$-action via the canonical map $G_F \longtwoheadrightarrow \fg.$ Suppose  that the restriction of $\kappa$ to $\Gal(\overline{F}_v/L_w^{\nr})$ is trivial. For a $\Z[\fg]$-module $N$ we write \[ N^- = (N \otimes \Z[1/2])/(c+1), \] where $c \in \fg$ denotes complex conjugation.
We may then define a $\fg$-module homomorphism
\[ \begin{tikzcd}
 \varphi_{\kappa}^v \colon W_v^- = (\Ind_{\fg_v}^{\fg} W_w(L))^- \ar[r] & M^-
 \end{tikzcd} \]
by the rule \begin{equation} \label{e:phidef}
 \varphi_{\kappa}^v(g \otimes (\sigma - 1)) = g \cdot \kappa(\sigma), \qquad g \in G, \ \ \sigma \in \WD(L_w^{\nr}/F_v). 
 \end{equation}
The condition on $\kappa$ ensures that $\kappa(\sigma)$ is well-defined  for $\sigma \in \WD(L_w^{\nr}/F_v)$.  It is elementary to check that $\varphi_{\kappa}^v$ is well-defined, as follows.
\begin{itemize}
\item If $g \in \fg_v$, then 
\[ \varphi_{\kappa}^v(1 \otimes(g\sigma - g)) = \kappa(g \sigma) - \kappa(g) = g \cdot \kappa(\sigma) = \varphi_{\kappa}^v(g \otimes (\sigma - 1)). \]
\item If $\sigma \in \WD(L_w^{\nr}/F_v)$ and $\tau \in \WD(L_w^{\nr}/L_w)$, then
\[ \varphi_{\kappa}^v(1 \otimes (\tau-1)(\sigma-1)) = \kappa( \tau\sigma) - \kappa(\tau) -\kappa(\sigma)  = 0 \]
since $\tau$ acts trivially on $M$.
\end{itemize}

Write $Z_v = \Ind_{\fg_v}^{\fg} \Delta \fg_v$.  Since $\Delta \fg_v$ is canonically a $\fg_v$-module quotient of $W_w(L)$ (see \ref{e:rwvwdef}), $Z_v^-$ is canonically a $\fg$-module quotient of $W_v^-$.
If  the restriction of $[\kappa]$ to $\Gal(\overline{F}_v/L_w)$ is trivial, then $\varphi_\kappa^v$ descends to a homomorphism
\begin{equation} \label{e:phiz}
\begin{tikzcd}
\varphi_{\kappa}^v \colon Z_v^-  \ar[r] & M^-.
\end{tikzcd}
 \end{equation}

 Now let $M_\bl = \Cl_{\bl}^{\Sigma'}(L)^-$ for $\bl = \Sigma$ or $\bl = \Sigma_\fp$.  By class field theory, we may view $M_\bl$ as the Galois group of a field extension $\tilde{L}_\bl/L$.  The field $\tilde{L}_\bl$ is the maximal abelian extension of $L$ of odd order that is unramified outside $\Sigma'$, tamely ramified at $\Sigma'$, such that the primes in $\bl$ split completely, and such that the conjugation action of the complex conjugation in $\fg$ is inversion.
Let \[ \xymatrix{ \tilde{\lambda}_\bl \colon G_L \ar[r] & \Gal(\tilde{L}_{\bl}/L) \cong M_\bl}  \] denote the canonical homomorphism given by the reciprocity map of class field theory.
By \cite{dk}*{Lemma 6.3}, the class $\tilde{\lambda}_\bl \in H^1(G_L, M_\bl)$ is the restriction of a unique class
 \[ [\lambda_\bl] \in H^1(G_F, M_\bl).\]  An explicit 1-coycle representing this class is given as follows.  Let $\tilde{c}$ denote a lift of the complex conjugation $c \in \fg$ to an element of $\tilde{\fg} = \Gal(\tilde{L}_\bl/F)$.  Then
\begin{equation} \label{e:lambdadef}
 \lambda_\bl(\tilde{\sigma}) = \tilde{\lambda}_\bl(\tilde{\sigma}\tilde{c} \tilde{\sigma}^{-1} \tilde{c}^{-1})^{1/2}, \qquad \tilde{\sigma} \in G_F. 
 \end{equation}

Note that the cocycle $\lambda_{\bl} \in Z^1(G_{F}, M_\bl)$ satisfies the condition described above for $\kappa,$ namely that the restriction to
$\Gal(\overline{F}_v/L_w^{\nr})$ is trivial for each $v \in S'$.  This follows since $\tilde{L}_\bl \subset L_w^{\nr}$.  
Furthermore if $v \in \bl$, then the restriction of  $\lambda_{\bl} $ to $\Gal(\overline{F}_v/L_w)$ is trivial.
Therefore the construction in (\ref{e:phidef}) and (\ref{e:phiz}) yields  elements:
\begin{align*}
 \varphi_{\lambda_{\bl}}^{v} & \in \Hom_{\Z[\fg]^-}(W_v^-, M_{\Sigma_*}), \qquad v \in S' - \bl, \\
 \varphi_{\lambda_{\bl}}^v & \in \Hom_{\Z[\fg]^-}(Z_v^-, M_\bl), \qquad v \in \bl.
\end{align*}

\begin{lemma}  \label{l:gtd}  The ``snake map" $\varphi_\bl \colon (W_\bl^\theta)^- \longrightarrow M_\bl$ given by the minus part of the last nontrivial arrow in $(\ref{e:snake})$ can be described explicitly by the formula 
\[ \varphi_\bl((a_v)_{v \in S'}) = \sum_{v \in S'} \varphi_{\lambda_\bl}^v(a_v). \]
\end{lemma}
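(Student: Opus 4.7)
The plan is to reduce the global snake-map computation to a sum of local contributions at each place $v \in S'$, and then to identify each local contribution with $\varphi^v_{\lambda_\bl}(a_v)$ via local class field theory and the Weil-group presentation~(\ref{e:vandw}) of $V_w(L)$ and $W_w(L)$.

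Concretely, given $(a_v)_{v \in S'} \in (W_\bl^\theta)^-$, I would lift each component $a_v$ independently to an element $\tilde a_v$ of the factor $(\Ind_{\fg_v}^{\fg} V_w(L))^-$ of $V^-$, using the surjection in the second sequence of (\ref{e:rwvwdef}). The snake map of (\ref{e:snake}) then assigns to $(a_v)$ the class of $\sum_v \theta(\tilde a_v) \in C_L^-$, projected to $M_\bl$. By $\Z[\fg]$-linearity this reduces the verification to the claim that, for each $v$, the image of $\theta(\tilde a_v)$ in $M_\bl$ equals $\varphi^v_{\lambda_\bl}(a_v)$.

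To establish the local identification, I would work directly with the Weil-group description: $V_w(L)$ is built from the relative augmentation ideal of $\WD(L_w^{\ab}/F_v) \twoheadrightarrow \fg_v$, so that under local Artin reciprocity the projection $V_w(L) \to L_w^*$ corresponds to applying $\sigma \mapsto \rec_v(\sigma)$ to an element of the ambient Weil group lifting $(\sigma-1) \in W_w(L)$. Composing with the global reciprocity map $\A_L^* \twoheadrightarrow \Gal(\tilde L_\bl/L) \cong M_\bl$ and using that $\tilde L_\bl$ is unramified outside $\Sigma'$, tamely ramified at $\Sigma'$, and split at the primes in $\bl$, the image of a Weil-group element $\sigma$ is precisely $\tilde\lambda_\bl(\sigma|_{L})$. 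Extending along $G_{F,v} \hookrightarrow G_F$ and passing to the minus side, the cocycle~(\ref{e:lambdadef})---where the factor of $1/2$ cancels against the doubling induced by $c$ acting as $-1$ on $M_\bl^-$---matches the formula~(\ref{e:phidef}) defining $\varphi^v_{\lambda_\bl}$.

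The main obstacle is the bookkeeping in the local decomposition: an individual component $a_v$ need not itself lie in $W_\bl^\theta$, so each ``local contribution'' lives a priori in $C_L^-$ rather than in $M_\bl$, and the assembly $\sum_v \theta(\tilde a_v)$ must be well-defined modulo $L^*$ independently of the chosen lifts. This is exactly the compatibility between local and global reciprocity---encoded in the commutativity of (\ref{e:rwdiagram}) and in the construction of $\fO$ from the global fundamental class---so the indeterminacy drops out upon projection to $M_\bl$. Once this compatibility is in hand the argument parallels \cite{dk}*{Lemma A.8} and \cite{dk}*{Lemma 6.3}, which handle the $\Sigma_\fp$ case; the variant for $\bl = \Sigma$ uses only the additional observation that $\lambda_\bl|_{G_{F,v}}$ is trivial on $G_{L_w}$ when $v \in \bl$, so that $\varphi^v_{\lambda_\bl}$ genuinely descends from $W_v^-$ to the quotient $Z_v^-$ as required.
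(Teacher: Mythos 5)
Your plan has the right conceptual shape---reduce to local contributions, then match each against $\varphi_{\lambda_\bl}^v$ via class field theory---and you correctly sense that a well-definedness issue is lurking. But you misidentify it, and the misidentification is fatal to the plan as written. You say each ``local contribution'' $\theta(\tilde a_v)$ ``lives a priori in $C_L^-$ rather than in $M_\bl$,'' so the only issue is indeterminacy modulo the lifts. That is not so. The snake map in (\ref{e:rwdiagram}) lands in $C_L$ only because the input is killed by $\theta_W$; for a single component $a_v$ (padded by zeroes), $\theta_W(a_v) \neq 0$ in $\Delta\fg$ in general, so $\theta(\tilde a_v)$ lies in $\fO$ and \emph{not} in $C_L$. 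Since $\fO$ is a nontrivial extension of $\Delta\fg$ by $C_L$ built from the global fundamental class, there is no canonical ``image of $\theta(\tilde a_v)$ in $M_\bl$'' to speak of. Your reduction ``to the claim that, for each $v$, the image of $\theta(\tilde a_v)$ in $M_\bl$ equals $\varphi^v_{\lambda_\bl}(a_v)$'' therefore does not parse, and the phrase ``the indeterminacy drops out upon projection to $M_\bl$'' papers over exactly the step that needs an argument.

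The fix---which is the paper's key move, and which you would need some version of---is to replace the naive one-component decomposition $(a_v)_v$ by a decomposition into elements that genuinely lie in $(W_\bl^\theta)^-$. The paper picks an auxiliary $v' \in S'$ whose Frobenius equals complex conjugation $c$, and replaces each $a_v$ by $y_v$ having component $a_v$ at $v$, component $\theta_W(a_v)/2$ at $v'$, and zero elsewhere. This works precisely because on the minus part, $\theta_W$ at $v'$ is multiplication by $c-1 = -2$, which is invertible; so each $y_v$ is an honest element of $(W_\bl^\theta)^-$, and $a = \sum_v y_v$. Only then can the snake map be evaluated termwise. The paper furthermore avoids your idele-theoretic lift-and-apply-$\theta$ route entirely, instead citing the Ritter--Weiss description of the snake map in terms of the group extension $\Delta\tilde{\fg}/\Delta(\tilde{\fg},M_\Sigma)\Delta\tilde{\fg}$ (their Theorem 5), and then evaluating $\varphi_\bl(y_v)$ via the explicit local maps $f_v$, $f_{v'}$---this is where the factor of $1/2$ in (\ref{e:lambdadef}) visibly cancels. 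If you wanted to run your idele-theoretic version, you would in effect be reproving that part of Ritter--Weiss, in addition to supplying the $y_v$-decomposition. So: right spirit, right local identification, but the decomposition step is missing and, as written, circular.
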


\begin{proof}  The proof is nearly identical to \cite{dk}*{Lemma A.9}, but we give it for completeness and because of notational differences.
We use the description of the snake map given by Ritter and Weiss in \cite{rw}*{Theorem 5}.
Write $\tilde{\fg} = \Gal(\tilde{L}/F)$, where as above $\tilde{L}$ is the extension of $L$ corresponding to $M_{\Sigma_*}$ via class field theory.
Write $\Delta \tilde{\fg}$ for the augmentation ideal of $\Z[\tilde{\fg}]$ and let $\Delta(\tilde{\fg}, M_{\Sigma_*})$ denote the kernel of the canonical projection
$\Z[\tilde{\fg}] \longrightarrow \Z[\fg]$.  There is a short exact sequence (see \cite{rw}*{Pg.\ 154})
\begin{equation} \label{e:rw} 
\begin{tikzcd}
0  \ar[r] & M_{\Sigma_*} \cong   \dfrac{\Delta(\tilde{\fg},M_{\Sigma_*})}{\Delta(\tilde{\fg}, M_{\Sigma_*}) \Delta\tilde{\fg}} 
 \ar[r] &  \dfrac{\Delta\tilde{\fg}}{\Delta(\tilde{\fg}, M_{\Sigma_*}) \Delta\tilde{\fg}}  \ar[r] & \Delta \fg \ar[r] & 0. 
 \end{tikzcd}
 \end{equation} 
 
Let $v \in S'$. The extension $\tilde{L}$ is unramifed (over $L$) at $w$, so there is a canonical restriction map $\WD(L_w^{\nr}/F_v) \longrightarrow \tilde{\fg}.$  In view of the definition of $W_w(L)$ given in (\ref{e:vandw}), this induces a canonical map
\[ \begin{tikzcd} \Ind_{\fg_v}^{\fg} W_w(L) = \Z[\fg] \otimes_{\Z[\fg_v]} W_w(L)  \ar[r,"f_v"] &  \dfrac{\Delta\tilde{\fg}}{\Delta(\tilde{\fg}, M_\Sigma) \Delta\tilde{\fg}}.
\end{tikzcd} \]

In \cite{rw}*{Theorem 5}, Ritter--Weiss show that the snake map is realized by \begin{equation} \label{e:rwform}
 \varphi_\bl((a_v)_{v \in S'}) = \sum_{v \in S'} f_v(a_v). \end{equation}

Let $a = (a_v)_{v \in S'} \in (W_\bl^\theta)^-$.   Choose $v' \in S'$ such that the decomposition group $\fg_{v'} \subset \fg$ contains complex conjugation $c$ (the existence of such a $v'$ is guaranteed by the assumptions on $S'$).
Let $\tilde{c}$ denote a lift of $c$  to $\WD(L_{w'}^{\nr}/F_v)$.

For each $v \in S'$, define $y_v \in W_\bl^-$ to have component at $v$ equal to $a_v$, component at $v'$ equal to  
\[ b_v = \theta_W(a_v) \otimes (1- \tilde{c})/2 \in (\Z[\fg] \otimes_{\Z[\fg_{v'}]} W_{w'}(L))^-, \] and all other components equal to 0.  Then $a = \sum_{v \in S'} y_v$ since $\theta_W(a) = 0$. Furthermore  each $y_v$ lies in $(W_\bl^\theta)^-$ by construction.
It therefore suffices to prove that $\varphi_\bl(y_v) =  \varphi_{\lambda_\bl}^v(a_v)$ for each $v \in S'$.  For this, we apply (\ref{e:rwform}) with the tuple $(a_v)_{v \in S'}$ replaced by $y_v$.

The module $W_v^-$ is generated over $\Z[\fg]^-$ by elements of the form \[ a_v= \tilde{\sigma} - 1 \in I_W \] for $\tilde{\sigma} \in \WD(L_w^{\nr}/F_v)$.  
Since we are working on the minus side, we have
\begin{align*}
f_v(a_v) & = f_v((1 -c)/2 \otimes a_v)  =(1 -\tilde{c})(\tilde{\sigma}- 1)/2, \\
f_{v'}(b_v) &=  (\tilde{\sigma}- 1)(\tilde{c} - 1)/2,
\end{align*}
and hence 
\[ \varphi_{\bl}(y_v) =  (\tilde{\sigma}\tilde{c} - \tilde{c}\tilde{\sigma})/2. \]
Under the isomorphism noted in the first nonzero term in (\ref{e:rw}), this correponds to the element 
\[ (\tilde{\sigma}\tilde{c} \tilde{\sigma}^{-1} \tilde{c}^{-1})^{1/2} \in \Gal(\tilde{L}/L) \cong M_{\Sigma_*}. \]
By (\ref{e:lambdadef}), this is precisely the value of $\varphi_{\lambda_{\bl}}^{v}(\tilde{\sigma} - 1)  = \lambda_{\bl}(\tilde{\sigma})$.
\end{proof}

As we now describe, the Galois theoretic description of $\nabla_{\Sigma}^{\Sigma'}(L)^-$ provided by Lemma~\ref{l:gtd} yields an explicit method of constructing homomorphisms \[ \nabla_{\Sigma}^{\Sigma'}(L)^- \longrightarrow B \] for $\Z[\fg]^-$-modules $B$.  Recall $R = \Z_p[\fg]^-$.
We work over an arbitrary $R$-algebra $A$ since this is the context we will require later.
Therefore let $B$ denote an $A$-module and let $[\kappa] \in H^1(G_F, B)$ denote a Galois cohomology class, where $B$ is endowed with a $G_F$-action via the composition \[ G_F \longrightarrow \fg \longrightarrow A^*. \]  Suppose that:
\begin{itemize}
\item The class $[\kappa]$ is unramified outside $\Sigma'$, locally trivial at  $\Sigma$, and tamely ramified at $\Sigma'$.
\item Let $B_0 \subset B$ denote the $A$-submodule generated by the image of the restriction 
\[ [\kappa]_{|G_L} \in H^1(G_L, B) = \Hom_{\cont}(G_L, B). \]
 The module
$B/B_0$ is generated over $A$ by the images of elements $x_v \in B$ for $v \in \Sigma$ and elements $x_{\fp}, x_{\fp}' \in B$.  The element $x_{\fp}'$ is fixed by the action of $G_{F,\fp}$ (i.e.\ by the action of $\fg_\fp$).
\item The class $[\kappa]$ is represented by a 1-cocycle $\kappa$ satisfying the following.
\begin{itemize}
\item For $v \in \Sigma$ and $\sigma \in G_{F, v}$, we have $\kappa(\sigma) = x_v(\sigma - 1).$
\item For $\sigma \in \WD(\overline{F}_\fp/F_\fp)$, we have \begin{equation} \label{e:xpdef}
 \kappa(\sigma) =     (\sigma - 1)    x_{\fp} + \ord_\fp(\sigma) x_{\fp}',\end{equation}
where $\ord_\fp$ is as in (\ref{e:ordp}). 
\item There exists $\tau \in G_F$, a lift of the complex conjugation in $\fg$, such that for all $\sigma \in G_F$ we have
\begin{equation} \label{e:nicecocycle}
\kappa(\sigma)=  [\kappa]|_{G_L}({\sigma}\tau {\sigma}^{-1} \tau^{-1})/2 \in B_0.
\end{equation}

\end{itemize}
\end{itemize}

\begin{theorem}  \label{t:nc}  Let $B$ be an $A$-module and $[\kappa] \in H^1(G_F, B)$  a Galois cohomology class satsifying the three bulleted points above. 
Write $A_\fp = A \otimes_{R} \Z_p[\fg/\fg_\fp]^-$.  There is a surjective $A$-module homomorphism 
\begin{equation} \label{e:alpha1} \begin{tikzcd}
 \alpha_1 \colon \tilde{\nabla}_{\Sigma}^{\Sigma'}(L)_A \ar[r,two heads] & B  \end{tikzcd} \end{equation}
induced by the map \[ \begin{tikzcd}
\alpha \colon (B_{\Sigma})_A \cong (A \oplus A_\fp) \oplus A^{\#S'-1} \ar[r] & B
\end{tikzcd} \] defined as follows:
\begin{itemize}
\item For $a_v \in A$ in the component at $v \in S' - \Sigma_\fp$, we have $\alpha(a_v) = a_v  \kappa(\sigma_v)$, where $\sigma_v$ denotes the Frobenius element at $v$.
\item For $a_v \in A$ in the component at $v \in \Sigma$, we have $\alpha(a_v) = a_v  x_v$.
\item For $(a_v, b_v) \in A \oplus A_\fp$ in the component at $\fp$, we have $\alpha(a_v, b_v) = a_v x_{\fp} + b_v x_{\fp}'.$
\end{itemize}
\end{theorem}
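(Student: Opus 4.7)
The plan is to reinterpret the combinatorial map $\alpha$ in Galois-cohomological terms via Lemma~\ref{l:gtd} and then use the snake-lemma description of the image of $V^\theta$ in $W_\Sigma^\theta$. For each $v \in S'$ the cocycle $\kappa$ together with its local hypotheses produces a $\fg$-equivariant homomorphism $\varphi_\kappa^v$ by the same formulae (\ref{e:phidef})/(\ref{e:phiz}) used to define $\varphi_{\lambda_\Sigma}^v$: unramifiedness of $\kappa$ outside $\Sigma'$ provides the triviality on $\Gal(\overline{F}_v/L_w^{\nr})$ needed at $v \in S' \setminus \Sigma_\fp$, while local triviality at $\Sigma$ gives the descent to $Z_v^-$ for $v \in \Sigma$. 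A direct check on the generators of $j_\Sigma \colon W_\Sigma \hookrightarrow B_\Sigma$ recalled in \S\ref{s:rwdef} --- matching $\sigma_v - 1 \mapsto \kappa(\sigma_v)$ at $v \in S' \setminus \Sigma_\fp$, the first bullet $\kappa(\sigma) = (\sigma-1)x_v$ at $v \in \Sigma$, and the decomposition (\ref{e:xpdef}) at $\fp$ --- shows that $\alpha \circ j_\Sigma$ equals $\sum_{v \in S'} \varphi_\kappa^v$ as maps $(W_\Sigma)_A \to B$.

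I would next normalize the cocycle within its class so that $\kappa(\tilde{c}) = 0$ for a fixed lift $\tilde{c} \in G_F$ of the complex conjugation $c \in \fg$; this is possible because $p$ is odd, hence $B$ is $2$-divisible, and the normalization replaces the $x_v$'s by consistently shifted versions that still satisfy the theorem's hypotheses. Under this normalization, the local conditions combined with $c \cdot b = -b$ on $B$ make $\kappa|_{G_L}$ factor through the reciprocity map $\tilde{\lambda}_\Sigma \colon G_L \twoheadrightarrow M_\Sigma = \Gal(\tilde{L}_\Sigma/L)$, yielding an $A$-linear map $\psi_\kappa \colon M_\Sigma \to B$ with image exactly $B_0$. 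Expanding the cocycle on the commutator $\tilde{\sigma}\tilde{c}\tilde{\sigma}^{-1}\tilde{c}^{-1}$ and using $\fg$-abelianness, $c$ acting as $-1$ on $B$, and $\kappa(\tilde{c})=0$, one obtains $\kappa(\tilde{\sigma}\tilde{c}\tilde{\sigma}^{-1}\tilde{c}^{-1}) = 2\kappa(\tilde{\sigma}) + (\tilde{\sigma}|_L - 1)\kappa(\tilde{c}) = 2\kappa(\tilde{\sigma})$; combined with (\ref{e:lambdadef}) and the $p$-oddness this gives $\psi_\kappa \circ \lambda_\Sigma = \kappa$ as cocycles on $G_F$, hence $\varphi_\kappa^v = \psi_\kappa \circ \varphi_{\lambda_\Sigma}^v$ for each $v$. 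Summing and applying Lemma~\ref{l:gtd}, $\sum_v \varphi_\kappa^v = \psi_\kappa \circ \varphi_\Sigma$ on $(W_\Sigma^\theta)_A$; since $\varphi_\Sigma$ has kernel equal to the image of $V^\theta$ by (\ref{e:snake}), the restriction $\alpha|_{(B_\Sigma^\theta)_A}$ annihilates the image of $V^\theta$ in $B_\Sigma^\theta$ and descends via (\ref{e:fsigmadef}) to the asserted map $\alpha_1$.

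For surjectivity, I would first show $x_\infty \in B_0$ for a chosen infinite place $\infty \in \Sigma$: writing $c_\infty = g\tilde{c}$ with $g \in G_L$, the relation $-2x_\infty = (c_\infty - 1)x_\infty = \kappa(c_\infty) = \kappa(g) \in B_0$ does the job. Next, using the basis $\{b_v\}$ of $(B_\Sigma^\theta)_A$ introduced in the proof of Theorem~\ref{t:gorder}, the image of $\alpha_1$ contains $x_v - x_\infty$ for $v \in \Sigma \setminus \{\infty\}$, $x_\fp - x_\infty$, $x_\fp'$, and $\kappa(\sigma_v) - (\sigma_v - 1)x_\infty$ for $v \in S' \setminus \Sigma_\fp$. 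Passing to $\overline{B} := B/\mathrm{Image}(\alpha_1)$, the cocycle $\overline{\kappa}$ agrees with the coboundary $d\overline{x}_\infty$ on the elements $\sigma_v$ for $v \in S' \setminus \Sigma_\fp$; the $S'$-hypothesis $\bigcup_v \fg_v = \fg$ (which may be strengthened by enlarging $S'$ without affecting $\nabla$ or $\alpha_1$) together with Chebotarev density forces the cocycle $\overline{\kappa} - d\overline{x}_\infty$ to vanish identically, and restricting to $G_L$ (on which $B$ carries trivial action) yields $\overline{\kappa}|_{G_L} = 0$, so $B_0 \subset \mathrm{Image}(\alpha_1)$. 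Combined with $x_\infty \in B_0$ this recovers each $x_v$ and $x_\fp$ in the image, and since $B$ is generated over $A$ by $B_0$ together with these elements, surjectivity follows. The main technical obstacle is the commutator expansion in the second paragraph, which relies crucially on the $\kappa(\tilde{c})=0$ normalization enabled by $p$ being odd; a secondary subtlety is the Chebotarev-plus-$S'$ argument needed to force $B_0 \subset \mathrm{Image}(\alpha_1)$.
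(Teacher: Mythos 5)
Your first two paragraphs, establishing that $\alpha$ descends to $\alpha_1$, track the paper's argument closely: the normalization $\kappa(\tilde{c})=0$ is the same as the paper's choice (\ref{e:nicecocycle}) (for $\sigma=\tilde{c}$ the right side of (\ref{e:nicecocycle}) vanishes), the commutator expansion recovers (\ref{e:lambdadef}), and the component-by-component check of $\alpha\circ j_\Sigma=\sum_v\varphi_\kappa^v$ is exactly what the paper does via Lemma~\ref{l:gtd} and the definitions (\ref{e:jpdef})--(\ref{e:phidef}). That part is fine.

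The surjectivity argument is a genuinely different route, and as written it has a gap. The paper instead applies the Five Lemma to the commutative diagram (\ref{e:bigd}) built from the exact sequence (\ref{e:seltr}): the outer maps $\kappa|_{G_L}\colon(M_\Sigma)_A\twoheadrightarrow B_0$ and $\alpha_0\colon(Y_\Sigma^\theta)_A\twoheadrightarrow B/B_0$ are surjective by definition of $B_0$ and the generating hypothesis on $B/B_0$, so surjectivity of $\alpha_1$ is immediate. Your approach replaces this with an explicit vanishing argument, but the Chebotarev step is not correct as stated. The claim is that $\eta := \overline{\kappa}-d\overline{x}_\infty$ vanishes identically on $G_F$ because it vanishes on the $\sigma_v$; however, a cocycle vanishing on a chosen Frobenius at each $v$ does not vanish on conjugates ($\eta(g\sigma g^{-1}) = (1-g\sigma g^{-1})\eta(g)$ when $\eta(\sigma)=0$), so density arguments on $G_F$ do not directly apply. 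Moreover, $\eta(\tilde{c}) = 2\overline{x}_\infty$, and concluding this is zero requires knowing $x_\infty\in\mathrm{Image}(\alpha_1)$, which is exactly what is being proved---so the claim that ``$\eta$ vanishes identically'' is circular if asserted before restricting to $G_L$. The fix is to reverse the order: first restrict to $G_L$, where $\eta|_{G_L}=\overline{\kappa}|_{G_L}$ is a genuine $\fg$-equivariant homomorphism vanishing on $G_{L,w}$ for all $w$ above $v\in S'$ (the conjugation issue disappears because $G_L$ acts trivially). Then either invoke the standing hypothesis $\Cl_{S'}^{\Sigma'}(L)=1$ to see that these Frobenii generate the quotient $M_\Sigma$ through which $\overline{\kappa}|_{G_L}$ factors, or carry out the enlargement of $S'$ carefully (which requires verifying that $\alpha_1$ is compatible with enlarging $S'$, a point you assert but do not prove). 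Once $\overline{\kappa}|_{G_L}=0$ is established, the conclusion $x_\infty=0$ in $\overline{B}$ and then $\overline{B}=0$ follow exactly as you intended; but the deduction should not be phrased as ``$\eta$ vanishes identically on $G_F$ by Chebotarev.''
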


\begin{proof}  
The homomorphism $[\kappa]_{|G_L} \in H^1(G_L, B) = \Hom_{\cont}(G_L, B)$ is unramified outside $\Sigma'$, locally trivial at  $\Sigma$, and tamely ramified at $\Sigma'$.  It follows from class field theory that $[\kappa]_{|G_L}$ factors through \[ M_\Sigma \cong \Gal(\tilde{L}_\Sigma/L)\] and therefore induces a surjective map 
\[ \begin{tikzcd}
(M_\Sigma)_R \ar[r,two heads] & B_0.  
\end{tikzcd}
 \]

Equation (\ref{e:nicecocycle}) implies that $\kappa$ takes values in $B_0$.

To prove that $\alpha$ induces a map $\alpha_1$ as in (\ref{e:alpha1}), it suffices to prove that the composition
\[ \begin{tikzcd}
 (W_\Sigma^\theta)_A \ar[r] & (B_\Sigma^\theta)_A \ar[r] & (B_\Sigma)_A \ar[r,"\alpha"] & B 
 \end{tikzcd}
 \]
can be factored as
\[ \begin{tikzcd}    
(W_\Sigma^\theta)_A \arrow[r, "\varphi_{\Sigma}"] & (M_\Sigma)_A \arrow[r, "\kappa|_{G_L}"] & B_0 \arrow[r,hook] & B.
\end{tikzcd}\]
For then the image of $(V^\theta)_A$ in $(W_\Sigma^\theta)_R$ vanishes under $\alpha$.  Note that the composition $\kappa|_{G_L} \circ \varphi_{\Sigma}$ equals $\varphi_\kappa := \sum_{v \in S'} \varphi_\kappa^v$, by Lemma~\ref{l:gtd} together with equations (\ref{e:lambdadef}) and (\ref{e:nicecocycle}).

The fact that the restriction of $\alpha$  to $(W_\Sigma^\theta)_A$ equals $\varphi_\kappa$ follows from the assumptions on $\kappa$, as we now check on each component.

\begin{itemize}
\item Any $v \in S' - \Sigma_\fp$ in unramifed in $L$ and hence $(W_v)_A$ is generated over $A$ by
$\sigma_v - 1$, where $\sigma_v$ is the Frobenius element at $v$.   
By definition of the map $W_\Sigma \longrightarrow B_\Sigma$, the image of $\sigma_v - 1$ in the component of $B_\Sigma$ at $v$ is simply 1.  Therefore
\[ \alpha(\sigma_v - 1) = \kappa(\sigma_v) = \varphi_\kappa^v(\sigma_v - 1). \]
\item For $v \in \Sigma$, let $\sigma \in \fg_v$, and consider the element $\sigma - 1$ in the $v$-component of $(B_{\Sigma})_A$.  We find:
 \[ \alpha(\sigma - 1) = x_v(\sigma - 1) = \kappa(\sigma) = \varphi_\kappa^v(\sigma - 1). \]
\item  Let $y \in (W_\fp)_R$ be represented by $\tilde{\sigma} - 1$ for ${\sigma} \in \WD(L_w^{\nr}/F_\fp)$.
Consider $j_\fp(y)$ in the $\fp$-component of $(B_{\Sigma})_R$. We find:
\begin{align*}
 \alpha(j_\fp(y)) &= \alpha(  \sigma|_{L_w} - 1, \ord_\fp({\sigma})) \\
 & =  (\sigma - 1) x_{\fp}  + \ord_\fp({\sigma}) x_{\fp}'   \\
 & = \kappa(\sigma) \\ & = \varphi_\kappa^v(y).
  \end{align*}
\end{itemize}
This concludes the proof that $\alpha$ induces the desired map $\alpha_1$.  In fact, if we let $\alpha_0$ denote the composition
of $\alpha$ with the projection $B \longrightarrow B/B_0$, then we have demonstrated a commutative diagram
\begin{equation}  \label{e:bigd}
\begin{gathered}
\begin{tikzcd}
0  \ar[r] & (M_\Sigma)_A  \ar[r]  \ar[d,two heads,"\kappa_{|G_L}"] & \tilde{\nabla}_{\Sigma}^{\Sigma'}(L)_A  \ar[r] 
\ar[d,"\alpha_1"] & (Y^\theta_\Sigma)_A  \ar[r] \ar[d,two heads,"\alpha_0"] & 0 \\
0 \ar[r] & B_0 \ar[r] & B \ar[r] & B/B_0 \ar[r] & 0. 
\end{tikzcd}
\end{gathered}
\end{equation}
The surjectivity of $\alpha_0$ follows by the assumption that $B/B_0$ is generated over $R$ by the $x_v$ for $v \in \Sigma$ along with $x_{\fp}, x_{\fp}'$.
The surjectivity of $\alpha_1$ then follows from the Five Lemma.
\end{proof}

\subsection{The module $\nabla_{\!\sL}$} \label{s:nablal}

Recall $R = \Z_p[\fg]^-, \overline{R} = \Z_p[G]^-$.
As we did with $(B_{\Sigma_\fp}^\theta)_R$ in the proof of Theorem~\ref{t:gorder}, we can write down a generating set for the $R$-module $(B_{\Sigma}^\theta)_R$ as follows.  The module $(B_{\Sigma})_R$ is generated over $R$ by the vectors $e_0 = (1,0)$ and $e_1 = (0,1)$ in the component $R \oplus R_\fp$ at $\fp$ together with the standard basis vectors $e_v$ for $\prod_{v \in S' \setminus \fp} R \subset (B_{\Sigma})_R$.
The module $(B_{\Sigma}^\theta)_R$ is then generated by
\[ \{ b_0 = e_0 - e_\infty, \  b_1 = e_1, \ b_v = e_v - \theta_B(e_v)e_\infty \colon v \in S' \setminus \{\infty, \fp\} \}, \]
where $\infty \in S_\infty \subset S'$ is any fixed infinite place of $F$.

Denote the image of the vectors $b_0$ and $b_1$ in $\tilde{\nabla}_{\Sigma}^{\Sigma'}(L)$ by $\overline{b}_0$ and $\overline{b}_1$, respectively.  Recall  the $R$-algebra  $R_\sL$  defined in  (\ref{e:rldef}). We define
\begin{equation} \label{e:nablaldef}
 \nabla_{\!\sL} = \tilde{\nabla}_{\Sigma}^{\Sigma'}(L)_R \otimes_R R_\sL/(\overline{b}_1 + \sL \overline{b}_0).
\end{equation}

The following is the analogue of Theorem~\ref{t:nc} for the module $\nabla_{\!\sL}$.  Again we work over an arbitrary $R$-algebra $A$ and write $A_\sL = R_\sL \otimes_R A$, $\nabla_{\!\sL, A} = \nabla_{\!\sL} \otimes_R A$.

\begin{theorem}  \label{t:nl} Let $B$ and $[\kappa] \in H^1(G_F, B)$ be as in Theorem~\ref{t:nc}. Suppose that $\tilde{B}$ is an $A_{\sL}$-module endowed with an $A$-module map $B \longrightarrow \tilde{B}$ such that the image of $B$ generates $\tilde{B}$ over $A_{\sL}$. Suppose further that the images in $\tilde{B}$ of $x_{\fp}, x_{\fp}' \in B$ defined in (\ref{e:xpdef}) satisfy 
$x_{\fp}' + \sL \cdot x_{\fp} = 0$.  Then the map $\alpha_1$ of $(\ref{e:alpha1})$ induces a surjective $A_{\sL}$-module homomorphism 
\begin{equation} \label{e:nlsurj} \begin{tikzcd}
\alpha_{\sL} \colon \nabla_{\!\sL, A} \ar[r, two heads] & \tilde{B}. 
\end{tikzcd} \end{equation}
\end{theorem}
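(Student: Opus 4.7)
The plan is to produce $\alpha_\sL$ by base-changing the map $\alpha_1$ of Theorem~\ref{t:nc} along $A \to A_\sL$, composing with the given $A$-module map $B \to \tilde B$, and then checking that the resulting map descends to the quotient $\nabla_{\!\sL,A}$. Concretely, I would first invoke Theorem~\ref{t:nc} on the present data to obtain the surjective $A$-module map $\alpha_1\colon \tilde\nabla_{\Sigma}^{\Sigma'}(L)_A \twoheadrightarrow B$. Tensoring with $A_\sL$ over $A$ and composing with the $A_\sL$-linear extension of $B \to \tilde B$ produces an $A_\sL$-module map
\[
  \tilde\alpha\colon \tilde\nabla_{\Sigma}^{\Sigma'}(L)_A \otimes_A A_\sL \longrightarrow \tilde B,
\]
which will be surjective by right-exactness of $-\otimes_A A_\sL$ together with the hypothesis that the image of $B$ generates $\tilde B$ over $A_\sL$.

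The crux will be checking that $\tilde\alpha$ annihilates $\overline b_1 + \sL \overline b_0$, the generator of the extra relation in the definition~(\ref{e:nablaldef}) of $\nabla_{\!\sL}$. Unwinding the definition of $\alpha$ from Theorem~\ref{t:nc}, one computes $\alpha_1(\overline b_1) = \alpha(e_1) = x_\fp'$ and $\alpha_1(\overline b_0) = \alpha(e_0) - \alpha(e_\infty) = x_\fp - x_\infty$, using that $\infty \in \Sigma$ so $\alpha(e_\infty) = x_\infty$. Hence
\[
  \tilde\alpha(\overline b_1 + \sL \overline b_0) = x_\fp' + \sL(x_\fp - x_\infty) \quad \text{in } \tilde B,
\]
and the hypothesis $x_\fp' + \sL x_\fp = 0$ reduces the problem to showing $\sL x_\infty = 0$ in $\tilde B$.

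The main (and only) obstacle will be handling this residual vanishing, and I expect it to be resolved by a judicious choice of cocycle representative for $[\kappa]$. Following the normalization already used in the proof of Theorem~\ref{t:nc}, I would replace $\kappa$ by its canonical representative given by formula~(\ref{e:nicecocycle}), in which one is free to choose the lift $\tilde c$ of the complex conjugation of $\fg$. Taking $\tilde c = c_\infty$, the complex conjugation in $G_F$ at the distinguished real place $\infty$ used to define the basis $\{b_v\}$, one has $c_\infty \tilde c c_\infty^{-1} \tilde c^{-1} = 1$, and so $\kappa(c_\infty) = 0$. Combined with the defining relation $\kappa(c_\infty) = (c_\infty-1)x_\infty = -2x_\infty$ (the latter because $c_\infty$ acts as $-1$ on the minus module $B$) and the invertibility of $2$ in $R = \Z_p[\fg]^-$ (since $p$ is odd), this forces $x_\infty = 0$. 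Since altering $\kappa$ by a coboundary shifts $x_\fp$ and $x_\infty$ by the same element while fixing $x_\fp'$, the combination $x_\fp' + \sL(x_\fp - x_\infty)$ is cocycle-invariant, so the hypothesis of the theorem applies unambiguously with respect to this normalized representative. We conclude that $\tilde\alpha(\overline b_1 + \sL \overline b_0) = 0$, so $\tilde\alpha$ descends to the desired surjection $\alpha_\sL\colon \nabla_{\!\sL,A} \twoheadrightarrow \tilde B$.
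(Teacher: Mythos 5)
Your proposal is correct and follows essentially the same route as the paper: base change $\alpha_1$ along $A \to A_\sL$, compose with $B \to \tilde B$, and verify the induced map kills $\overline b_1 + \sL\overline b_0$. You are, however, more careful than the paper on one point, and rightly so. The paper's proof simply asserts $\alpha_1(\overline b_0) = x_\fp$, whereas the definitions $b_0 = e_0 - e_\infty$ and $\alpha(e_\infty) = x_\infty$ (since $\infty \in S_\infty \subset \Sigma$) give $\alpha_1(\overline b_0) = x_\fp - x_\infty$, exactly as you compute. Your resolution --- normalizing the cocycle to satisfy (\ref{e:nicecocycle}) with $\tilde c$ a complex conjugation $c_\infty$ at the distinguished place $\infty$, so that $\kappa(c_\infty) = 0$ forces $x_\infty = 0$ by oddness of $p$ --- is the right move, and it is the normalization the paper's proof of Theorem~\ref{t:nc} already performs implicitly. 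One small caution: the phrase ``the hypothesis of the theorem applies unambiguously'' slightly overstates matters, since $x_\fp' + \sL x_\fp$ itself is not cocycle-invariant (a coboundary shift $x \mapsto x - a$ sends it to $x_\fp' + \sL x_\fp - \sL a$); what is invariant, as you correctly note, is $x_\fp' + \sL(x_\fp - x_\infty) = \tilde\alpha(\overline b_1 + \sL\overline b_0)$. The cleanest formulation is therefore to read the hypothesis of the theorem as being stated for the normalized representative (where $x_\infty = 0$), in which case it coincides with the vanishing of the invariant quantity and the proof closes. With that clarification your argument is complete and fills in a detail the paper elides.
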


\begin{proof}  Since the image of $B$ generates $\tilde{B}$ as an $A_{\sL}$-module, the surjection $\tilde{\nabla}_{\Sigma}^{\Sigma'}(L)_A \longtwoheadrightarrow B$ induces a surjection
 \begin{equation} \label{e:tildeb}
 \tilde{\nabla}_{\Sigma}^{\Sigma'}(L)_A \otimes_A A_{\sL}  \longtwoheadrightarrow \tilde{B}.
 \end{equation}
Since $\alpha_1(\overline{b}_0) = x_{\fp}$ and $\alpha_1(\overline{b}_1) = x_{\fp}'$, the equality $x_{\fp}' + \sL \cdot x_{\fp} = 0$ implies that the surjection (\ref{e:tildeb}) factors through
\[ \nabla_{\!\sL, A} = \tilde{\nabla}_{\Sigma}^{\Sigma'}(L)_A \otimes_A A_{\sL} /(\overline{b}_1 + \sL \overline{b}_0)
\]
as desired.
\end{proof}

\subsection{Gross's Conjecture via Fitting Ideals} \label{s:gcfi}

In this section we prove the following interpretation of Gross's Conjecture.

\begin{theorem} \label{t:mfig}  The $R_\sL$-module $\nabla_{\!\sL}$ is quadratically presented and we have
\[ \Fitt_{R_\sL}(\nabla_{\!\sL}) = (\rec_G(u_\fp^{\Sigma, \Sigma'}) - \Theta_L). \]
Therefore, the equality
\[  \Fitt_{R_\sL}(\nabla_{\!\sL}) = 0 \]
implies the $p$-part of the modified Gross conjecture:
\[ \rec_G(u_\fp^{\Sigma, \Sigma'}) \equiv \Theta_L \pmod{I^2}. \]
\end{theorem}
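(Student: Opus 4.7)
The plan is to prove both assertions in three steps: first the quadratic presentation, then the Fitting ideal computation, and finally the implication.

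For the quadratic presentation, Lemma~\ref{l:qp} gives that $(V^\theta)_R$ is free of rank $\#S'-1$ over $R$, so $(V^\theta)_{R_\sL}$ is free of the same rank over $R_\sL$. The cokernel target $(B_\Sigma^\theta)_R$ is generated over $R$ by $\overline b_0$, $\overline b_1$, and the $\overline b_v$ for $v\in S'\setminus\{\fp,\infty\}$, giving $\#S'$ generators. All of these are $R$-free except $\overline b_1$, whose $R$-annihilator is $I(\fp)\subset I$. Passing to $R_\sL$ and imposing the relation $\overline b_1+\sL\,\overline b_0=0$ eliminates $\overline b_1$ from the generating set; since $\sL\cdot I=0$ in $R_\sL$ and $I(\fp)\subset I$, the element $\sL\,\overline b_0$ is already annihilated by $I(\fp)$, so no extraneous relation is produced. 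Thus $(B_\Sigma^\theta)_{R_\sL}/(\overline b_1+\sL\,\overline b_0)$ is free of rank $\#S'-1$ over $R_\sL$, and $\nabla_{\!\sL}$ is quadratically presented.

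To compute the Fitting ideal, which equals the determinant of the presentation matrix $M_\sL$, I would compare $M_\sL$ with the presentation matrix $M_{\Sigma_\fp}$ of $\nabla_{\Sigma_\fp}^{\Sigma'}(L)_R$ from Theorem~\ref{t:dkresult}, whose determinant generates $(\Theta_L)$. Fixing a common basis of $V^\theta_R$ for the columns, the two matrices agree on every row indexed by $v\in S'\setminus\{\fp,\infty\}$ because the maps $j_\bl$ and $\theta_B$ coincide at such $v$ for $\bl=\Sigma$ and $\bl=\Sigma_\fp$. At $\fp$, however, the matrix for $\tilde\nabla_\Sigma^{\Sigma'}(L)_R$ has \emph{two} rows, one for $\overline b_0\in R$ and one for $\overline b_1\in R_\fp$; by (\ref{e:jpdef}) the former records the contribution $\sigma\mapsto\sigma|_{L_w}-1$ while the latter records $\sigma\mapsto\ord_\fp(\sigma)$. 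After imposing $\overline b_1=-\sL\,\overline b_0$, these rows collapse into a single row whose entries equal (the $\overline b_0$-contribution) $-\,\sL\cdot$(the $\overline b_1$-contribution).

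The final identification combines cofactor expansion of this modified row with the Galois-theoretic description of Lemma~\ref{l:gtd} and the defining properties $\ord_G(u_\fp^{\Sigma,\Sigma'})=\Theta_H$ and the formula for $\rec_G$. The $(\sigma|_{L_w}-1)$-contribution reads off $\rec_\fP$ of the Brumer--Stark unit via the snake sequence (\ref{e:snake}), assembling into $\rec_G(u_\fp^{\Sigma,\Sigma'})\in I/I^2\subset R_\sL$; the $\ord_\fp$-contribution reads off the valuation and assembles into $\Theta_H$. Using $\sL\Theta_H=\Theta_L$ in $R_\sL$ one obtains
\[
\det M_\sL \;=\; \rec_G(u_\fp^{\Sigma,\Sigma'}) \;-\; \sL\Theta_H \;=\; \rec_G(u_\fp^{\Sigma,\Sigma'}) \;-\; \Theta_L
\]
in $R_\sL$, which is the claimed generator of $\Fitt_{R_\sL}(\nabla_{\!\sL})$. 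The concluding implication is then immediate: if this Fitting ideal vanishes then $\rec_G(u_\fp^{\Sigma,\Sigma'})=\Theta_L$ in $R_\sL$, and the injectivity of the structure map $R/I^2\hookrightarrow R_\sL$ from Theorem~\ref{t:ws} descends this to $\rec_G(u_\fp^{\Sigma,\Sigma'})\equiv\Theta_L\pmod{I^2}$ in $R$, which is the modified Gross congruence.

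The main obstacle is the precise identification carried out in the second step: tracing the image of a basis of $V^\theta$ through the diagram (\ref{e:rwdiagram}) at $\fp$ so as to match both the valuation (giving $\Theta_H$) and the reciprocity map (giving $\rec_G(u_\fp^{\Sigma,\Sigma'})$) to the entries of the modified $\fp$-row of $M_\sL$. This is the analogue, with a genuine $\sL$-twist, of the cohomological computation underlying Theorem~\ref{t:dkresult} that reads $\Theta_L$ off of $M_{\Sigma_\fp}$.
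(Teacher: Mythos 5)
Your strategy is the paper's: collapse $\overline b_1$ onto $-\sL\,\overline b_0$ for the quadratic presentation, expand $\det\eA_\sL=\det\eA_{\Sigma_\fp}-\sL\det\overline{\eA}_\Sigma$, identify the two summands via cofactor expansion as the reciprocity and valuation images of a single $(\cO_{H,\fp,\Sigma'}^*)_{\overline R}$-unit, and conclude by the injectivity $R/I^2\hookrightarrow R_\sL$ of Theorem~\ref{t:ws}. One small imprecision: the cofactor-expanded unit $\epsilon$ equals $u_\fp^{\Sigma,\Sigma'}$ only up to a unit $x\in\overline R^*$ (Lemma~\ref{l:eord} gives $\det\overline{\eA}_\Sigma=x\Theta_H$, not $\Theta_H$), so $\det\eA_\sL=x\bigl(\rec_G(u_\fp^{\Sigma,\Sigma'})-\Theta_L\bigr)$; since $x$ is a unit this does not change the Fitting ideal. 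The step you flag as the main obstacle is precisely where the paper does the work: Lemma~\ref{l:detav} shows the cofactor minor lies in $\ker f_{H,\Sigma_\fp}$ and hence comes from a unit $\epsilon$, a further lemma places $\epsilon$ in $(\cO_{H,\fp,\Sigma'}^*)_{\overline R}$, and Lemmas~\ref{l:eord},~\ref{l:erec} compute $\ord_G(\epsilon)$ and $\rec_G(\epsilon)$ as the two determinants.
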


We would like to point out that most of the computations of \S\ref{s:gcfi} are the same as, or slight variants of, the calculations of Burns, Kurihara, and Sano in \S5 of \cite{bks}.   Our approach to relating the Brumer--Stark unit $u_\fp^{\Sigma,\Sigma'}$ to Ritter--Weiss modules and studying its properties is modeled after theirs.  Our innovation is the definition of $R_\sL$ and $\nabla_{\!\sL}$ and the application of these techniques to the statement and proof of Theorem~\ref{t:mfig}.

\bigskip

First we note that the same construction used to define $\tilde{\nabla}_{\Sigma_\fp}^{\Sigma'}(L) = \nabla_{\Sigma_\fp}^{\Sigma'}(L)$ above, but working over $H$ rather than $L$, gives rise to modules 
$\nabla_{\Sigma}^{\Sigma'}(H)$ and $\nabla_{\Sigma_\fp}^{\Sigma'}(H)$.
  Since $H/F$ is unramified outside $\Sigma \cup \Sigma'$, these modules
  are the cokernels of maps
  \[ \begin{tikzcd} V^\theta(H) \ar[r, "f_{H, \Sigma}"] & B^\theta(H), \end{tikzcd} \qquad
  \begin{tikzcd} V^\theta(H) \ar[r, "f_{H, \Sigma_\fp}"] & B^\theta(H), \end{tikzcd} \]
respectively,  with the {\em same} domain and codomain.
The modules $\nabla_{\Sigma}^{\Sigma'}(H)_{\overline{R}}$ and $\nabla_{\Sigma_\fp}^{\Sigma'}(H)_{\overline{R}}$ satisfy properties analogous to those stated in Lemma~\ref{l:qp} and Theorem~\ref{t:bs}.  Specifically, $V^\theta(H)_{\overline{R}}$ is free over $\overline{R}$ with constant rank equal to the rank of the free module $B^\theta(H)_{\overline{R}}$, namely $\#S' - 1$.   Furthermore, as stated in Theorem~\ref{t:dkresult} above, 
we have by \cite{dk}*{Theorem 3.3} the equalities
\[ \Fitt_{\overline{R}} \nabla_{\Sigma}^{\Sigma'}(H)_{\overline{R}} = (\Theta_{\Sigma, \Sigma'}^{H}), \qquad
 \Fitt_{\overline{R}} \nabla_{\Sigma_\fp}^{\Sigma'}(H)_{\overline{R}} = (\Theta_{\Sigma_\fp, \Sigma'}^{H}) = 0. \]

In \cite{dk}*{Lemmas B.1 and B.2} we prove that there is a commutative diagram
\begin{equation} \begin{gathered}
 \begin{tikzcd}
V^\theta(L)_R \ar[d,"f_{\Sigma_\fp}"] \ar[r, "\N\Gamma", two heads] & V^\theta(L)^\Gamma_{\overline{R}} \ar[r,"\sim"] \ar[d] & V^\theta(H)_{\overline{R}} \ar[d,"f_{H, \Sigma_\fp}"] \\
B_{\Sigma_\fp}^\theta(L)_R \ar[r, "\N\Gamma", two heads] &  B_{\Sigma_\fp}^\theta(L)^{\Gamma}_{\overline{R}} \ar[r, "\sim"] & B^\theta(H)_{\overline{R}}.
\end{tikzcd}
\end{gathered} \label{e:bv}
\end{equation}
To give the analogous diagram for $\Sigma$, we need some additional notation.  
Define \[ \overline{B}_{\Sigma}(L) = \Z[\fg/\fg_\fp] \oplus \bigoplus_{v\in S' \setminus \fp} \Z[\fg], \]
and let $\pi \colon {B}_{\Sigma}(L) \longrightarrow \overline{B}_{\Sigma}(L)$ be the projection in which the first component at $\fp$, which is a factor $\Z[\fg]$, has been forgotten. 
Recall the map $f_\Sigma \colon V^\theta(L) \longrightarrow B_{\Sigma}^{\theta}(L)$ defined in (\ref{e:fsigmadef}).
 Let 
$f_{\Sigma, \pi} = \pi \circ f_{\Sigma}$.  Note that since $\fg_\fp \subset \Gamma$, multiplication by $\N\Gamma$ induces a well-defined map \[ \Z[\fg/\fg_\fp] \longrightarrow \N\Gamma \cdot  \Z[\fg] = \Z[\fg]^{\Gamma} \] and hence a well-defined map $ \overline{B}_{\Sigma}(L) \longrightarrow B_{\Sigma_\fp}(L)^\Gamma$.  By \cite{dk}*{Lemma B.2}, we then have a commutative diagram:
\begin{equation} \begin{gathered}
 \begin{tikzcd}
V^\theta(L)_R \ar[d,"f_{\Sigma, \pi}"] \ar[r, "\N\Gamma", two heads] & V^\theta(L)^\Gamma_{\overline{R}} \ar[r,"\sim"] \ar[d] & V^\theta(H)_{\overline{R}} \ar[d,"f_{H, \Sigma}"] \\
\overline{B}_{\Sigma}^\theta(L)_R \ar[r, "\N\Gamma", two heads] &  B_{\Sigma_\fp}^\theta(L)^{\Gamma}_{\overline{R}} \ar[r, "\sim"] & B^\theta(H)_{\overline{R}}.
\end{tikzcd}
\end{gathered} \label{e:bv2}
\end{equation}

Write $t = \#S' - 1$ and fix an $R$-basis $\{v_1, v_2, \dotsc, v_t \}$ of $V^\theta(L)_R$.
As in the proof of Theorem~\ref{t:gorder}, we choose the following basis for $B_{\Sigma_\fp}^\theta(L)_R$:
\begin{equation} \label{e:bbasis}
 \{b_v = e_v - \theta_B(e_v)e_\infty : v \in S' - \infty\}, \end{equation}
 where $\infty \in S_\infty \subset S'$ is any fixed infinite place of $F$ and $\{e_v\}$ is the standard basis of 
\[ B_{\Sigma_\fp}(L)_R = \prod_{v \in S'} R. \]
 Let $\{\overline{v}_1, \dotsc, \overline{v}_t\}$ 
and $\{\overline{b}_v\}$ 
denote the $\overline{R}$-bases of $V^\theta(H)_{\overline{R}}$ and 
$ B^\theta(H)_{\overline{R}}$, respectively, obtained by applying the horizontal maps in (\ref{e:bv}).

Having fixed these bases, we define:
\begin{itemize}
\item  $\eA_{\Sigma_\fp} \in M_{t \times t}(R)$ is the matrix for $f_{\Sigma_\fp}$.
\item $\eA_{\Sigma}$ is the $t \times (t+1)$ matrix representing the map $f_{\Sigma}$, with second column having entries in $R_\fp = \Z_p[\fg/\fg_\fp]^-$ and all other columns having entries in $R$.
\end{itemize}
By the commutative  diagrams (\ref{e:bv}) and (\ref{e:bv2}) we have:
\begin{itemize}
\item The reduction of $\eA_{\Sigma_\fp}$ modulo $I$,  denoted 
$\overline{\eA}_{\Sigma_\fp} \in M_{t \times t}(\overline{R})$, is the matrix for $f_{H, \Sigma_\fp}$.
\item Let $\overline{\eA}_{\Sigma}$ denote the matrix in $M_{t\times t}(\overline{R})$ obtained from $\eA_\Sigma$ by deleting the first column and reducing the other entries modulo $I$.
Then $\overline{\eA}_{\Sigma}$ is the matrix for $f_{H, \Sigma}$.
\end{itemize}
We furthermore note that:
\begin{itemize}
\item The matrices $\overline{\eA}_{\Sigma_\fp}$ and $\overline{\eA}_{\Sigma}$ agree other than their first columns, since the components away from $\fp$ of the maps $f_{H, \Sigma_\fp}, f_{H, \Sigma}$ are the same.
\item The first column of $\overline{\eA}_{\Sigma_\fp}$ consists of all zeroes, since $\fg_\fp \subset \Gamma$.
\end{itemize}

We now define a square $t \times t$ matrix $\eA_V$.  The last $t-1$ columns of $\eA_V$ have entries in 
$\overline{R}$ and are equal to the last $t-1$ columns of $\overline{\eA}_{\Sigma_\fp}$  (equivalently, 
$\overline{\eA}_{\Sigma}$). The first column of $\eA_V$ is the column vector $(\overline{v}_i)_{i = 1}^{t}$, with entries in $V^\theta(H)_{\overline{R}}$.  It makes sense to consider the determinant of $\eA_V$ as an element of $V^\theta(H)_{\overline{R}}$ by Leibniz formula for determinants.

\begin{lemma} \label{l:detav} We have $\det(\eA_V) \in \ker(f_{H, \Sigma_\fp}).$
\end{lemma}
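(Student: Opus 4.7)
The plan is to prove Lemma~\ref{l:detav} by a direct cofactor expansion, unwinding $\det(\eA_V)$ along its first (vector-valued) column and then applying the classical adjugate identity to $\overline{\eA}_{\Sigma_\fp}$.

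First, I observe that the last $t-1$ columns of $\eA_V$ have entries in the commutative ring $\overline{R}$ and coincide with the last $t-1$ columns of $\overline{\eA}_{\Sigma_\fp}$, while the first column is $(\overline{v}_i)_{i=1}^t$ with entries in the $\overline{R}$-module $V^\theta(H)_{\overline{R}}$. So the Laplace expansion along the first column makes sense and gives
\[
\det(\eA_V) \;=\; \sum_{i=1}^t C_{i,1}\,\overline{v}_i \;\in\; V^\theta(H)_{\overline{R}},
\]
where $C_{i,1}=(-1)^{i+1}M_{i,1}$ is the signed $(i,1)$-cofactor of $\overline{A} := \overline{\eA}_{\Sigma_\fp}$ (the minors $M_{i,1}$ do not involve column $1$, so they agree whether computed from $\eA_V$ or from $\overline{A}$).

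Next, I apply the $\overline{R}$-linear homomorphism $f_{H,\Sigma_\fp}$. In the convention fixed in the preceding paragraphs, where rows of $\overline{A}$ are indexed by the basis $\{\overline{v}_i\}$ of $V^\theta(H)_{\overline{R}}$ and columns by the basis $\{\overline{b}_v\}$ of $B^\theta(H)_{\overline{R}}$, one has $f_{H,\Sigma_\fp}(\overline{v}_i)=\sum_v \overline{A}_{i,v}\,\overline{b}_v$. Substituting and interchanging summation,
\[
f_{H,\Sigma_\fp}\!\left(\det(\eA_V)\right) \;=\; \sum_v \overline{b}_v \sum_{i=1}^t \overline{A}_{i,v}\,C_{i,1}.
\]
The inner sum is precisely the $(1,v)$-entry of $\mathrm{adj}(\overline{A})\cdot \overline{A}$, which by the classical identity equals $\delta_{v,1}\det(\overline{A})$.

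Finally, $\det(\overline{A})=0$, since the first column of $\overline{A}$ vanishes (as already recorded in the text, because $\fg_\fp\subset\Gamma$; equivalently, via Theorem~\ref{t:dkresult}, the Fitting ideal of $\nabla_{\Sigma_\fp}^{\Sigma'}(H)_{\overline{R}}$ equals $(\Theta_{\Sigma_\fp,\Sigma'}^H)=0$). Hence every inner sum is zero, giving $f_{H,\Sigma_\fp}(\det(\eA_V))=0$. The argument is essentially formal once the matrix conventions are aligned; the only ``obstacle'' is the bookkeeping required to match the paper's convention that columns of $\overline{\eA}_{\Sigma_\fp}$ correspond to the codomain basis, so that the identity $\mathrm{adj}(A)\cdot A=\det(A)\,I$ applies in the form needed.
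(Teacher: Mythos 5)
Your proposal is correct and, once unpacked, is essentially the paper's argument. The paper applies $f_{H,\Sigma_\fp}$ to $\det(\eA_V)$ directly and notes that, component by component in $B^\theta(H)_{\overline{R}}$, the resulting matrix has either a repeated column (for $v\neq\fp$) or a zero column (for $v=\fp$); your cofactor expansion together with the adjugate identity $\sum_i \overline{A}_{i,v}\,C_{i,1}=\delta_{v,\fp}\det(\overline{A})=0$ packages exactly those two observations into one formula.
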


\begin{proof} The value $f_{H, \Sigma_\fp}(\det(\eA_V))$ is the determinant of the matrix in which the first column of $\eA_V$ has been replaced by column whose elements are $f_{H, \Sigma_\fp}(\overline{v}_i) \in B^\theta(H)_{\overline{R}}.$
With respect to the decomposition of $B^\theta(H)_{\overline{R}}$ as a product over the places $v \in S'$,
each component of $(f_{H, \Sigma_\fp}(\overline{v}_i))_{i=1}^{t}$ corresponding to a place $v \in S' \setminus \fp$ is equal to another column of the matrix, namely the column corresponding to $v$.  Meanwhile the component at $v = \fp$ is the 0 vector, as noted in the bulleted point above, regarding $\overline{\eA}_{\Sigma_\fp}$.  It follows that the determinant is 0 in every component of $ B^\theta(H)_{\overline{R}}.$
\end{proof}

From the exact sequence
\begin{equation} \label{e:tateh}
\begin{tikzcd}
 0   \ar[r] & (\cO_{H, \Sigma_\fp, \Sigma'}^*)_{\overline{R}}  \ar[r] & V^\theta(H)_{\overline{R}}  \ar[r, "f_{H, \Sigma_\fp}"] & B^\theta(H)_{\overline{R}}  \ar[r] & {\nabla}_{\Sigma_\fp}^{\Sigma'}(H)_{\overline{R}}  \ar[r] & 0, 
 \end{tikzcd}
 \end{equation}
it follows from Lemma~\ref{l:detav} that $\det(\eA_V)  \in V^\theta(H)_{\overline{R}}$ is the image of a unit 
\begin{equation} \label{e:epsilondef}
 \epsilon \in  (\cO_{H, \Sigma_\fp, \Sigma'}^*)_{\overline{R}}. \end{equation}

\begin{lemma}  We have $\epsilon \in (\cO_{H, \fp, \Sigma'}^*)_{\overline{R}}$. 
\end{lemma}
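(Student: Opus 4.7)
The plan is to extend the cofactor computation of Lemma~\ref{l:detav} to the setting of a smaller depletion set.  Specifically, I would run the Ritter--Weiss machinery with depletion set $\{\fp\}$ in place of $\Sigma_\fp$, whose associated Tate sequence (analogous to (\ref{e:tate})) identifies the kernel of the corresponding map $f_{H,\{\fp\}}$ precisely with $(\cO_{H,\fp,\Sigma'}^*)_{\overline{R}}$.  For this smaller depletion set, every prime $v_0\in\Sigma\setminus\{\fp\}$ now lies outside the depletion and contributes an extra ``valuation'' summand $\Z[\fg/\fg_{v_0}]$ to the $v_0$-component of $B_{\{\fp\}}$, exactly parallel to the $\Z[\fg/\fg_\fp]$-summand that $\fp$ contributes when the depletion set is $\Sigma$.

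First I would construct $f_{H,\{\fp\}}\colon V^\theta(H)_{\overline{R}}\to B^\theta_{\{\fp\}}(H)_{\overline{R}}$ by specializing the definitions of \S\ref{s:rwdef} and verify the corresponding Tate sequence.  Next I would show that $f_{H,\{\fp\}}(\det\eA_V)=0$ by decomposing the target module into its factors.  For each ordinary $\Z[\fg]$-factor at $v\in S'\setminus\{\infty\}$, the ``repeated column'' or ``zero column'' argument from Lemma~\ref{l:detav} applies unchanged: the last $t-1$ columns of $\eA_V$ cover the $v$-columns for $v\in S'\setminus\{\infty,\fp\}$ while the $\fp$-column continues to vanish.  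For each new $\Z[\fg/\fg_{v_0}]$-factor at $v_0\in\Sigma\setminus\{\fp\}$, I would argue vanishing by exploiting the Weil-group formula (\ref{e:jpdef}) applied at $v_0$: the map $j_{v_0}$ sends $\sigma-1\in W_w(L)$ to $(\sigma|_{L_w}-1,\,\ord_{v_0}(\sigma))$, so the new valuation column is controlled by the same Weil-group data already encoded in the $\Z[\fg]$-column at $v_0$, which is one of the columns of $\eA_V$.  This forces a linear dependence in the corresponding minor of the projected matrix, hence a zero determinant.  Archimedean $v_0\in S_\infty$ are automatic on the minus side, since complex conjugation lies in $\fg_{v_0}$ and thus $\Z[\fg/\fg_{v_0}]^-=0$.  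Combining these vanishings places $\det\eA_V$ in $\ker(f_{H,\{\fp\}})$, which by the Tate sequence is exactly the image of $(\cO_{H,\fp,\Sigma'}^*)_{\overline{R}}$.

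The hard part will be making the linear-dependence argument fully rigorous, especially for primes $v_0\in\Sigma$ above $p$ that are ramified in $L$: one must carefully track how the new valuation column interacts with the induced module structure $\Ind_{\fg_{v_0}}^{\fg}$, and how the reduction from $L$-level to $H$-level via the $\N\Gamma$-diagrams (\ref{e:bv}) and (\ref{e:bv2}) behaves when $v_0$ carries nontrivial inertia in $L/F$.  If direct verification of this linear dependence proves too delicate, a backup route would be to identify $\det\eA_V$ with the image of the modified Brumer--Stark unit $u_\fp^{\Sigma,\Sigma'}$ in $V^\theta(H)_{\overline{R}}$: the $\fp$-factor of $f_{H,\Sigma}(\det\eA_V)$ is $\det(\overline{\eA}_\Sigma)=\Theta_{\Sigma,\Sigma'}^H$ by the analogue of Theorem~\ref{t:dkresult} for $H$, matching $\ord_\fp(u_\fp^{\Sigma,\Sigma'})$, and one then invokes an appropriate rigidity statement for $(\cO^*_{H,\Sigma_\fp,\Sigma'})_{\overline{R}}$ to conclude $\epsilon=u_\fp^{\Sigma,\Sigma'}\in(\cO_{H,\fp,\Sigma'}^*)_{\overline{R}}$.
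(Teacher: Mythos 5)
Your primary route is in the right spirit, but it introduces unnecessary machinery and hides the crux of the argument. The paper does not change the depletion set at all: it keeps the Tate sequence for $\bl=\Sigma_\fp$ and instead shows directly, for each $v\in\Sigma$, that the component of $\det\eA_V$ in $V_v(H)$ maps to zero under the natural projection $\omega_v\colon V_v(H)\to W_v(H)$ of~(\ref{e:vwseq}). The vanishing there does \emph{not} come from a ``linear dependence'' between two columns — that description is misleading and would not survive scrutiny. Rather, $W_w(H_w)$ is described by Ritter--Weiss as $\{(x,y)\in\Delta G_v\oplus\Z[G_v/I_v]:\overline{x}=(\sigma_v-1)y\}$, and the relation $\overline{x}=(\sigma_v-1)y$ forces all $2\times2$ minors formed from the first column (with entries $y_{\cdot,i}$) and the $v$-column (with entries $x_{\cdot,j}$) to satisfy $y_{\cdot,i}\overline{x}_{\cdot,j}=y_{\cdot,j}\overline{x}_{\cdot,i}$, killing the Laplace expansion. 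If you try to recover this by rebuilding the module with depletion set $\{\fp\}$, you will eventually need exactly this identity in each new valuation component — only now buried under the extra work of defining new injective maps $j_{v_0}$ at ramified primes $v_0\mid p$, checking a new Tate sequence, and rederiving the descent diagrams. Your instinct that this is ``delicate'' is correct; the paper's choice is to avoid it entirely.

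The backup route is circular. From $\det(\overline{\eA}_\Sigma)=x\Theta_H$ (note the unit $x$, which you dropped) you can only match $\ord_G(\epsilon)$ with $\ord_G(u_\fp^{\Sigma,\Sigma'})$, i.e.\ conclude that the ratio has trivial valuation at $\fp$. Without already knowing $\epsilon\in(\cO_{H,\fp,\Sigma'}^*)_{\overline{R}}$, the ratio could still have nontrivial valuation at the finite primes of $\Sigma\setminus\{\fp\}$, and no ``rigidity statement'' for $(\cO_{H,\Sigma_\fp,\Sigma'}^*)_{\overline{R}}$ can rule that out. Indeed the paper's identification $\epsilon=x\cdot u_\fp^{\Sigma,\Sigma'}$ in~(\ref{e:eup}) is deduced \emph{from} the present lemma together with Lemma~\ref{l:eord}, so using it here runs in a circle.
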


\begin{proof} Let $v \in \Sigma$ and let $w$ be the chosen place of $H$ above $v$ in the definition of $V(H)$. 
Recall the short exact sequence
\begin{equation} \label{e:vwseq}
 \begin{tikzcd} \Ind_{G_w}^G \cO_w^* \ar[r, hook] & V_v(H) = \Ind_{G_w}^G V_w(H_w)  \ar[r, two heads, "\omega_v"] &
W_v(H) = \Ind_{G_w}^{G} W_w(H_w) .
\end{tikzcd}
\end{equation}
To show that $\epsilon \in (\cO_{H, \fp, \Sigma'}^*)_{\overline{R}}$, we must show that the component of $\det(\eA_V)$ in $V_v(H)$ has vanishing image  in $W_v(H)$ under $\omega_v$.
  Now $\omega_v(\det(\eA_V))$ is the determinant of the matrix $\eA_{\omega_v}$ in which the first column of $\eA_V$ has been replaced by $(\omega_v(\overline{v}_{i}))_{i=1}^t$.
We use the description of $W_w(H_w)$ given by Ritter--Weiss in \cite{rw}*{\S3}.  We have \begin{equation} \label{e:wvdef}
 W_w(H_w)  = \{(x,y) \in \Delta G_w \oplus \Z[G_w/I_w] \colon \overline{x} = (\sigma_w - 1)y \}. \end{equation}
 Here $\sigma_w \in G_w/I_w$ denotes  Frobenius.
   If we write \[ \omega_v(\overline{v}_i) = \sum_{\sigma \in G/G_w} \sigma \otimes (x_{\sigma, i}, y_{\sigma,i}), \] 
   then $\eA_{\omega_v}$ has first column equal to these values, 
   and another column (the column corresponding to $v$) equal to
    \[ \sum_{\sigma \in G/G_w} \sigma \otimes x_{\sigma,i} \in \Ind_{G_w}^{G} \Delta G_w. \] 
Indeed, this is the component of $f_{H, \Sigma}$ at the factor of $B^\theta(H)$ corresponding to $v$.  Because of the relationship between $\overline{x}$ and $y$ in (\ref{e:wvdef}), it is easy to see that the determinant of such a matrix is zero.  This is clear for the first component of the ordered pair, and for the second we note that 
\[
(\sigma_w -1) y_{\sigma,  i} = \overline{x}_{\sigma, i}.
\]
The vanishing of  $\det \eA_{\omega_v}$ yields the desired result $\epsilon \in (\cO_{H, \fp, \Sigma'}^*)_{\overline{R}}$.
\end{proof}

\begin{lemma} \label{l:eord}
Let $\fP$ denote the chosen place of $H$ above $\fp$ used in the definition of $V(H)$.  We have 
\[  \ord_G(\epsilon) := \sum_{\sigma \in G} \ord_\fP(\sigma(\epsilon)) \sigma^{-1} = \det(\overline{\eA}_{\Sigma})
= x \Theta_H \]
for some unit $x \in \overline{R}^*$.
\end{lemma}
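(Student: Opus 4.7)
The plan is to attack the two equalities in turn. For the first equality $\ord_G(\epsilon) = \det(\overline{\eA}_\Sigma)$, I would introduce an $\overline R$-linear ``ord-at-$\fP$'' map $\omega\colon V^\theta(H)_{\overline R}\longrightarrow \overline R$ extending $\ord_G$ on the unit subgroup $(\cO_{H,\fp,\Sigma'}^*)_{\overline R}$. This arises by projecting onto the $\fp$-component $\Ind_{G_\fp}^{G}V_w(H_w)$ inside $V(H)_{\overline R}$; since $\fp$ splits completely in $H$, one has $G_\fp=1$ and the defining sequence $1\to H_\fP^*\to V_w(H_w)\to \Delta G_\fp\to 1$ collapses to $V_w(H_w)\cong H_\fP^*$, so one may apply $\ord_\fP$ componentwise and package the result into an element of $\overline R$. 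By construction $\omega(\epsilon) = \ord_G(\epsilon)$.

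Now expand $\det(\eA_V)$ by multilinearity along the first column: the other $t-1$ columns have entries in $\overline R$, so $\omega(\det\eA_V)$ equals the determinant of the matrix obtained from $\eA_V$ by replacing the first column $(\overline v_i)$ with $(\omega(\overline v_i))$. The crux is then to identify $\omega(\overline v_i)$ with the first column of $\overline{\eA}_\Sigma$. By the bookkeeping set up in \S\ref{s:rwdef}, this first column records the image of $f_{H,\Sigma}(\overline v_i)$ in the $\Z[G/G_\fp]$-factor at $\fp$ inside $B_\Sigma(H)$. Since $G_\fp$ is trivial, the recipe $j_\fp(\sigma-1)=(\sigma|_{H_w}-1,\,\ord_\fp(\sigma))$ in (\ref{e:jpdef}) reduces to $(0,\ord_\fp(\sigma))$, so that the $\Z[G/G_\fp]$-slot at $\fp$ is precisely the $\ord_\fp$-slot. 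Matching definitions, this slot picks out $\omega(\overline v_i)$, and hence $\ord_G(\epsilon)=\omega(\det\eA_V)=\det(\overline{\eA}_\Sigma)$.

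For the second equality $\det(\overline{\eA}_\Sigma)=x\Theta_H$ with $x\in\overline R^*$, I would invoke the $H$-analogue of Lemma~\ref{l:qp} (recorded immediately before this lemma in the paper) to get that both $V^\theta(H)_{\overline R}$ and $B^\theta_\Sigma(H)_{\overline R}$ are free $\overline R$-modules of rank $t$. Hence $\nabla_\Sigma^{\Sigma'}(H)_{\overline R}$ is quadratically presented by $\overline{\eA}_\Sigma$, and its Fitting ideal is the principal ideal $(\det\overline{\eA}_\Sigma)$. By Theorem~\ref{t:dkresult} applied to $H/F$, this ideal equals $(\Theta_{\Sigma,\Sigma'}^H)=(\Theta_H)$. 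Upgrading equality of principal ideals to equality up to a unit proceeds via the character decomposition of $\overline R\otimes_{\Z_p}\overline\Q_p = \prod_{\chi \text{ odd}}\overline\Q_p$: on every odd character component either both sides vanish or both sides are nonzero (dictated by the trivial-zero behaviour of $L_{\Sigma,\Sigma'}(\chi^{-1},s)$ at $s=0$), from which a standard descent argument over $\overline R$ yields the unit multiplier.

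The expected obstacle is not the Fitting-ideal input, which is essentially a citation, but the bookkeeping in the first equality: one must verify with care that the ``extraction'' map $\omega$ really does agree, slot by slot, with the first column of $\overline{\eA}_\Sigma$ under the chosen bases $\{\overline v_i\}$ and $\{b_v\}$. This requires harmonizing the definitions of $V_w(H)$, $W_w(H)$, $j_\fp$ and the convention for the kernel $B^\theta_\Sigma$, all in the degenerate case where $\fp$ splits completely and $G_\fp$ collapses. Once this identification is made, the rest is formal multilinear algebra and an appeal to the already-established Brumer--Stark Fitting ideal formula.
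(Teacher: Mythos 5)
Your argument is essentially the paper's: the first equality follows by identifying the $\fp$-component map $\omega_\fp$ with $\ord_G$ (valid because $\fp$ splits completely in $H$, so $V_\fp(H)\cong\Ind_1^G H_\fP^*$ and $W_\fp(H)\cong\Z[G]$ with $\omega_\fp=1\otimes\ord_\fP$), then expanding $\det\eA_V$ linearly along its first column, and the second is the Fitting-ideal formula $\Fitt_{\overline R}\,\nabla_\Sigma^{\Sigma'}(H)_{\overline R}=(\Theta_H)$ from \cite{dk}. One small correction: the upgrade from the ideal equality $(\det\overline{\eA}_\Sigma)=(\Theta_H)$ to $\det\overline{\eA}_\Sigma=x\Theta_H$ with $x\in\overline R^*$ does not really go through the rationalization $\overline R\otimes_{\Z_p}\overline\Q_p$ or trivial-zero behaviour (which is redundant once the ideal equality is known); it rests on $\overline R=\Z_p[G]^-$ being a finite product of local rings (each component $\Z_p[\chi][G_p]$ is local since $G_p$ is a $p$-group), over which equality of principal ideals forces the generators to differ by a unit.
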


\begin{proof}
The proof is similar to the previous calculations.  The key point here is that since $\fp$ splits completely in $H$, we have $H_\fP = F_\fp$, hence $V_\fP(H_\fP) \cong H_\fP^*$ and  $W_\fP(H_\fP) \cong \Z$. The sequence (\ref{e:vwseq})
becomes the canonical sequence
\[
 \begin{tikzcd} \Ind_{1}^G \cO_\fP^* \ar[r, hook] & V_\fp(H) = \Ind_{1}^G H_\fP^*  \ar[r, two heads, "\omega_\fp"] &
W_\fp(H) = \Ind_{1}^{G} \Z = \Z[G]
\end{tikzcd}
\]
with $\omega_\fp = 1 \otimes \ord_{\fP}$.
The composition 
\[ \begin{tikzcd} \cO_{H, \Sigma_\fp, \Sigma'}^* \ar[r] & V_\fp(H) \ar[r, "\omega_\fp"] & \Z[G]
\end{tikzcd} \]
therefore is precisely the map $\ord_G$.  It follows that the value of $ \ord_G(\epsilon)$ is the determinant of the matrix in which the first column of $\eA_V$ has been replaced by $\omega_\fp(v_i)$.  But this is by definition the matrix $\overline{\eA}_{\Sigma}$.

To conclude, we note that
\[ (\det(\overline{\eA}_{\Sigma})) = \Fitt_{\overline{R}} \nabla_{\Sigma}^{\Sigma'}(H) = 
(\Theta_H)
\]
by \cite{dk}*{Theorem 3.3 and Corollary 6.2}.
\end{proof}

If follows from Lemma~\ref{l:eord} that \begin{equation} \label{e:eup}
\epsilon = x \cdot u_\fp^{\Sigma, \Sigma'}. \end{equation}
For the next lemma, recall from \S\ref{s:igs} that we have a map
 \[ \rec_G \colon (\cO_{H, \Sigma, \Sigma'}^*)_{\overline{R}} \longrightarrow I/I^2, \qquad \epsilon \mapsto 
\sum_{\sigma \in G} (\rec_\fP(\sigma(\epsilon)) - 1)\tilde{\sigma}^{-1}, \]
where $\tilde{\sigma}$ is a lift of $\sigma$ in $\fg$. 

\begin{lemma}  \label{l:erec}
We have
\[  \rec_G(\epsilon) \equiv \det(\eA_{\Sigma_\fp}) \text{  in  } I/I^2. \]
\end{lemma}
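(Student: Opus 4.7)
The plan is to expand $\det(\eA_{\Sigma_\fp})$ along its first column modulo $I^2$ and identify the resulting sum with the defining expression for $\rec_G(\epsilon)$ via a local reciprocity calculation at $\fp$.  Since $\fp$ splits completely in $H$, we have $\fg_\fp \subset \Gamma$, so the inclusion $\Ind_{\fg_\fp}^{\fg}\Delta\fg_\fp \hookrightarrow \Z[\fg]$ that defines the $\fp$-component of the map $W_{\Sigma_\fp} \to B_{\Sigma_\fp}$ factors through $I \subset R$.  Consequently the first-column entries $c_i$ of $\eA_{\Sigma_\fp}$ (which record the $\fp$-component of $f_{\Sigma_\fp}(v_i)$) all lie in $I$, and a cofactor expansion yields
\[
\det(\eA_{\Sigma_\fp}) \equiv \sum_{i=1}^{t} (-1)^{i+1}\, c_i\, \overline{M}_{i,1} \pmod{I^2},
\]
where $\overline{M}_{i,1} \in \overline{R}$ is the $(i,1)$-minor of $\overline{\eA}_{\Sigma_\fp}$.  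Since $\overline{\eA}_{\Sigma_\fp}$ and $\overline{\eA}_\Sigma$ share their columns $2, \ldots, t$, this $\overline{M}_{i,1}$ equals the $(i,1)$-minor of $\overline{\eA}_\Sigma$ as well.

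Next, I would expand $\det(\overline{\eA}_\Sigma)$ along its own first column and appeal to Lemma~\ref{l:eord}, which identifies this determinant with $\ord_G(\epsilon) = \sum_\sigma \ord_\fP(\sigma(\epsilon))\sigma^{-1}$.  This produces the identity
\[
\sum_\sigma \ord_\fP(\sigma(\epsilon))\,\sigma^{-1} = \sum_{i=1}^{t} (-1)^{i+1}\, (\overline{\eA}_\Sigma)_{i,1}\, \overline{M}_{i,1}.
\]
The two columns $(c_i)$ and $((\overline{\eA}_\Sigma)_{i,1})$ arise as the two natural projections of a single underlying datum, namely the $\fp$-component $v_i^\fp \in V_w(L)$ of $v_i$: the entry $c_i$ records the image under the quotient $V_w(L) \twoheadrightarrow \Delta\fg_\fp$ of (\ref{e:rwvwdef}), while $(\overline{\eA}_\Sigma)_{i,1}$ records the image under the $\ord_\fp$-valued map $V_w(L) \to W_w(L) \to \Z[\fg/\fg_\fp]$ arising from the companion sequence for $W_w(L)$.

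The crux of the proof is a local Ritter--Weiss calculation at $\fp$: for any $x \in L_w^*$, and in particular for $x = \sigma(\epsilon) \in H_\fP^* = F_\fp^*$, I would show that the image in $\Delta\fg_\fp$ of the class of $x \in L_w^* \subset V_w(L)$, once pushed into $R$ and reduced modulo $I^2$, equals $(\rec_\fP(x) - 1) \in I/I^2$.  This is the content of local class field theory repackaged into the Ritter--Weiss module: the identification $V_w(L) = I_V/I_V I_{\pi_V}$ comes from the augmentation ideal of the Weil group $\WD(L_w^{\ab}/F_v)$, and the sequence $1 \to L_w^* \to V_w(L) \to \Delta\fg_\fp \to 1$ realizes the local Artin map (with Serre's sign convention) after passage to $I/I^2$.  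Combining this local identity with the formula $\epsilon = \det(\eA_V)$ from (\ref{e:epsilondef}) and with the $\fg$-equivariance of the construction, the expansion translates into
\[
\sum_{i=1}^{t}(-1)^{i+1}\, c_i\, \overline{M}_{i,1} \equiv \sum_\sigma (\rec_\fP(\sigma(\epsilon))-1)\,\tilde{\sigma}^{-1} = \rec_G(\epsilon) \pmod{I^2},
\]
which is the desired congruence.  The main obstacle lies in this local reciprocity identification: it requires careful bookkeeping of the Weil-group extension, the module-theoretic definition of $V_w(L)$, and the sign conventions governing $\rec_\fP$, in order to verify that the projection $V_w(L) \to \Delta\fg_\fp$ genuinely encodes the local Artin map modulo $I^2$.
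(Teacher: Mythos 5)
Your proposal is correct and takes essentially the same route as the paper's proof. The paper introduces the auxiliary matrix $\eA_{\rec}$ with first column $(\rec_\fP(\overline{v}_i))_i$, shows $\rec_G(\epsilon) = \det(\eA_{\rec})$, and compares first columns with $\eA_{\Sigma_\fp}$ modulo $I^2$; your explicit cofactor expansion is equivalent to this column comparison, and the local identification of $\rec_\fP$ with the restriction map $\sigma - 1 \mapsto \sigma|_L - 1$ defining the $\fp$-component $j_\fp$ in (\ref{e:jpdef}) is precisely the bookkeeping step you flag as the crux.
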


\begin{proof} 
Recall from the proof of Lemma~\ref{l:eord} that $V_\fp(H) = \Ind_1^G H_\fP^*$.  The homomorphism 
\[ \rec_{\fP} \colon H_\fP^* \longrightarrow \Gamma \cong I_\Gamma/I_\Gamma^2 \]
therefore induces a map that we again denote $\rec_\fP \colon V_\fp(H) \longrightarrow I/I^2.$
The composition 
\[ \begin{tikzcd} \cO_{H, \Sigma_\fp, \Sigma'}^* \ar[r] & V_\fp(H) \ar[r, "\rec_\fP"] & I/I^2
\end{tikzcd} \]
 is precisely the map $\rec_G$.  It follows that $\rec_G(\epsilon)$ is the determinant of the matrix $\eA_{\rec}$ in which the first column of $\eA_V$ has been replaced by $(\rec_\fP(\overline{v}_i))_{i=1}^t$.
We must prove that \[ \det(\eA_{\Sigma_\fp}) \equiv \det(\eA_{\rec}) \pmod{I^2}. \]  

To prove this claim, first note that $\overline{\eA}_{\Sigma_\fp}$ and $\eA_{\rec}$ have all columns after the first equal in $\overline{R}$.  It therefore suffices to show that the first columns of $\eA_{\Sigma_\fp}$ and $\eA_{\rec}$ are equal in $I/I^2$.  

This follows from an unwinding of the definitions.  We revisit the definition of $V_\fP(H_\fP)$ given in (\ref{e:vandw}).  In this notation, the map \[ \rec_\fP \colon V_\fP(H_\fP) \longrightarrow I_\Gamma/I_\Gamma^2 \] giving the first column of $\eA_{\rec}$ is simply induced by the canonical restriction map \[ \sigma - 1 \mapsto \sigma|_L - 1, \qquad \sigma \in \WD(H_\fP^{\ab}/F_\fp). \]
Let $w$ denote the chosen place of $L$ above $\fP$ and $\fp$.
By (\ref{e:jpdef}), the map $V_\fP(L_\fP) \longrightarrow I$ giving the first column of $\eA_{\Sigma_\fp}$
 is also induced by the restriction $\sigma - 1 \mapsto \sigma|_L - 1$ for $\sigma \in \WD(L_w^{\ab}/F_\fp)$.

To conclude, we observe that by \cite{dk}*{Lemma B.1}, the composition of the maps
\[ \begin{tikzcd}
 \Ind_{\fg_v}^{\fg} V_w(L_w) \ar[r, "\N \Gamma"] & (\Ind_{\fg_v}^{\fg} V_w(L_w))^{\Gamma} \ar[r, "\sim"] & \Ind_{1}^G V_\fP(H_\fP) 
 \end{tikzcd}
 \]
 is induced by the map $V_w(L_w) \longrightarrow V_\fP(H_\fP)$ given by restriction: $\sigma - 1 \mapsto \sigma|_{H_\fP^{\ab}}-1.$
\end{proof}

We are finally ready for:

\begin{proof}[Proof of Theorem~\ref{t:mfig}]
Define 
\[
V_{\sL}^\theta = V^\theta(L) \otimes_{\Z[\fg]} R_{\sL}, \qquad B_{\sL}^\theta = B_{\Sigma}^\theta(L) \otimes_{\Z[\fg]} R_{\sL} / (b_1 + \sL b_0) \cong (R_{\sL})^{t}. 
\]
The module $B_{\sL}^\theta $ has free generators $\overline{b}_0, \overline{b}_2, \overline{b}_3, \dotsc, \overline{b}_t$ over $R_{\sL}$.  The module $V_{\sL}^\theta$ has free generators $v_1, \dotsc, v_t$ over $R_{\sL}$.
Therefore 
\[ 
\nabla_{\!\sL} = \tilde{\nabla}_{\Sigma}^{\Sigma'}(L) \otimes_{\Z[G]} R_{\sL}/(\overline{b}_1 + \sL \overline{b}_0) 
\]
 has a quadratic $R_{\sL}$-module presentation 
\[ \begin{tikzcd}
 V_{\sL}^\theta \ar[r, "f_{\sL}"] & B_{\sL}^\theta \ar[r] & \nabla_{\!\sL} \ar[r] & 0. 
\end{tikzcd}
\]
By definition, the matrix $\eA_{\sL}$ for $f_{\sL}$ with respect to our chosen bases is the matrix
$\eA_{\Sigma_\fp}$ with the first column replaced by the first column of 
$\eA_{\Sigma_\fp} - \sL \overline{\eA}_{\Sigma}.$  Note that this first column has entries in the ideal $(I, \sL)  \subset R_{\sL}$ that is annihilated by $I$.   Furthermore $\overline{\eA}_{\Sigma_\fp}$ and $\overline{\eA}_{\Sigma}$ have columns after the first that are equal.  It follows that
\[ 
\det(\eA_{\sL}) = \det(\eA_{\Sigma_\fp}) - \sL \det(\overline{\eA}_\Sigma). 
\]
By Lemma~\ref{l:eord}, we have $\det(\overline{\eA}_{\Sigma}) = x \Theta_H$ for some $x \in \overline{R}^*$. By (\ref{e:eup}) and Lemma~\ref{l:erec}, we have
$\det(\eA_{\Sigma_\fp}) = x \cdot \rec_{G}(u_\fp^{\Sigma, \Sigma'})$ in $I/I^2$, with the same $x$. Therefore,
 \begin{align*} \label{e:ta}
 \det(\eA_{\sL}) & =   \det(\eA_{\Sigma_\fp}) - \sL \det(\overline{\eA}_{\Sigma}) \\
 & = x \cdot \rec_{G}(u_\fp^{\Sigma, \Sigma'}) - x \cdot \sL \Theta_H \\
 & = x \cdot (\rec_G(u_{\fp}^{\Sigma, \Sigma'}) - \Theta_L).
  \end{align*}
Since $x$ is a unit, the equality
 \[ \Fitt_{R_{\sL}}(\nabla_{\!\sL}) = (\det(\eA_\sL)) = (\rec_G(u_{\fp}^{\Sigma, \Sigma'}) - \Theta_L) \] follows.
Since \[ \rec_G(u_{\fp}^{\Sigma, \Sigma'}) - \Theta_L \in I/I^2, \] the second statement of the theorem follows from the first by Theorem~\ref{t:ws}.
\end{proof}

\subsection{Working componentwise} \label{s:comp}

The rings $R$ and $\overline{R}$ are not in general connected. 
In working with modular forms, it will be convenient to replace these rings with individual components.
  We will also need to extend scalars when working with Galois representations, so we do so already at this point.  Therefore let $E$ denote a finite extension of $\Q_p$ and let $\cO$ denote the ring of integers of $E$.  We assume that $E$ contains the image of every character of $\fg$.

Write $\fg = \fg_p \times \fg'$, where $\fg_p$ is the $p$-Sylow subgroup of $\fg$ and $\fg'$ is the subgroup of $\fg$ containing the elements of prime-to-$p$ order.  For each odd character $\psi$ of $\fg'$, let $R_\psi$ denote the group ring $\cO[\fg_p]$ endowed with the $\fg$-action in which $g = g_p \cdot g'$ (with $g_p \in \fg_p, g' \in \fg'$) acts by multiplication by $g_p \psi(g')$. 
 We then have an isomorphism of $\cO[\fg]$-algebras
\[ {R} \otimes_{\Z_p} \cO = \cO[\fg]^- \cong \prod_{\psi} R_\psi,  \]
where the product ranges over the odd characters $\psi$ of $\fg'$. 
We let
\[ 
R_{\sL, \psi} = R_\sL \otimes_R R_\psi, \qquad \qquad \nabla_{\!\sL,\psi}  =  \nabla_{\!\sL} \otimes_{R_\sL} R_{\sL, \psi}. 
\]

 We consider the analogous decomposition 
$G = G_p \times G'$.  If $\chi$ is an odd character of $G'$,
we let  $\overline{R}_\chi$ denote the group ring $\cO[G_p]$ endowed with the $G$-action in which $g = g_p \cdot g'$ acts by multiplication by $g_p \chi(g')$.  Then $\overline{R} \otimes_{\Z_p} \cO \cong \prod_{\chi} \overline{R}_\chi$ with the product running over the odd characters $\chi$ of $G'$.  

If $\chi$ is an odd character of $G'$, then it may be viewed as a character of $\fg'$ via the canonical projection $\fg' \longrightarrow G'$, and we may consider both $R_\chi$ and $\overline{R}_\chi$.  Furthermore, in this case if we let $I_\chi$ denote the image of the relative augmentation ideal $I$ in $R_\chi$, then we have $R_\chi / I_\chi \cong \overline{R}_\chi$.

\begin{lemma}  \label{l:fittchi} The equality
\begin{equation} \label{e:desiredfittchi} \Fitt_{R_{\sL, \chi}}(\nabla_{\!\sL, \chi}) = 0 
\end{equation}
for each odd character $\chi$ of $G'$ implies the equality 
\begin{equation} \label{e:desiredfitt}
\Fitt_{R_{\sL}}(\nabla_{\!\sL}) = 0.
\end{equation}
\end{lemma}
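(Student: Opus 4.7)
The plan is to reduce by faithful flatness to the statement after extending scalars from $\Z_p$ to $\cO$, and then exploit the decomposition of $R \otimes_{\Z_p} \cO$ into character components. Since $\cO/\Z_p$ is faithfully flat and $\nabla_{\!\sL}$ is a finitely presented $R_\sL$-module (being a quotient of $\tilde{\nabla}_{\Sigma}^{\Sigma'}(L)_R \otimes_R R_\sL$, whose finite presentation follows from Lemma~\ref{l:qp} together with the definition (\ref{e:rldef}) of $R_\sL$), Fitting ideals commute with the base change $- \otimes_{\Z_p} \cO$. Hence it is enough to show
\[
\Fitt_{R_\sL \otimes_{\Z_p} \cO}\bigl(\nabla_{\!\sL} \otimes_{\Z_p} \cO\bigr) = 0.
\]

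To this end, I will establish the decomposition
\[
R_\sL \otimes_{\Z_p} \cO \ \cong\ \prod_\chi R_{\sL, \chi}, \qquad \nabla_{\!\sL} \otimes_{\Z_p} \cO \ \cong\ \bigoplus_\chi \nabla_{\!\sL, \chi},
\]
with $\chi$ running over the odd characters of $G'$, viewed as characters of $\fg'$ via the surjection $\fg' \longtwoheadrightarrow G'$. Starting from the decomposition $R \otimes_{\Z_p} \cO \cong \prod_\psi R_\psi$ indexed by odd characters $\psi$ of $\fg'$ and tensoring with $R_\sL$ over $R$ yields $\prod_\psi R_{\sL, \psi}$. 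The crucial claim is that $R_{\sL, \psi}$ vanishes whenever $\psi$ does not factor through $G'$: in that case there exists $\gamma$ in the prime-to-$p$ part of $\Gamma = \Gal(L/H)$ with $\psi(\gamma) \neq 1$, and then $\gamma - 1 \in I$ acts on $R_\psi$ as multiplication by the unit $\psi(\gamma) - 1 \in \cO^*$ (since $\psi(\gamma)$ is a nontrivial root of unity of prime-to-$p$ order). Thus the image $I_\psi$ of $I$ in $R_\psi$ is all of $R_\psi$; combined with the defining relation $I^2 = 0$ in $R_\sL$, this forces $R_{\sL, \psi} = 0$.

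Granting this vanishing, the Fitting ideal of a finitely presented module over a finite product of rings decomposes as the product of the Fitting ideals of its components, so
\[
\Fitt_{R_\sL \otimes \cO}\bigl(\nabla_{\!\sL} \otimes \cO\bigr) \;=\; \prod_\chi \Fitt_{R_{\sL, \chi}}(\nabla_{\!\sL, \chi}),
\]
which vanishes by the hypothesis (\ref{e:desiredfittchi}). The only substantive step in the argument is the vanishing of the ``extraneous'' components $R_{\sL, \psi}$; once this is verified, the remainder is formal manipulation with base change and finite products.
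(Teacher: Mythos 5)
Your proof is correct, and it takes the same overall route as the paper's own proof: reduce by faithful flatness of $\cO/\Z_p$, decompose $R_{\sL} \otimes_{\Z_p} \cO$ into character components $R_{\sL,\psi}$ over odd characters $\psi$ of $\fg'$, and then handle the ``extraneous'' $\psi$ that do not factor through $G'$ by exploiting the fact that some $\gamma$ in the prime-to-$p$ part of $\Gamma$ has $\psi(\gamma) - 1$ a unit. Where you diverge is in the endgame. You observe directly that since $I_\psi = R_\psi$, the defining relation $I^2 = 0$ in $R_{\sL}$ kills the entire component, so $R_{\sL,\psi} = 0$ and the Fitting ideal vanishes trivially. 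The paper instead only uses the relation $\sL I = 0$ to deduce $\sL = 0$ in $R_{\sL,\psi}$, writes $R_{\sL,\psi} \cong R_\psi/(\Theta_L, I^2)$, identifies $\nabla_{\!\sL,\psi}$ with $\nabla_{\Sigma_\fp}^{\Sigma'}(L)_{R_{\sL,\psi}}$, and then invokes Theorem~\ref{t:dkresult} (the Fitting ideal formula $\Fitt_R(\nabla_{\Sigma_\fp}^{\Sigma'}(L)_R)=(\Theta_L)$ from \cite{dk}) together with $\Theta_L = 0$ in $R_{\sL,\psi}$. Your shortcut is a genuine simplification: it avoids invoking \cite{dk}*{Theorem 3.3} entirely and makes the computation of $\nabla_{\!\sL,\psi}$ unnecessary, since any module over the zero ring is zero. (In fact, the paper's own isomorphism $R_{\sL,\psi} \cong R_\psi/(\Theta_L, I^2)$ combined with its observation that $I$ maps to a unit in $R_\psi$ already implies $R_{\sL,\psi}=0$; the subsequent module computation is about the zero module, so the paper's argument, while valid, is carrying unnecessary weight.)
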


\begin{proof}  As $\cO$ is free (and hence faithfully flat) over $\Z_p$, in order to prove (\ref{e:desiredfitt}) it suffices to show that
\[ 
\Fitt_{R_{\sL} \otimes_{\Z_p} \cO} (\nabla_{\!\sL} \otimes_{\Z_p} \cO) = 0. 
\]
As 
\[ 
R_{\sL} \otimes_{\Z_p} \cO \cong \prod_{\psi} R_{\sL, \psi}, 
\]
with the product running over all odd characters $\psi$ of $\fg'$, it suffices to prove that
\begin{equation} \label{e:rlpsi} 
\Fitt_{R_{\sL, \psi}}(\nabla_{\!\sL, \psi}) = 0  
\end{equation}
for all such $\psi$.  
The assumption (\ref{e:desiredfittchi}) is precisely this result if $\psi$ factors through $G'$.

It therefore remains to show that (\ref{e:rlpsi}) holds if $\psi$ is an odd character of $\fg'$ that does not factor through $G'$. For such $\psi$, there exists $\sigma$ in the kernel of $\fg' \longrightarrow G'$ such that $\psi(\sigma) \neq 1$.  Since $\psi$ has prime-to-$p$ order, the element $1 - \psi(\sigma)$ is a unit in $\cO$.  It follows that the image of $1 - \sigma \in I$ in $R_\psi$ is a unit.  Since $\sL I = 0$ in $R_{\sL}$, it follows that the image of $\sL$  in $R_{\sL, \psi}$ vanishes, and hence that
\begin{equation} \label{e:rlcong}
 R_{\sL, \psi} \cong R_\psi / (\Theta_L, I^2). \end{equation}
In view of the definition (\ref{e:nablaldef}), and again applying $\sL =0$ in $R_{\sL, \psi}$, we find
\begin{align*}
  \nabla_{\!\sL, \psi} & \cong (\tilde{\nabla}_{\Sigma}^{\Sigma'}(L)_{R_{\sL, \psi}})/(\overline{b}_1)  \\
  & \cong \nabla_{\Sigma_\fp}^{\Sigma'}(L)_{R_{\sL, \psi}},
\end{align*}
since $\tilde{\nabla}_{\Sigma}^{\Sigma'}(L) / \overline{b}_1 \cong \nabla_{\Sigma_\fp}^{\Sigma'}(L)$ by the definitions of these modules.
Now \[ \Fitt_R( \nabla_{\Sigma_\fp}^{\Sigma'}(L)_R) = (\Theta_L) \] by \cite{dk}*{Theorem 3.3} as stated in Theorem~\ref{t:dkresult} above.
The desired result (\ref{e:rlpsi}) follows since $\Theta_L = 0$ in $R_{\sL, \psi}$ by (\ref{e:rlcong}).
\end{proof}

\begin{lemma} \label{l:k} Let $\chi$ be an odd character of $G'$ and let $\fK \subset \fg', K \subset G'$ denote the kernels of $\chi$
when viewed as characters of $\fg'$ and $G'$, respectively.  Let $R_{\sL, \chi}'$ and $\nabla_{\!\sL, \chi}'$ denote the ring $R_{\sL, \chi}$ and the module $\nabla_{\!\sL, \chi}$, respectively, defined using the fields $L^{\fK}$ and $H^K$ in place of $L$ and $H$.  Then there exist canonical isomorphisms
\[ 
R_{\sL, \chi}' \cong R_{\sL, \chi}, \qquad  \nabla_{\!\sL, \chi}' \cong  \nabla_{\!\sL, \chi}. 
\]
\end{lemma}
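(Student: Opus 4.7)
My plan is to verify the two claimed canonical isomorphisms by directly comparing the defining data on each side, with the ring isomorphism being essentially tautological and the module isomorphism following from a deflation argument.

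First I would construct the ring isomorphism $R_{\sL,\chi} \cong R_{\sL,\chi}'$. Since $\fK \subset \fg'$ is the kernel of $\chi$ and $\chi$ has prime-to-$p$ order, the projection $\fg \twoheadrightarrow \fg/\fK$ restricts to an isomorphism of $p$-Sylow subgroups, so $\fg_p$ is identified with the $p$-part of $\fg/\fK$. Under this identification, $R_\chi = \cO[\fg_p]$ with the $\chi$-twisted action is literally the same ring as the analogous $R_\chi'$ built from $\fg/\fK$, and similarly $\overline{R}_\chi \cong \overline{R}'_\chi$ with matching augmentation ideals $I_\chi \cong I'_\chi$. For the Stickelberger elements, every character appearing in the $\chi$-component of $\Theta_L$ or $\Theta_H$ is of the form $\chi\cdot\eta$ for some character $\eta$ of $\fg_p$, and every such character factors through $\fg/\fK$; the Artin $L$-values $L_{\Sigma_\fp,\Sigma'}(\chi^{-1}\eta^{-1},s)$ depend only on the character and not on the field cutting it out, so $\Theta_L$ and $\Theta_{L^\fK}$ have the same image in $R_\chi \cong R'_\chi$, and likewise for $\Theta_H$ versus $\Theta_{H^K}$. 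The defining relations $\Theta_H \sL - \Theta_L$, $\sL I$, $\sL^2$, $I^2$ of (\ref{e:rldef}) then match on the nose, yielding the canonical isomorphism of $\cO$-algebras.

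Next I would establish the module isomorphism $\nabla_{\!\sL, \chi} \cong \nabla_{\!\sL, \chi}'$ by a deflation argument that mirrors \cite{dk}*{Lemma B.2}. That lemma handles the special case of the normal subgroup $\Gamma = \Gal(L/H) \subset \fg$ and produces the commutative diagrams (\ref{e:bv}) and (\ref{e:bv2}) linking the Ritter--Weiss data of $L$ to that of $H$. Since $\fg$ is abelian and $\fK$ is a normal subgroup, the identical argument (applied to the auxiliary set $S'$ enlarged, if necessary, so that it remains adequate for both $L/F$ and $L^\fK/F$) produces comparison maps from $V^\theta(L)_R, B^\theta_\Sigma(L)_R$ to $V^\theta(L^\fK)_{R'}, B^\theta_\Sigma(L^\fK)_{R'}$ sitting in a commutative square with the maps $f_{\Sigma,\pi}$ and $f_\Sigma$. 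After tensoring with $R_\chi$, the action of $\fK$ becomes trivial (as $\chi|_\fK = 1$), so these comparison maps become isomorphisms, and the induced map on the cokernels $\tilde\nabla_\Sigma^{\Sigma'}(L)_{R_\chi} \cong \tilde\nabla_\Sigma^{\Sigma'}(L^\fK)_{R'_\chi}$ is an isomorphism.

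Finally, I would observe that the quotient by $\overline{b}_1 + \sL\overline{b}_0$ defining $\nabla_{\!\sL}$ from $\tilde\nabla_\Sigma^{\Sigma'}$ is manifestly preserved by the comparison map, since the distinguished basis vectors $b_0, b_1 \in (B_\Sigma^\theta)_R$ from (\ref{e:bbasis}) correspond to the analogous basis vectors on the $L^\fK$ side (both are built from the place $\fp$, which behaves identically in $L$ and $L^\fK$ since $\fK$ acts trivially on any factor indexed by a prime above $\fp$ after $\chi$-localization). The main technical obstacle will be checking that the deflation diagram of \cite{dk}*{Lemma B.2} extends cleanly to the subgroup $\fK$ and carefully tracking the normalization factors (analogous to $\N\Gamma$) so that the vectors $\overline{b}_0, \overline{b}_1$ and the element $\sL$ are correctly matched under the identification; this is bookkeeping rather than substantive difficulty, as all constructions are manifestly functorial in the Galois group.
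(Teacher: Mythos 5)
Your proposal is correct and follows essentially the same two-step strategy as the paper: (1) the ring isomorphism is a tautology once one observes that the Stickelberger elements of $L$ and $L^{\fK}$ (resp.\ $H$ and $H^K$) have the same image in $R_\chi$ (resp.\ $\overline{R}_\chi$); (2) the module isomorphism follows from the deflation machinery of Appendix B of \cite{dk} — the paper quotes Lemma B.1, giving $V^\theta(L^{\fK}) \cong (\N\fK)V^\theta(L)$, sets up the commutative diagram with vertical arrow $\N\fK$ on the $B^\theta_\Sigma$-component, and then notes exactly the key point you identify, namely that after tensoring with $R_\chi$ the group $\fK$ acts trivially so $\N\fK$ acts by $\#\fK$, which is prime to $p$ and hence a unit. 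Your appeal to Lemma B.2 rather than B.1 and your reversed orientation of the comparison map are harmless presentation variants; the substance matches.
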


\begin{proof}  There is clearly an $\cO[\fg]$-algebra isomorphism $R_\chi' \cong R_\chi$, as these are both the group ring $\cO[\fg_p]$ in which $g=g_p g'$ acts via $\chi(g')$.  The elements $\Theta_{L^{\fK}}$ and $\Theta_L$ correspond under this isomorphism.  Similarly we have an isomorphism $\overline{R}_\chi' \cong \overline{R}_\chi$ under which $\Theta_{H^K}$ and $\Theta_H$ correspond.
The $\cO[\fg]$-algebra isomorphism $R_{\sL,  \chi}' \cong R_{\sL,  \chi}$ follows immediately from these considerations.

For the second isomorphism of the lemma, we first show that 
\begin{equation} \label{e:nablachi}
\tilde{\nabla}_{\Sigma}^{\Sigma'}(L)_{R_\chi} \cong 
\tilde{\nabla}_{\Sigma}^{\Sigma'}(L^{\fK})_{R_\chi}. 
\end{equation}  
For this, we note that by \cite{dk}*{Lemma B.1}, there is an isomorphism
\[ 
V^\theta(L^{\fK}) \cong (\N {\fK}) V^\theta(L) \subset V^\theta(L), 
\] 
yielding a commutative diagram
\[ 
\begin{tikzcd}
V^\theta(L^{\fK}) \ar[r] \ar[d] & B^\theta_{\Sigma}(L^{\fK}) \ar[r] \ar[d, "\N \fK"] & \tilde{\nabla}_{\Sigma}^{\Sigma'}(L^{\fK}) \ar[r] \ar[d] & 0 \\
V^\theta(L) \ar[r] & B^\theta_{\Sigma}(L) \ar[r]  & \tilde{\nabla}_{\Sigma}^{\Sigma'}(L) \ar[r] & 0.
\end{tikzcd}
\]
Upon tensoring with $R_\chi$, the arrow labelled $\N \fK$ becomes an isomorphism.  Indeed, $\fK$ acts trivially on $R_\chi$, whence $\N \fK$ acts as $\#\fK$, which is prime-to-$p$ and hence invertible in $\Z_p$.  The isomorphism (\ref{e:nablachi}) follows.

The desired isomorphism $\nabla_{\!\sL, \chi}' \cong  \nabla_{\!\sL, \chi}$ now follows from the definition (\ref{e:nablaldef}).
\end{proof}

In view of Lemma~\ref{l:k}, we may (and do) hereafter replace $(L, H)$ by $(L^{\fK}, H^K)$ and therefore assume that $\fg' = G'$ and that $\chi$ is a faithful character of $G'$.  In particular, $G'$ is cyclic and \[ \Gamma = \ker(\fg \longrightarrow G)\] is a $p$-group.

\section{Group ring valued Hilbert modular forms} \label{s:cusp}

Let $m$ be a positive integer. Let $\chi$ be an odd character of $G'$.
In this section, we use the theory of group ring valued Hilbert modular forms to produce an $R_{\sL, \chi}$-module 
$\tilde{B}_p$  and a cohomology class $\kappa \in H^1(G_F, \tilde{B}_p)$ satisfying the conditions of Theorem~\ref{t:nl}.
At some point, we will have to assume that $\fp$ is not the only prime of $F$ above $p$ (the case of one prime above $p$ will be handled in 
\S\ref{s:onep}).
We will calculate that \[  \Fitt_{R_{\sL, \chi}}(\tilde{B}_p) \subset (p^m), \] which in conjunction with the $R_{\sL, \chi}$-module surjection 
  \[ \nabla_{\!\sL, \chi} \longrightarrow \tilde{B}_p \] of Theorem~\ref{t:nl} yields
   \[ \Fitt_{R_{\sL, \chi}}(\nabla_{\!\sL, \chi}) \subset (p^m). \]
   Since this holds for all $m$, we have $\Fitt_{R_{\sL, \chi}}(\nabla_{\!\sL, \chi}) = 0$.   Lemma~\ref{l:fittchi} implies $\Fitt_{R_{\sL}}(\nabla_{\!\sL}) = 0$, which by
    Theorem~\ref{t:mfig} completes the proof of the $p$-part of Gross's Conjecture.
   
   We refer the reader to \cite{dk}*{Section 7} for our definitions on group ring valued Hilbert modular forms, recalling only the essential notation here.  Let $\fn \subset \cO_F$ denote an integral ideal and $k \ge 1$ a positive integer. We let $M_k(\fn)$ denote the space of Hilbert modular forms of level $\fn$ and weight $k$.  The subgroup of forms whose $q$-expansion coefficients at all unramified cusps lie in $\Z$ is denoted $M_k(\fn, \Z)$.  If $A$ is any abelian group, we let $M_k(\fn, A) = M_k(\fn, \Z) \otimes A$.  The space $M_k(\fn, \Z)$ is endowed with an action of ``diamond operators" $S(\fm)$, indexed by the classes $\fm \in G_\fn^+$, the narrow ray class group of $F$ associated to the conductor $\fn$.  
   Suppose now that $R$ is a ring and $\bpsi \colon G_\fn^+ \longrightarrow R^*$ is a character.  Suppose that the abelian group $A$ has an $R$-module  structure.  Then  we define
   \[ M_k(\fn, A, \bpsi) = \{ f \in M_k(\fn, A) \colon f_{|S(\fm)} = \bpsi(\fm) f  \text{ for all } \fm \in G_\fn^+ \}. \]
   These are the forms of nebentypus $\bpsi$.  In our applications below, $R$ is a group ring (or a factor of a group ring), and $\bpsi$ is the tautological character.  For this reason, we call $M_k(\fn, A, \bpsi)$ the space of {\em group ring valued modular forms}.   We write $S_k(\fn, A, \bpsi) \subset M_k(\fn, A, \bpsi)$ for the subspace of cusp forms.
   
\subsection{The modified group ring Eisenstein series} 

We begin by recalling the reductions of previous sections.  By the results of \S\ref{s:remove}, we may assume that $T$ (and hence $\Sigma'$) contains no primes above $p$.  By the results of \S\ref{s:comp} we may assume that
 $\fg' = G'$ and that $\chi$ is a faithful odd character of $G'$.  In particular, $G'$ is cyclic and $\Gamma = \ker(\fg \longrightarrow G)$ is a $p$-group.

Let 
\begin{equation} \label{e:conds}
 \fn_0 = \cond(L/F),  \qquad \fn = \lcm(\fn_0, \ \prod_{\mathclap{\fq \in \Sigma_\fp \cup \Sigma'}} \fq \, ). 
\end{equation}
  Let $\fP$ be the $p$-part of $\fn$.  Note that $\fP \neq 1$ as $\fp \mid \fP$.
 We write
 \[ A = R_\chi = \cO[\fg_p]_{\chi}, \qquad \overline{A} = \overline{R}_\chi = \cO[G_p]_\chi. \] 
  There are canonical $\cO[\fg]$-algebra injections with finite cokernel:
  \[ A \hookrightarrow \prod_{\mathclap{\substack{\psi \in \hat{\fg} \\ \psi|_{G'} = \chi}}} \cO_\psi, \qquad \overline{A} \hookrightarrow \prod_{\mathclap{\substack{\psi \in \hat{G} \\ \psi|_{G'} = \chi}}} \cO_\psi, \qquad x \mapsto (\psi(x))_\psi. \]
 Here $\cO_\psi$ denotes the ring $\cO$ on which $\fg$ acts by the character $\psi$.  We call the characters indexing these products the {\em characters of $A$} and the {\em characters of $\overline{A}$,} respectively.  In particular, a character of $\overline{A}$ is simply a character of $A$ that is trivial on $\Gamma$.
  
 \begin{lemma} \label{l:psicond}
  Let $\psi$ be any character of $A$, and let $\fc_0 = \cond(\psi)$. 
    Put $\fc = \lcm(\fc_0, \fP)$, and $\fl = \fn/\fc$.  Then $\fl$ is a square-free product of primes not dividing $p$.
 \end{lemma}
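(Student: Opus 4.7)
The proof splits by prime. For $\fq \mid p$, the $p$-part of $\fc = \lcm(\fc_0, \fP)$ is exactly $\fP$, the $p$-part of $\fn$, so $\fn/\fc$ has trivial $p$-part. For a finite prime $\fq \nmid p$, I would compute $\ord_\fq(\fl) = \ord_\fq(\fn) - \ord_\fq(\fc_0)$, using that $\fP$ has trivial $\fq$-part. Since $\Sigma$ consists only of archimedean and $p$-adic places, the defining expression for $\fn$ yields
\[
 \ord_\fq(\fn) = \max\bigl(\ord_\fq(\fn_0),\, [\fq \in \Sigma']\bigr).
\]
If $\fq$ is unramified in $L/F$, then $\ord_\fq(\fn_0) = 0$, and since $\fc_0 = \cond(\psi)$ divides $\cond(L/F) = \fn_0$ we also have $\ord_\fq(\fc_0) = 0$, whence $\ord_\fq(\fl) \le 1$. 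Otherwise $\fq$ is ramified in $L/F$, and the crucial estimate is
\[
 \ord_\fq(\cond(L/F)) - \ord_\fq(\cond(\psi)) \le 1.
\]

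I would derive this key inequality using the decomposition $\fg = \fg_p \times \fg'$ together with the faithfulness of $\chi = \psi|_{\fg'}$ arranged by the reductions of \S\ref{s:comp}. Since the orders of $\fg_p$ and $\fg'$ are coprime, the inertia group $I_\fq \subset \fg$ splits as a direct product $I_\fq^p \times I_\fq^{p'}$ with the two factors sitting inside $\fg_p$ and $\fg'$ respectively. Because $\fq$ has residue characteristic $\ell \ne p$, the wild inertia subgroup $P_\fq$ is pro-$\ell$ and therefore lies entirely in $\fg'$. For any character $\psi'$ of $\fg$, the Artin conductor exponent decomposes as
\[
 \ord_\fq(\cond(\psi')) = [\psi'|_{I_\fq} \ne 1] + \operatorname{Sw}_\fq(\psi'),
\]
and $\operatorname{Sw}_\fq(\psi')$ depends only on $\psi'|_{P_\fq}$. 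Faithfulness of $\chi$ on $\fg' \supset P_\fq$ forces $\chi|_{P_\fq}$ to be injective and hence to maximize the Swan conductor among all characters of $P_\fq$, giving $\operatorname{Sw}_\fq(\psi') \le \operatorname{Sw}_\fq(\chi) = \operatorname{Sw}_\fq(\psi)$ for every $\psi'$. Taking the maximum over $\psi'$ then yields
\[
 \ord_\fq(\cond(L/F)) \,\le\, 1 + \operatorname{Sw}_\fq(\psi) \,\le\, 1 + \ord_\fq(\cond(\psi)),
\]
as required.

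Combining the cases gives $\ord_\fq(\fl) \le 1$ at every prime $\fq$, so $\fl$ is a square-free product of primes not dividing $p$. The essential content is the Swan-conductor comparison of the middle paragraph, which rests entirely on the fact that wild inertia at a prime $\fq \nmid p$ is forced into the prime-to-$p$ factor $\fg'$ where the faithful character $\chi$ already sees every break in the ramification filtration; once that observation is in place, the remainder is elementary bookkeeping with the definition of $\fn$.
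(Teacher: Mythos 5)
Your proof is correct and follows essentially the same route as the paper: the paper reduces squarefreeness to showing $\fq^m \mid \cond(\psi)$ whenever $\fq^m \mid \cond(L/F)$ with $m \ge 2$ and $\fq \nmid p$, then defers to \cite{dk}*{Lemma 8.13}, and the content of that cited lemma is exactly your observation that wild inertia at $\fq \nmid p$ is a pro-$\ell$ group ($\ell \neq p$) landing in the prime-to-$p$ factor $\fg'$, where the faithful character $\chi = \psi|_{\fg'}$ already realizes the maximal Swan conductor. Your writeup thus supplies the deferred argument in self-contained form; the only cosmetic difference is that you settle for the inequality $\ord_\fq(\cond(L/F)) \le 1 + \ord_\fq(\cond(\psi))$, which still gives $\ord_\fq(\fl) \le 1$, whereas the cited lemma also uses $\epsilon_0(\psi) = 1$ at wildly ramified $\fq$ to conclude $\fq \nmid \fl$ there.
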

  
  \begin{proof}  The fact that the primes dividing $\fl$ do not lie above $p$ is clear since $\fP$ is the $p$-part of $\fn$ and $\fP \mid \fc$.  To prove that $\fl$ is square-free, suppose that $\fq$ is a prime not lying above $p$ such that $\fq^m \mid\mid \fn$ with $m \ge 2$.  It suffices to show that $\fq^m \mid \fc$, whence $\fq \nmid \fl$.
  By the definition of $\fn$, we must have $\fq^m \mid \fn_0 = \cond(L/F)$.
  The proof now follows exactly as \cite{dk}*{Lemma 8.13}. 
  \end{proof}
  
  Let $\psi$ be a character of $A$.  Denote by $\psi_\fP$ the character $\psi$ viewed with modulus divisible by all primes dividing $\fP$. In \cite{dk}*{Definition 8.2}, we defined the following linear combination of level-raised Eisenstein series (where $\fl$ is as in Lemma~\ref{l:psicond}):
  \begin{equation} \label{e:wkpsi1def}
{W}_k(\psi_{\fP}, 1) = \sum_{\fa \mid \fl}  \mu(\fa)\psi(\fa) \N\fa^k E_k(\psi_{\fP},1)|_{\fa}  \in M_k(\fn, \psi).
\end{equation}
Here $\mu$ is the M\"obius function, which for squarefree ideals $\fa$ that are the product of $r$ distinct primes
satsifies $\mu(\fa)=(-1)^r$.

As we now recall, we showed in \cite{dk}*{\S8} that the forms $W_k(\psi_{\fP}, 1)$ interpolate into a group ring-valued family of modular forms, and we calculated the constant terms of this family at certain cusps.  Let 
 \[ \bpsi \colon \fg \longrightarrow A^*, \qquad g=g_p g' \mapsto g_p \chi(g') \] denote the canonical character.  We recall from \cite{dk}*{\S7.2.3} our notation on the set of cusps of level $\fn$, denoted $\cusps(\fn)$.
 A cusp $[\cA]$ is represented by a pair $\cA =(M, \lambda)$ where $M \in \GL_2^+(F)$, the set of all $2 \times 2$ matrices over $F$ with totally positive determinant, and $\lambda \in \Cl^+(F)$, the narrow  class group of $F$.  The definition of $M_k(\fn)$ involves the choice, for each $\lambda \in \Cl^+(F)$, of a representative ideal $\ft_\lambda$.  Writing $\fd$ for the different of $F$ and letting $M = \mat{a}{b}{c}{d}$, we define the ideals
 \[ \fb_\cA = a\cO_F + c(\ft_\lambda \fd)^{-1}, \quad \fc_{\cA} = c(\ft_\lambda \fd \fb_\cA)^{-1}. \]
 Following \cite{dk}*{\S7.2.5} we define for any integral ideal $\fb \mid \fn$:
 \begin{align*}
C_\infty(\fb, \fn) &= \{ [\cA] \in \cusps(\fn)\colon \fb \mid \fc_\cA\} \\
C_0(\fb, \fn) &= \{ [\cA] \in \cusps(\fn)\colon \gcd(\fb, \fc_\cA)=1\}. 
 \end{align*}
 If $\fb = \fn$, we simply write $C_\infty(\fn)$ and $C_0(\fn)$.  The notion of {\em normalized constant term} of a modular form at a cusp is defined in \cite{dk}*{\S7.2.3}.
In the remainder of the paper, we write for simplicity $\Theta_H = \Theta_{\Sigma, \Sigma'}^H$ and  $\Theta_L= \Theta_{\Sigma_\fp, \Sigma'}^L$   as in \S\ref{s:rw} above. 

 \begin{prop}  \label{p:wconstantterms}  There exists  a group ring valued form 
\begin{equation} \label{e:wbpsi}
W_1(\bpsi, 1) \in M_1(\fn,A, \bpsi) 
\end{equation}
such that the specialization of $W_1(\bpsi, 1)$ at character $\psi$ of $A$ is the form 
$W_1(\psi_{\fP}, 1)$.
The normalized constant term of $W_1(\bpsi, 1)$ at a cusp $\cA$ such that $[\cA] \in C_\infty(\fP, \fn)$ 
is 
 \begin{equation} \label{e:wkconst}
C_1^L(\cA) = \begin{cases}
\sgn(\N a) \bpsi^{-1}(a \fb_\cA^{-1})\Theta_L^\#/2^n  & \text{if } [\cA] \in C_\infty(\fn) \\
0 &  \text{if } [\cA] \in C_\infty(\fP,\fn)  \setminus C_\infty(\fn).
\end{cases}
\end{equation}
\end{prop}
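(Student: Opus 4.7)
The plan is to construct $W_1(\bpsi, 1)$ as a $p$-adic limit of higher-weight group ring valued Eisenstein series, following the template of \cite{dk}*{\S8}. First I would form, for each integer $k \ge 2$, a group ring valued Eisenstein series $E_k(\bpsi, 1) \in M_k(\fn, A, \bpsi)$ specializing to $E_k(\psi_\fP, 1)$ at every character $\psi$ of $A$. This reduces to the observation that the $q$-expansion coefficients of $E_k(\psi_\fP, 1)$ away from the constant term are the universal character sums $\sum_{\fr \mid \fm} \psi_\fP(\fr)\N\fr^{k-1}$, which immediately make sense when $\psi$ is replaced by the tautological character $\bpsi \colon \fg \longrightarrow A^*$, while the constant term $L_\fP(\psi, 1-k)/2^n$ interpolates to an element of $A$ by Deligne--Ribet. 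The $q$-expansion principle then produces $E_k(\bpsi, 1)$, and applying the level-raising combination
\[ W_k(\bpsi, 1) = \sum_{\fa \mid \fl} \mu(\fa) \bpsi(\fa) \N\fa^k E_k(\bpsi, 1)|_{\fa} \in M_k(\fn, A, \bpsi) \]
produces the higher-weight analogue of the desired form.

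Next I would pass to weight one. Restricting to weights $k \equiv 1 \pmod{(p-1)p^{m}}$ with $m \to \infty$, the $q$-expansion coefficients of $W_k(\bpsi, 1)$ converge $p$-adically (using $\N\fm^{k-1} \to 1$ for $\fm$ prime to $p$, which is guaranteed since $\fl$ is prime to $p$ by Lemma~\ref{l:psicond}) to those of a group ring valued $p$-adic modular form. Since each specialization $W_1(\psi_\fP, 1)$ is a classical Eisenstein series of weight one, this limit is classical: applying Hida's ordinary projector does not change the specializations, and the classicality transfers from each character specialization to the group ring valued form via the $q$-expansion principle over $A$. This yields $W_1(\bpsi, 1) \in M_1(\fn, A, \bpsi)$ with the required interpolation property.

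Third I would compute the normalized constant terms at cusps in $C_\infty(\fP, \fn)$. For a cusp $\cA$ with $[\cA] \in C_\infty(\fn)$, the classical constant-term formula for $E_k(\psi_\fP, 1)|_\fa$ is proportional to $\sgn(\N a)\psi^{-1}(a\fb_\cA^{-1})L_\fP(\psi, 1-k)/2^n$; weighting by $\mu(\fa)\psi(\fa)\N\fa^k$ and summing over $\fa \mid \fl$ converts this to the $L$-function with Euler factors at every prime of $\Sigma_\fp \cup \Sigma'$ removed, which at $k=1$ and after interpolation over $\psi$ produces precisely $\Theta_L^\#$. This gives the first case of (\ref{e:wkconst}). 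For $[\cA] \in C_\infty(\fP,\fn) \setminus C_\infty(\fn)$, there is by definition some $\fq \mid \fn$ with $\fq \nmid \fP$ and $\fq \mid \fc_\cA$, and Lemma~\ref{l:psicond} forces $\fq \mid \fl$. The constant term of $E_k(\psi_\fP, 1)|_\fa$ at $\cA$ depends on $\fa$ only through whether $\fq \mid \fa$, and matching divisors via the involution $\fa \leftrightarrow \fa\fq$ on divisors of $\fl$ prime to $\fq$ gives an exact cancellation in the sum defining $W_k$.

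The main technical obstacle is the cusp-by-cusp constant-term bookkeeping in the second case: one must verify precisely how $E_k(\psi_\fP, 1)|_\fa$ transforms at a cusp $\cA$ with $\fq \mid \fc_\cA$ so that the involution $\fa \leftrightarrow \fa\fq$ produces constant terms differing by exactly $-\psi(\fq)\N\fq^k$, matching the change in the level-raising weight $\mu(\fa)\psi(\fa)\N\fa^k$ and yielding the cancellation. This amounts to a careful but essentially routine extension of the constant-term computations of \cite{dk}*{Proposition 8.9} to the present level structure.
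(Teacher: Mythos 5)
Your proposal diverges from the paper's proof at essentially every stage, and several of the departures introduce genuine gaps.

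The paper's actual proof is short and consists largely of citations to \cite{dk}: the form $W_k(\bpsi, 1)$ is defined, for \emph{every} odd weight $k \ge 1$ simultaneously (in particular $k=1$, with no $p$-adic limit), by the formula of \cite{dk}*{Proposition 8.14, eqn.~(102)},
\[
W_k(\bpsi, 1) = \sum_{\fm \mid \fl} \N I_{\fm} \cdot \tilde{E}_k(\bpsi^{\fm}, 1)|_{\fm}\, \bpsi^{\fm}(\fm)\, \frac{1}{\# I_{\fm}} \prod_{v \mid \fm} (1- \N v^k),
\]
which is considerably more intricate than a naive M\"obius level-raising sum: it uses twisted group ring characters $\bpsi^\fm$, inertia norms $\N I_\fm$, and correction factors $\prod_{v \mid \fm}(1-\N v^k)$. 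The statement that this specializes at $\psi$ to $W_k(\psi_\fP,1)$ is nontrivial and is precisely where Lemma~\ref{l:psicond} enters. The constant-term formula (\ref{e:wkconst}) is then quoted directly from \cite{dk}*{Proposition 8.7}.

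Against this background, your construction has two substantive gaps. First, your building block $E_k(\bpsi,1)$, with $\bpsi$ of modulus $\fn$, does not specialize to $E_k(\psi_\fP,1)$. The character $\psi_\fP$ is $\psi$ taken with modulus $\lcm(\cond(\psi),\fP)$, which may be a proper divisor of $\fn$; in particular the Fourier coefficients $\sum_{\fr\mid\fm}\psi_\fP(\fr)\N\fr^{k-1}$ depend on $\cond(\psi)$ (one has $\psi_\fP(\fq)=\psi(\fq)\ne 0$ for $\fq\mid\fn$ unramified in $\psi$ and prime to $\fP$), so they are not given by evaluating a single group-ring expression at $\psi$. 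Second, and relatedly, the definition (\ref{e:wkpsi1def}) of $W_k(\psi_\fP,1)$ sums over $\fa\mid\fl$ where $\fl = \fn/\lcm(\cond(\psi),\fP)$ depends on $\psi$. A level-raising sum with a fixed $\fl$ does not interpolate this character-by-character definition; the $\N I_\fm$ and $\bpsi^\fm$ machinery in \cite{dk}*{eqn.~(102)} exists exactly to compensate for this variation, and your proposal simply bypasses it.

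The involution argument you sketch for the vanishing case does not close as stated. If the constant term of $E_k(\psi_\fP,1)|_{\fa\fq}$ at $\cA$ differs from that of $E_k(\psi_\fP,1)|_\fa$ by the factor $-\psi(\fq)\N\fq^k$, and the level-raising weight $\mu(\fa)\psi(\fa)\N\fa^k$ also changes by the factor $-\psi(\fq)\N\fq^k$, then the paired contributions change by $\psi(\fq)^2\N\fq^{2k}$, which is not $-1$; the two terms add rather than cancel. The correct computation, including the cusp bookkeeping, is the content of \cite{dk}*{Proposition 8.7}, and it is not as routine as your final paragraph suggests.

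Finally, the $p$-adic limit from $k\ge 2$ down to weight one, followed by a classicality argument, is a genuinely different route from the paper's direct weight-one construction. It is not needed here, and introducing it incurs the extra burden of establishing classicality of the group ring valued limit over $A$.
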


Recall here that $\#$ denotes the involution on ${R}$ induced by $g \mapsto g^{-1}$ for $g \in \fg$.

\begin{proof}  For any odd $k \ge 1$, 
a group ring valued form \[ W_k(\bpsi, 1) \in M_k(\fn, \Frac(A), \bpsi) \] is defined in \cite{dk}*{Proposition 8.14, eqn.~(102)}, with notation from \emph{loc.\,cit.}, by
 \[
 W_k(\bpsi, 1) = \sum_{\fm \mid \fl} \N I_{\fm} \cdot \tilde{E}_k(\bpsi^{\fm}, 1)|_{\fm} \bpsi^{\fm}(\fm) \frac{1}{\# I_{\fm}} \prod_{v \mid \fm} (1- \N v^k).
 \]
Note that this definition and the proof that $W_k(\bpsi,1)$ specializes under a character $\psi$ of $A$ to $W_k(\psi_\fP, 1)$ uses the result of Lemma~\ref{l:psicond}.  All the normalized Fourier coefficients of $W_k(\bpsi,1)$ lie in $A$, except for possibly the constant terms, which lie in $\Frac(A)$.

 As we now explain,  \cite{dk}*{Proposition 8.7} implies that the constant term of $W_1(\psi_\fP, 1)$ at a cusp $[\cA] \in C_\infty(\fP, \fn) \setminus C_\infty(\fn)$ vanishes. First note that $\fP \neq 1$ and hence $C_0(\fP, \fn) \cap C_{\infty}(\fP, \fn) = \emptyset$.
  As in \cite{dk}*{\S8.1}, write \[ \fc_0 = \cond(\psi), \fc = \lcm(\fc_0, \fP), \text{ and } \ft = \fn/\fc. \]  
By Lemma~\ref{l:psicond}, $\ft$ is a squarefree product of primes not lying above $p$.
We therefore have \[ C_\infty(\fP, \fn) \cap C_\infty(\fc_0 \ft, \fn) = C_\infty(\fn). \]  Hence if $[\cA] \in C_\infty(\fP, \fn) \setminus C_\infty(\fn)$, then 
\cite{dk}*{Prop 8.7} yields that the constant term of $W_1(\psi_\fP, 1)$ at $[\cA]$ vanishes in both the cases $\fc_0 = 1$ and $\fc_0 \neq 1$.

 At a cusp $[\cA] \in C_\infty(\fn)$ the contant term of $W_1(\psi_\fP, 1)$ equals
\[ 
\sgn(\N a) \psi^{-1}(a \fb_\cA^{-1})\frac{L_{\Sigma_\fp, \Sigma'}(\psi, 0)}{2^n}.
\]
The element of $A$ interpolating these specializations is the element $C^L_1(\cA)$ defined in (\ref{e:wkconst}).  Since $C_1^L(\cA) \in A$, we obtain $W_1(\bpsi, 1) \in M_1(\fn,A, \bpsi)$. 
\end{proof}

We have an analogous construction of group ring forms over $G$.  We write \[ \overline{A} = \overline{R}_\chi = \cO[G_p]_\chi. \]  Let
\[ \overline{\bpsi} \colon \fg \longrightarrow \overline{A}^* \]
denote the reduction of $\bpsi$.  Note that $\overline{\bpsi}$ factors through $\fg \longrightarrow G$.
As in (\ref{e:conds}), we let
\[ \fm_0 = \cond(H/F), \qquad \fm = \lcm(\fm_0, \ \prod_{\mathclap{\fq \in \Sigma \cup \Sigma'}} \fq \, ). \]
Note that here, $\fp$ does not divide the level $\fm$.  Since we have $\fg' = G'$, however, note that the levels $\fn$ and $\fm$ agree away from $p$, i.e. if $\fP_0$ denotes the $p$-part of $\fm$, then $\fn/\fP = \fm/\fP_0$.
(This fact implies that the results of Propositions~\ref{p:wct1b} and~\ref{p:wct2}, which {\em a priori} would apply to the cusps in $C_\infty(\fP_0, \fm)$, also apply on $C_\infty(\fP, \fn)$.)

We use  \cite{dk}*{Proposition 8.14, eqn.~(102)} again to define, for odd $k \ge 1$,  a group ring valued modular form 
\begin{equation} \label{e:fraca}
 W_k(\overline{\bpsi}, 1) \in  M_k(\fm, \Frac(\overline{A}), \overline{\bpsi}) \subset M_k(\fn, \Frac(\overline{A}), \overline{\bpsi}).
 \end{equation}
Again, the $q$-expansion coefficients of $W_k(\overline{\bpsi}, 1)$ other than possibly the constant terms lie in $\overline{A}$. 
The specialization of $W_k(\overline{\bpsi}, 1)$ at a character $\psi$ of $\overline{A}$ is ${W}_k(\psi_{\fP_0}, 1)$, defined as in (\ref{e:wkpsi1def}) with $\fP$ replaced by $\fP_0$.

We now discuss the constant terms of $W_k(\overline{\bpsi}, 1)$ for odd $k \ge 1$.
The following result  follows directly from  \cite{dk}*{Proposition 8.7}. 
 \begin{prop} \label{p:wct1b}  
If $\Sigma \setminus S_\infty$ is nonempty, the normalized constant term of $W_1(\overline{\bpsi},1)$ at a cusp $\cA$ such that $[\cA] \in C_\infty(\fP, \fn)$ is
\[ 
C^H_1(\cA) = 
\begin{cases}
\sgn(\N a) {\overline{\bpsi}}^{-1}(a \fb_{\cA}^{-1})\Theta_H^\#/2^n & \text{if } [\cA] \in C_\infty(\fn) \\
0 &  \text{if } [\cA] \in C_\infty(\fP, \fn) \setminus C_\infty(\fn).
\end{cases}
\]
 In particular, if $\Sigma \setminus S_\infty$ is nonempty we have $W_1(\overline{\bpsi},1) \in M_1(\fn, \overline{A}, \overline{\bpsi})$. 
\end{prop}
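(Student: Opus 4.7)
The plan is to mirror the proof of Proposition~\ref{p:wconstantterms}, substituting $\overline{\bpsi}$ for $\bpsi$, $\Theta_H$ for $\Theta_L$, and tracking where the hypothesis $\Sigma \setminus S_\infty \neq \emptyset$ takes over the role played by $\fp \in \Sigma_\fp \setminus S_\infty$ in the $L$-version.

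First, I would invoke the specialization property recorded in (\ref{e:fraca}): for any character $\psi$ of $\overline{A}$, the specialization of $W_1(\overline{\bpsi}, 1)$ is $W_1(\psi_{\fP_0}, 1)$. By \cite{dk}*{Proposition 8.7} the normalized constant term of $W_1(\psi_{\fP_0}, 1)$ at a cusp $\cA$ with $[\cA] \in C_\infty(\fP_0, \fm)$ equals $\sgn(\N a)\psi^{-1}(a\fb_\cA^{-1}) L_{\Sigma, \Sigma'}(\psi,0)/2^n$ when $[\cA] \in C_\infty(\fm)$ and vanishes otherwise. Since $\fg' = G'$ forces $\fn/\fP = \fm/\fP_0$, the cusps in $C_\infty(\fP, \fn)$ correspond to those in $C_\infty(\fP_0, \fm)$ under the inclusion $M_1(\fm) \hookrightarrow M_1(\fn)$, so the same formulas give the specialized constant term at cusps in $C_\infty(\fP, \fn)$.

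Second, I would interpolate into $\overline{A}$. By \cite{dk}*{Remark 3.6}, $\Theta_H = \Theta_{\Sigma,\Sigma'}^H \in \overline{R}$, so its image in $\overline{A}$ is integral; because $p$ is odd, $2^n$ is a unit of $\cO$, whence $\Theta_H^\#/2^n \in \overline{A}$. The defining property $\psi(\Theta_H^\#) = L_{\Sigma,\Sigma'}(\psi,0)$ together with the embedding $\overline{A} \hookrightarrow \prod_{\psi} \cO_\psi$ singles out a unique element $C_1^H(\cA) \in \overline{A}$ specializing to the formulas obtained in the first step; matching these yields the explicit formula claimed in (\ref{e:wkconst}) (with $\Theta_L, \bpsi$ replaced by $\Theta_H, \overline{\bpsi}$).

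Finally, for the ``in particular'' assertion $W_1(\overline{\bpsi},1) \in M_1(\fn, \overline{A}, \overline{\bpsi})$, I must show integrality of the normalized constant terms at \emph{all} cusps of level $\fn$. For cusps in $C_\infty(\fP, \fn)$, integrality is exactly what was just proved. For cusps outside this set, I would invoke the hypothesis $\Sigma \setminus S_\infty \neq \emptyset$: pick any finite prime $\fq \in \Sigma$, so that $\fq \mid \fm \mid \fn$. In the level-raising sum (\ref{e:wkpsi1def}) defining $W_1(\overline{\bpsi}, 1)$, the choice of $\fq \in \Sigma$ introduces an Euler factor at $\fq$ whose combinatorial effect—by the same $q$-expansion analysis carried out in \cite{dk}*{Proposition 8.7}—forces the normalized constant term to vanish at every cusp $\cA$ with $\fq \nmid \fb_\cA$, i.e.\ at every cusp not lying over the $\fq$-part, and to be an integral multiple of the already-integral constant terms computed above at the remaining cusps. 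This parallels exactly how $\fp \in \Sigma_\fp$ was used in Proposition~\ref{p:wconstantterms} to obtain $W_1(\bpsi,1) \in M_1(\fn, A, \bpsi)$.

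The main obstacle is the last step: propagating integrality from the distinguished cusps in $C_\infty(\fP, \fn)$ to all remaining cusps of level $\fn$. This is where the hypothesis $\Sigma \setminus S_\infty \neq \emptyset$ is essential, playing the exact role that the prime $\fp$ played in the analogous argument for $W_1(\bpsi, 1)$; without a finite prime in $\Sigma$, there is no level-raising structure to force vanishing or to cancel denominators at cusps outside $C_\infty(\fP, \fn)$.
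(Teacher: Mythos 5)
The paper gives no argument for this proposition beyond the sentence ``The following result follows directly from \cite{dk}*{Proposition 8.7},'' and your parts 1--2 (specialize to $W_1(\psi_{\fP_0},1)$, read off the constant terms from \cite{dk}*{Prop.~8.7}, interpolate the result into $\overline{A}$ using $\Theta_H^\# \in \overline{R}$ and $2^n \in \cO^*$) are a correct elaboration of that same route. The problem is in your last step, and it is not a small one.

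You assert that in the level-raising sum (\ref{e:wkpsi1def}) ``the choice of $\fq \in \Sigma$ introduces an Euler factor at $\fq$.'' This is not possible: by Lemma~\ref{l:psicond}, the ideal $\fl$ indexing the level-raising sum is a squarefree product of primes \emph{not dividing $p$}, whereas every finite $\fq \in \Sigma$ lies above $p$. Such a $\fq$ never appears among the $\fa \mid \fl$. The only way $\fq$ enters the construction is through the modulus of $\psi_{\fP_0}$, since $\fq \mid \fP_0$; this forces $\psi_{\fP_0}(\fq) = 0$, but that is a different mechanism than the M\"obius-weighted sum over $\fa \mid \fl$, and it has a different effect.

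More importantly, you have misdiagnosed where the hypothesis $\Sigma \setminus S_\infty \neq \emptyset$ is actually used. It is not needed to propagate integrality from cusps in $C_\infty(\fP,\fn)$ to cusps outside that set --- that part is handled for both $W_1(\bpsi,1)$ and $W_1(\overline{\bpsi},1)$ by the cited computation, exactly as it is silently handled in the proof of Proposition~\ref{p:wconstantterms}, with no special hypothesis required. The hypothesis is needed to control the constant terms \emph{at} the cusps $[\cA] \in C_\infty(\fP,\fn)$ themselves. Compare Proposition~\ref{p:wct2}: when $\Sigma = S_\infty$, i.e.\ $\fP_0 = 1$, the constant term of $W_1(\overline{\bpsi},1)$ at such a cusp acquires the extra summand $C_1'(\cA)$ of (\ref{e:cpdef}), built from the Gauss sum $\tau(\psi)$ and the \emph{unsmoothed} value $L_{S_\infty,\emptyset}(\psi^{-1},0)$, and this extra term is what obstructs integrality. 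When $\Sigma \setminus S_\infty \neq \emptyset$, i.e.\ $\fP_0 \neq 1$, this term disappears for the simple reason that for $[\cA] \in C_\infty(\fP,\fn)$ one has $\fP_0 \mid \fb_\cA$, while $\fP_0$ also divides the modulus of $\psi_{\fP_0}$; hence the coprimality condition $\gcd(\cond(\psi_{\fP_0}),\fb_\cA)=1$ that triggers the Gauss-sum contribution can never be satisfied. Once that term is gone, the constant terms are exactly the $\Theta_H^\#/2^n$ formula, which lies in $\overline{A}$ by \cite{dk}*{Remark~3.6}, and integrality follows as in Proposition~\ref{p:wconstantterms}. Your instinct that the hypothesis plays the role that $\fp \in \Sigma_\fp$ plays in the $L$-version is right, but the mechanism is the incompatibility of $\fP_0 \mid \fb_\cA$ with the Gauss-sum case, not a cancellation produced by the level-raising sum.
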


If $\Sigma = S_\infty$ (in which case $\fP_0 = 1$) the situation is more complicated for $W_1(\overline{\bpsi},1)$. 
Furthermore in this case we  require the constant terms of the forms $W_k(\overline{\bpsi},1)$ for odd $k\ge 3$.
 
For each character $\psi$ of $\overline{A}$, let $\fb_0 = \cond(\psi)$, $\fb = \lcm(\fb_0, \fP_0)$, and $\fl = \fm/\fb$.
If $[\cA]$ is a cusp, we follow \cite{dk}*{Definition 8.3} and define two sets of primes:
\[ J_\fl = \{ \fq \mid \fl \colon [\cA] \in C_0(\fq, \fn) \}, \qquad  J_\fl^c =  \{ \fq \mid \fl \colon [\cA] \in C_\infty(\fq, \fn) \}. \]
Define $C_k'(\cA)$ to be the unique element of
 $\Frac(\overline{A})$ 
 such that for each character $\psi$ of $\overline{A}$, we have
\begin{equation} \label{e:cpdef}
 \psi(C_k'(\cA)) =  \frac{\tau(\psi)}{\N\fb_0^k} \psi(\fc_{\cA}) \sgn(\N(-c)) \frac{L_{S_{\infty}, \emptyset}(\psi^{-1}, 1-k)}{2^n} 
 \prod_{\fq \in J_{\fl}^c}(1 - \N \fq^k) \prod_{\fq \in J_{\fl}}(1- \psi(\fq))
 \end{equation}
if $[\cA] \in C_0(\fb_0, \fn)$, and \[  \psi(C_k'(\cA)) = 0 \] if $[\cA] \in C_\infty(\fP, \fn) \setminus C_0(\fb_0, \fn)$.  Here $\tau(\psi)$ denotes the Gauss sum defined in \cite{dka}*{Definition 4.1}.  The following proposition follows directly from \cite{dk}*{Propositions 8.6 and 8.7}.

\begin{prop} \label{p:wct2}  Suppose that $\Sigma = S_\infty$. The normalized constant term of $W_k(\overline{\bpsi}, 1)$ at a cusp $[\cA] \in C_\infty(\fP, \fn)$ is equal to $C^H_1(\cA) + C_1'(\cA)$ if $k = 1$, and is equal to $C_k'(\cA)$ if $k > 1$.
\end{prop}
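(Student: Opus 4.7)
The plan is to deduce the proposition directly from \cite{dk}*{Proposition 8.7}, which computes the normalized constant terms of the level-raised Eisenstein series in $W_k(\psi_{\fP_0},1) = \sum_{\fa \mid \fl}\mu(\fa)\psi(\fa)\N\fa^k E_k(\psi_{\fP_0},1)|_\fa$ at arbitrary cusps of level $\fn$, after a character-by-character reduction.

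First, I would fix a character $\psi$ of $\overline{A}$. Since both sides of the claimed identity depend $\overline{A}$-linearly on character values, and $W_k(\overline{\bpsi},1)$ specializes under $\psi$ to $W_k(\psi_{\fP_0},1) = W_k(\psi,1)$ (using $\fP_0 = 1$, as $\Sigma = S_\infty$), it suffices to verify, for each such $\psi$ and each cusp $[\cA] \in C_\infty(\fP,\fn)$, that the normalized constant term of $W_k(\psi,1)$ at $[\cA]$ equals $\psi(C_1^H(\cA)+C_1'(\cA))$ when $k=1$ and $\psi(C_k'(\cA))$ when $k>1$.

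Second, I would apply \cite{dk}*{Proposition 8.7} termwise to the summands $E_k(\psi,1)|_\fa$. That proposition decomposes the normalized constant term of each $E_k(\psi,1)|_\fa$ at a level-$\fn$ cusp into two kinds of contribution: a ``$\psi$-side'' piece, supported on cusps $[\cA] \in C_\infty(\fn)$ and carrying an $L$-factor of the form $L_{\Sigma,\Sigma'}(\psi, 1-k)$ together with the factor $\sgn(\N a)\psi^{-1}(a\fb_\cA^{-1})/2^n$; and a ``trivial-character-side'' piece, supported on cusps $[\cA] \in C_0(\fb_0,\fn)$ and carrying the Gauss sum $\tau(\psi)$, the conductor factor $\N\fb_0^{-k}$, the sign $\sgn(\N(-c))$, and the $L$-value $L_{S_\infty,\emptyset}(\psi^{-1}, 1-k)/2^n$. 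Summing over $\fa \mid \fl$, the $\psi$-side assembles via M\"obius inversion into the fully smoothed Stickelberger-type $L$-value; the trivial-character-side splits according to the partition $J_\fl \sqcup J_\fl^c$ of the primes dividing $\fl$ (depending on whether each such $\fq$ satisfies $[\cA] \in C_0(\fq,\fn)$ or $[\cA] \in C_\infty(\fq,\fn)$), producing exactly the product $\prod_{\fq \in J_\fl^c}(1 - \N\fq^k)\prod_{\fq \in J_\fl}(1 - \psi(\fq))$ that appears in the definition of $C_k'(\cA)$.

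Finally, the case distinction between $k=1$ and $k>1$ is encoded directly in the output of \cite{dk}*{Proposition 8.7}: in the special case $\fP_0 = 1$ at hand, the $\psi$-side contribution is assigned to the $k=1$ formula only, where it becomes $\sgn(\N a)\psi^{-1}(a\fb_\cA^{-1}) L_{\Sigma,\Sigma'}(\psi,0)/2^n = \psi(C_1^H(\cA))$ on $C_\infty(\fn)$ and vanishes on $C_\infty(\fP,\fn) \setminus C_\infty(\fn)$, matching the two-line definition of $C_1^H$; the trivial-character-side contribution gives $\psi(C_k'(\cA))$ for every odd $k \ge 1$. Adding the two in the $k=1$ case and taking only the second in the $k>1$ case yields the claimed formulas. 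The main obstacle, insofar as there is one, is purely notational: tracking signs, Gauss-sum and conductor normalizations, and the cusp stratification through the M\"obius telescoping, and verifying that the output of \cite{dk}*{Proposition 8.7} matches term-for-term the expressions defining $C_1^H$ and $C_k'$. Given the explicit formula from \emph{loc.\,cit.}, this matching is mechanical.
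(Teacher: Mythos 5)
Your proposal follows the same approach as the paper, whose entire proof is the one sentence ``follows directly from \cite{dk}*{Proposition 8.7}.'' You unpack that citation by reducing to characters $\psi$ of $\overline{A}$ (using $\fP_0=1$ so that the specialization is $W_k(\psi,1)$) and then matching the two cusp strata of the constant-term formula against $C_1^H$ (the $\psi$-side, contributing only in weight one) and $C_k'$ (the Gauss-sum side with the $J_\fl$, $J_\fl^c$ Euler factors), which is exactly the bookkeeping the paper's citation leaves implicit.
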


\subsection{Construction of a cusp form} \label{s:ccf}

In this section we construct the cusp form required in our proof. The construction will need to be split into several cases. 
Recall that $\fP$ is the $p$-part of $\fn$.  The ideal $\fP$ is divisible precisely by the primes in $\Sigma_\fp =\Sigma \cup \{ \fp \}$.
Define $\fP'$ to be the product of all other primes above $p$, i.e.
\[ 
\fP' = \prod_{\mathclap{\fq \mid p, \ \fq \nmid \fP}} \fq. 
\]

\begin{itemize}
\item Case 1: the set $\Sigma \setminus S_\infty$ is non-empty, i.e. $\fp$ is not the only prime dividing $\fP$. 
\end{itemize}
In Case 2, we have $\Sigma = S_\infty$.
Note that the eigenvalues of the form $W_1(\bpsi, 1)$ for the operator $U_\fq$, $\fq \mid \fP'$, are $\bpsi(\fq)$ and $1$.  If $\chi(\fq) \neq 1$, then these are not congruent modulo the maximal ideal of $A$.  To ensure that the Hecke algebras we work with are local, it will be convenient to project to the eigenspace with eigenvalue $1$.
Let 
\[ 
\fP'_0  =\prod_{\mathclap{\substack{\fq \mid \fP' \\ \chi(\fq) \neq 1}}} \fq, \qquad 
\fP'_1  =\prod_{\mathclap{\substack{\fq \mid \fP' \\ \chi(\fq) = 1}}} \fq. 
\]
We subdivide Case 2 into three cases: 
\begin{itemize} \item Case 2(a): $\fP'_0 \neq 1$.
\item Case 2(b): $\fP'_0 = 1, \fP'_1 \neq 1$.
\item Case 2(c): $\fP'_0 = \fP'_1 = 1$, i.e.\ $\fp$ is the only prime above $p$ in $F$.
\end{itemize}

We consider the module \[ \Lambda = A \oplus \overline{A},\]
endowed with the canonical diagonal $A$-action.
  We view $A = A \oplus 0$ as a submodule of $\Lambda$ and denote the vector $(0,1) \in \Lambda$ by $\lambda$, so elements of $\Lambda$ will be written $a + \lambda b$ with $a \in A, b \in \overline{A}.$

 As in the previous section, we have the canonical characters  \[ 
\bpsi\colon \fg \longrightarrow A^*, \qquad \overline{\bpsi}  \colon \fg \longrightarrow G \longrightarrow \overline{A}^*. 
 \]
 We denote the image of $I$ in $A$ (i.e.\ the kernel of the canonical projection $A \longrightarrow \overline{A}$) by $I_A$.
 
Recall the following result of Silliman \cite{silliman}*{Theorem 8.10}, a  generalization of a result of Hida.
\begin{theorem} \label{t:hida}  Fix a positive integer $m$.  For positive integers $k \equiv 0 \pmod{(p-1)p^{N}}$ with $N$ sufficiently large depending on $m$, 
there is a modular form $V_{k} \in M_k(1, \Z_p, 1)$ such that 
$c(\fm, V_{k})  \equiv 0 \pmod{p^m}$ for all integral ideals $\fm$, and such that the normalized constant term  $c_{\cA}(0,V_k)$
 for each cusp $[\cA]$ is congruent to $1 \pmod{p^m}$.
\end{theorem}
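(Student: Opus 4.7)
The plan is to construct $V_k$ as a suitably normalized Hilbert Eisenstein series of level $1$, weight $k$, and trivial character, building on Hida's classical argument for $F = \Q$ as adapted by Silliman to the totally real setting. The starting point is the Hilbert Eisenstein series $E_k(1,1)$ of weight $k$ and level $1$: its normalized Fourier coefficients $c(\fm, E_k)$ for $\fm \neq 0$ are divisor sums of the form $\sum_{\fa \mid \fm} \N\fa^{k-1}$, while the normalized constant terms at the various cusps (indexed by the narrow class group of $F$) are partial values of the Dedekind zeta function $\zeta_F(1-k)$, weighted by data depending on $\cA$. The key input is that, after removing the Euler factors at primes above $p$, all of these quantities vary $p$-adically continuously in $k$, by Deligne--Ribet's theory of $p$-adic Hilbert $L$-functions.

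Next I would arrange for all non-constant coefficients to vanish modulo $p^m$. For an ideal $\fa$ prime to $p$, Euler's theorem in $(\Z/p^m\Z)^{*}$ gives $\N\fa^{k-1} \equiv \N\fa^{-1} \pmod{p^m}$ as soon as $k \equiv 0 \pmod{(p-1)p^N}$ with $N$ large enough relative to $m$; to remove the residual contribution from ideals divisible by primes above $p$, I would apply a $p$-stabilization, i.e.\ operators of the form $(1 - V_\fq U_\fq)$ for every $\fq \mid p$. This kills the non-constant coefficients modulo $p^m$ while modifying the constant terms only by Euler factors that, for $k \equiv 0 \pmod{(p-1)p^N}$, are themselves $p$-adic units congruent to explicit quantities mod $p^m$.

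The third step is to normalize the constant terms to be $\equiv 1 \pmod{p^m}$. Here one exploits the Deligne--Ribet $p$-adic interpolation: the $p$-stabilized partial zeta values $\zeta_F^{(p)}(1-k)$ are congruent modulo $p^m$ to their values at a fixed reference weight whenever $k$ lies in a sufficiently small residue class mod $(p-1)p^N$, and by rescaling $E_k$ by the inverse of a suitable $p$-adic unit one can arrange that the constant term at every cusp becomes $\equiv 1 \pmod{p^m}$. The main obstacle is making this normalization uniform across \emph{all} cusps simultaneously, since level $1$ for a general totally real $F$ has $h^{+}_F$ cusps and the constant terms at different cusps differ by diamond-operator twists. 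Silliman's strategy, following the group-ring-valued Eisenstein series philosophy also used elsewhere in this paper, is to work with a single Eisenstein series valued in the group ring of the narrow class group, whose character-by-character Deligne--Ribet congruences force simultaneous normalization.

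Finally, I would verify that the resulting form actually lives in $M_k(1, \Z_p, 1)$ rather than merely in a space of $p$-adic modular forms. This is the point at which the Deligne--Ribet $q$-expansion principle enters: a $p$-adic Hilbert modular form whose $q$-expansion coefficients lie in $\Z_p$ and which agrees modulo $p^m$ with a classical Eisenstein series of the same weight and level is itself classical. Combining this with the explicit Eisenstein-series construction above yields $V_k$ with the required properties, and the main technical difficulty throughout is bookkeeping the behaviour of constant terms at the full set of cusps under $p$-stabilization and rescaling.
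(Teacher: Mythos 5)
The paper does not prove this statement; it is quoted verbatim as Silliman's Theorem 10.7 (itself a Hilbert-modular generalization of Hida's level-$1$ construction) and used as a black box. So the only question is whether your sketch is a correct reconstruction, and it has a genuine gap at its arithmetic heart.

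You claim that Euler's theorem together with $p$-stabilization makes the non-constant Fourier coefficients vanish modulo $p^m$. It does not. After $p$-stabilizing and taking $k\equiv 0\pmod{(p-1)p^N}$, the non-constant coefficient at an ideal $\fm$ prime to $p$ is congruent modulo $p^m$ to $\sum_{\fa\mid\fm}\N\fa^{-1}$, which is a $p$-adic unit when $\fm=\cO_F$ and is typically nonzero modulo $p^m$ otherwise. Euler's theorem \emph{stabilizes} the coefficients; it does not kill them. Consequently, your later step, rescaling ``by the inverse of a suitable $p$-adic unit,'' cannot possibly produce a form whose non-constant coefficients are all divisible by $p^m$ while its constant terms are $\equiv 1$: scaling by a unit preserves $p$-adic valuations. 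The mechanism you are missing is exactly the opposite of what you wrote: for $k\equiv 0\pmod{(p-1)p^N}$ the ($p$-stabilized) constant term behaves like $\zeta_{F,p}(1-k)$, which has very large \emph{negative} $p$-adic valuation as $1-k\to 1$ because the $p$-adic Dedekind zeta function has a pole at $s=1$ (for $F=\Q$ this is the von Staudt--Clausen phenomenon, $v_p(\zeta(1-k))=-1-v_p(k)$). One normalizes $E_k^{(p)}$ by the reciprocal of this constant term, which is a quantity of valuation $\geq N$, \emph{not} a unit, and this is precisely what forces $c(\fm,V_k)\equiv 0\pmod{p^m}$ while pinning the constant term at $1$. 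The remaining content of Silliman's theorem is then the uniformity across the $h_F^+$ cusps, i.e.\ that the ratios of $p$-stabilized partial zeta values at $1-k$ to the one at the standard cusp are $\equiv 1\pmod{p^m}$ (and in particular $p$-integral), which you correctly flag but which has to be established via Deligne--Ribet congruences, not by a further rescaling. Your final appeal to a $q$-expansion classicality principle is unnecessary: $V_k$ is by construction a scalar multiple of a classical Eisenstein series, so the only issue is $\Z_p$-integrality of the coefficients, handled by the valuation estimates just described.
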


To apply this result, we hereafter assume that $m$ is fixed and $k \equiv 1 \pmod{(p-1)p^{N})}$ with $N$ sufficiently large that the conclusion of Theorem~\ref{t:hida} holds.  In  subsequent arguments we will make $N$ larger still if necessary to obtain other properties of our modular forms.

In Case 1, we define
\[ 
  f = (W_1( \bpsi,1) - \lambda W_1(\overline{\bpsi},1)) V_{k-1} \in M_k(\fn, \Lambda, \bpsi).
\]
Note here that $W_1( \bpsi,1) \in M_k(\fn, A, \bpsi)$ and $W_1(\overline{\bpsi},1) \in  M_k(\fn, \overline{A}, \overline{\bpsi})$ by
Propositions~\ref{p:wconstantterms} and~\ref{p:wct1b}, respectively.

In Case 2 we
recall the non-zerodivisor 
 \begin{equation} \label{e:xdef}
 x = x(k) = \frac{\Theta^{H/F}_{S_{\infty}, \emptyset}(1-k)}{\Theta^{H/F}_{S_{\infty}, \emptyset}} \in \Frac(\overline{A}) \end{equation}
 considered in \cite{dk}*{Theorem 8.16}.
 The elements $\Theta^{H/F}_{S_{\infty}, \emptyset}(1-k)$ and $\Theta^{H/F}_{S_{\infty}, \emptyset}$ belong to $\Frac(\overline{A})$ and interpolate the nonzero algebraic numbers 
$L_{S_\infty, \emptyset}(\psi^{-1}, 1-k)$ and $L_{S_{\infty}, \emptyset}(\psi^{-1}, 0)$, respectively, as $\psi$ ranges over the odd characters of $G$ that restrict to $\chi$ on $G'$. For $k \equiv 1 \pmod{(p-1)p^N)}$ with $N$ sufficiently large, the ratio $x$ belongs to $\overline{A}$ by {\em loc.\,cit.}  We hereafter assume that $N$ is chosen so that this holds.
 Let \begin{equation} \label{e:tx}
 \tilde{x} = \text{ any non-zerodivisor lift of }x \text{ to }A.  
 \end{equation}
We define in Case 2:
\begin{equation} \label{e:fdef2ab}
 f = \tilde{x}  \left(W_1( \bpsi,1) - \lambda W_1(\overline{\bpsi},1)\right) V_{k-1} + \lambda W_k(\overline{\bpsi}, 1).
\end{equation}

{\em A priori}, in Case 2 the form $f$ lies in $M_k(\fn, \Lambda \otimes_{\Z_p} \Q_p, \bpsi)$ since we have only shown that the constant terms of the forms $W_1(\overline{\bpsi}, 1)$ and $W_k(\overline{\bpsi},1)$ lie in $\Frac(\overline{A}) = \overline{A} \otimes_{\Z_p} \Q_p$.  However, the proposition below shows that in fact $f \in M_k(\fn, \Lambda, \bpsi)$.  
In order to state results in all cases simultaneously, we set $\tilde{x} = 1$ in Case 1 for the remainder of the paper.

\begin{prop} \label{p:fcusp} With $N$ and $k$ as above, we have in all cases $f \in M_k(\fn, \Lambda, \bpsi)$.  Furthermore the form $f$ has constant terms at cusps $[\cA] \in C_\infty(\fP, \fn)$
lying in the submodule $ J \subset \Lambda$, where
\begin{equation}  \label{e:jdef}
J =   \tilde{x}J_0, \qquad J_0 = \left(I_A^2, \Theta_L^\# - \Theta_H^\# \lambda, \  p^m\Lambda\right). 
\end{equation}
\end{prop}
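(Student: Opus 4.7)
The plan is to verify separately that every Fourier coefficient of $f$ lies in $\Lambda$ and that the normalized constant term at each cusp $[\cA] \in C_\infty(\fP,\fn)$ lies in $J$.

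First, the non-constant Fourier coefficients. By Proposition~\ref{p:wconstantterms} the non-constant normalized coefficients of $W_1(\bpsi,1)$ lie in $A$, and by the remarks after (\ref{e:fraca}) the corresponding coefficients of $W_1(\overline{\bpsi},1)$ and $W_k(\overline{\bpsi},1)$ lie in $\overline{A}$. The coefficients of $V_{k-1}$ lie in $\Z_p$ by Theorem~\ref{t:hida}, and $\tilde x \in A$, so every non-constant Fourier coefficient of $f$ lies in $A \oplus \lambda \overline{A} = \Lambda$. The nebentypus check is routine, using the identity $\bpsi(\fm)\lambda = \lambda\overline{\bpsi}(\fm)$ inside $\Lambda$, so that $\lambda W_1(\overline{\bpsi},1)$ and $\lambda W_k(\overline{\bpsi},1)$ both carry nebentypus $\bpsi$.

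Now fix a cusp $[\cA] \in C_\infty(\fP,\fn)$. The constant term of $f$ decomposes into two pieces. The first is the contribution $\tilde x\, c_\cA(0,V_{k-1})\,[C_1^L(\cA) - \lambda C_1^H(\cA)]$ from the $W_1$ terms, which vanishes unless $[\cA] \in C_\infty(\fn)$; in that case Propositions~\ref{p:wconstantterms}, \ref{p:wct1b} and~\ref{p:wct2} identify it with
\[
\tilde x\, c_\cA(0,V_{k-1})\, v\,(\Theta_L^\# - \Theta_H^\# \lambda),
\]
where $v = \sgn(\N a)\,\bpsi^{-1}(a\fb_\cA^{-1})/2^n \in A^\times$ (using $p$ odd) and $\bpsi^{-1}(a\fb_\cA^{-1})$ reduces to $\overline{\bpsi}^{-1}(a\fb_\cA^{-1})$. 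Since $\Theta_L^\# - \Theta_H^\# \lambda$ is one of the generators of $J_0$ and Theorem~\ref{t:hida} gives $c_\cA(0,V_{k-1}) \equiv 1 \pmod{p^m}$ so that factor contributes at most a $p^m \Lambda$ error, this piece lies in $\tilde x J_0 = J$.

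The second piece, present only in Case 2, is $\lambda\,[\,C_k'(\cA) - \tilde x\, c_\cA(0,V_{k-1})\, C_1'(\cA)\,]$, arising from the $\lambda W_k(\overline{\bpsi},1)$ summand together with the $C_1'(\cA)$ correction in the constant term of $W_1(\overline{\bpsi},1)$ given by Proposition~\ref{p:wct2}. Using $c_\cA(0,V_{k-1}) \equiv 1 \pmod{p^m}$ again, it suffices to show $C_k'(\cA) - \tilde x\, C_1'(\cA) \in \tilde x \cdot p^m \overline{A}$. Comparing the explicit formulas (\ref{e:cpdef}) for $C_k'$ and $C_1'$ with the definition (\ref{e:xdef}) of $x$, at each character $\psi$ of $\overline{A}$ the ratio of specializations equals
\[
\frac{\psi(C_k'(\cA))}{\psi(\tilde x C_1'(\cA))} = \frac{1}{\N\fb_0^{k-1}}\prod_{\fq \in J_\fl^c}\frac{1-\N\fq^k}{1-\N\fq},
\]
a $p$-adic unit congruent to $1$ modulo $p^m$ for $k \equiv 1 \pmod{(p-1)p^N}$ with $N$ sufficiently large, since $\N\fb_0$ and each $\N\fq$ above are prime to $p$ by Lemma~\ref{l:psicond}. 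The upgrade from these character-by-character congruences to an integral congruence in $\overline{A}$ follows from the Deligne--Ribet integrality of $p$-adic $L$-functions, exactly as in \cite{dk}*{Theorem 8.16}. The main obstacle is this last step: passing from the character-wise $p$-adic-analytic congruences to an honest congruence in the group ring $\overline{A}$. Once that is in hand, both pieces of the constant term of $f$ at every $[\cA] \in C_\infty(\fP,\fn)$ lie in $J$, which together with the integrality of non-constant coefficients yields $f \in M_k(\fn,\Lambda,\bpsi)$ as claimed.
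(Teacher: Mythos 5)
Your analysis of the constant terms is correct and follows essentially the same route as the paper: the $W_1$-contribution is a unit multiple of $\tilde{x}\,(\Theta_L^\# - \Theta_H^\#\lambda)$ up to an error in $\tilde{x}\,p^m\Lambda$, and the remaining piece reduces to $C_k'(\cA) - x\,C_1'(\cA) \in x\,p^m\overline{A}$, which both you and the paper extract from the character-by-character ratio $\N\fb_0^{1-k}\prod_{\fq \in J_\fl^c}(1-\N\fq^k)/(1-\N\fq) \equiv 1$. The step you single out as the main obstacle is in fact routine and does not need a fresh appeal to Deligne--Ribet: $C_1'(\cA)$ is a single fixed element of $\overline{A}\otimes_{\Z_p}\Q_p$, hence has bounded denominators, and the injection $\overline{A}\hookrightarrow\prod_\psi\cO_\psi$ has finite cokernel, so specializations uniformly divisible by $p^{M}$ with $M$ large force membership in $p^m\overline{A}$. (One also has to note that when $\psi(C_1'(\cA))=0$ the ratio is undefined but $\psi(C_k'(\cA))$ vanishes too, so the congruence is vacuous there.)

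There is, however, a genuine gap in your first paragraph. In Case 2 the constant terms of $W_1(\overline{\bpsi},1)$ at the various cusps lie a priori only in $\Frac(\overline{A})$, and the \emph{non-constant} $q$-expansion coefficients of the product $W_1(\overline{\bpsi},1)\cdot V_{k-1}$ involve these constant terms: the coefficient of $q^\mu$ in a product of $q$-expansions picks up the cross-term $c_{\cA}(0,W_1(\overline{\bpsi},1))\cdot c(\mu,V_{k-1})$. So integrality of the non-constant coefficients of each factor does not by itself give $f \in M_k(\fn,\Lambda,\bpsi)$. The paper closes this using the other half of Theorem~\ref{t:hida}: for $N$ sufficiently large the non-constant coefficients of $V_{k-1}$ are divisible by arbitrarily high powers of $p$, which absorbs the bounded denominators of the constant terms of $W_1(\overline{\bpsi},1)$. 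You quote $c_{\cA}(0,V_{k-1})\equiv 1 \pmod{p^m}$ but never use $c(\fm,V_{k-1})\equiv 0\pmod{p^m}$; that divisibility is exactly the missing ingredient, and without it the claimed integrality of the non-constant coefficients does not follow in Case 2.
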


\begin{proof} We give the proof in Case 2, as Case 1 is similar and in fact easier.

We first note that the non-constant term $q$-expansion coefficients of $f$ lie in $\Lambda$.  
Indeed, all the forms appearing in the definition of $f$ have non-constant term $q$-expansion coefficients lying in $\Lambda$,
 so any failure of integrality of non-constant terms can  arise only from multiplying the constant terms of $W_1(1, \overline{\bpsi})$ by the non-constant terms of $V_{k-1}$. For $k \equiv 1 \pmod{(p-1)p^{N}}$ and $N$ sufficiently large, the non-constant terms of $V_{k-1}$ are divisible by any desired power of $p$, and this product will be integral.

We will prove the integrality of the constant terms of $f$ and the statement about cuspidality simultaneously.
By Propositions~\ref{p:wconstantterms} and \ref{p:wct2}, the constant term of $f$ at a cusp $[\cA] \in C_\infty(\fP, \fn)$ is equal to
\begin{align*}
 & \left( C^L_1(\cA) - \lambda C^H_1(\cA)- \lambda C_1'(\cA)\right) \tilde{x} y + \lambda  C_k'(\cA)  \\
 \equiv \ &  \left( \tilde{x}C_1^L(\cA) - \tilde{x} \lambda C_1^H(\cA)\right) - 
\lambda \left(C_1'(\cA) \tilde{x}y - C_k'(\cA)\right), 
\end{align*}
where $y = c_{\cA}(0, V_{k-1}) \equiv 1 \pmod{p^m}$ and the congruence is modulo $\tilde{x} p^m \Lambda \subset J$. For the first term, we have
\[
 \left( \tilde{x} C_1^L(\cA) - \tilde{x} \lambda C_1^H(\cA) \right) =  \frac{\sgn(\N a) \bpsi^{-1}(a \fb_{\cA}^{-1})}{2^n}\left(\tilde{x}\Theta_L^\# - \tilde{x}\Theta_H^\# \lambda \right) \in J. \]
It remains to show that $x y C_1'(\cA) -   C_k'(\cA)$ lies in $\overline{A}$ and  is divisible by $x p^m$. Now
$C_1'(\cA)$ is some fixed element of $\Frac(\overline{A})$ and $x \in \overline{A}$, so for $y$ sufficiently close to 1 $p$-adically, $(y - 1) C_1'(\cA)$ will lie in $p^m \overline{A}$.  Therefore it suffices to show that $x C_1'(\cA) -  C_k'(\cA)$ lies in $\overline{A}$ and  is divisible by $x p^m$, i.e.\ that $C_1'(\cA) -  x^{-1} C_k'(\cA)$ lies in $\overline{A}$ and  is divisible by $p^m$. 

For this, we note that by the definition (\ref{e:xdef}), multiplying by the factor $x^{-1}$ exactly replaces the $L$-value in the definition (\ref{e:cpdef}) of $C_k'(\cA)$ with that appearing in $C_1'(\cA)$, hence
\[
 C_1'(\cA) -  x^{-1}\Theta_H C_k'(\cA) = C_1'(\cA) \left( 1 - \N\fb_0^{1-k} \prod_{\fq \in J_{\fl}^c}\frac{1-\N\fl_i^k}{1 - \N\fl_i} \right).
\]
For  $k \equiv 1 \pmod{(p-1)p^{N}}$ and $N$ sufficiently large, the term in parentheses can be made divisible by arbitrarily large powers of $p$. (Note we are in Case 2, whence $\fb_0$ is prime to $p$.)
The result follows.
\end{proof}

\begin{corollary} \label{c:gdef}
There exists a $p$-ordinary cusp form $g \in S_k(\fn\fP', \Lambda, \bpsi)^{p\text{-ord}}$ such that in Cases 1 and 2(a) we have
\[ 
g \equiv W_1( \bpsi_{\fP_0'},1) - \lambda W_1(\overline{\bpsi}_{\fP_0'},1) \pmod{J_0},
\]
while in Cases 2(b) and 2(c) we have
\[
g \equiv \tilde{x} \big( W_1(\bpsi, 1) -\lambda W_1( \overline{\bpsi}, 1) \big) + \lambda W_k(1, \overline{\bpsi}_p) \pmod{J}.
\]
These congruences are understood to mean that the $q$-expansion coefficients $c(\fa, g)$ for nonzero ideals $\fa \subset \cO_F$ are congruent modulo $J_0$ or $J$ to the coefficients of the expressions on the right.
\end{corollary}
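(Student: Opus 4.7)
The plan is to derive $g$ from the modular form $f$ of Proposition~\ref{p:fcusp} via three steps: (i) level-raising at the primes dividing $\fP'$, (ii) Hida's ordinary projection, and (iii) subtraction of constant terms to land in the space of cusp forms.

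For step (i), at a prime $\fq \mid \fP_0'$ the element $1 - \bpsi(\fq) \in A$ is a unit, because $\bpsi(\fq)$ reduces to $\chi(\fq) \ne 1$ modulo the maximal ideal of $A$ and $\chi$ has prime-to-$p$ order. The two $U_\fq$-eigenvalues on the $\fq$-oldspan generated by the Eisenstein components of $f$ are $1$ and $\bpsi(\fq)$; applying the idempotent cutting out the eigenvalue $1$ transforms the $W_1(\bpsi,1)$-component into $W_1(\bpsi_{\fP_0'},1)$ (and similarly for $\overline{\bpsi}$) modulo $J_0$. For a prime $\fq \mid \fP_1'$ (relevant in Cases 2(b) and 2(c)), the two eigenvalues are congruent modulo the maximal ideal, so a clean projection is unavailable at this stage; instead I would apply the operator $V_\fq$ directly, relying on the ordinary projector in step (ii) to suppress the non-unit eigenvalue.

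Next I would apply Hida's ordinary projector $e^{p\text{-ord}} = \lim_{n \to \infty} \prod_{\fq \mid p} U_\fq^{n!}$, which is well-defined on $M_k(\fn\fP', \Lambda, \bpsi)$ since $\Lambda$ is a finitely generated $\Z_p$-module and which preserves congruences modulo $J$. The result lies in $M_k(\fn\fP', \Lambda, \bpsi)^{p\text{-ord}}$ and matches the right-hand side of the claimed congruence modulo $J$ (respectively $J_0$); here one uses that $V_{k-1}$ has non-constant coefficients divisible by $p^m$ and normalized constant terms congruent to $1$ modulo $p^m$, so that multiplication by $V_{k-1}$ is the identity modulo $J$. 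The appearance of $\lambda W_k(1, \overline{\bpsi}_p)$ rather than $\lambda W_k(\overline{\bpsi},1)$ in Cases 2(b) and 2(c) reflects the $p$-stabilization of weight-$k$ Eisenstein series at the primes above $p$, which forces a character rearrangement that is invisible in weight $1$ but manifests at higher weights in the Hida family.

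Finally, for step (iii), Proposition~\ref{p:fcusp} together with explicit tracking of constant terms through the operations in steps (i) and (ii) shows that constant terms at cusps $[\cA] \in C_\infty(\fP\fP', \fn\fP')$ of the resulting ordinary form lie in $J$. For cusps outside this set, an analogous analysis using the explicit $L$-value formulae for ordinary Eisenstein constant terms---interpolated by Stickelberger elements up to unit multiples, as in Propositions~\ref{p:wct1b} and~\ref{p:wct2}---shows these also lie in $J$. One then subtracts an ordinary Eisenstein series, with non-constant $q$-expansion coefficients lying in $J$, to cancel every constant term without disturbing the congruence. The main obstacle is this final cuspidal projection: it requires an integral version of the direct-sum decomposition $M_k^{p\text{-ord}} = S_k^{p\text{-ord}} \oplus E_k^{p\text{-ord}}$ over $A$ that is compatible with reduction modulo $J$, together with the fact that a correction Eisenstein series can be found whose non-constant coefficients themselves lie in $J$. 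Establishing this should be possible by combining the integral Hida-theoretic results of Silliman~\cite{silliman} with the detailed analysis of Eisenstein constant terms carried out in \cite{dk}.
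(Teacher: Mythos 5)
Your outline identifies the right ingredients (level-raising at $\fP'$, Hida's ordinary projector, and a mechanism for killing constant terms) but it inverts the order of the key steps, and as a result you leave open precisely the step you flag as ``the main obstacle,'' which the paper's argument is designed to avoid.

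The paper does \emph{not} perform a cuspidal projection after the ordinary projector, and it never needs an integral direct-sum decomposition $M_k^{p\text{-ord}} = S_k^{p\text{-ord}} \oplus E_k^{p\text{-ord}}$ compatible with reduction modulo $J$. Instead it first invokes Silliman's lifting theorem (\cite{silliman}*{Theorem 10.8}) to produce a form $f' \in M_k(\fn, J, \bpsi)$ whose \emph{constant} terms at the cusps in $C_\infty(\fP,\fn)$ agree with those of $f$ -- the whole point of Proposition~\ref{p:fcusp} is to verify that these constant terms lie in $J$, so that such an $f'$ exists with $q$-expansion coefficients in $J$. One then considers $f - f'$, which has vanishing constant terms on $C_\infty(\fP,\fn)$ but is congruent to $f$ modulo $J$ (since $f' \in M_k(\fn, J, \bpsi)$). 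The key input, which is absent from your proposal, is \cite{dka}*{Theorem 5.1}: a form whose constant terms vanish on $C_\infty(\fP,\fn)$ has $\fP$-ordinary projection $e_\fP^{\ord}(f-f')$ which is automatically a \emph{cusp} form. Composing with $e_{\fP'}^{\ord}$ yields a $p$-ordinary cusp form. Cuspidality is therefore obtained for free from vanishing of a finite set of constant terms before ordinary projection, not by an a posteriori cuspidal splitting over $A$ modulo $J$. Your proposed route -- ordinary projection first, then subtracting an ordinary Eisenstein series with coefficients in $J$ to cancel \emph{all} constant terms -- would require controlling constant terms at cusps outside $C_\infty(\fP,\fn)$ and establishing an integral cuspidal/Eisenstein decomposition compatible with the ideal $J$; neither is carried out in the paper, and you do not carry it out either, so this is a genuine gap.

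A second, smaller discrepancy: you apply the level-raising at $\fq \mid \fP_0'$ at the \emph{start}, whereas the paper applies the operator $\prod_{\fq \mid \fP_0'}(U_\fq - \bpsi(\fq))$ at the very \emph{end}, after the ordinary projection. The order is not incidental. In Case 2(a) the output of this operator is a cusp form whose Fourier coefficients are congruent to those of $\tilde{x}(W_1(\bpsi_{\fP_0'},1) - \lambda W_1(\overline{\bpsi}_{\fP_0'},1))$ modulo $\tilde{x}J_0$; since $\tilde{x}$ is a non-zerodivisor one then \emph{divides} the Fourier expansion by $\tilde{x}$ to land in the stated congruence modulo $J_0$. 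Your proposal never performs this division, so in Case 2(a) you would end up with a congruence modulo $J = \tilde{x}J_0$ rather than modulo $J_0$ as required. Finally, the explanation you give for $\lambda W_k(1,\overline{\bpsi}_p)$ as a ``character rearrangement'' is not what is happening; the proof shows that the term produced is simply the ordinary $p$-stabilization $W_k(\overline{\bpsi},1_p)$ of $W_k(\overline{\bpsi},1)$ under $e_p^{\ord}$, with no interchange of the two characters.
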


\begin{proof}
Silliman's result \cite{silliman}*{Theorem 8.4} 
 implies that there is an element $f' \in M_k(\fn, J, \bpsi)$ whose constant terms at $C_\infty(\fP, \fn)$ agree with those of $f$.  Therefore $f - f'$ has constant terms that vanish on $C_\infty(\fP, \fn)$.
 Let $e_\fP^{\ord}, e_p^{\ord}$ denote the ordinary operators of Hida (see \cite{dk}*{\S7.2.9}).
By
 \cite{dka}*{Theorem 5.1}, the form $g_0 = e_\fP^{\ord}(f - f')$ is cuspidal,
 whence \[ g = e_p^{\ord}(f -f') = e_{\fP'}^{\ord} g_0\] is cuspidal as well. Now the ordinary operator $e_p^{\ord}$ fixes the forms $W_1(\bpsi,1)$ and $W_1(\overline{\bpsi}, 1)$, whereas it sends $W_k(\overline{\bpsi}, 1)$ to its ordinary $p$-stabilization $W_k(\overline{\bpsi}, 1_p)$.  Furthermore, $V_{k-1} \equiv 1 \pmod{p^m}$ and $p^m \in J_0$. The result follows in Cases 2(b) and 2(c).
 
To complete the proof in Cases 1 and 2(a), we apply the operator $\prod_{\fq \mid \fP'_0} (U_\fq - \bpsi(\fq))$.  This operator sends 
\[ W_1(\bpsi, 1) \mapsto W_1(\bpsi_{\fP'_0}, 1), \qquad W_1(\overline{\bpsi}, 1) \mapsto W_1(\overline{\bpsi}_{\fP'_0}, 1), \] while it annihilates $W_k(\overline{\bpsi}, 1_p).$  This gives the result in Case 1.  In Case 2(a), we obtain a cusp form $g$ satisfying
 \[ g \equiv  \tilde{x}  \left(W_1( \bpsi_{\fP'_0},1) - \lambda W_1(\overline{\bpsi}_{\fP_0'},1)\right)
\pmod{\tilde{x}J_0}. \]
This congruence in particular implies that the Fourier coefficients of $g$ are divisible by $\tilde{x}$.  Since $\tilde{x}$ is a non-zerodivisor, we may divide by it and obtain a cusp form satsifying the same congruence as in Case 1.
\end{proof}

\subsection{Homomorphism from the Hecke algebra} \label{s:hha}

We now define certain Hecke algebras acting on $S_k(\fn\fP', A, \bpsi)$.  
We define $\T$ to be the $A$-subalgebra of $\End_A(S_k(\fn\fP', A, \bpsi))$ generated over $A$ by the following operators:
\begin{itemize}
\item $T_\fl$ for $\fl \nmid \fn\fP'$,
\item $S(\fa)$ for $(\fa, \fn\fP') = 1$, 
\item  $U_\fq$ for prime $\fq \in \Sigma \setminus S_\infty$ (i.e.\ for prime $\fq \mid \fP$, $\fq \neq \fp$),
\item $(U_\fp - 1)^2$, and
\item $(U_\fp - 1)t$ for $t \in I$. 
\end{itemize}

 Let $\tilde{\T}= \T[U_\fp]$.
Finally let $\T^\dagger = \tilde{\T}[U_\fq, \fq \mid \fP']$.\\

Due to the presence of the involution $\#$ in Proposition~\ref{p:fcusp}, we  consider an ``involuted" version of the ring $A_\sL$.  Define
\[
A_{\sL}^\# = A[\sL]/(\sL (\Theta_H)^\# - (\Theta_L)^\#, \sL I_A, \sL^2, I_A^2).
\]

At this point, we must momentarily abandon case 2(c), when $\fp$ is the only prime of $F$ above $p$.  In this case, the last bulleted point of the theorem below---which in some sense is the most important---does not hold.  Here $\fP' = 1$, and this last bulleted point then reads $\Ann_{A_\sL^\#}(W) \subset (p^m)$, which does not hold in case 2(c).  The algebra $W$ constructed in Theorem~\ref{t:heckehom} using the form $g$ is not large enough for our purposes.  In \S\ref{s:onep} we will therefore provide a different approach to proving the congruence (\ref{e:mgc}) that only applies when there is exactly one prime of $F$ above $p$.
 For the remainder of \S\ref{s:cusp} we assume that there is at least one prime above $p$ in $F$ other than $\fp$.

\begin{theorem}  \label{t:heckehom} Suppose we are in Cases 1, 2(a), or 2(b), i.e.\ there exists a prime of $F$ above $p$ other than $\fp$.  There exists an $A_{\sL}^\#$-algebra $W$ 
 and a surjective $A$-algebra homomorphism \[ \varphi \colon \T^\dagger \longrightarrow W \] such that:
\begin{itemize}
\item $T_\fl \mapsto \N\fl^{k-1} + \bpsi(\fl)$ for prime $\fl \nmid \fn\fP'$.  
\item $S(\fa) \mapsto \bpsi(\fa)$ for $(\fa, \fn\fP') = 1$.
\item $U_\fq \mapsto 1$ for prime $\fq \in \Sigma \setminus S_\infty$.
\item $U_\fp \mapsto 1 - \sL$.
\item $U_\fq \mapsto 1$ for prime $\fq \mid \fP_0'$.
\end{itemize}
For prime $\fq \mid \fP'$, write $\epsilon_\fq = \varphi(U_\fq - \bpsi(\fq)) \in W$.  Then:
\begin{itemize}
\item $\epsilon_\fq(\epsilon_\fq -1 + \bpsi(\fq)) = 0$.
\item $\Ann_{A_{\sL}^\#}\Big(\prod_{\fq \mid \fP'} \epsilon_\fq\Big) \subset (p^m)$.
\end{itemize}
\end{theorem}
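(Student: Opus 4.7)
The plan is to realise $\varphi$ and $W$ via the action of $\T^\dagger$ on the cusp form $g$ from Corollary~\ref{c:gdef}. Put $\Xi := \T^\dagger \cdot g \subset S_k(\fn\fP',\Lambda,\bpsi)^{p\text{-ord}}$ and form the $A_\sL^\#$-module $\overline{\Xi} := (\Xi/J\Xi) \otimes_A A_\sL^\#$ (with $J$ replaced by $J_0$ in Cases 1 and 2(a)). Since $p^m\Lambda \subset J$, the scalar $p^m$ acts as zero on $\overline{\Xi}$. I then define $W$ to be the $A_\sL^\#$-subalgebra of $\End_{A_\sL^\#}(\overline{\Xi})$ generated by the images of the generators of $\T^\dagger$, and $\varphi\colon \T^\dagger \longrightarrow W$ the induced $A$-algebra homomorphism; surjectivity is automatic.

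For each generator $T$ of $\T^\dagger$, I would identify $\varphi(T)$ by computing $Tg$ modulo $J\Xi$ through the Fourier-coefficient congruence of Corollary~\ref{c:gdef} and the standard weight-$k$ action of the Hecke operators on $q$-expansions. For $T_\fl$ with $\fl \nmid \fn\fP'$, comparison with the weight-$1$ Eisenstein eigenvalue $1+\bpsi(\fl)$ and $\N\fl^{k-1} \equiv 1 \pmod{p^m}$ (for $N$ large enough) yield $T_\fl g \equiv (\N\fl^{k-1}+\bpsi(\fl)) g$. The values $\varphi(U_\fq)=1$ for $\fq \in \Sigma\setminus S_\infty$ come from the $\fq$-depletion of $\bpsi$ built into $W_1(\bpsi_\fP, 1)$, and for $\fq \mid \fP'_0$ from the explicit projection $\prod_{\fq\mid\fP'_0}(U_\fq - \bpsi(\fq))$ applied in Corollary~\ref{c:gdef}. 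The quadratic relation $\epsilon_\fq(\epsilon_\fq - 1 + \bpsi(\fq))=0$ for $\fq\mid\fP'$ follows from the weight-$k$ Hecke identity $(U_\fq - \N\fq^{k-1})(U_\fq - \bpsi(\fq))=0$ at level $\fn\fP'$, combined with $\N\fq^{k-1}\equiv 1 \pmod{p^m}$ and $p^m=0$ in $W$.

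The decisive case is $\varphi(U_\fp) = 1 - \sL$. In Cases 2(b)--2(c), a direct $q$-expansion computation yields
\[
(U_\fp - 1)g \equiv (\N\fp^{k-1}-1)\,\lambda\, W_k(1,\overline\bpsi_p) \pmod{J},
\]
and the normalized constant terms of $\lambda W_k(1,\overline\bpsi_p)$ modulo $J$ interpolate, up to a unit determined by the normaliser $\tilde x$ of (\ref{e:tx}), the element $\lambda \Theta_H^\#$ via Proposition~\ref{p:wct2} and the definition (\ref{e:xdef}) of $x$. Combined with the congruence $\Theta_L^\# \equiv \lambda \Theta_H^\# \pmod{J}$ and the defining relation $\sL \Theta_H^\# = \Theta_L^\#$ of $A_\sL^\#$, this identifies $(U_\fp - 1)g$ with $-\sL \cdot g$ in $\overline{\Xi}$. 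Cases 1 and 2(a) are handled in parallel using the congruence modulo $J_0$ and the distinct $U_\fp$-behaviours of the two Eisenstein pieces $W_1(\bpsi_{\fP'_0}, 1)$ (fixed by its built-in $\fp$-depletion) and $W_1(\overline\bpsi_{\fP'_0}, 1)$ (which contributes an Euler-factor correction at $\fp$ through its $p$-ordinary stabilisation).

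The main obstacle is the annihilator bound $\Ann_{A_\sL^\#}\bigl(\prod_{\fq\mid\fP'}\epsilon_\fq\bigr) \subset (p^m)$. I would attack it by analysing $\eta := \prod_{\fq\mid\fP'}(U_\fq - 1)\, g$: modulo $J\Xi$ it equals, up to the unit scalar $\prod_{\fq\mid\fP'_1}(\bpsi(\fq)-1)$ times appropriate Euler factors for $\fq \mid \fP'_0$, the analogous combination of Eisenstein series with $\fP'$ absorbed into the conductor. A suitable normalised constant term of $\eta$ at a cusp in $C_\infty(\fn\fP')$ is then, via Propositions~\ref{p:wconstantterms}, \ref{p:wct1b}, and~\ref{p:wct2}, a unit multiple of $\Theta_L^\# - \lambda\Theta_H^\#$. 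An annihilator $a \in A_\sL^\#$ of $\prod\epsilon_\fq$ in $W$ must kill this constant term in $\overline{\Xi}$; unwinding the identifications, this forces $a\Theta_H^\# \in p^m A_\sL^\#$, and the injectivity of multiplication by $\Theta_H^\#$ on $A_\sL^\#/p^m$---a structural property following from Theorem~\ref{t:ws}, the definition (\ref{e:rldef}) of $R_\sL$, and the non-vanishing of $\Theta_H^\#$ at every character of $A$---then forces $a \in (p^m)$. Making this Fourier-coefficient computation precise and verifying the final injectivity step is the most delicate part of the argument.
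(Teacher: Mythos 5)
Your overall framework---defining $W$ via the action of $\T^\dagger$ on the $q$-expansion coefficients of $g$ modulo $J$ (or $J_0$), then reading off the Hecke eigenvalues from the Eisenstein congruence of Corollary~\ref{c:gdef}---is the same as the paper's, and your computations for $T_\fl$, $S(\fa)$, $U_\fq$ ($\fq \in \Sigma \setminus S_\infty$) and $U_\fq$ ($\fq \mid \fP_0'$), as well as the quadratic relation for $\epsilon_\fq$, are handled correctly. But there are two concrete errors in the harder parts.

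First, your computation of $(U_\fp - 1)g$ in Case 2(b) is wrong. You wrote $(U_\fp - 1)g \equiv (\N\fp^{k-1}-1)\lambda W_k(1,\overline\bpsi_p) \pmod{J}$ and tried to extract $\lambda\Theta_H^\#$ from the constant terms of $W_k$. In fact $W_k(\overline\bpsi, 1_p)$ is $p$-stabilized, so $U_\fp$ acts on it by the unit root $\overline\bpsi(\fp)=1$ and $(U_\fp-1)$ annihilates it; meanwhile $U_\fp$ already fixes $W_1(\bpsi,1)$. The only nontrivial contribution comes from the piece you suppressed: $(U_\fp-1)$ sends $W_1(\overline\bpsi,1)$ to its $\fp$-stabilization $W_1(\overline\bpsi_\fp,1)$, giving $(U_\fp-1)g \equiv -\lambda\tilde x\, W_1(\overline\bpsi_\fp,1)\pmod J$. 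The identity $\varphi(U_\fp)=1-\sL$ then comes from comparing $\Theta_H^\#(U_\fp-1)g$ with $\Theta_L^\#g$ using the relation $\Theta_L^\#-\Theta_H^\#\lambda\in J_0$, not from the $W_k$ piece.

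Second, your strategy for the annihilator bound cannot work as stated. You propose to look at ``a suitable normalised constant term of $\eta$ at a cusp in $C_\infty(\fn\fP')$'', but $g$---and hence $\eta=\prod_{\fq\mid\fP'}(U_\fq-1)g$---is a cusp form: its constant terms vanish identically, and the module on which $W$ acts is built only out of the nonzero Fourier coefficients $c(\fa,g)$, $\fa\neq 0$. There is no constant-term information available in $W$, so one cannot recover $\Theta_L^\#-\lambda\Theta_H^\#$ that way. The paper instead evaluates the nonconstant coefficients $c(1,\cdot)$ and $c(\fp,\cdot)$ of $(a+b\sL)\prod_{\fq\mid\fP'}(U_\fq-\bpsi(\fq))g$: these give the two linear congruences $X(a-\lambda a+\lambda b)\equiv 0$ and $X(a-2\lambda a+\lambda b)\equiv 0 \pmod K$, from which $a+\lambda b\in J_0$, and hence $a+b\sL\in(p^m)$ in $A_\sL^\#$ directly by unwinding the definition of $J_0$. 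This also sidesteps the injectivity of multiplication by $\Theta_H^\#$ on $A_\sL^\#/p^m$ that your outline depends on, a step which is not obviously available (in Case 1, $\Theta_H^\#$ can be a zero divisor because of trivial zeros at characters unramified at some $v\in\Sigma\setminus S_\infty$).
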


\begin{proof} We recall from (\ref{e:jdef}) the definition
\[
J_0 = \left(I_A^2, \Theta_L^\# - \Theta_H^\# \lambda, \  p^m\Lambda\right). 
\]
To streamline the notation for all cases, write
\[ X = \begin{cases} 1 & \text{Cases 1/2(a)} \\
\tilde{x} & \text{Case 2(b),}
\end{cases} \qquad K = XJ_0, \]
and recall $\fP_0' = 1$ in Case 2(b).
Define
\[ 
\cC = \prod_{\fa \subset O_F} \Lambda/{K}, 
\] with the product indexed by the nonzero ideals $\fa \subset \cO_F$. There is an $A$-module map 
\[ 
c: S_k(\fn\fP', \Lambda, \bpsi) \longrightarrow \cC 
\] 
that associates to each cusp form $h$ its collection of normalized $q$-expansion coefficients $c(\fa, h)$ reduced modulo ${K}$. Note that if we let the operators $T_\fl$, $U_\fl$ act by the usual formulae on Fourier coefficients (see~\cite{dk}*{eqn. (97)}) and we let $S(\fa)$ act by $\bpsi(\fa)$, then the map $c$ is equivariant for these operators.

Let $\cF$ denote the image of the $\T^{\dagger}$-span of the cusp form $g$ defined in Corollary~\ref{c:gdef} under the map $c$. This is an $A$-module of finite type. Define $W$ be the image of the canonical $A$-algebra homomorphism 
\[ 
\T^{\dagger} \longrightarrow \End_{A}(\cF). 
\] 
This construction yields a canonical surjective $A$-algebra map 
\[ 
\varphi: \T^{\dagger} \longrightarrow W 
\] 
that sends a Hecke operator to its action on the Hecke span of $g$ under the map $c$.  

We will show in a moment that $W$ has the structure of an $A_{\sL}^\#$-algebra.  First we calculate the action of Hecke operators on $g$ using the congruence of Corollary~\ref{c:gdef}:
\begin{equation} \label{e:gcong}
 g \equiv  \begin{cases}
  W_1( \bpsi_{\fP_0'},1) - \lambda W_1(\overline{\bpsi}_{\fP_0'},1) & \text{Cases 1/2(a)} \\ 
 X \left(W_1( \bpsi,1) - \lambda W_1(\overline{\bpsi},1)\right) + \lambda W_k(\overline{\bpsi}, 1_p)
 & \text{Case 2(b)}
\end{cases} \pmod{K}.
\end{equation}

The fact that $\varphi(S(\fa)) = \bpsi(\fa)$ is clear since all our forms have nebentypus $\bpsi$.  
The operator $T_{\fl}$ acts by multiplication by $1 + \bpsi(\fl)$ on $W_1(\bpsi, 1)$ and $W_1(\overline{\bpsi}, 1)$, and it acts by multiplication by $\N\fl^{k-1} + \bpsi(\fl)$ on $W_{k-1}(\bpsi, 1_p)$.  Since $\N\fl^{k-1} \equiv 1 \pmod{p^m}$ and $Xp^m \in K$, we have
\[ T_\fl(g) \equiv (\N\fl^{k-1} + \bpsi(\fl)) g \pmod{K}. \]

 Similarly $\varphi(U_\fq) = 1$ for prime $\fq \in \Sigma \setminus S_\infty$, since all the forms $W_1(\bpsi, 1)$, $W_1(\overline{\bpsi}, 1), W_k(\overline{\bpsi}, 1_p)$ have $U_\fq$-eigenvalue 1.

The most interesting action is that of $U_\fp$.  This operator fixes $W_1(\bpsi, 1)$, since $\fp \mid \fP$.  It also acts as $\overline{\bpsi}(\fp) = 1$ on $W_k(\overline{\bpsi}, 1_p)$ because of the $p$-stabilization.  Yet $U_\fp$ does not act as a scalar on $W_1(\overline{\bpsi}, 1)$, as it is not stabilized.  Instead $U_\fp - 1$ sends $W_1(\overline{\bpsi}, 1)$ to its $\fp$-stabilization $W_1(\overline{\bpsi}, 1_\fp) = W_1(\overline{\bpsi}_\fp, 1)$, where the equality follows since $\overline{\bpsi}(\fp) = 1$ and we are in weight 1.  The conclusion of these considerations is that 
\begin{equation} \label{e:cc}
 (U_\fp - 1)g \equiv  
 - \lambda X W_1(\overline{\bpsi}_{\fp\fP_0'},1) 
 \pmod{K}. \end{equation}

We can now give $W$ the structure of an $A_\sL^\#$-algebra by letting $\sL$ act by $\varphi(1 - U_\fp)$.  To prove that this is well-defined, we must show that the relations in $A_{\sL}^\#$ are satisfied in $W$, namely $\sL(\Theta^H)^\#- (\Theta^L)^\# = \sL I_A = \sL^2 = I_A^2 = 0$.  The most interesting of these is the first.  From (\ref{e:cc}), we find the following congruences mod 
$K$:
\begin{align*}
 (\Theta^H)^\# (1 - U_\fp)g & \equiv  (\Theta^H)^\# X \lambda W_1(\overline{\bpsi}_{\fp\fP_0'},1)   \\ 
 & \equiv  (\Theta^L)^\# XW_1(\overline{\bpsi}_{\fp  \fP_0'},1)  \\
 & \equiv  (\Theta^L)^\# g.
 \end{align*}
The second congruence follows from the definition of $K$ and the third from (\ref{e:gcong}), in view of the fact that $(\Theta_L)^\# \in I_A$ 
annihilates $\lambda$ while $W_1(\bpsi_{\fP_0'}, 1) \equiv W_1(\overline{\bpsi}_{\fp\fP_0'}, 1) \pmod{I_A}$ and 
$X (\Theta^L)^\# I_A \in X I_A^2 \subset K$.  This proves that the first relation holds, while the others are similar but easier. 

 Next we study the action of $U_\fq - \bpsi(\fq)$ for $\fq \mid \fP'$.  This operator sends $W_1(\bpsi, 1)$ to $W_1(\bpsi_\fq, 1)$, sends $W_1(\overline{\bpsi}, 1)$ to $W_1(\overline{\bpsi}_\fq, 1)$, and annihilates
$W_k(\bpsi, 1_p)$.  For $\fq \mid \fP_1'$, we therefore have
\begin{equation}
\label{e:uq}
 (U_\fq - \bpsi(\fq))g \equiv X\left(W_1(\bpsi_{\fq \fP_0'}, 1) - \lambda W_1(\overline{\bpsi}_{\fq \fP_0'}, 1)\right)
\pmod{K}. \end{equation}
Note that $U_\fq$ acts as 1 on the form on the right.  This yields the equation 
\begin{equation} \label{e:eqeq}
\epsilon_\fq(\epsilon_\fq -1 + \bpsi(\fq)) = 0,
\end{equation}
where $\epsilon_\fq = \varphi(U_\fq - \bpsi(\fq))$.  Meanwhile if $\fq \mid \fP_0'$, then in particular we are in case 1/2(a) and $U_\fq$ acts as 1 on $g \pmod{K}$ by (\ref{e:gcong}).  For such $\fq$ we have 
\begin{equation} \label{e:eqeq2}
\epsilon_\fq = 1 - \bpsi(\fq)
\end{equation} and (\ref{e:eqeq}) holds trivially.

To prove the last item note that by (\ref{e:uq}) and (\ref{e:eqeq2}) we have
\begin{equation} \label{e:alleqs}
\prod_{\fq \mid \fP'} (U_{\fq} - \bpsi(\fq)) g \equiv  X \left( W_1(\bpsi_{\fP'}, 1) - \lambda W_1(\overline{\bpsi}_{\fP'},1) \right) \prod_{\fq \mid \fP_0'} (1 - \bpsi(\fq)) \pmod{K}.
\end{equation}
In case 2(c), this congruence does not hold as there is a contribution from the term $W_k(\overline{\bpsi}, 1_p)$.  This term does not appear in (\ref{e:gcong}) in cases 1/2(a), and is annihilated by the single application of an $\epsilon_\fq$ for $\fq \mid \fP'_1$ in case 2(b).  This is why case 2(c) must be removed from the present analysis.

Returning to (\ref{e:alleqs}), suppose this form is annihilated by an element $a + b \sL \in A_{\sL}^{\#}$, with $a, b \in A$. By definition, such an element acts by $a + b (1- U_{\fp})$ and hence, noting that $\prod_{\fq \mid \fP_0'} (1 - \bpsi(\fq))$ is a unit, we obtain
\[
a X \left( W_1(\bpsi_{\fP'},1) - \lambda W(\overline{\bpsi}_{\fP'}, 1)\right) + b X \lambda W_1(\overline{\bpsi}_{\fP'}, 1_\fp) \equiv 0 \pmod{K}.
\]

Analyzing the $q$-expansion coefficients $c(1, *)$ and $c(\fp, *)$ of this congruence, respectively, we obtain 
\begin{align*}
X(a - \lambda a + \lambda b)  & \equiv 0 \pmod{K} \\
X(a - 2 \lambda a + \lambda b) & \equiv 0 \pmod{K}.
\end{align*}
From these, we deduce $X(a+\lambda b) \in K$, and since $X$ is a non-zerodivisor it follows that $a + \lambda b \in J_0 = ( \Theta_H^{\#} \lambda - \Theta_L^{\#}, I_A^2 \Lambda, p^m \Lambda)$. It follows that in $A_{\sL}^{\#}$, we have
\[
a+ b \sL \in (\Theta_H^{\#} \sL - \Theta_L^{\#}, I_A^2, p^m) = (p^m)
\]
as desired.
\end{proof}

\subsection{Galois Representation}  \label{s:galoisrep}

In this section we recall the formalism of \cite{dk}*{\S9.1--9.2} regarding the Galois representation associated to the Hecke algebra $\T^\dagger$.
As explained in \cite{dk}*{\S8.5} the Hecke algebra ${\T}^{\dagger}$ is reduced. 
The kernel of $\varphi$ is contained in a unique maximal ideal $\fm \subset \T^\dagger$.  This maximal ideal is generated by the maximal ideal $(\fm_{\cO}, I)$ of $A$ together with the elements $T_\fl - (1+ \chi(\fl))$ for $\fl \nmid \fn\fP'$, $S(\fa) - \chi(\fa)$ for $(\fa, \fn\fP') = 1$, $U_\fq - 1$ for all $\fq \mid p$.

Let $\T_{\fm}^\dagger$ denote the completion of $\T^\dagger$ with respect to $\fm$ (and similarly $\T_{\fm}$, $\tilde{\T}_\fm$  the completions of $\T$ and $\tilde{\T}$ with respect to their maximal ideals $\fm \cap \T$, $\fm \cap \tilde{\T}$, respectively).  Set
\[
K = \Frac(\T_{\fm}^{\dagger}).
\]
As in \cite{dk}*{\S9.2}, there is a Galois representation
\[
\rho: G_F \longrightarrow \GL_2(K)
\]
such that 
\begin{itemize}
\item[(1)] $\rho$ is unramified outside $\fn p$. 
\item[(2)] For all primes $\fl \nmid \fn p$, the characteristic polynomial of $\rho(\Frob_{\fl})$ is given by 
\begin{equation} \label{e:chartl}
{\rm char}(\rho(\Frob_{\fl}))(x) = x^2 - T_{\fl} x + \bpsi(\fl) \N\fl^{k-1}.
\end{equation}
\item[(3)] For  $\fq \mid p$, let $G_{F,\fq} \subset G_F$ denote a decomposition group at $\fq$.  We have
\begin{equation} \label{e:repord}
\rho|_{G_{F,\fq}} \sim \begin{pmatrix} \bpsi \varepsilon_{\cyc}^{k-1}\eta_\fq^{-1} & * \\ 0 & \eta_{\fq} \end{pmatrix},
\end{equation}
where $\eta_{\fp}\colon G_{F,\fq} \longrightarrow (\T_\fm^\dagger)^*$ is the unramified character given by $\eta_{\fp}(\rec(\varpi^{-1})) = U_{\fp}$.  Here $\varpi$ denotes a uniformizer of $F_\fq^*$ and \[ \rec\colon F_\fq^* \longrightarrow G_{F,\fq}^{\ab} \] is the local Artin reciprocity map.  
\end{itemize}

For each $\fq \mid p$, let $V_{\fq}$ be the eigenspace of $\rho|_{G_{\fq}}$ i.e. the span of the vector $\binom{1}{0}$ in the basis for which (\ref{e:repord}) holds. 
We choose an element $\tau \in G_F$ as in \cite{dk}*{Proposition 9.3} so that its restriction to $\fg$ is complex conjugation and so that for all $\fq \mid p$, the subspace $V_{\fq}$ projected to each factor of $K$ is not stable under $\rho(\tau)$. As in \emph{loc.\,cit.,} fix a basis such that 
\begin{equation} \label{e:tauchoice}
\rho(\tau) = 
\begin{pmatrix} \lambda_1 & 0 \\ 0 & \lambda_2\end{pmatrix}
\end{equation}
where $\lambda_1 \equiv 1 \pmod{\fm}$ and $\lambda_2 \equiv -1 \pmod{\fm}$. For a general $\sigma$, we write 
\[
\rho(\sigma) = \begin{pmatrix} a(\sigma) & b(\sigma) \\ c(\sigma) & d(\sigma) \end{pmatrix}.
\]
For each $\fq \mid p$, there is a change of basis matrix $M_\fq = \begin{pmatrix} A_\fq & B_\fq \\ C_\fq & D_\fq \end{pmatrix} \in \GL_2(K)$ such that 
\begin{equation} \label{e:matrixeqn}
\begin{pmatrix} a(\sigma) & b(\sigma) \\ c(\sigma) & d(\sigma)\end{pmatrix} M_{\fq} = M_{\fq} \begin{pmatrix} \bpsi \varepsilon_{\cyc}^{k-1} \eta_{\fq}^{-1} & * \\ 0 & \eta_{\fq} \end{pmatrix}.
\end{equation}
The choice of $\tau$ ensures that $A_\fq$ and $C_\fq$ are invertible in $K$. Furthermore, equating the upper left hand entries in (\ref{e:matrixeqn}) yields: 
\begin{equation} \label{e:bequation}
b(\sigma) = x_\fq (a(\sigma) -  \bpsi \varepsilon_{\cyc}^{k-1} \eta_{\fq}^{-1}(\sigma))  \text{ for all }\sigma \in G_{F,\fq}, \text{ where } 
x_\fq = -\frac{A_\fq}{C_\fq}.
\end{equation}

\subsection{Cohomology Class} \label{s:cc}

Let $\varphi_{\fm} \colon \T_\fm^\dagger \longrightarrow W$ denote the extension of the homomorphism $\varphi$ to the completion of $\T^\dagger$.
Let \[ \I^\dagger = \ker(\varphi_{\fm}), \quad \tilde{\I} = \ker({\varphi}_{\fm}|_{\tilde{\T}_{\fm}}), \quad \I = \ker(\varphi_{\fm}|_{\T_\fm}). \]
 As in \cite{dk}*{\S9.3}, the choice of basis for $\rho$ implies that $a(\sigma), d(\sigma) \in \T$ with 
\begin{equation} \label{e:adcong}
a(\sigma) \equiv \varepsilon_{\cyc}^{k-1}(\sigma) \pmod{\I}, \qquad d(\sigma) \equiv \bpsi(\sigma) \pmod{\I}.
\end{equation}
Furthermore we have 
\begin{equation} \label{e:detcong}
\det \rho(\sigma) \equiv \varepsilon_{\cyc}^{k-1}\bpsi(\sigma).
\end{equation}

Recall that  $K=\Frac(\T_{\fm}^\dagger)$.
Recall the elements $x_\fq \in K$ for $\fq \mid p$ defined in (\ref{e:bequation}), including the distinguished prime $\fp$.  
Define $x_{\fp}' = x_{\fp}(U_\fp - 1)$.  Let:
\begin{align} 
\tilde{\B} & = \tilde{\T}_{\fm} \text{-submodule of } K \text{ generated by } b(\sigma)  \text{ for all } \sigma \in G_F \text{ along with } \nonumber \\
& \ \ \ \ \  x_{\fp}, x_{\fp}', \text{ and } x_\fq \text{ for all } \fq \in \Sigma. \nonumber \\
\tilde{\B}' & = \tilde{\T}_{\fm} \text{-submodule of } \tilde{\B} \text{ generated by } \tilde{\I} \tilde{\B}, p^m \tilde{\B}, \text{ and}  \nonumber \\
& \ \ \ \ \  b(\sigma) \text { for all } \sigma \in I_{\fq}, \fq \mid \fP'. \nonumber \\
\tilde{B} & = \tilde{\B}/\tilde{\B}'. \label{e:tbdef}
 \end{align}
 By construction, $\tilde{B}$ naturally has the structure of an  $A_{\sL}^\#$-algebra in which $\sL$ acts by multiplication by $(1 - U_\fp)$.
 
Denote by $\overline{b}(\sigma)$ the image of $b(\sigma)$ in $\tilde{B}$.
Since $\rho$ is a Galois representation, we have
\[ b(\sigma \sigma') = a(\sigma) b(\sigma') + b(\sigma)d(\sigma') \qquad \text{ for all } \sigma, \sigma' \in G_F. \]
The congruences (\ref{e:adcong}) therefore imply that the function 
\[ \kappa(\sigma) =  \overline{b}(\sigma)\bpsi^{-1}(\sigma) \]
is a 1-cocycle defining a cohomology class $[\kappa] \in H^1(G_F, \tilde{B}(\bpsi^{-1}))$.  Note here that since $p^m \tilde{\B} = 0$ in $\tilde{B}$ and $\varepsilon_{\cyc}^{k-1} \equiv 1 \pmod{p^m}$, the character $\varepsilon_{\cyc}^{k-1}$ acts trivially on $\tilde{B}$.
Let: \begin{align*}
{B}_0 &=  \T_\fm\text{-submodule of } \tilde{B} \text{ generated by the image of } b(\sigma) \text{ for all } \sigma \in G_F.  \\
B & =  \T_\fm\text{-submodule of } \tilde{B} \text{ generated by } B_0 \text{ along with }  \\
& \ \ \ \ \  x_{\fp}, x_{\fp}', \text{ and } x_\fq \text{ for all } \fq \in \Sigma. 
\end{align*}
Note that $\tilde{\I}\tilde{\B} = 0$ in $\tilde{B}$ and every element of $\T_\fm$ is equivalent to an element of $A$ modulo $\I$ (see the first three bullet points of Theorem~\ref{t:heckehom}). Therefore, in the definition of $B_0$ and $B$, it is equivalent to replace $\T_\fm$ by $A$.

The $A$-module structure of $\tilde{B}(\bpsi^{-1})$ is by definition the composition of the involution $\#$ with the natural $A$-module structure of $\tilde{B}$.  The canonical $A_{\sL}^\#$-module structure of $\tilde{B}$ can therefore be viewed as an $A_{\sL}$-module structure on $\tilde{B}(\bpsi^{-1})$.

We now verify that our construction verifies all the properties required to apply Theorems~\ref{t:nc} and~\ref{t:nl}.
First we describe the class $\kappa$ locally at primes in $\Sigma_\fp \setminus S_\infty$ using (\ref{e:bequation}).

\begin{lemma} \label{l:kappaloc} For $\fq \in \Sigma \setminus S_\infty$, we have
\[ \kappa(\sigma) = x_{\fq}( \bpsi^{-1}(\sigma) -1 ), \qquad \sigma \in G_{F, \fq}. \] 
Meanwhile if $x_{\fp}' = -\sL x_{\fp}$ then
\[ \kappa(\sigma) = x_{\fp}(\bpsi^{-1}(\sigma)-1) + \ord_{\fp}(\sigma)x_{\fp}', \qquad \sigma \in G_{F, \fp},\]
where $\ord_\fp$ is as in (\ref{e:ordp}). 
\end{lemma}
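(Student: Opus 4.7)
The plan is to substitute the explicit formula \eqref{e:bequation} for $b(\sigma)$ and simplify in $\tilde{B}$ using the defining relations of the $A_{\sL}^\#$-algebra structure together with the congruences \eqref{e:adcong}. Throughout, recall that in $\tilde B$ we have $\tilde{\I}\tilde{\B}=0$ and $p^m\tilde{\B}=0$, so that $a(\sigma)$ acts as $1$ (by $a(\sigma)\equiv\varepsilon_{\cyc}^{k-1}(\sigma)\equiv 1\bmod(\I,p^m)$), and the cyclotomic character $\varepsilon_{\cyc}^{k-1}$ acts trivially (since $k\equiv 1\pmod{(p-1)p^{N}}$ with $N\geq m$).

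First I would handle the case $\fq\in\Sigma\setminus S_\infty$, i.e.\ a prime above $p$ distinct from $\fp$. Here $\varphi(U_\fq)=1$ by Theorem~\ref{t:heckehom}, so the unramified character $\eta_\fq$, which is determined by $\eta_\fq(\rec(\varpi^{-1}))=U_\fq$, satisfies $\eta_\fq(\sigma)-1\in\tilde\I$ for every $\sigma\in G_{F,\fq}$. Multiplying \eqref{e:bequation} through, the factor $\bpsi(\sigma)\varepsilon_{\cyc}^{k-1}(\sigma)\eta_\fq^{-1}(\sigma)\cdot x_\fq$ reduces in $\tilde{B}$ to $\bpsi(\sigma)x_\fq$, and $x_\fq a(\sigma)$ reduces to $x_\fq$. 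Thus $\overline{b}(\sigma)=(1-\bpsi(\sigma))x_\fq$, and twisting by $\bpsi^{-1}(\sigma)$ gives the first claimed formula.

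The more delicate case is $\fq=\fp$, where the character $\eta_\fp$ is no longer congruent to the identity. Here the key substitution is that $\varphi(U_\fp)=1-\sL$, and since the relations $\sL^2=0$ and $\sL\cdot I_A=0$ hold in $\tilde B$, the action of $\eta_\fp^{-1}(\sigma)$ on $x_\fp$ simplifies dramatically: $\eta_\fp^{-1}(\sigma)$ is a power of $U_\fp$ determined by the Frobenius content of $\sigma$, which by the definition \eqref{e:ordp} is controlled by $\ord_\fp(\sigma)$, so that $\eta_\fp^{-1}(\sigma)\cdot x_\fp$ becomes $(1+\ord_\fp(\sigma)\sL)x_\fp$ in $\tilde B$ (up to the sign dictated by the reciprocity convention \eqref{e:ordp}). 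Substituting into \eqref{e:bequation} yields
\[
\overline{b}(\sigma)\;=\;(1-\bpsi(\sigma))x_\fp\;+\;\chi(\sigma)\ord_\fp(\sigma)\bigl(\sL x_\fp\bigr),
\]
after using $\bpsi(\sigma)\sL=\chi(\sigma)\sL$ (which holds because $\bpsi(\sigma)-\chi(\sigma)\in I_A$ and $\sL I_A=0$). The identification $x_\fp'=(U_\fp-1)x_\fp=-\sL x_\fp$ in $\tilde B$ converts the correction term into a multiple of $x_\fp'$. Finally, twisting by $\bpsi^{-1}(\sigma)$ and using once more that $\bpsi^{-1}(\sigma)-\chi^{-1}(\sigma)\in I_A$ annihilates $x_\fp'$, the scalar $\chi(\sigma)$ cancels against $\chi^{-1}(\sigma)=\bpsi^{-1}(\sigma)|_{x_\fp'}$, producing $\ord_\fp(\sigma)\, x_\fp'$ as desired.

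The main technical obstacle is bookkeeping: one must track which quantities in $\tilde\B$ reduce modulo $\tilde\I\tilde\B$ versus those that survive to give genuine contributions in $\tilde B$, and in particular one must correctly exploit the interplay between the multiplication by a scalar from $A$ and the $A_{\sL}^\#$-structure, using $\sL I_A=0$ at the right moments to replace $\bpsi(\sigma)$ by $\chi(\sigma)$ whenever it multiplies $\sL$ or $x_\fp'$. Getting the sign right in the Frobenius convention for $\ord_\fp$ (so that $\eta_\fp(\sigma)=U_\fp^{\ord_\fp(\sigma)}$ with the sign matching \eqref{e:ordp}) is the only place where arithmetic care is truly required; once that is pinned down, the computation is a mechanical expansion of $(1-\sL)^{\ord_\fp(\sigma)}=1-\ord_\fp(\sigma)\sL$ afforded by the nilpotence of $\sL$.
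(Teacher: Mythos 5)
Your proposal is correct and follows essentially the same route as the paper: substitute the local relation \eqref{e:bequation}, use $a(\sigma)\equiv\varepsilon_{\cyc}^{k-1}(\sigma)\equiv 1\pmod{(\I,p^m)}$ and $\eta_\fq\equiv 1$ for $\fq\in\Sigma\setminus S_\infty$, and for $\fq=\fp$ linearize the power of $U_\fp$ via $(U_\fp-1)^2\in\I$ (equivalently $\sL^2=0$), identifying $x_\fp'=(U_\fp-1)x_\fp=-\sL x_\fp$. The only cosmetic difference is your detour through $\bpsi(\sigma)\sL=\chi(\sigma)\sL$; in the paper's computation the $\bpsi(\sigma)$ and $\bpsi^{-1}(\sigma)$ factors on the $\eta_\fp^{-1}$ term simply cancel, so that step is unnecessary (though harmless).
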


\begin{proof}
Note  that \begin{equation} \label{e:aeq}
a(\sigma) \equiv \varepsilon_{\cyc}^{k-1}(\sigma) \equiv 1 \pmod{(\I, p^m)}. \end{equation}
If $\fq \in \Sigma \setminus S_\infty,$ then $U_\fq \equiv 1 \pmod{\I}$, and hence by the definition of  $\eta_\fq$ following (\ref{e:repord}), 
$\eta_\fq$ acts trivially on $\tilde{B}(\bpsi^{-1})$.  Therefore (\ref{e:bequation}) becomes
\[ \kappa(\sigma) = \overline{b}(\sigma) \bpsi^{-1}(\sigma)= x_{\fq}( \bpsi^{-1}(\sigma) -1 ), \qquad \sigma \in G_{F, \fq}. \] 

For $\fq = \fp$, this applies except that $U_\fp \not\equiv 1 \pmod{\I}$.
Since $(U_\fp-1)^2 \in \I$, we have \[ U_\fp^n \equiv 1 + n(U_\fp - 1) \pmod{\I}. \]  It follows that
\[ \kappa(\sigma) = x_{\fp}(\bpsi^{-1}(\sigma)-1) + \ord_{\fp}(\sigma)x_{\fp}', \qquad \sigma \in G_{F, \fp},\]
where \[ x_{\fp}' = ( U_\fp-1) x_{\fp} = -\sL x_{\fp}.  \qedhere \] 
\end{proof}

\begin{theorem}  \label{t:properties}
The $G$-module $\tilde{B}(\bpsi^{-1})$ and the cohomology class $[\kappa] \in H^1(G_F, \tilde{B}(\bpsi^{-1}))$ satisfy the following properties.
\begin{itemize}
\item The class $[\kappa]$ is unramified outside $\Sigma'$, locally trivial at  $\Sigma$, and tamely ramified at $\Sigma'$.
\item The image of the restriction 
\[ [\kappa]_{|G_L} \in H^1(G_L, \tilde{B}) = \Hom_{\cont}(G_L, \tilde{B}) \]
 is equal to ${B}_0$.
\item The quotient 
${B}/{B}_0$ is generated over $A$ by $x_\fq$ for $\fq \in \Sigma$ and the elements $x_{\fp}, x_{\fp}'$.  The element $x_{\fp}'$ is fixed by the action of $G_{F,\fp}$.
\item The 1-cocycle $\kappa$ satisfies the following.
\begin{itemize}
\item For $\fq \in \Sigma$ finite and $\sigma \in G_{F, \fq}$, we have  
$\kappa(\sigma) = (\bpsi^{-1}(\sigma)-1)x_\fq.$
\item For $\sigma \in \WD(\overline{F}_\fp/F_\fp)$, we have \begin{equation} \label{e:ksp}
 \kappa(\sigma) =     (\bpsi^{-1}(\sigma)-1)    x_{\fp} + \ord_\fp(\sigma) x_{\fp}'. \end{equation}
 \item  Let $\tau$ be the special element used in \S\ref{s:galoisrep} to fix the chosen basis of $\rho$.  For all $\sigma \in G_F$, we have
\begin{equation} \label{e:nicecocycleverify}
\kappa(\sigma)=  [\kappa]|_{G_L}({\sigma}\tau {\sigma}^{-1} \tau^{-1})/2 \in B_0.
\end{equation}
\end{itemize}
\item With respect to the $A_{\sL}$-structure on $\tilde{B}(\bpsi^{-1})$, we have  \[ x_{\fp}' + \sL x_\fp = 0. \]
\end{itemize}
\end{theorem}

\begin{proof}
The Galois representation $\rho$, and hence the cohomology class $[\kappa]$, is unramified outside $\Sigma \cup \Sigma'$ and the primes dividing $p$.  There are three types of primes above $p$: those in $\Sigma$, the distinguished prime $\fp$, and the primes dividing $\fP'$.  

Since in the definition of $\tilde{B}$ we have taken the quotient by the $\tilde{\T}_{\fm}$-module spanned by $b(\sigma)$  for $\sigma \in I_\fq, \fq \mid \fP'$, it follows that $[\kappa]$ is unramified at the primes dividing $\fP'$. 
 For $\fq \in \Sigma \setminus S_\infty$, Lemma~\ref{l:kappaloc}
  expresses $\kappa_{| G_{F, \fq}}$ as a coboundary and demonstrates that $[\kappa]$ is locally trivial at $\fq \in \Sigma$.  At $\fp$, note that since $ \ord_{\fp}(\sigma) = 0 $  for $\sigma \in I_{\fp}$, Lemma~\ref{l:kappaloc} shows that $[\kappa]$ is unramified at $\fp$.  To conclude the proof of the first item, note that $\Sigma'$ contains no primes above $p$, and all our modules are pro-$p$.  Therefore $[\kappa]$ is tamely ramified at all primes in $\Sigma'$.

For the second bullet point, let $B_L \subset B_0$ denote the image of $[\kappa]_{| G_L}$.  Then of course the image of $[\kappa]_{| G_L}$ in $H^1(G_L, (B_0/B_L)(\bpsi^{-1}))$ vanishes, and hence by \cite{dk}*{Lemma 6.3}, the image of $[\kappa]$ in $H^1(G_F, (B_0/B_L)(\bpsi^{-1}))$ vanishes.  Writing $\overline{\kappa}$ for the image of $\kappa$ in $B_0/B_L$, we may then write \[ \overline{\kappa}(\sigma) = z(\bpsi^{-1}(\sigma) - 1) \] for some $z \in B_0/B_L$.  Yet by construction $\kappa(\tau) = 0$ and $\bpsi^{-1}(\tau) = -1$. We therefore obtain $z=0$.  Hence $\overline{\kappa} = 0$ as a cocycle.  But by definition of $\kappa$ and $B_0$, the image of the cocycle $\kappa$ generates $B_0$ and hence the image of $\overline{\kappa}$ generates $B_0/B_L$.  It follows that $B_0/B_L = 0$, i.e.\ $B_0 = B_L$ as desired.

 Next we show that $\kappa$ satisfies equation (\ref{e:nicecocycleverify}). As $B_0$ is pro-$p$ with $p$ odd, it is enough to show that 
\[
2 \bpsi^{-1}(\sigma) \overline{b}(\sigma) = \overline{b}(\sigma \tau \sigma^{-1} \tau^{-1}),
\]
where $\tau$ is  as in  (\ref{e:tauchoice}). The upper right entry (``$b$-entry'') of $\rho(\sigma \tau \sigma^{-1} \tau^{-1})$ is 
\begin{equation} \label{e:bexp}
b(\sigma \tau \sigma^{-1} \tau^{-1}) = \det(\rho(\sigma))^{-1} a(\sigma) b(\sigma) \left(1 + \frac{\lambda_2}{\lambda_1} \right).
\end{equation}
 The congruence (\ref{e:detcong}) yields
\[
\delta \equiv \bpsi(\sigma) \pmod{p^m, \I}. 
\]
The first congruence in (\ref{e:adcong}) yields
\[
a(\sigma) \equiv 1 \pmod{p^m, \I}.
\] 
Furthermore, $\lambda_1 \equiv 1 \equiv -\lambda_2 \pmod{p^m, \I}$. Hence the expression on the right  side of  (\ref{e:bexp}) is congruent to $2 \bpsi^{-1}(\sigma) \overline{b}(\sigma)$. This finishes the proof that $\kappa$ satisfies equation (\ref{e:nicecocycleverify}).

The remaining bullet points follow directly from the definitions or have already been established.
\end{proof}

In view of Theorems~\ref{t:nc} and~\ref{t:nl}, we deduce from Theorem~\ref{t:properties}:
\begin{corollary}  \label{c:fit}
We have an $A_{\sL}$-module surjection
 \[ \nabla_{\!\sL, A} \longtwoheadrightarrow \tilde{B}(\bpsi^{-1}) \] and hence an inclusion
 $\Fitt_{A_{\sL}}(\nabla_{\!\sL, A}) \subset \Fitt_{A_{\sL}}(\tilde{B}(\bpsi^{-1}))$.
\end{corollary}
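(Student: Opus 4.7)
The plan is to derive this corollary as a direct assembly of the two general construction theorems—Theorem~\ref{t:nc} and Theorem~\ref{t:nl}—with the necessary input hypotheses already packaged into the six bullet points of Theorem~\ref{t:properties}. No further Galois cohomology computation should be required; the real content of the corollary has already been spent in \S\ref{s:cc}.

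First I would apply Theorem~\ref{t:nc} to the $A$-module $B(\bpsi^{-1})$ equipped with the class $[\kappa]$, noting that the cocycle $\kappa$ already takes values in $B_0 \subset B$ by the third bullet of Theorem~\ref{t:properties}, so $[\kappa]$ can legitimately be viewed as a class in $H^1(G_F, B(\bpsi^{-1}))$. The three hypotheses demanded by Theorem~\ref{t:nc}—local ramification behavior of $[\kappa]$, the prescribed generation of $B/B_0$ with the $G_{F,\fp}$-fixedness of $x_{\fp}'$, and the explicit local shape of the cocycle at primes in $\Sigma$ and at $\fp$—are supplied verbatim by bullet points two through five of Theorem~\ref{t:properties}. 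This yields a surjective $A$-module homomorphism $\alpha_1 \colon \tilde{\nabla}_{\Sigma}^{\Sigma'}(L)_A \twoheadrightarrow B(\bpsi^{-1})$.

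Next I would invoke Theorem~\ref{t:nl} to promote $\alpha_1$ to an $A_{\sL}$-module surjection $\alpha_{\sL} \colon \nabla_{\!\sL,A} \twoheadrightarrow \tilde{B}(\bpsi^{-1})$. The only additional inputs are that $B(\bpsi^{-1})$ generates $\tilde{B}(\bpsi^{-1})$ as an $A_{\sL}$-module and that $x_{\fp}' + \sL\, x_{\fp} = 0$ in $\tilde{B}(\bpsi^{-1})$; these are exactly bullet points one and six of Theorem~\ref{t:properties}, the latter following from the identification $x_{\fp}' = (U_{\fp}-1)x_{\fp}$ recorded in Lemma~\ref{l:kappaloc} together with the convention that $\sL$ acts on $\tilde{B}(\bpsi^{-1})$ through $1-U_{\fp}$ under the $\#$-twist. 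The inclusion $\Fitt_{A_{\sL}}(\nabla_{\!\sL,A}) \subset \Fitt_{A_{\sL}}(\tilde{B}(\bpsi^{-1}))$ is then a formal consequence of $\alpha_{\sL}$ by the standard monotonicity of Fitting ideals under surjections of finitely generated modules over a commutative ring.

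The main obstacle is not located here: the corollary is essentially a matter of bookkeeping once Theorem~\ref{t:properties} is in hand. The genuine difficulty that remains is the complementary Fitting ideal bound $\Fitt_{A_{\sL}}(\tilde{B}(\bpsi^{-1})) \subset (p^m)$ that, together with this corollary and Lemma~\ref{l:fittchi}, will finally close the proof of Theorem~\ref{t:maingross}.
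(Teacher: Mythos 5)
Your proposal is correct and follows essentially the same route as the paper, which simply cites Theorems~\ref{t:nc} and~\ref{t:nl} together with Theorem~\ref{t:properties}; you have just spelled out the matching of hypotheses that the paper leaves implicit. The one point worth double-checking in your write-up — that $[\kappa]$ may be regarded in $H^1(G_F, B(\bpsi^{-1}))$ — is handled correctly: by construction $\kappa$ is valued in $B_0 \subset B$, while the constants $x_v, x_\fp, x_\fp'$ appearing in the local formulas of the third bullet of Theorem~\ref{t:nc} lie in $B$ by (\ref{e:bmoddef}), so the hypotheses of Theorem~\ref{t:nc} apply as you claim.
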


\subsection{Calculation of the Fitting ideal}

It remains to prove that \[  \Fitt_{A_{\sL}}(\tilde{B}(\bpsi^{-1})) \subset (p^m). \]
Applying the involution $\#$, this is equivalent to 
\[  
\Fitt_{A_{\sL}^\#}(\tilde{B}) \subset (p^m) = p^m A_{\sL}^\#. 
\]  
This removal of the twist by $\bpsi^{-1}$ will be convenient so that the usual $\fg$-module structure on $\tilde{B}$ via $\bpsi$ is compatible with the $\tilde{\T}_\fm$-module structure on $\tilde{B}$ and the homomorphism $\varphi_\fm$, which satsifies $\varphi_\fm(S(\fa)) = \bpsi(\fa)$.

  We first recall the following lemma from  \cite{dk}*{Lemma 9.9}.
\begin{lemma} \label{l:bgen} Recall that $\B_0 \subset \tilde{\B}$ denotes the $\tilde{\T}_{\fm}$-submodule generated by the elements $b(\sigma)$ for $\sigma \in G_F$.
 There are finitely many elements $b_1, \ldots, b_n \in {\B}_0$ that are non-zerodivisors in $K$, which  generate $\B_0$ as an $A$-module.
 \end{lemma}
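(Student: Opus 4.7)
The plan is twofold: first, show that $\B_0$ is finitely generated as an $A$-module; second, modify any $A$-generating set to produce one consisting of non-zerodivisors in $K$. The argument is the direct analogue of \cite{dk}*{Lemma 9.9}.

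For the first step, observe that $\tilde{\T}_\fm$ is a finite $A$-algebra, since the Hecke algebra $\tilde{\T}$ acts faithfully on the finitely generated $A$-module $S_k(\fn\fP', A, \bpsi)$. It therefore suffices to prove $\B_0$ is finitely generated as a $\tilde{\T}_\fm$-module. The key point is that the $\tilde{\T}_\fm$-subalgebra $\eA \subset M_2(K)$ generated by $\rho(G_F)$ is contained in a finitely generated $\tilde{\T}_\fm$-submodule of $M_2(K)$. This follows from the Cayley--Hamilton equation satisfied by each $\rho(\sigma)$, whose coefficients (trace and determinant) lie in $\tilde{\T}_\fm$ by (\ref{e:chartl}) and analogous considerations for the determinant, together with the observation that $M_2(K)$ is 4-dimensional over $K$ and $\tilde{\T}_\fm$ is Noetherian. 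Projecting to the upper-right entry then shows that $\B_0 \subset K$ is contained in a finitely generated $\tilde{\T}_\fm$-submodule of $K$, whence $\B_0$ itself is finitely generated over $\tilde{\T}_\fm$ by the Noetherian property. I expect this step to be the main technical obstacle: because $G_F$ is not topologically finitely generated, one cannot simply reduce to finitely many $\sigma$ on the domain side, and the passage through the integrality argument is what makes the finite generation work.

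For the second step, since $\T^\dagger$ is reduced (as recalled in \cite{dk}*{\S8.5}), the fraction ring $K = \Frac(\T_\fm^\dagger)$ decomposes as a finite product $\prod_i K_i$ of fields indexed by the minimal primes, and an element of $K$ is a non-zerodivisor exactly when its projection to each $K_i$ is nonzero. The component representation $\rho_i \colon G_F \to \GL_2(K_i)$ is absolutely irreducible (being attached to a cuspidal Hilbert eigenform via the usual correspondence), so the cocycle $b_i \colon G_F \to K_i$ cannot vanish identically; otherwise $\rho_i$ would be reducible in the chosen basis. For each $i$ pick $\tau_i \in G_F$ with $b(\tau_i)$ nonzero in $K_i$. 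Starting from any $A$-generating set $\{b_1', \ldots, b_m'\}$ of $\B_0$ produced in the first step, we add suitable $A$-multiples of the various $b(\tau_i)$ to each $b_j'$ to arrange nonzero projection to every $K_i$; the scalars can be chosen in $A$ to avoid the finitely many vanishing conditions, since $A$ contains infinitely many elements distinct modulo any fixed $K_i$-projection. The modified set $\{b_1, \ldots, b_m\} \subset \B_0$ still generates $\B_0$ over $A$ (only elements of $\B_0$ were added) and consists of non-zerodivisors in $K$, completing the proof.
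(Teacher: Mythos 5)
Your step 2 is essentially fine, and indeed the pieces you invoke (finite product decomposition of $K$ over minimal primes, irreducibility of each component, an avoidance argument to adjust a generating set) are the right ones; one could make the avoidance step cleaner by first constructing a single non-zerodivisor $b^*\in\B_0$ and then replacing $b_j'$ by $b_j' + p^{N}b^*$ for $N$ large. Your step 1, however, contains a genuine gap.

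The stated implication — ``each $\rho(\sigma)$ satisfies a monic quadratic with coefficients in $\tilde{\T}_\fm$, $M_2(K)$ is $4$-dimensional over $K$, and $\tilde{\T}_\fm$ is Noetherian, therefore the $\tilde{\T}_\fm$-span of $\rho(G_F)$ is contained in a finitely generated $\tilde{\T}_\fm$-submodule'' — is false in general. Take $\tilde{\T}_\fm=\Z_p$, $K=\Q_p$, and let $\eA\subset M_2(\Q_p)$ be $\Z_p\cdot\mathrm{Id} + \left\{\left(\begin{smallmatrix}0&b\\0&0\end{smallmatrix}\right):b\in\Q_p\right\}$. This is a $\Z_p$-subalgebra, every element has trace and determinant in $\Z_p$ and hence satisfies an integral quadratic, $M_2(\Q_p)$ is $4$-dimensional, $\Z_p$ is Noetherian, yet $\eA$ is not finitely generated over $\Z_p$. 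The missing hypothesis is non-degeneracy (irreducibility), which you invoke only in step 2 where a weaker fact suffices, but omit in step 1 where it is essential. Two standard repairs: (a) use the trace pairing $(x,y)\mapsto \tr(xy)$, which is non-degenerate on $M_2(K)$, to embed $\eA$ into $\tilde{\T}_\fm^4$ via $x\mapsto(\tr(xe_i))_i$ after picking a $K$-basis $e_1,\dots,e_4\in\eA$ — this requires knowing $\tr$ takes $\tilde{\T}_\fm$-values on all of $G_F$, which comes from $a(\sigma),d(\sigma)\in\T_\fm$ as in \S\ref{s:cc}, plus irreducibility so that $e_1,\dots,e_4$ exist; or (b), more directly and likely closer to the argument of \cite{dk}*{Lemma 9.9}, observe $b(\sigma)c(\sigma') = a(\sigma\sigma') - a(\sigma)a(\sigma') \in \T_\fm$ for all $\sigma,\sigma'$, so that once one produces a single $\sigma_0'$ with $c(\sigma_0')$ a non-zerodivisor in $K$ (which again uses componentwise irreducibility), one obtains $\B_0\subset c(\sigma_0')^{-1}\T_\fm$, a cyclic and hence Noetherian $\T_\fm$-module. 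Either route must be spelled out; as written, the key finiteness claim is asserted rather than proved.
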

 
 \begin{theorem} \label{t:mainfit2}  With notation as in Theorem \ref{t:heckehom}, we have \[ \Fitt_{A_{\sL}^\#}(\tilde{B}) \subset
  \bigg(p^m, \Ann_{A_{\sL}^\#}\Big( \prod_{\fq \mid \fP'} \epsilon_\fq \cdot W\Big) \bigg), \]
 and hence by the last item of that theorem we have
 \[  \Fitt_{A_{\sL}^\#}(\tilde{B}) \subset (p^m). 
 \]
\end{theorem}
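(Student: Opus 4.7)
\bigskip

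\textit{Proof plan.} The plan is to follow the template of \cite{dk}*{Theorem 9.10}, carefully keeping track of the extra element $\sL = \varphi_\fm(1-U_\fp)$ that enlarges $A^\#$ to $A_{\sL}^\#$. First I would use Lemma~\ref{l:bgen} to obtain finitely many $A$-module generators $b_1,\dots,b_M$ of $\B_0$ that are non-zerodivisors in $K$. Combining with the finite collection $x_{\fp},x_{\fp}',x_{\fq}$ ($\fq\in\Sigma$) already built into $\tilde{\B}$, we get a finite $A$-generating set of $\tilde{\B}$, and hence a finite generating set of $\tilde B$ as an $A_{\sL}^\#$-module (the $\sL$-action absorbs the extra $U_\fp$ coming from $\tilde{\T}_\fm$). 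Since $\tilde{\I}$ annihilates $\tilde B$ by construction, $\tilde B$ is naturally a $W$-module, and the $A_{\sL}^\#$-structure agrees with that coming from $\varphi_\fm$.

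Next I would write down a quadratic presentation of $\tilde B$ over $A_{\sL}^\#$ and identify enough relations to bound the Fitting ideal. The relations fall into three families: (i) $p^m\tilde{B}=0$; (ii) the cocycle identities $b(\sigma\sigma')=a(\sigma)b(\sigma')+b(\sigma)d(\sigma')$, which together with the congruences $a(\sigma)\equiv\varepsilon_{\cyc}^{k-1}(\sigma)\equiv 1$ and $d(\sigma)\equiv\bpsi(\sigma)$ modulo $(\tilde{\I},p^m)$ (see (\ref{e:adcong}) and the proof of Lemma~\ref{l:kappaloc}) force linear relations among the $b_i$; and (iii) the ordinarity relations at primes $\fq\mid\fP'$. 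For $\sigma\in I_\fq$ with $\fq\mid\fP'$, equation (\ref{e:bequation}) specialises, using $\eta_\fq(\sigma)=1$ and the congruences above, to
\[
b(\sigma)\;\equiv\; x_\fq\bigl(1-\bpsi(\sigma)\bigr)\pmod{(\tilde{\I},p^m)\tilde{\B}},
\]
and the left-hand side is zero in $\tilde B$ by definition of $\tilde{\B}'$. These relations, together with the analogous ones for $\fq\in\Sigma$ and for $\fq=\fp$ (where the twist by $\eta_\fp$ produces exactly the element $\sL x_\fp=-x_{\fp}'$), provide the rows of the presentation matrix.

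The main step, and the place where the hypothesis on $\fP'$ really enters, is the computation of the determinant of the square matrix obtained in a Ribet-style Fitting-ideal calculation. Following the strategy of \cite{dk}*{Theorem 9.10}, I would argue that after multiplying everything by $\prod_{\fq\mid\fP'}\epsilon_\fq$, the relations of type (iii) become tautological: the operator $\epsilon_\fq=\varphi(U_\fq-\bpsi(\fq))$ essentially projects onto the $\fq$-new part of $W$, where the inertia relation at $\fq$ is automatic. Thus any element of $\Fitt_{A_{\sL}^\#}(\tilde B)$ is congruent, modulo $p^m$, to an element annihilating $\prod_{\fq\mid\fP'}\epsilon_\fq\cdot W$. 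This yields the claimed inclusion
\[
\Fitt_{A_{\sL}^\#}(\tilde B)\;\subset\;\Bigl(p^m,\;\Ann_{A_{\sL}^\#}\bigl(\textstyle\prod_{\fq\mid\fP'}\epsilon_\fq\cdot W\bigr)\Bigr).
\]
The second part of the statement is then immediate: by the last bullet of Theorem~\ref{t:heckehom}, $\Ann_{A_{\sL}^\#}(\prod_{\fq\mid\fP'}\epsilon_\fq)\subset(p^m)$, so \emph{a fortiori} $\Ann_{A_{\sL}^\#}(\prod_{\fq\mid\fP'}\epsilon_\fq\cdot W)\subset(p^m)$, and we conclude $\Fitt_{A_{\sL}^\#}(\tilde B)\subset(p^m)$.

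The principal obstacle will be the bookkeeping in the quadratic-presentation step: one must choose the generators and order the relations so that the presentation is literally square (rather than rectangular), so that multiplying by $\prod_{\fq\mid\fP'}\epsilon_\fq$ really does eliminate the inertia rows without introducing spurious factors. This is precisely the linear-algebra content of the $\fP'$-hypothesis, and is the reason the one-prime case (Case 2(c)) must be handled separately in \S\ref{s:onep}: there are no $\epsilon_\fq$ available to absorb the obstruction.
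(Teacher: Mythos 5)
Your overall orientation is correct: you identify the right template (\cite{dk}*{Theorem 9.10}), you correctly enumerate the three families of relations, and your final deduction is fine, since $\Ann_{A_{\sL}^\#}\bigl(\prod_{\fq\mid\fP'}\epsilon_\fq\cdot W\bigr)\subset\Ann_{A_{\sL}^\#}\bigl(\prod_{\fq\mid\fP'}\epsilon_\fq\bigr)\subset(p^m)$ by the last bullet of Theorem~\ref{t:heckehom}. But your main step is a heuristic, not an argument, and it misses the one device that makes the proof go through.

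Concretely, you say that ``after multiplying everything by $\prod_{\fq\mid\fP'}\epsilon_\fq$, the relations of type (iii) become tautological'' because $\epsilon_\fq$ ``projects onto the $\fq$-new part of $W$.'' That is not how a Fitting ideal computation works. The Fitting ideal is generated by determinants of square relation matrices $M\in M_{N\times N}(A_{\sL}^\#)$, and to land in $(p^m,\Ann(\prod\epsilon_\fq\cdot W))$ you need to produce, for each such $M$, an actual identity of the form $(\det M+p^m z)\prod_{\fq\mid\fP'}\epsilon_\fq=0$ in $W$. Multiplying the module by $\epsilon_\fq$ only shrinks the module and hence \emph{enlarges} the Fitting ideal, which is the wrong direction. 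Your own ``principal obstacle'' paragraph flags the issue (how to keep the presentation square while absorbing the inertia relations), but you do not resolve it.

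The resolution in the paper is to \emph{enlarge the generating set by redundant generators}. For each prime $\fq_i\mid\fP'$ one picks $\sigma_i\in G_{F,\fq_i}$ lifting $\rec(\varpi_i)$ and adds
\[
c_i := -b(\sigma_i)\,\bpsi(\fq_i)\,\varepsilon_{\cyc}^{1-k}(\sigma_i) = x_{\fq_i}\bigl(U_{\fq_i}-\bpsi(\fq_i)+\I\bigr)
\]
to the generating set $b_1,\dots,b_n$ coming from Lemma~\ref{l:bgen} and the elements $x_\fq$, $\fq\in\Sigma_\fp$. The point of the $c_i$ is that when one lifts a relation row in $\tilde B$ to an identity in $K=\Frac(\T_\fm^\dagger)$, the column corresponding to $c_i$ carries the factor $x_{\fq_i}\bigl(\tilde y_{ij}(U_{\fq_i}-\bpsi(\fq_i))+\I^\dagger\bigr)$, while the columns for $b_j$ carry $(\tilde z_{ij}+\I^\dagger+p^m\T_\fm^\dagger)b_j$. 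Using that $b(I_{\fq_i})\subset x_{\fq_i}\I$ (so inertia contributions at $\fq_i$ land in the $x_{\fq_i}$-column) one obtains an $(n+r)\times(n+r)$ matrix $M'$ over $K$ with identically vanishing determinant. Cancelling the non-zerodivisors $x_{\fq_i}$ and $b_j$ column-by-column gives $M''$ over $\T^\dagger$ with $\det M''=0$, and then applying $\varphi$ kills $\I^\dagger$ and sends $U_{\fq_i}-\bpsi(\fq_i)$ to $\epsilon_{\fq_i}$, yielding
\[
0=\varphi(\det M'')=(\det M+p^m A_{\sL}^\#)\prod_{\fq\mid\fP'}\epsilon_\fq\quad\text{in }W.
\]
This is the precise sense in which the $\epsilon_\fq$ enter, and it is exactly the linear-algebra bookkeeping your plan leaves open. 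Without the auxiliary generators $c_i$ there is no way to make the $\epsilon_\fq$ factor appear as a column scaling, and the determinant identity you would need does not materialize.

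A smaller remark: your type-(iii) relation $b(\sigma)\equiv x_\fq(1-\bpsi(\sigma))$ for $\sigma\in I_\fq$, $\fq\mid\fP'$, is vacuous, because $\fq\nmid\fn$ for $\fq\mid\fP'$, so $\bpsi$ is unramified at $\fq$ and $\bpsi(\sigma)=1$. The useful input from ordinarity is instead $b(I_{\fq_i})\subset x_{\fq_i}\I$, and this matters not as a ``relation among the $b_j$'' but because $x_{\fq_i}$ is \emph{not} among the chosen generators of $\tilde{\B}$, so these inertia values are killed in $\tilde B$ only because they are explicitly included in $\tilde{\B}'$; they then get collected into the $c_i$-columns when lifting relations.
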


\begin{proof} 
The proof proceeds closely along the lines of that in \cite{dk}*{Theorem 9.10}. Let  $\fq_1, \dotsc, \fq_r$ denote the primes dividing $\fP'$. For each $\fq_i$, choose an element $\sigma_i \in G_{\fq_i} \subset G_F$ that lifts $\rec(\varpi_i) \in G_{\fq_i}^{\ab}$, where
$\varpi_i$ is a uniformizer for $F_{\fq_i}$.  Define
\[ c_i := -b(\sigma_i)\bpsi(\fq_i) \varepsilon_{\cyc}^{1-k}(\sigma_i) = x_{\fq_i}(U_{\fq_i} - \bpsi(\fq_i) + \I) \in \tilde{\B}. \]
Here and throughout this proof, we use the notation $a = b + \I$ to mean $a = b + z$ for some $z \in \I$ to avoid needing to add distinct variable names for each such $z$ that appears.  

By choosing the elements $b_1, \dots, b_m$ from Lemma~\ref{l:bgen} together with $x_\fq$ for all $\fq \in \Sigma_\fp$, we get elements $b_1, \dotsc, b_n$ ($n = m + \#\Sigma_\fp$) of $\tilde{\B}$ that are non-zerodivisors in $K=\Frac(\T_{\fm})$ and generate this module over $\tilde{\T}_\fm$.  The images of these elements in $\tilde{B}$ are therefore $A_{\sL}^\#$-module generators.

To calculate $\Fitt_{A_{\sL}^\#}(\tilde{B}) $ we use the generating set $c_1, \dotsc, c_r, b_1, \dotsc, b_n$ for $\tilde{B}$ over $A_{\sL}^\#$.  Of course, these first $r$ generators are not necessary, but including them will aid us in proving the theorem.  Suppose we have a matrix 
\[ 
M \in M_{(n+r) \times (n+r)}(A_{\sL}^\#) 
\]
such that each row of $M$ represents a relation amongst our generators, i.e.\ such that 
\[ 
M(c_1, \dotsc, c_r, b_1, \dotsc, b_n)^T \equiv 0 \text{ in } \tilde{B}^{n+r}.
\]
We need to show that $\det(M) \in A_{\sL}^\#$ satisfies $(\det(M) + p^m z) \prod_{\fq \mid \fP'} \epsilon_\fq = 0$ in $W$ for some $z \in A_{\sL}^\#$.

Write $M   = (Y | Z)$ in block matrix form, where 
\[ 
Y = (y_{ij}) \in M_{(n+r)\times r}(A_{\sL}^\#), \qquad 
Z = (z_{ij}) \in M_{(n+r)\times n}(A_{\sL}^\#). 
\]
Since $\bpsi$ and $\eta_{\fq_i}$ are unramified at $\fq_i$ and $a(\sigma) \equiv \varepsilon_{\cyc}^{k-1} \pmod{\I}$, we have by (\ref{e:bequation}):
\[ b(I_{\fq_i}) \subset x_{\fq_i} \I. \]
Also, since the $b_i$ generate $\tilde{\B}$, every element of $\tilde{\I} \tilde{\B}$ can be written as a sum of elements of the form   $ t_i b_i$ with $t_i \in \tilde{\I}$.
Therefore each relation \[ \sum_{j=1}^r y_{ij} c_j + \sum_{j=1}^n z_{ij} b_j \equiv  0 \text{ in } \tilde{B} \] can be expressed as in equality in $\tilde{\B}$ as
\begin{equation} \label{e:xyzsum}
  \sum_{j=1}^r  x_{\fq_j}(\tilde{y}_{ij}(U_{\fq_j} - \bpsi(\fq_j)) + \tilde{\I}) +\sum_{j=1}^n (\tilde{z}_{ij} + \tilde{\I} + p^m \Tilde{\T}_\fm) b_j  = 0. \end{equation}
  We reiterate that here and in what follows, the symbols $\tilde{\I}$ (twice) and $\tilde{\T}_\fm$ represent elements of those sets for which we do not, for notational reasons, introduce separate variable names.  Here we have denoted by $\tilde{y}_{ij}$ and $\tilde{z}_{ij}$ elements of $\tilde{\T}_{\fm}$ such that $\varphi(\tilde{y}_{ij}) = y_{ij}$, $\varphi(\tilde{z}_{ij}) = z_{ij}$.
It  follows from (\ref{e:xyzsum}) that if we define a matrix $M' \in M_{(n+r) \times (n+r)}(\Frac(\tilde{\T}_{\fm}))$  in block form by
\[ 
M' = \left( x_{\fq_j}(\tilde{y}_{ij}( U_{\fq_j} - \bpsi(\fq_j) ) + \tilde{\I}) \ \ \ \ |  \ \ \ \ (\tilde{z}_{ij} + \tilde{\I} + p^m \tilde{\T}_\fm) b_j \right), 
\]
then $\det(M') = 0$ in $K$ since it has rows that sum to 0.  We can cancel the factors $x_{\fq_i}$ and $b_j$ scaling the columns of $M'$, since these are non-zerodivisors in $K$.  We obtain that $\det(M'') = 0$ where 
\[  
M'' = \left( (\tilde{y}_{ij}(U_{\fq_j} - \bpsi(\fq_j)) + \tilde{\I}) \ \ |  \ \ (\tilde{z}_{ij} +\tilde{\I} + p^m \tilde{\T}_\fm)  \right) \in M_{(n+r)\times (n+r)}(\T^\dagger).
\]
Taking the determinant of $M''$ and applying $\varphi$, we obtain
\[ 
0 = \varphi( \det(M''))= (\det(M) + p^m A_{\sL}^\#) \prod_{\fq \mid \fP'} \epsilon_\fq \text{ in } W. 
\]  
Therefore,  \[ \det(M) \in \bigg(p^m, \Ann_{A_{\sL}^\#}\Big( \prod_{\fq \mid \fP'} \epsilon_\fq \cdot W\Big) \bigg), \]
yielding the first statement of the theorem.
The second statement then follows immediately from  the last bulleted statement in Theorem~\ref{t:heckehom}.
\end{proof}

We immediately find:

\begin{theorem}  \label{t:fittal}  Suppose we are in cases 1, 2(a), or 2(b).
We have 
\[ \Fitt_{A_{\sL}}(\nabla_{\!\sL, A}) = 0.\]
\end{theorem}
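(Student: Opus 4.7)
The plan is to assemble the conclusion from the main results already established in Sections~\ref{s:rw} and~\ref{s:cusp}, observing that the entire construction of Section~\ref{s:cusp} depended on an arbitrarily chosen positive integer $m$, while the target module $\nabla_{\!\sL, A}$ is itself independent of $m$.

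\medskip

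First I would fix an arbitrary positive integer $m$ and run the construction of \S\ref{s:ccf}--\S\ref{s:cc}, obtaining the cusp form $g = g_m$, the Hecke algebra $\T^\dagger = \T^\dagger_m$, the $A_{\sL}^\#$-algebra $W = W_m$, the $A_{\sL}^\#$-module $\tilde{B} = \tilde{B}_m$, and the cohomology class $[\kappa_m] \in H^1(G_F, \tilde{B}_m(\bpsi^{-1}))$. Theorem~\ref{t:properties} then applies (the hypotheses in cases 1, 2(a), and 2(b) are precisely what is used throughout \S\ref{s:cusp}), and by Corollary~\ref{c:fit} we obtain the surjection of $A_{\sL}$-modules
\[
\nabla_{\!\sL, A} \longtwoheadrightarrow \tilde{B}_m(\bpsi^{-1}),
\]
yielding the Fitting ideal inclusion $\Fitt_{A_{\sL}}(\nabla_{\!\sL, A}) \subset \Fitt_{A_{\sL}}(\tilde{B}_m(\bpsi^{-1}))$.

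\medskip

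Next I would translate the result of Theorem~\ref{t:mainfit2}, which gives $\Fitt_{A_{\sL}^\#}(\tilde{B}_m) \subset (p^m)$, into a statement about $\tilde{B}_m(\bpsi^{-1})$ over $A_{\sL}$. As noted in the paragraph preceding Lemma~\ref{l:kappaloc}, the twist by $\bpsi^{-1}$ precisely intertwines the canonical $A_{\sL}^\#$-module structure on $\tilde{B}_m$ with an $A_{\sL}$-module structure on $\tilde{B}_m(\bpsi^{-1})$ via the involution $\#\colon A_{\sL} \to A_{\sL}^\#$. Since $\#$ is a ring isomorphism, it carries Fitting ideals to Fitting ideals, and since $p^m$ is fixed by $\#$, we deduce
\[
\Fitt_{A_{\sL}}(\tilde{B}_m(\bpsi^{-1})) \subset (p^m).
\]

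\medskip

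Combining the two displays gives $\Fitt_{A_{\sL}}(\nabla_{\!\sL, A}) \subset (p^m)$. The crucial point is that the module $\nabla_{\!\sL, A}$, defined in \S\ref{s:nablal}, depends only on the arithmetic data $L/H/F$, $\Sigma$, $\Sigma'$, $\fp$, and $\chi$, and is entirely independent of the auxiliary integer $m$ and the weight $k$ chosen in \S\ref{s:cusp}. Hence the inclusion above holds for every positive integer $m$, and letting $m \to \infty$ we conclude
\[
\Fitt_{A_{\sL}}(\nabla_{\!\sL, A}) \subset \bigcap_{m \ge 1} (p^m) = 0
\]
in the $p$-adically separated ring $A_{\sL}$, which proves the theorem. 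There is no serious obstacle at this stage: the heavy lifting was done in Theorem~\ref{t:mainfit2} and in assembling Theorem~\ref{t:properties}; the present argument is essentially bookkeeping combined with the observation that $m$ is a free parameter.
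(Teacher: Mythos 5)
Your proposal is correct and takes essentially the same approach as the paper: combine Corollary~\ref{c:fit} with Theorem~\ref{t:mainfit2} (via the involution $\#$) to obtain $\Fitt_{A_{\sL}}(\nabla_{\!\sL, A}) \subset (p^m)$ for every $m$, then let $m \to \infty$. The paper presents this in two lines, leaving implicit the points you spell out (the $\#$-transfer of Fitting ideals and the $m$-independence of $\nabla_{\!\sL, A}$), but the logic is identical.
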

\begin{proof}

Corollary~\ref{c:fit} and Theorem~\ref{t:mainfit2} combine to yield
\[  
\Fitt_{A_{\sL}}(\nabla_{\!\sL, A}) \subset \Fitt_{A_{\sL}}(\tilde{B}(\bpsi^{-1})) \subset (p^{m}). 
\]
Since this holds for all $m$, we have $\Fitt_{A_{\sL}}(\nabla_{\!\sL, A}) = 0$ as desired.
\end{proof}

Combining our results yields the $p$-part of Gross's conjecture.
\begin{proof}[Proof of Theorem~\ref{t:maingross}]  For now we assume there is more than one prime above $p$ in $F$, leaving the case of one prime for the next section.
Theorem~\ref{t:fittal} and Lemma~\ref{l:fittchi} imply that
$\Fitt_{R_{\sL}}(\nabla_{\!\sL}) = 0.$  Theorem~\ref{t:mfig} then yields
the $p$-part of the modified Gross conjecture:
\[ \rec_G(u_\fp^{\Sigma, \Sigma'}) \equiv \Theta_L \pmod{I^2}. \]
Lemma~\ref{l:equiv} now gives the $p$-part of Gross's conjecture for the Brumer--Stark unit $u_\fp$:
\[ \rec_G(u_\fp) \equiv  \Theta_{S_\fp,T}^{L/F} \pmod{I^2}. \qedhere \]
\end{proof}

\section{The case of one prime above $p$ in $F$} \label{s:onep}

In this section we handle Case 2(c), where $\fp$ is the only prime of $F$ lying above $p$.  We impose this condition for the remainder of the paper.  
Rather than calculating the Fitting ideal of $\nabla_{\!\sL}$, we prove the $p$-part of the modified Gross Conjecture (the congruence (\ref{e:mgc})) by taking advantage of two features that present themselves when there is only one prime above $\fp$: (1) the cyclotomic tower is ramified only at $\fp$, hence we may deform up this tower without altering the depletion set $\Sigma_\fp$; (2) The rank one rational Gross--Stark conjecture, proven in \cite{ddp} and \cite{v}, is known.  In essence, our argument  is to show that the rational conjecture (for the cyclotomic tower) implies the integral conjecture (for arbitrary $L/F$).  The key input in this reduction is the strong version of the Brumer--Stark conjecture giving the Fitting ideal of $\nabla_{\Sigma_\fp}^{\Sigma'}$, conjectured by Burns and Kurihara and proven in \cite{dk}.  This result was stated in Theorem~\ref{t:dkresult} above, and we apply it in this section to the compositum of $L$ with cyclotomic extensions of $F$.  Another important result applied is the nonvanishing of the derivative of $p$-adic $L$-functions at $s=0$, which follows by combining the result of the rational Gross--Stark conjecture with the spectacular transcendence theorem of Brumer--Baker on the linear independence of $p$-adic logarithms of algebraic numbers.  This again takes advantage of the fact that we are in a rank 1 situation since there is only one prime above $p$ in $F$.

We begin by recalling the necessary results and notation concerning $p$-adic $L$-functions.

\subsection{$p$-adic $L$-functions}

In our current setting we have 
$\Sigma = S_\infty$, $\Sigma_\fp = S_\infty \cup \{\fp\}$, and $\fp$ is the only prime of $F$ above $p$.
For each odd character $\chi$ of $G$, Deligne--Ribet and Cassou-Nogues construct a $p$-adic meromorphic function
\[ L_p(\chi\omega, s) \colon \Z_p \longrightarrow \Q_p(\chi) \]
satisfying the interpolation property
\[ L_p(\chi\omega, n) = L_{\Sigma_\fp, \emptyset}(\chi \omega^n, n) \]
for all integers $n \le 0$. Here $\omega \colon G_F \longrightarrow \mu_{p-1}$ denotes the  Teichm\"uller character and $\Q_p(\chi)$ denotes the extension of $\Q_p$ obtained by adjoining the values of $\chi$.  Analyticity and integrality are achieved if we incorporate the smoothing set $\Sigma'$, i.e. we have a $p$-adic analytic function
\[ L_{p, \Sigma'}(\chi\omega, s) \colon \Z_p \longrightarrow \Z_p(\chi) \] satisfying
\[ L_{p, \Sigma'}(\chi\omega, n) = L_{\Sigma_\fp, \Sigma'}(\chi \omega^n, n) \]
for all integers $n \le 0$. Moreover, these $p$-adic $L$-functions interpolate to a group ring valued Stickelberger function.
There exists a $p$-adic analytic function 
\[ \Theta_{p, \Sigma'}^H(s) \colon \Z_p \longrightarrow \Z_p[G]^- \] satisfying the interpolation property
\[ \Theta_{p,\Sigma'}^H(1-k) = \Theta_{\Sigma_\fp, \Sigma'}(1-k) 
 \]
for all positive integers $k \equiv 1\pmod{p-1}$.  As we now describe, $\Theta_{p, \Sigma'}^H(s)$ can be constructed as a certain $p$-adic integral.
Let $F_\infty/F$ denote the cyclotomic $\Z_p$-extension of $F$ and let $\fh_\infty = \Gal(HF_\infty/F)$.  
For each integer $m \ge 0$ we let $F_m \subset F_\infty$ denote the $m$th layer of the tower and let $\fh_m = \Gal(HF_\infty/HF_m)$.
Then $\fh_m$ is an open subgroup of $\fh_\infty$ and its cosets provide a cover of $\fh_\infty$ by disjoint opens.
For $\sigma \in G_m = \fh_\infty/\fh_m \cong \Gal(HF_m/F)$ we define
\[ \mu(\sigma + \fh_m) = \zeta_{\Sigma_\fp, \Sigma'}(\sigma, 0), \quad \text{where } 
 \Theta_{\Sigma_\fp, \Sigma'}^{HF_m/F}(0) = \sum_{\tau \in G_m} \zeta_{\Sigma_\fp, \Sigma'}(\tau, 0)\tau^{-1}. \]
For $\sigma \in G$ define
\begin{equation} \label{e:zetaint}
 \zeta_{p, \Sigma'}(\sigma, s) = \int_{\sigma + \fh_0} \langle \varepsilon_{\cyc}(\tau) \rangle^{-s} d \mu(\tau). 
 \end{equation}
Here $\fh_0 = \Gal(HF_\infty/H)$.  We then have 
\[ \Theta_{p, \Sigma'}^H(s) = \sum_{\sigma \in G} \zeta_{p, \Sigma'}(\sigma, s) \sigma^{-1}. \] 
 Taking the derivative of (\ref{e:zetaint}) with respect to $s$ and evaluating at 0, we obtain
\begin{equation} 
 \zeta_{p, \Sigma'}'(\sigma, 0) = -\int_{\sigma + \fh_0} \log_p \varepsilon_{\cyc}(\tau) d \mu(\tau). 
 \end{equation}
To evaluate this modulo $p^m$, we may take the Riemann sum over the cosets of $\fh_m$.  We obtain:
\begin{lemma}  \label{l:derivative}
For every integer $m \ge 0$, we have \[
(\Theta_{p, \Sigma'}^H)'(0) \equiv - \sum_{\sigma \in G_m}  \zeta_{\Sigma_\fp, \Sigma'}(\sigma, 0) \log_p\varepsilon_{\cyc}(\sigma)\overline{\sigma}^{-1} \pmod{p^m}. \]
Here $\overline{\sigma}$ denotes the image of $\sigma$ in $G$.
\end{lemma}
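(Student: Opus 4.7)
The plan is to recognize the right-hand side of the claimed congruence as a Riemann-sum approximation of the $p$-adic integral formula for $(\Theta_{p, \Sigma'}^H)'(0)$ derived from (\ref{e:zetaint}). Starting from
\[
\zeta_{p, \Sigma'}'(\sigma, 0) = -\int_{\sigma + \fh_0} \log_p \varepsilon_{\cyc}(\tau) \, d \mu(\tau),
\]
I will partition the domain of integration $\sigma + \fh_0$ into the $p^m$ cosets $\tau + \fh_m$, as $\tau$ ranges over the lifts of $\sigma$ to $G_m$. By the definition of $\mu$, we have $\mu(\tau + \fh_m) = \zeta_{\Sigma_\fp, \Sigma'}(\tau, 0) \in \Z_p$, so it will suffice to show that $\log_p \varepsilon_{\cyc}(\eta) \equiv \log_p \varepsilon_{\cyc}(\tau) \pmod{p^m}$ for every $\eta \in \tau + \fh_m$; the lemma will then follow by replacing the integrand by its value at the chosen representative on each coset and summing.

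The key (and only nontrivial) step is therefore the claim that $\log_p \varepsilon_{\cyc}$ is constant modulo $p^m$ on each coset $\tau + \fh_m$. Writing $\eta = \tau \tau_0$ with $\tau_0 \in \fh_m = \Gal(HF_\infty/HF_m)$, the restriction of $\tau_0$ to $F_\infty$ lands in $\Gal(F_\infty / (HF_m \cap F_\infty))$. Since $F_\infty$ is a $\Z_p$-extension of $F$, its only subfields are the layers $F_n$, and $HF_m \cap F_\infty = F_n$ for some $n \ge m$; in particular $\tau_0|_{F_\infty} \in \Gal(F_\infty/F_m)$. This last group, via the cyclotomic character, is identified with the unique index-$p^m$ subgroup of $\Gal(F_\infty/F) \subset 1 + p\Z_p$, hence lies in $1 + p^{m+1}\Z_p$ for $p$ odd. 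Consequently $\log_p \varepsilon_{\cyc}(\tau_0) \in p^{m+1}\Z_p$, and
\[
\log_p \varepsilon_{\cyc}(\eta) = \log_p \varepsilon_{\cyc}(\tau) + \log_p \varepsilon_{\cyc}(\tau_0) \equiv \log_p \varepsilon_{\cyc}(\tau) \pmod{p^{m+1}}.
\]

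Assembling the contributions yields
\[
\zeta_{p, \Sigma'}'(\sigma, 0) \equiv - \sum_{\substack{\tau \in G_m \\ \overline{\tau} = \sigma}} \zeta_{\Sigma_\fp, \Sigma'}(\tau, 0) \log_p \varepsilon_{\cyc}(\tau) \pmod{p^m},
\]
and multiplying by $\sigma^{-1} = \overline{\tau}^{-1}$ and summing over $\sigma \in G$ gives the stated congruence for $(\Theta_{p, \Sigma'}^H)'(0)$. I do not anticipate a serious obstacle here: once the behavior of $\varepsilon_{\cyc}$ on the layers of $F_\infty/F$ is recorded, the argument is a routine approximation of a $\Z_p$-valued Iwasawa-type integral by its Riemann sum along the cover by cosets of $\fh_m$.
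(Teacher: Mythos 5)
Your proof is correct and takes essentially the same approach as the paper, which states only that one may evaluate the integral formula for $\zeta_{p,\Sigma'}'(\sigma,0)$ modulo $p^m$ by taking the Riemann sum over the cosets of $\fh_m$. You have simply filled in the (routine but worth recording) verification that the integrand $\log_p \varepsilon_{\cyc}$ is constant modulo $p^{m+1}$ on each such coset, since $\fh_m$ restricts into $\Gal(F_\infty/F_m)$, whose image under $\varepsilon_{\cyc}$ lies in $1 + p^{m+1}\Z_p$.
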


For notational simplicity, we will simply write $\Theta_H'$ for $(\Theta_{p, \Sigma'}^H)'(0)$ in the sequel.

\subsection{The rational Gross--Stark conjecture}

The following result, which we refer to as the rank 1 rational Gross--Stark conjecture, was proven in \cite{ddp} and \cite{v}.  The latter paper removed two assumptions from the former, making the result unconditional.

\begin{theorem}[\cite{gross}*{Conjecture 2.12}] \label{t:ddp}
Let $u \in U_\fp^-$ such that 
\begin{equation} \label{e:ordnz}
 \chi(\ord_G(u)) = \sum_{\sigma \in G}  \chi(\sigma)^{-1} \ord_{\fP}(\sigma(u_\fp)) \neq 0.
\end{equation}
  Then
\begin{equation} \label{e:lddp}
 \frac{\sum_{\sigma \in G} \chi(\sigma)^{-1} \log_p(\Norm_{F_{\fp}/\Q_p}(\sigma(u)))}{ \chi(\ord_G(u)) } = - \frac{L_p'(\chi^{-1}\omega,0)}{L(\chi^{-1},0)}. \end{equation}
\end{theorem}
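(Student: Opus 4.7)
My plan, closely following the strategies of \cite{ddp} and \cite{v}, would be to interpret both sides of (\ref{e:lddp}) as arising from the same matrix entry of a Galois representation attached to a $p$-adic Hida family of cuspidal Hilbert modular forms. To align with the characters on the right-hand side, write $\psi = \chi^{-1}$.

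First I would construct the relevant Hida family. Since $\fp$ splits completely in $H$ we have $\psi(\fp)=1$, hence $L_p(\psi\omega, 0) = (1-\psi(\fp))L_{\Sigma}(\psi,0)=0$. This trivial zero provides room for deformation: there should exist a Hida family of ordinary $p$-adic cusp forms whose weight-$1$ specialization is congruent (after ordinary $\fp$-stabilization) to the Eisenstein series $E_1(1,\psi)_\fp$. For weights $k\geq 2$, Ribet's method produces cusp forms congruent to Eisenstein series modulo $L_{\Sigma_\fp,\Sigma'}(\psi, 1-k) = L_p(\psi\omega, 1-k)$; varying $k$ in Hida's ordinary family and passing to the weight-$1$ limit yields the required family.

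Next I would attach a $2$-dimensional Galois representation $\rho\colon G_F \to \GL_2(K)$ to this family, where $K=\Frac(\T)$ for $\T$ the local Hida Hecke algebra. At $\fp$, the representation is ordinary, taking the form
\[\rho|_{G_{F,\fp}} \sim \begin{pmatrix} \psi\varepsilon_{\cyc}^{k-1}\eta_\fp^{-1} & * \\ 0 & \eta_\fp \end{pmatrix}, \qquad \eta_\fp(\Frob_\fp) = U_\fp.\]
Both invariants in (\ref{e:lddp}) can then be extracted from $\rho$. The Greenberg--Stevens principle identifies the analytic quantity $\sL_{\an} := -L_p'(\psi\omega,0)/L(\psi,0)$ with the derivative of the eigenvalue $U_\fp - 1$ along the weight variable of the family. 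On the other hand, the off-diagonal entry $b(\sigma)$ of $\rho$ descends modulo the Eisenstein ideal to a $G_F$-cocycle valued in $\psi$, which by class field theory and Kummer theory is governed by the $\chi$-isotypic component of the Brumer--Stark unit $u$. The local relation (\ref{e:bequation}) involving $x_\fp = -A_\fp/C_\fp$ then shows that the algebraic invariant $\sL_{\alg}$ on the left of (\ref{e:lddp}) and $\sL_{\an}$ are both equal to $x_\fp$ modulo the Eisenstein ideal, yielding the desired equality.

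The hard part is twofold. First, constructing a cuspidal Hida family through a critical weight-$1$ Eisenstein point and verifying that the reduction of $\rho$ modulo the Eisenstein ideal produces a \emph{non-trivial} extension is genuinely delicate: if the cocycle were a coboundary, the argument would only give $0=0$. Second, one must exclude pathological ``higher-order'' trivial zeros of $L_p(\psi\omega, s)$ that would prevent identifying $\sL_{\an}$ with a single matrix entry. Ventullo's contribution in \cite{v} addresses both issues, crucially invoking the Brumer--Baker theorem on the $p$-adic independence of logarithms of algebraic numbers to rule out spurious vanishings. The standing hypothesis $\chi(\ord_G(u))\neq 0$, equivalent to $L_{S,T}(\chi^{-1},0)\neq 0$, ensures the algebraic $\sL$-invariant is well-defined and that the comparison is informative.
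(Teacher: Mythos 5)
The paper does not prove this statement; it cites it as a theorem established externally in \cite{ddp} and \cite{v}, noting only that the latter removed two hypotheses from the former. Your proposal is therefore a sketch of the cited proofs rather than an independent argument, and as a sketch it captures the overall shape of \cite{ddp}: deform the weight-$1$ Eisenstein point along an ordinary Hida family of cusp forms, extract a cocycle from the off-diagonal entry of the attached Galois representation, and compare the analytic $\sL$-invariant with a matrix entry via the Greenberg--Stevens mechanism.

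One substantive point is off, however. You claim that Ventullo's removal of the hypotheses of \cite{ddp} ``crucially'' invokes the Brumer--Baker transcendence theorem to rule out spurious vanishing of $L_p'(\chi\omega,0)$. But in the present paper (Theorem~\ref{t:nv}, following \cite{gross}*{Proposition 2.13}), Brumer--Baker is used \emph{in conjunction with} Theorem~\ref{t:ddp} to deduce that $L_p'(\chi\omega,0)\neq 0$: one proves Theorem~\ref{t:ddp} first, and only then transfers the nonvanishing of the algebraic $\sL$-invariant (which is what Brumer--Baker actually controls) to the analytic side. Using Brumer--Baker inside the proof of Theorem~\ref{t:ddp} in order to exclude higher-order zeros of $L_p(\chi\omega,s)$ would therefore be circular. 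Ventullo's argument in \cite{v} instead removes the hypotheses of \cite{ddp} by a finer analysis of the cuspidal eigenvariety near the critical Eisenstein point, without input from transcendence theory. Beyond this, your outline correctly flags but does not resolve the two genuinely difficult technical steps (nontriviality of the cocycle and control of the local shape of the Galois representation at $\fp$), which is consistent with the fact that the paper itself treats Theorem~\ref{t:ddp} as a black box.
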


When applied to the Brumer--Stark unit $u_{\fp}^{\Sigma,\Sigma'}$, this result can be interpreted as follows:

\begin{corollary} \label{c:ddp}
We have 
\begin{equation} \label{e:ddp}
\Theta_H' = - \sum_{\sigma \in G} \overline{\bpsi}(\sigma)^{-1}  \log_p(\Norm_{F_{\fp}/\Q_p}(\sigma(u_{\fp}^{\Sigma, \Sigma'}))).
\end{equation}
\end{corollary}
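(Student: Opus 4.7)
The plan is to deduce Corollary~\ref{c:ddp} from Theorem~\ref{t:ddp} by applying the latter character by character to $u = u_{\fp}^{\Sigma,\Sigma'}$ and packaging the resulting scalar identities into a group-ring equality. Fix a character $\psi$ of $\overline{A}$; equivalently, $\psi$ is a character of $G$ whose restriction to $G'$ is the fixed character $\chi$. Since $\chi$ is odd and complex conjugation lies in $G'$ (because $p$ is odd, complex conjugation has prime-to-$p$ order), every such $\psi$ is itself odd.

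To invoke Theorem~\ref{t:ddp} at $\psi$ with $u = u_{\fp}^{\Sigma,\Sigma'}$, I first check the nonvanishing condition $\psi(\ord_G(u_{\fp}^{\Sigma,\Sigma'})) = L_{\Sigma,\Sigma'}(\psi^{-1},0) \neq 0$. This is automatic in Case~2(c), where $\Sigma = S_\infty$ contains no finite primes: Tate's formula gives $L_{\Sigma}(\psi^{-1},0) \neq 0$ for odd $\psi$, and each Euler factor $(1-\psi^{-1}(\fq)\N\fq)$ for $\fq \in \Sigma'$ is nonzero since $\psi^{-1}(\fq)$ is a root of unity while $\N\fq \geq 2$. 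Clearing the denominator in Theorem~\ref{t:ddp} then yields
\[
\sum_{\sigma \in G} \psi(\sigma)^{-1}\log_p \Norm_{F_\fp/\Q_p}(\sigma(u_{\fp}^{\Sigma,\Sigma'})) = -\frac{L_p'(\psi^{-1}\omega,0)}{L_\Sigma(\psi^{-1},0)}\cdot L_{\Sigma,\Sigma'}(\psi^{-1},0).
\]
Using the factorization $L_{p,\Sigma'}(\psi^{-1}\omega,s) = L_p(\psi^{-1}\omega,s)\prod_{\fq\in\Sigma'}(1-\psi^{-1}(\fq)\N\fq^{1-s})$ together with the trivial zero $L_p(\psi^{-1}\omega,0) = 0$ (which holds because $\fp$ splits completely in $H$), differentiating at $s=0$ shows $L'_{p,\Sigma'}(\psi^{-1}\omega,0) = L'_p(\psi^{-1}\omega,0)\prod_{\fq\in\Sigma'}(1-\psi^{-1}(\fq)\N\fq)$. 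Combined with the factorization $L_{\Sigma,\Sigma'}(\psi^{-1},0) = L_\Sigma(\psi^{-1},0)\prod_{\fq\in\Sigma'}(1-\psi^{-1}(\fq)\N\fq)$, the right-hand side of the displayed identity collapses to $-L'_{p,\Sigma'}(\psi^{-1}\omega,0) = -\psi(\Theta_H')$.

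Since $\psi(\overline{\bpsi}(\sigma)^{-1}) = \psi(\sigma)^{-1}$ for every $\sigma \in G$, the scalar identity just derived is precisely the image under $\psi$ of the group-ring equation claimed in the corollary. As the characters of $\overline{A}$ separate points of $\overline{A}\otimes_{\Z_p}\Q_p$, the equality follows in $\overline{A}\otimes\Q_p$, completing the proof. The main subtlety---and the reason Case~2(c) must be handled separately---is the nonvanishing of $L_{\Sigma,\Sigma'}(\psi^{-1},0)$: if $\Sigma$ properly contained $S_\infty$ due to further primes of $F$ above $p$, Tate's formula would force potential trivial zeros of $L_\Sigma(\psi^{-1},s)$ at $s=0$ for certain $\psi$, obstructing a direct invocation of Theorem~\ref{t:ddp} and making the Galois-representation machinery of the earlier sections unavoidable.
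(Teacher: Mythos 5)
Your argument is essentially the same as the paper's: reduce to a character-by-character statement, invoke Theorem~\ref{t:ddp} for $u = u_\fp^{\Sigma,\Sigma'}$, and cancel the smoothing factors using the trivial zero of $L_p$. Your version is more explicit than the paper's in two respects: you verify the nonvanishing hypothesis $\chi(\ord_G(u)) = L_{\Sigma,\Sigma'}(\chi^{-1},0) \neq 0$ via Tate's formula (the paper takes this for granted in Case~2(c), where $\Sigma = S_\infty$), and you carry out the product-rule computation behind the equality
\[
\frac{L_p'(\chi^{-1}\omega,0)}{L(\chi^{-1},0)} = \frac{L_{p,\Sigma'}'(\chi^{-1}\omega,0)}{L_{\Sigma,\Sigma'}(\chi^{-1},0)},
\]
which the paper asserts tersely by saying ``the smoothing factors cancel.'' Both of these elaborations are worthwhile.

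Two small issues. First, you only apply characters $\psi$ of $\overline{A}$ (i.e.\ characters of $G$ restricting to the fixed $\chi$ on $G'$), which proves the identity in $\overline{A}\otimes\Q_p$. In \S\ref{s:onep} the paper does not reduce to a single component, and $\Theta_H' \in \Z_p[G]^-$; the notation $\overline{\bpsi}(\sigma)^{-1}$ in the corollary is just shorthand for $\sigma^{-1}$ in the group ring, so the intended identity is in $\overline{R}\otimes\Q_p$ and one should check all odd characters of $G$. Your argument applies verbatim to any odd character (the nonvanishing holds for all of them when $\Sigma = S_\infty$), so this is a matter of scope, not of substance. Second, your closing paragraph misidentifies why Case~2(c) is isolated. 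Although it is true that extra finite places in $\Sigma$ would create trivial zeros of $L_\Sigma(\psi^{-1},s)$ at $s=0$ and break the nonvanishing hypothesis of Theorem~\ref{t:ddp}, the reason Case~2(c) is treated separately in the paper is the opposite: when $\fp$ is the \emph{only} prime above $p$, the group-ring modular form construction of \S\ref{s:cusp} fails (specifically, the last bullet of Theorem~\ref{t:heckehom} degenerates because $\fP' = 1$), and the cyclotomic-deformation argument via the rational Gross--Stark conjecture is what remains available. The two methods happen to be complementary; the elementary argument you reproduce here does not ``replace'' the Galois-representation machinery in the other cases so much as fill the one case that machinery cannot reach.
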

\begin{proof} It is enough to show that the two sides agree after the application of $\chi$ for every odd character $\chi$ of $G$, i.e.\ that
\begin{equation} \label{e:lddp2}
 L_{p, \Sigma'}'(\chi^{-1} \omega, 0) = 
-\sum_{\sigma \in G} \chi(\sigma)^{-1} \log_p(\Norm_{F_{\fp}/\Q_p}(\sigma(u_{\fp}^{\Sigma, \Sigma'}))). 
\end{equation}
Noting that
\[ \frac{L_p'(\chi^{-1}\omega,0)}{L(\chi^{-1},0)} = \frac{L_{p, \Sigma'}'(\chi^{-1}\omega,0)}{L_{\Sigma,\Sigma'}(\chi^{-1},0)} 
\]
since the smoothing factors $\prod_{\fq \mid \Sigma'} (1 - \chi^{-1}(\sigma_\fq) \N\fq)$ cancel, and recalling that 
\[ \chi(\ord_G(u_{\fp}^{\Sigma,\Sigma'})) = L_{\Sigma,\Sigma'}(\chi^{-1},0) \] by the definition of $u_{\fp}^{\Sigma,\Sigma'},$ the desired result (\ref{e:lddp2}) follows directly from (\ref{e:lddp}).
\end{proof}

The following result is known to the experts and has at its heart two deep facts---Theorem~\ref{t:ddp} above and the celebrated transcendence result of Brumer--Baker on the linear independence of logarithms of algebraic numbers over $\overline{\Q}$.
\begin{theorem} \label{t:nv}
Let $\chi$ be an odd character of $G$.  We have $L_p'(\chi\omega, 0) \neq 0$.
\end{theorem}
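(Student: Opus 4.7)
The plan is to argue by contradiction, assuming $L_p'(\chi\omega, 0) = 0$ and combining the rank-one rational Gross--Stark conjecture (Theorem~\ref{t:ddp}) with the Brumer--Baker transcendence theorem on $p$-adic logarithms of algebraic numbers.

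First I would apply Theorem~\ref{t:ddp} with the character $\chi^{-1}$ (in place of the $\chi$ in the theorem statement) and the modified Brumer--Stark unit $u := u_\fp^{\Sigma, \Sigma'}$, which satisfies $\ord_G u = \Theta_H = \Theta_{\Sigma, \Sigma'}^H$. The required hypothesis $\chi^{-1}(\ord_G u) = L_{\Sigma, \Sigma'}(\chi, 0) \neq 0$ holds: $\chi$ is odd and $\Sigma = S_\infty$ in the one-prime setting, so $L(\chi, 0) \neq 0$, and the smoothing factors $(1 - \chi(\fq)\N\fq)$ for $\fq \in \Sigma'$ are nonzero since $\N\fq \geq 2$ while $\chi(\fq)$ is a root of unity. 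The conclusion of the theorem reads
\[
\sum_{\sigma \in G} \chi(\sigma) \log_p N_{F_\fp/\Q_p}(\sigma u) \;=\; -\,\chi^{-1}(\ord_G u) \cdot \frac{L_p'(\chi\omega, 0)}{L(\chi, 0)},
\]
and under our assumption the right-hand side vanishes. We thus obtain a $\overline{\Q}$-linear relation, with coefficients $\chi(\sigma) \in \overline{\Q}^*$, among the $p$-adic logarithms of the algebraic numbers $\alpha_\sigma := N_{F_\fp/\Q_p}(\sigma u) \in \overline{\Q}^* \cap \Q_p^*$.

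The crucial input is the Brumer--Baker theorem, which asserts that the $p$-adic logarithm extends to an injective $\overline{\Q}$-linear map $\log_p \colon \overline{\Q}^*/\mu \otimes_{\Z} \overline{\Q} \hookrightarrow \overline{\Q}_p$. The vanishing $\sum_\sigma \chi(\sigma) \log_p \alpha_\sigma = 0$ therefore forces the multiplicative identity
\[
\sum_{\sigma \in G} \chi(\sigma)\,[\alpha_\sigma] \;=\; 0 \qquad \text{in } \overline{\Q}^*/\mu \otimes \overline{\Q}.
\]

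To derive the contradiction I apply the $\overline{\Q}$-linear extension of the valuation $\ord_p \colon \overline{\Q}^*/\mu \to \Q$ to this identity. A standard computation yields $\ord_p \alpha_\sigma = [F_\fp : \Q_p] \cdot \ord_\fP(\sigma u) = [F_\fp : \Q_p] \cdot \zeta_{\Sigma, \Sigma'}(\sigma, 0)$, where the final equality uses the defining property $\ord_G u = \Theta_H$ of the modified Brumer--Stark unit. Summing,
\[
0 \;=\; \sum_{\sigma \in G} \chi(\sigma) \ord_p \alpha_\sigma \;=\; [F_\fp : \Q_p] \sum_{\sigma \in G} \chi(\sigma)\,\zeta_{\Sigma, \Sigma'}(\sigma, 0) \;=\; [F_\fp : \Q_p] \cdot L_{\Sigma, \Sigma'}(\chi, 0),
\]
contradicting the nonvanishing of $L_{\Sigma, \Sigma'}(\chi, 0)$ already verified above. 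The main obstacle is the invocation of Brumer--Baker, which is a deep transcendence result; the remainder is a clean unwinding that combines the algebraic defining properties of $u_\fp^{\Sigma, \Sigma'}$ (on the Stickelberger side) with the analytic formula of Theorem~\ref{t:ddp} (on the $p$-adic $L$-function side).
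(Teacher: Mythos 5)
Your high-level strategy (apply Theorem~\ref{t:ddp} with $u=u_\fp^{\Sigma,\Sigma'}$, then invoke Brumer--Baker) matches the paper's, which simply cites Gross's Proposition~2.13 for the Brumer--Baker step. However, your stated version of Brumer--Baker is incorrect, and the error propagates into a genuine gap. With Iwasawa's convention $\log_p(p)=0$ (the one used throughout this paper), the map $\log_p$ is \emph{not} injective on $\overline{\Q}^*/\mu\otimes_{\Z}\overline{\Q}$: the class of $p$ is a nonzero element of the kernel. What Brumer's theorem actually gives is that a vanishing $\overline{\Q}$-linear combination $\sum_i c_i\log_p\alpha_i=0$ with algebraic $c_i$ not all zero forces a nontrivial $\Q$-linear relation among the $\log_p\alpha_i$, i.e.\ a multiplicative relation $\prod_i\alpha_i^{n_i}\in\mu\cdot p^{\Q}$. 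This yields $\sum_\sigma\chi(\sigma)[\alpha_\sigma]=0$ only in $\overline{\Q}^*/(\mu\cdot p^{\Q})\otimes\overline{\Q}$, and $\ord_p$ does \emph{not} descend to that quotient since $\ord_p(p)=1$. Indeed your final computation shows $\sum_\sigma\chi(\sigma)\ord_p(\alpha_\sigma)\neq 0$, which is perfectly consistent with $\sum_\sigma\chi(\sigma)[\alpha_\sigma]$ being a nonzero multiple of $[p]$ in $\overline{\Q}^*/\mu\otimes\overline{\Q}$; no contradiction follows at that point.

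The gap can be closed, but only by using an input you never invoke: because $u_\fp^{\Sigma,\Sigma'}\in U_\fp^-$, every archimedean absolute value of each $\sigma u$ is $1$, so $|\alpha_\sigma|_v=1$ for every archimedean place $v$. A nontrivial rational power of $p$ times a root of unity has archimedean absolute value $\neq 1$, so any relation $\prod_\sigma\alpha_\sigma^{n_\sigma}\in\mu\cdot p^{\Q}$ in fact lands in $\mu$. With this extra step, Brumer--Baker does give $\sum_\sigma\chi(\sigma)[\alpha_\sigma]=0$ in $\overline{\Q}^*/\mu\otimes\overline{\Q}$, and your $\ord_p$ argument concludes. (A minor correction: $\ord_p(N_{F_\fp/\Q_p}(x))=f\cdot\ord_\fp(x)$ with $f$ the residue degree of $\fp$ over $p$, not $[F_\fp:\Q_p]$, unless $\fp/p$ is unramified; this does not affect the contradiction.) This archimedean observation is precisely where the definition of $U_\fp^-$ enters, and it is the content that the paper delegates to Gross's Proposition~2.13 rather than reproducing.
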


\begin{proof}
In view of Theorem~\ref{t:ddp}, it suffices to prove that 
\[ \sum_{\sigma \in G} \chi(\sigma)^{-1} \log_p(\Norm_{F_{\fp}/\Q_p}(\sigma(u))) \neq 0 \] for any $u \in U_\fp^-$ such that $\chi(\ord_G(u)) \neq 0$.  This follows from the theorem of Brumer--Baker \cite{bb} as explained by Gross in \cite{gross}*{Proposition 2.13}.
\end{proof}

We interpret this result in terms of the group-ring element $\Theta_H'$ as follows.

\begin{corollary}  The element $\Theta_H' \in \overline{R} = \Z_p[G]^-$ is a non-zerodivisor.
\end{corollary}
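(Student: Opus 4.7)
The plan is to reduce the statement to a character-by-character check, and then invoke Theorem~\ref{t:nv}.

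First, I would note that $\overline{R}$ is a finitely generated free $\Z_p$-module, so the natural map $\overline{R} \hookrightarrow \overline{R} \otimes_{\Z_p} \overline{\Q}_p$ is injective. Choosing a finite extension $E/\Q_p$ containing the values of every character of $G$, we have the Wedderburn-style decomposition
\[
\overline{R} \otimes_{\Z_p} E \;\cong\; \prod_{\chi} E,
\]
where the product ranges over odd characters $\chi$ of $G$, and the projection to the $\chi$-factor is given by applying $\chi$. An element $x\in\overline{R}$ is therefore a non-zerodivisor in $\overline{R}$ as soon as $\chi(x)\neq 0$ in $E$ for every odd character $\chi$ of $G$: indeed, any $y\in\overline{R}$ with $xy=0$ has $\chi(x)\chi(y)=0$ in the field $E$ for each $\chi$, forcing $\chi(y)=0$ for all $\chi$ and hence $y=0$ by the injectivity above. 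So the task reduces to showing $\chi(\Theta_H')\neq 0$ for each odd $\chi$.

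Next I would identify this character value with a derivative of a classical $p$-adic $L$-function. From the construction of $\Theta_{p,\Sigma'}^H(s)$ as the interpolating Stickelberger series, we have $\chi(\Theta_{p,\Sigma'}^H(s)) = L_{p,\Sigma'}(\chi^{-1}\omega,s)$ as $p$-adic analytic functions of $s$: the two agree at the dense set of values $s=1-k$ with $k\equiv 1\pmod{p-1}$ by the interpolation property. Differentiating at $s=0$ gives
\[
\chi(\Theta_H') \;=\; L_{p,\Sigma'}'(\chi^{-1}\omega,0).
\]

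The last step is to compare $L_{p,\Sigma'}'$ with the unsmoothed derivative $L_p'$. Write $L_{p,\Sigma'}(\chi^{-1}\omega,s) = L_p(\chi^{-1}\omega,s)\, E_{\Sigma'}(\chi^{-1},s)$ where $E_{\Sigma'}(\chi^{-1},s)$ is the product of smoothing Euler factors at primes in $\Sigma'$. Since $\fp$ splits completely in $H$, we have $\Theta_{p,\Sigma'}^H(0)=\Theta_{\Sigma_\fp,\Sigma'}^H=0$, so $L_{p,\Sigma'}(\chi^{-1}\omega,0)=0$. The factor $E_{\Sigma'}(\chi^{-1},0)$ is a product of terms of the form $1-\chi^{-1}(\fq)\N\fq$, each nonzero because $\chi^{-1}(\fq)$ is a root of unity and $\N\fq>1$; hence $E_{\Sigma'}(\chi^{-1},0)\neq 0$, which forces $L_p(\chi^{-1}\omega,0)=0$. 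Differentiating the product at $s=0$ then gives
\[
L_{p,\Sigma'}'(\chi^{-1}\omega,0) \;=\; L_p'(\chi^{-1}\omega,0)\,E_{\Sigma'}(\chi^{-1},0).
\]
By Theorem~\ref{t:nv} applied to the odd character $\chi^{-1}$, the first factor is nonzero, and we have just observed that the second factor is nonzero. Therefore $\chi(\Theta_H')\neq 0$ for every odd $\chi$, and the first paragraph gives the result.

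The only nonroutine point is the non-vanishing input $L_p'(\chi^{-1}\omega,0)\neq 0$, but this has already been isolated as Theorem~\ref{t:nv} and ultimately rests on Brumer--Baker transcendence combined with the rank-one rational Gross--Stark conjecture; everything else is a bookkeeping exercise in decomposing $\overline{R}$ and stripping off nonvanishing Euler factors.
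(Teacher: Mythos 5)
Your proof is correct and fills in exactly the argument the paper intends: the paper states this corollary immediately after Theorem~\ref{t:nv} with no further comment, and the implicit proof is precisely the character-by-character reduction you wrote out (injectivity of $\overline{R}\hookrightarrow\prod_\chi E$, the identification $\chi(\Theta_H')=L'_{p,\Sigma'}(\chi^{-1}\omega,0)$, and stripping off the nonvanishing smoothing factor after using the trivial zero $L_p(\chi^{-1}\omega,0)=0$ so the product rule produces only one term). The only thing to keep straight is that the nonvanishing of $L'_p$, not $L'_{p,\Sigma'}$, is what Theorem~\ref{t:nv} gives, and you handle that bridge cleanly.
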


\subsection{The ring $R_{X,m}$ and module $\nabla_{X,m}$}

Recall $R = \Z_p[\fg]^-$. For any nonnegative integer $m$, define the ring 
\begin{equation} R_{X,m} = R[X]/(\Theta_L - \Theta_H' X, X^2, I X, I^2, p^mX). \label{e:rxdef} \end{equation}

\begin{lemma} For $m$ sufficiently large, the canonical map $R/I^2 \longrightarrow R_{X,m}$ is injective. \label{l:nxi}
\end{lemma}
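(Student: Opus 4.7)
My plan is to check injectivity by direct manipulation of the presentation of $R_{X,m}$. Suppose $a \in R$ lies in the kernel of $R \to R_{X,m}$. Then in $R[X]$ we may write
\[
a = P(X)(\Theta_L - \tilde{\Theta}_H' X) + X^2 Q(X) + X D(X) + E(X),
\]
where $\tilde{\Theta}_H' \in R$ is any lift of $\Theta_H' \in \overline{R}$, $P, Q \in R[X]$, $D(X)$ is a polynomial with coefficients in $I + p^m R$ (absorbing the $IX$ and $p^m X$ relations), and $E(X)$ is a polynomial with coefficients in $I^2$. Comparing constant terms gives $a \equiv P_0 \Theta_L \pmod{I^2}$, and comparing coefficients of $X$ gives $P_0 \tilde{\Theta}_H' \in I + p^m R$. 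Reducing modulo $I$, the image $\overline{P}_0 \in \overline{R}$ satisfies $\overline{P}_0 \cdot \Theta_H' \in p^m \overline{R}$.

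The key step is to upgrade this to a congruence on $\overline{P}_0$ alone. Here I use the non-zerodivisor property of $\Theta_H'$, which follows from Theorem~\ref{t:nv} via the corollary preceding the statement of this lemma. Since $\overline{R} = \Z_p[G]^-$ is a free $\Z_p$-module of finite rank, multiplication by $\Theta_H'$ is then an injective $\Z_p$-linear endomorphism of a finite-rank free $\Z_p$-module, so its cokernel is a finite $p$-group. Choose a constant $c \geq 0$, independent of $m$, such that $p^c \overline{R} \subseteq \Theta_H' \cdot \overline{R}$. From $\overline{P}_0 \Theta_H' \in p^m \overline{R} \subseteq \Theta_H'(p^{m-c}\overline{R})$ and injectivity of multiplication by $\Theta_H'$, I obtain $\overline{P}_0 \in p^{m-c} \overline{R}$, i.e., $P_0 \in I + p^{m-c} R$.

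Writing $P_0 = P_0' + p^{m-c} z$ with $P_0' \in I$, $z \in R$, and recalling $\Theta_L \in I$, we find $P_0 \Theta_L \in I^2 + p^{m-c} I$, hence $a \in I^2 + p^{m-c} I$. The last ingredient is the finiteness of $I/I^2$: inverting $p$ kills the relative augmentation ideal modulo its square because $\Gamma$ is finite, so $I/I^2$ is $\Z_p$-torsion and therefore finite; let $N$ satisfy $p^N I \subseteq I^2$. For any $m \geq N + c$, we then have $p^{m-c} I \subseteq I^2$ and consequently $a \in I^2$, proving injectivity. The main obstacle in this approach is obtaining the constant $c$; everything else is formal manipulation with the presentation of $R_{X,m}$, and the existence of $c$ is precisely the content of the deep rank-one results (rational Gross--Stark plus Brumer--Baker) assembled in Theorem~\ref{t:nv}.
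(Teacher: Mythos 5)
Your proof is correct and follows essentially the same line of argument as the paper's: extract the constant- and linear-term constraints from the defining presentation of $R_{X,m}$, use the fact that $\Theta_H'$ is a non-zerodivisor in the finite-rank free $\Z_p$-module $\overline{R}$ to get $\overline{P}_0 \in p^{m-c}\overline{R}$ (the paper phrases this as ``$\Theta_H'$ divides $p^h$''), and then use $\#\Gamma$ annihilating $I/I^2$ to absorb $p^{m-c}I$ into $I^2$ for $m$ large. The only cosmetic differences are your explicit introduction of a lift $\tilde{\Theta}_H'$ and your slightly more direct cancellation argument; the key inputs and structure are identical.
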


\begin{proof}
The proof is easier than Theorem~\ref{t:ws} since $\Theta_H'$ is a non-zerodivisor in $\overline{R}$.
 Let $a \in R$ have image in $R_{X,m}$ that vanishes.  Writing down the fact that $a$ lies in the ideal defining $R_{X,m}$ shows that $a \equiv r\Theta_L \pmod{I^2}$ for some $r \in R$ such that $\overline{r} \Theta_H' \equiv 0 \pmod{p^m}$ in $\overline{R}$.  Yet since $\Theta_H'$ is a non-zerodivisor, 
there exists a nonnegative integer $h$ such that $\Theta_H'$ divides $p^h$ in $\overline{R}$. We therefore find $p^h \overline{r} \equiv 0 \pmod{p^m}$, whence $\overline{r} \equiv 0 \pmod{p^{m-h}}$ for $m \ge h$.  Therefore $r \in (I, p^{m-h})$ so $a \in (I^2, p^{m-h}I)$.  But $\#\Gamma$ annihilates $I/I^2$ (recall $\Gamma = \Gal(L/H)$) so for $m$ large enough $p^{m-h}I \subset I^2$, and we have $a \in I^2$ as desired.
\end{proof}

Recall that for an integer $m \ge 0$, $F_m$ denotes the $m$th layer of the cyclotomic $\Z_p$-extension of $F$.
Let $\fg_m = \Gal(LF_m/F)$ and $\Gamma_m = \Gal(LF_m/H)$.  Let $R_m = \Z_p[\fg_m]^-$ and
let \[ \bpsi_m \colon \fg_m \longrightarrow R_m^* \] denote the canonical character.  We define a ring homomorphism
\begin{equation} \label{e:betadef} \beta_m \colon R_m \longrightarrow R_{X,m}, \qquad \sigma \mapsto \sigma|_L  + \sigma|_H \cdot \log_p(\varepsilon_{\cyc}(\sigma)) X. 
\end{equation}
This is well-defined since the image of $\sigma \in \fg_m$ determines the value of $\log_p(\varepsilon_{\cyc}(\sigma))$ modulo $p^m$.

Define
\begin{equation} \label{e:nablaxdef}
 \nabla_{X, m} = {\nabla}_{\Sigma_\fp}^{\Sigma'}(L F_m)_{R_m} \otimes_{R_m} R_{X,m},
 \end{equation}
where the $R_m$-action on the right factor is given by $\beta_m$.

\begin{lemma} \label{l:fittnx}
The $R_{X,m}$-module $ \nabla_{X, m}$ is quadratically presented and 
\[ \Fitt_{R_{X,m}}(\nabla_{X,m})= (\rec_G(u_\fp^{\Sigma,\Sigma'}) - \Theta_L). \]
\end{lemma}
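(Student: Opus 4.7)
The plan is to apply the Brumer--Stark formula of Theorem~\ref{t:dkresult} to the extension $LF_m/F$, transport the resulting generator of the Fitting ideal along the ring homomorphism $\beta_m\colon R_m \to R_{X,m}$, and rewrite it in terms of $\rec_G(u_\fp^{\Sigma,\Sigma'})$ using the rational Gross--Stark conjecture together with local class field theory at $\fp$.

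By Lemma~\ref{l:qp} applied with $L$ replaced by $LF_m$, the module $\nabla_{\Sigma_\fp}^{\Sigma'}(LF_m)_{R_m}$ is quadratically presented over $R_m$, and tensoring along $\beta_m$ preserves the quadratic presentation.  Since Fitting ideals of finitely presented modules commute with base change, Theorem~\ref{t:dkresult} yields
\[
\Fitt_{R_{X,m}}(\nabla_{X,m}) \;=\; \bigl(\beta_m(\Theta^{LF_m/F}_{\Sigma_\fp,\Sigma'})\bigr).
\]
Expanding $\Theta^{LF_m/F}_{\Sigma_\fp,\Sigma'} = \sum_{\sigma \in \fg_m}\zeta(\sigma,0)\,\sigma^{-1}$ and using $\beta_m(\sigma^{-1}) = \sigma^{-1}|_L - \sigma^{-1}|_H \log_p\varepsilon_{\cyc}(\sigma)\,X$, the image splits into an ``$L$-part'' and an ``$X$-part''.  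The first reduces to $\Theta_L$ by functoriality of the Stickelberger element under the surjection $\fg_m \twoheadrightarrow \fg$; the second, after grouping by $\tau = \sigma|_{HF_m} \in G_m$ (both $\sigma^{-1}|_H$ and $\log_p\varepsilon_{\cyc}(\sigma)$ depend only on $\tau$), is the Riemann sum converging to $\Theta_H' X$ via Lemma~\ref{l:derivative}, modulo $p^m X = 0$ in $R_{X,m}$.

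To identify this with $\rec_G(u_\fp^{\Sigma,\Sigma'}) - \Theta_L$, I will combine Corollary~\ref{c:ddp}, which writes $\Theta_H' = -\sum_{\sigma \in G}\log_p N_{F_\fp/\Q_p}(\sigma(u_\fp^{\Sigma,\Sigma'}))\,\sigma^{-1}$, with Serre's local reciprocity formula $\log_p\varepsilon_{\cyc}(\rec_\fP(u)) = -\log_p N_{F_\fp/\Q_p}(u)$.  Substituting these into $\Theta_H' X$ and using the defining relation $\Theta_L = \Theta_H' X$ of $R_{X,m}$ to bridge the ``$I$-part'' with the ``$X$-part'', one rewrites $\Theta_H' X$ as a unit multiple of $\rec_G(u_\fp^{\Sigma,\Sigma'})$, yielding the required equality of ideals $(\beta_m(\Theta^{LF_m/F}_{\Sigma_\fp,\Sigma'})) = (\rec_G(u_\fp^{\Sigma,\Sigma'}) - \Theta_L)$.

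The hard part will be this final local--analytic--algebraic translation: identifying the ``analytic'' expression $\Theta_H' X$ with the ``group-theoretic'' element $\rec_G(u_\fp^{\Sigma,\Sigma'}) \in I/I^2$ inside $R_{X,m}$, where the two incarnations meet only along the single relation $\Theta_L = \Theta_H' X$.  The non-vanishing of $\Theta_H'$ from Theorem~\ref{t:nv}, together with the injectivity of $R/I^2 \hookrightarrow R_{X,m}$ established in Lemma~\ref{l:nxi}, ensure the translation is unambiguous and that the subsequent vanishing $\Fitt_{R_{X,m}}(\nabla_{X,m}) = 0$ (to be proved via the integral construction of the Deligne--Ribet $p$-adic $L$-function) descends to the desired congruence $\rec_G(u_\fp^{\Sigma,\Sigma'}) \equiv \Theta_L \pmod{I^2}$.
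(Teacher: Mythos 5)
Your computation of $\beta_m(\Theta^{LF_m/F}_{\Sigma_\fp,\Sigma'})$ is correct and useful, but it proves the \emph{wrong} statement: it establishes Theorem~\ref{t:fittxz}, namely $\Fitt_{R_{X,m}}(\nabla_{X,m}) = 0$, rather than Lemma~\ref{l:fittnx}. Tracing through your own Riemann-sum computation, you get $\beta_m(\Theta^{LF_m/F}_{\Sigma_\fp,\Sigma'}) = \Theta_L - \Theta_H' X$, which vanishes in $R_{X,m}$ by the defining relation $\Theta_L = \Theta_H' X$. The final step of your plan --- ``rewrite $\Theta_H' X$ as a unit multiple of $\rec_G(u_\fp^{\Sigma,\Sigma'})$'' --- is not a consequence of Serre's local formula or Corollary~\ref{c:ddp}: the element $\rec_G(u_\fp^{\Sigma,\Sigma'})$ encodes the reciprocity map into $\Gamma = \Gal(L/H)$, which is a general abelian $p$-group, not just the cyclotomic quotient $\Gal(HF_m/H)$ that $\log_p\varepsilon_{\cyc}$ sees. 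Asserting $\Theta_H' X = \rec_G(u_\fp^{\Sigma,\Sigma'})$ (up to units) is exactly the modified Gross congruence (\ref{e:mgc}) you are ultimately trying to prove, so your argument is circular.

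The ingredient you are missing is Lemma~\ref{l:erec} (and the accompanying Lemma~\ref{l:eord}), applied with $L$ replaced by $LF_m$. Those lemmas identify a generator of $\Fitt_{R_m}(\nabla_{\Sigma_\fp}^{\Sigma'}(LF_m)_{R_m})$ with $\rec_{G,m}(u_\fp^{\Sigma,\Sigma'})$, the reciprocity element at $\fp$ valued in $I_m/I_m^2 \cong \overline R \otimes \Gamma_m$. This is a different representative of the same ideal as $\Theta^{LF_m/F}_{\Sigma_\fp,\Sigma'}$, agreeing with it modulo $I_m^2$ --- and this difference is invisible after $\beta_m$ since $\beta_m(I_m^2)=0$. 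But the two representatives \emph{look} entirely different after $\beta_m$: the $\Theta$-representative collapses to $0$, while the $\rec_{G,m}$-representative decomposes via equation (\ref{e:recn}) into the $L$-projection $\rec_G(u_\fp^{\Sigma,\Sigma'})$ plus an $X$-term that Corollary~\ref{c:ddp} identifies as $-\Theta_H' X = -\Theta_L$. That decomposition is what produces the explicit generator $\rec_G(u_\fp^{\Sigma,\Sigma'}) - \Theta_L$ claimed in the lemma. Without passing through $\rec_{G,m}$, the term $\rec_G(u_\fp^{\Sigma,\Sigma'})$ never appears in your computation, so there is no way to extract it.
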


To prepare for the proof of Lemma~\ref{l:fittnx}, 
define $I_m = \ker(\Z_p[\fg_m]^- \longrightarrow \Z_p[G]^-)$. We have $I_m/I_m^2 \cong \Z_p[G]^- \otimes \Gamma_m$. We lift the map $\rec_G$ defined in (\ref{e:recg}) by defining
\[ 
\rec_{G,m} \colon (\cO_{H, \Sigma, \Sigma'}^*)_{\overline{R}} \longrightarrow I_m/I_m^2, \qquad \epsilon \mapsto \sum_{\sigma \in G} (\rec_{\fP,m}(\sigma(\epsilon)) - 1)\tilde{\sigma}_m^{-1},
\]
where $\rec_{\fP, m}\colon H_{\fP}^* \longrightarrow \Gamma_m$ is the reciprocity map and $\tilde{\sigma}_m$ is a lift of $\sigma$ in $\fg_m$. Under the projection $\fg_m \longrightarrow \fg$, the element $\rec_{\fP, m}(\epsilon)$ is mapped to $\rec_{\fP}(\epsilon)$. Furthermore, \[ \varepsilon_{\cyc}(\rec_{\fP,m}(\epsilon)) \equiv  \Norm_{H_{\fP}/\Q_p}(\epsilon) \text{ in } (\Z/p^m\Z)^*. \] Therefore, under the map $\beta_m$ defined in (\ref{e:betadef}), we have
\begin{equation} \label{e:recn}
\beta_m(\rec_{G,m}(\epsilon)) = \rec_G(\epsilon) + X \sum_{\sigma \in G}\overline{\bpsi}(\sigma)^{-1} \log_p  \Norm_{H_{\fP}/\Q_p}(\sigma(\epsilon)).
\end{equation}

\begin{proof}[Proof of Lemma~\ref{l:fittnx}] Lemmas~\ref{l:qp} and \ref{l:erec} (applied to $LF_m$ in place of $L$) imply that
${\nabla}_{\Sigma_\fp}^{\Sigma'}(L F_m)_{R_m}$ is  quadratically presented over $R_m$ and that
\[ \Fitt_{R_m} ({\nabla}_{\Sigma_\fp}^{\Sigma'}(L F_m)_{R_m}) = ( \rec_{G,m}(u_\fp^{\Sigma, \Sigma'})). \]
Applying (\ref{e:recn}), we then calculate
\begin{align} 
\beta_m(\rec_{G,m}(u_\fp^{\Sigma, \Sigma'})) &= \rec_G(u_\fp^{\Sigma, \Sigma'}) + X \sum_{\sigma \in G}\overline{\bpsi}(\sigma)^{-1} \log_p  \Norm_{H_{\fP}/\Q_p}(\sigma(u_\fp^{\Sigma, \Sigma'}))  \nonumber \\
& =  \rec_G(u_\fp^{\Sigma, \Sigma'}) - \Theta_H' X  \label{e:applygs} \\
& =  \rec_G(u_\fp^{\Sigma, \Sigma'}) - \Theta_L. \label{e:applyref}
 \end{align}
Equation (\ref{e:applygs}) follows from Corollary~\ref{c:ddp} and equation (\ref{e:applyref}) is one of the defining relations of $R_{X, m}$ in (\ref{e:rxdef}). The lemma follows.
\end{proof}

On the other hand, in our previous work \cite{dk} we calculated the Fitting ideal of ${\nabla}_{\Sigma_\fp}^{\Sigma'}(L F_m)$ exactly, yielding the following result:

\begin{theorem}  We have $\Fitt_{R_{X,m}}(\nabla_{X,m}) = 0$. \label{t:fittxz}
\end{theorem}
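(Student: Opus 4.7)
The plan is to combine Theorem~\ref{t:dkresult} applied to the CM extension $LF_m/F$ with the Riemann-sum description of $\Theta_H'$ in Lemma~\ref{l:derivative}. Theorem~\ref{t:dkresult} gives
\[
\Fitt_{R_m}\bigl(\nabla_{\Sigma_\fp}^{\Sigma'}(LF_m)_{R_m}\bigr) = \bigl(\Theta^{LF_m/F}_{\Sigma_\fp, \Sigma'}\bigr),
\]
and the standard base-change property of Fitting ideals along $\beta_m\colon R_m \to R_{X,m}$ then yields $\Fitt_{R_{X,m}}(\nabla_{X,m}) = \beta_m(\Theta^{LF_m/F}_{\Sigma_\fp, \Sigma'}) \cdot R_{X,m}$. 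The task therefore reduces to showing $\beta_m(\Theta^{LF_m/F}_{\Sigma_\fp, \Sigma'}) = 0$ in $R_{X,m}$.

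To carry out this computation, I would write $\Theta^{LF_m/F} = \sum_{\sigma \in \fg_m} \zeta(LF_m/F, \sigma, 0)\sigma^{-1}$ and expand $\beta_m$ termwise. Since $X$ is annihilated by $I$ and satisfies $X^2 = 0$ in $R_{X,m}$, one obtains
\[
\beta_m(\sigma^{-1}) = (\sigma|_L)^{-1} - (\sigma|_H)^{-1}\log_p\varepsilon_{\cyc}(\sigma)\,X,
\]
and the full image splits into an $X^0$-part and an $X^1$-part. For the $X^0$-part, grouping the sum by the restriction map $\fg_m \twoheadrightarrow \fg$ and invoking the functoriality of partial zeta values under Galois restriction identifies it with $\Theta^{L/F}_{\Sigma_\fp, \Sigma'} = \Theta_L$. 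For the $X^1$-part, grouping instead by the projection $\fg_m \twoheadrightarrow G_m$ converts it into precisely the Riemann sum of Lemma~\ref{l:derivative}, up to an error that is annihilated by $p^m X = 0$ in $R_{X,m}$; that lemma then identifies the result with a specific $\Z_p$-multiple of $X\Theta_H'$.

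Finally, the defining relation $\Theta_L - \Theta_H' X = 0$ of $R_{X,m}$ converts the $X^1$-contribution back into a multiple of $\Theta_L$ which exactly cancels the $X^0$-contribution, giving $\beta_m(\Theta^{LF_m/F}) = 0$, hence $\Fitt_{R_{X,m}}(\nabla_{X,m}) = 0$ as desired.

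The main obstacle is the last step: tracking the various sign conventions carefully enough to see the cancellation. Three signs must conspire correctly---the minus in Lemma~\ref{l:derivative}'s Riemann-sum formula for $\Theta_H'$, the minus coming from $\log_p\varepsilon_{\cyc}(\sigma^{-1}) = -\log_p\varepsilon_{\cyc}(\sigma)$, and the sign in the relation $\Theta_L = \Theta_H' X$ that cuts out $R_{X,m}$. By contrast, the existence of the Riemann-sum representation and the functoriality of partial zeta values, which are the other structural inputs of the argument, are entirely formal; and the passage back from $\Fitt_{R_{X,m}}(\nabla_{X,m}) = 0$ to the modified Gross congruence (\ref{e:mgc}) is already provided by Lemma~\ref{l:fittnx} and the injectivity of $R/I^2 \hookrightarrow R_{X,m}$ supplied by Lemma~\ref{l:nxi}.
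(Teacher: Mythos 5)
Your proposal is exactly the paper's argument: base-change the Fitting ideal from Theorem~\ref{t:dkresult} along $\beta_m$, expand $\beta_m(\Theta^{LF_m}_{\Sigma_\fp,\Sigma'})$ termwise into an $X^0$-part (identified with $\Theta_L$ by pushing partial zeta values from $\fg_m$ to $\fg$) and an $X^1$-part (identified via the distribution relation to $G_m$, the Riemann sum of Lemma~\ref{l:derivative}, and $p^m X = 0$ to absorb the approximation error), and then annihilate the sum with the relation $\Theta_L - \Theta_H' X = 0$ defining $R_{X,m}$. The sign bookkeeping you defer as ``the main obstacle'' is precisely what the paper's short display carries out---the minus from $\log_p\varepsilon_{\cyc}(\sigma^{-1}) = -\log_p\varepsilon_{\cyc}(\sigma)$, the minus in Lemma~\ref{l:derivative}, and the sign in the defining relation of $R_{X,m}$ combine to give the cancellation---so rather than flagging it as unresolved you should simply finish the two-sign computation; everything else in your outline matches the paper's proof verbatim.
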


\begin{proof} Theorem 3.3 of \cite{dk}, recalled already in Theorem~\ref{t:dkresult} above, 
implies that 
\[    \Fitt_{R_m}(\nabla_{\Sigma_\fp}^{\Sigma'}(LF_m)_{R_m}) = (\Theta^{LF_m}_{\Sigma_\fp, \Sigma'}). \]
Passing to $R_{X,m}$ by applying $\beta_m$, we find
\begin{align}
\beta_m(\Theta^{LF_m}_{\Sigma_\fp, \Sigma'}) &= \Theta_L + X \sum_{\sigma \in \fg_m} \zeta_{\Sigma_\fp, \Sigma'}(\sigma, 0) \log_p\varepsilon_{\cyc}(\sigma)\overline{\sigma}^{-1} \nonumber \\
&= \Theta_L + X \sum_{\sigma \in G_m} \zeta_{\Sigma_\fp, \Sigma'}(\sigma, 0) \log_p\varepsilon_{\cyc}(\sigma)\overline{\sigma}^{-1}  \label{e:gtog} \\
&= \Theta_L - X \Theta_H'  \label{e:applyth} \\
&= 0.  \label{e:final} \end{align}
Here $\overline{\sigma}$ denotes the image of $\sigma$ in  $G$.  Equation (\ref{e:gtog}) follows from the distribution property of partial zeta functions since $\log_p \varepsilon_{\cyc}(\sigma) \overline{\sigma}^{-1}$ depends only on the image of $\sigma$ in $G_m$. Equation (\ref{e:applyth}) follows from Lemma~\ref{l:derivative}.  Equation (\ref{e:final}) is a defining relation of $R_{X,m}$.  The result follows.
\end{proof}

Combining Lemma~\ref{l:fittnx} with Theorem~\ref{t:fittxz}, we obtain \begin{equation} \label{e:recutrx}
 \rec_G(u_\fp^{\Sigma, \Sigma'}) - \Theta_L =0 \text{ in } R_{X,m}. \end{equation}
   By Lemma~\ref{l:nxi},  equation (\ref{e:recutrx}) for $m$ large enough yields 
\[ \rec_G(u_\fp^{\Sigma, \Sigma'}) \equiv \Theta_L \pmod{I^2} \]
in $R$.  This  completes the proof of the $p$-part of the modified Gross conjecture (\ref{e:mgc}) in Case 2(c).
Lemma~\ref{l:equiv} then yields the $p$-part of Gross's conjecture for the Brumer--Stark unit $u_\fp$:
\[ \rec_G(u_\fp) \equiv  \Theta_{S_\fp,T}^{L/F} \pmod{I^2}. \]
This completes the proof of Theorem~\ref{t:maingross}.

\end{document}